\long\def\comment#1{}
\newcommand{\En}{\mathbb{E}_n}
\newtheorem{theorem}{Theorem}
\theoremstyle{plain}
\newtheorem{corollary}{Corollary}
\newtheorem{definition}{Definition}
\newtheorem{lemma}{Lemma}
\newtheorem{ConditionC}{Condition C.\!}
\newtheorem*{ConditionS}{Condition S}
\newtheorem*{ConditionNS}{Condition NS}
\newtheorem*{ConditionNK}{Condition NK}
\newtheorem*{ConditionP}{Condition P}
\newtheorem*{ConditionV}{Condition V}
\newtheorem{algorithm}{Algorithm}
\theoremstyle{definition}
\newtheorem{remark}{Remark}
\newtheorem{example}{Example}
\newcommand{\ba}{\begin{eqnarray*}}
\newcommand{\ea}{\end{eqnarray*}}
\newcommand{\ban}{\begin{eqnarray*}}
\newcommand{\ean}{\end{eqnarray*}}
\newcommand{\bs}{\begin{align}\begin{split}\nonumber}
\newcommand{\bsnumber}{\begin{align}\begin{split}}
\newcommand{\es}{\end{split}\end{align}}
\newcommand{\A}{A}
\newcommand{\B}{B}
\newcommand{\V}{\mathcal{V}}
\newcommand{\Vn}{V_n}
\newcommand{\barVn}{\overline{V}_n}
\newcommand{\KK} {\frac{1}{h_n^{d/2}}\mathbf{K}\( \frac{z - Z_i}{h_n}\)}
\newcommand{\KU}{ \KK U_{ij}}
\def\norm#1{\left\| #1 \right\|_{\Pn,2}}
\def\enorm#1{\left\| #1 \right\|_{\EPn,2}}
\newcommand{\normKU}{\norm{\KU}}
\newcommand{\enormKU}{\enorm{\KU}}
\newcommand{\Z}{\mathcal{Z}}
\newcommand{\D}{\mathcal{D}}
\renewcommand{\Pr}{\mathrm{P}}
\renewcommand{\P}{\mathrm{P}}
\newcommand{\Pn}{\mathrm{P}_n}
\newcommand{\Gn}{\mathbb{G}_n}
\newcommand{\Bn}{\mathbb{B}_n}
\newcommand{\barBn}{\bar{\mathbb{B}}_n}
\newcommand{\EPn}{\mathbb{P}_n}
\renewcommand{\qed}{\hfill \ensuremath{\blacksquare}}
\newcommand{\di}{\displaystyle}
\renewcommand{\(}{\left(}
\renewcommand{\)}{\right)}
\renewcommand{\cite}{\citeasnoun}
\DeclareMathOperator*{\arginf}{arg\,inf}
\numberwithin{equation}{section}
\begin{document}
\title[ ]{Intersection Bounds: Estimation and Inference}
\thanks{\tiny We are especially grateful to D. Chetverikov, K. Kato, Y. Luo, A. Santos, five anonymous referees, and a co-editor for making several extremely useful suggestions that have led to substantial improvements.  We thank T. Armstrong, R. Blundell, A. Chesher, F. Molinari, W. Newey, C. Redmond, N. Roys, S. Stouli, and J. Stoye for detailed discussion and suggestions, and participants at numerous seminars and conferences
for their comments. This paper is a revised version of ``Inference on Intersection Bounds'' initially presented and circulated at the University of Virginia and the Harvard/MIT econometrics seminars in December 2007, and presented at the March 2008 CEMMAP/Northwestern conference on ``Inference in Partially Identified Models with Applications.''
We gratefully acknowledge financial support from the National Science Foundation, the Economic and Social Research Council (RES-589-28-0001, RES-000-22-2761) and the European Research Council (ERC-2009-StG-240910-ROMETA)}
\author{Victor Chernozhukov}\thanks{\tiny Victor Chernozhukov: Department of Economics, Massachusetts Institute of Technology, vchern@mit.edu.}
\author{Sokbae Lee}\thanks{\tiny Sokbae Lee: Department of Economics, Seoul National University and CeMMAP, sokbae@gmail.com.}
\author{Adam M. Rosen}\thanks{\tiny Adam Rosen: Department of Economics, University College London and CeMMAP, adam.rosen@ucl.ac.uk.}
\date{March 2013. First version: December 2007.}

\begin{abstract}
\footnotesize{We develop a practical and novel method for inference on intersection
bounds, namely bounds defined by either the infimum or supremum of a
parametric or nonparametric function, or equivalently, the value of a linear
programming problem with a potentially infinite constraint set. We show that many bounds characterizations in econometrics, for instance bounds on parameters under conditional moment inequalities, can be formulated as intersection bounds. Our
approach is especially convenient for models comprised of a continuum of
inequalities that are separable in parameters, and also applies to models
with inequalities that are nonseparable in parameters.  Since analog
estimators for intersection bounds can be severely biased in finite
samples, routinely underestimating the size of the identified set, we also
offer a median-bias-corrected estimator of such
bounds as a by-product of our inferential procedures.
We develop theory for large sample inference based on the strong approximation of a sequence of series or kernel-based empirical processes by a sequence of ``penultimate'' Gaussian processes.  These penultimate processes are generally not weakly convergent, and thus non-Donsker.  Our theoretical results establish that we can nonetheless perform asymptotically valid inference based on these processes. Our construction
also provides new adaptive inequality/moment selection methods. We provide conditions for the use of nonparametric kernel and series
estimators, including a novel result that establishes strong approximation
for any general series estimator admitting linearization, which may be of independent interest.\\
}

{\tiny \noindent \textsc{Key words.} Bound analysis, conditional moments,  partial identification, strong approximation, infinite-dimensional constraints, linear programming, concentration inequalities, anti-concentration inequalities, non-Donsker empirical process methods, moderate
deviations, adaptive moment selection. \\

\noindent \textsc{JEL Subject Classification.} C12, C13, C14.  \textsc{AMS Subject Classification.} 62G05, 62G15, 62G32.}

\end{abstract}

\maketitle

\newpage

\section{Introduction}

This paper develops a practical and novel method for estimation
and inference on intersection bounds.
Such bounds arise in settings where the parameter of interest, denoted $\theta
^{\ast } $, is known to lie within the bounds $\left[ \theta
^{l}\left( v\right) ,\theta ^{u}\left( v\right) \right] $ for
each $v$ in some set $\mathcal{V} \subseteq \mathbb{R}^d$, which may be uncountably infinite. \ The identification region for $\theta ^{\ast}$ is then
\begin{equation}\label{eq:Ibound}
\Theta _{I}=\cap _{v\in \mathcal{V}}\left[ \theta ^{l}\left(
v\right)
,\theta ^{u}\left( v\right) \right] =\left[ \sup\nolimits_{v\in \mathcal{V}%
}\theta ^{l}\left( v\right) ,\inf\nolimits_{v\in
\mathcal{V}}\theta ^{u}\left( v\right) \right] \text{.}
\end{equation}
Intersection bounds stem naturally from exclusion restrictions (\cite{Manski:03}) and appear in numerous applied and theoretical examples.\footnote{Examples include average treatment effect bounds from instrumental variable restrictions (\cite{Manski:90}), bounds on the distribution of treatment effects in a randomized experiment (\cite{Heckman/Smith/Clements:97}), treatment effect bounds from nonparametric selection equations with exclusion restrictions (\cite{Heckman/Vytlacil:99}), monotone instrumental variables and the returns to schooling (\cite{Manski/Pepper:00}), English auctions (\cite{Haile/Tamer:03}), the returns to language skills (\cite{Gonzalez:05}), changes in the distribution of wages (\cite{Blundell/Gosling/Ichimura/Meghir:07}), the study of disability and employment
(\cite{Kreider/Pepper:07}), unemployment compensation reform
(\cite{Lee/Wilke:05}), set identification with Tobin regressors (\cite{Chernozhukov/Rigobon/Stoker:07}), endogeneity with discrete outcomes (\cite{Chesher:07}), estimation of income poverty measures
(\cite{Nicoletti/Foliano/Peracchi:07}), bounds on average structural functions and treatment effects in triangular systems (\cite{Shaikh/Vytlacil:11}), and set identification with imperfect instruments (\cite{Nevo/Rosen:08}).}
\ A leading case is that where the bounding functions are conditional expectations with continuous conditioning variables, yielding conditional moment inequalities. More generally, the
methods of this paper apply to any estimator for the value of a linear
programming problem with an infinite dimensional
constraint set.

This paper covers both parametric and non-parametric estimators of bounding
functions $\theta ^{l}\left(\cdot\right)$ and $\theta ^{u}\left(\cdot\right)$.  We provide formal justification for parametric, series, and kernel-type estimators via asymptotic theory based on the strong approximation of a
sequence of empirical processes by a sequence of
Gaussian processes. This includes an important
new result on strong approximation for series estimators that
applies to any estimator that admits a linear approximation,
essentially providing a functional central limit theorem for
series estimators for the first time in the literature. In addition, we generalize existing results on the strong approximation of kernel-type estimators to regression models with multivariate outcomes, and we provide a novel multiplier method to approximate the distribution of such estimators. For each of these estimation methods, the paper provides
 \begin{itemize}
 \item[(i)] confidence regions that achieve a desired asymptotic level,
  \item[(ii)] novel adaptive inequality selection (AIS) needed to construct sharp
   critical values, which in some cases result in confidence regions with exact asymptotic size,\footnote{
 The previous literature, e.g. \cite{Chernozhukov/Hong/Tamer:07} and contemporaneous papers, such as \cite{Andrews/Shi:08}, use ``non-adaptive" cutoffs such as $C\sqrt{\log n}$. Ideally $C$ should depend on the problem at hand and so careful calibration might be required in practice.
 Our
 new AIS procedure provides data-driven, adaptive cutoffs, which do not require calibration.
 Note that our AIS procedure could be iterated via stepdown, for example, as in \cite{Chetverikov:12}. We omit the details for brevity.}
 \item[(iii)] convergence rates for the boundary points of these regions,
  \item[(iv)] a characterization of local alternatives against which the associated tests have non-trivial power,
  \item[(v)] half-median-unbiased estimators of the intersection bounds.
  \end{itemize}
Moreover, our paper also extends inferential theory based on empirical processes in Donsker settings to non-Donsker cases.  The empirical processes arising in our problems do not converge weakly to a Gaussian process, but can be strongly approximated by a sequence of ``penultimate" Gaussian processes, which we use directly for inference without resorting to further approximations, such as extreme value approximations as in \cite{Bickel/Rosenblatt:73}. These new methods may be of independent interest for a variety of other problems.

Our results also apply to settings where a parameter of interest, say $\mu$, is characterized by intersection bounds of the form (\ref{eq:Ibound}) on an auxiliary function $\theta \( \mu \)$.  Then the bounding functions have the representation
\begin{equation}\label{eq:parametricBGF}
\theta ^{l}\left(v\right):=\theta ^{l}\left(v;\mu\right)\text{ and }\theta ^{u}\left(v\right):=\theta ^{u}\left(v;\mu\right)\text{,}
\end{equation}
and thus inference statements for $\theta^{*} := \theta(\mu)$ bounded by $\theta ^{l}\left(\cdot\right)$ and $\theta ^{u}\left(\cdot\right)$ can be translated to inference statements for the parameter $\mu$. \ This includes cases where the bounding functions are a collection of conditional moment functions indexed by $\mu$. When the auxiliary function is additively separable in $\mu$, the relation between the two is simply a location shift.  When the auxiliary function is nonseparable in $\mu$, inference statements on $\theta^{*}$ still translate to inference statements on $\mu$, though the functional relation between the two is more complex.

This paper overcomes significant complications for estimation of and inference on intersection bounds.
\ First, because the bound estimates are suprema and infima of
parametric or nonparametric estimators, closed-form characterization of their asymptotic
distributions is typically unavailable or difficult to establish. As a consequence, researchers have often
used the canonical bootstrap for inference, yet the recent literature indicates that the canonical bootstrap is
not generally consistent in such settings, see e.g. \cite{Andrews/Han:08}, \cite{Bugni:07}, and \cite{Canay:07}.\footnote{The recent papers \cite{Andrews/Shi:08} and \cite{Kim:08} provide justification for subsampling procedures for the statistics they employ for inference with conditional moment inequalities.  We discuss these papers further in our literature review below.} Second, since sample analogs of the bounds
of $\Theta _{I}$ are the suprema and infima of estimated bounding functions, they have substantial finite sample bias, and estimated bounds tend to be
much tighter than the population bounds. \ This has been noted by Manski and Pepper (2000, 2009),
and some heuristic bias adjustments have been proposed by \cite{Haile/Tamer:03} and \cite{Kreider/Pepper:07}.

We solve the problem of estimation and inference for
intersection bounds by proposing bias-corrected estimators of the upper and lower bounds, as well
as confidence intervals. Specifically, our approach employs a
precision-correction to the estimated bounding
functions $ v \mapsto \widehat{\theta}^{l}\left( v\right) $ and
$v \mapsto \widehat{\theta}^{u}\left( v\right)$  before
applying the supremum and infimum operators. \ We
adjust the estimated bounding functions for their
precision by adding to each of them an appropriate critical
value times their pointwise standard error. \ Then, depending
on the choice of the critical value, the intersection of these
precision-adjusted bounds provides (i) confidence sets for either the identified
set $\Theta_{I}$ or the true parameter value $\theta
^{\ast }$, or (ii) bias-corrected estimators for the lower and upper bounds. Our bias-corrected estimators are half-median-unbiased in the sense that the upper bound estimator $\widehat{\theta}^{u}$ exceeds $\theta^u$ and the lower bound estimator $\widehat{\theta}^{l}$ falls below $\theta^l$ each with probability at least one half asymptotically. Due to the presence of the $\inf$ and $\sup$ operators in the definitions of $\theta^u$ and $\theta^l$, achieving unbiasedness is impossible in general, as shown by \cite{Hirano/Porter:09}, and this motivates our half-unbiasedness property.  Bound estimators with this property are also proposed by \cite{Andrews/Shi:08}, henceforth AS.  An attractive feature of our approach is that
the only difference in the construction of our estimators and
confidence intervals is the choice of a critical value. Thus,
practitioners need not implement two entirely different methods
to construct estimators and confidence bands with desirable
properties.

This paper contributes to a growing literature on inference on
set-identified parameters bounded by inequality restrictions.
The prior literature has focused
primarily on models with a finite number of unconditional
inequality restrictions.  Some examples include
\cite{Andrews/Jia:08},
\cite{Andrews/Guggenberger:09},
\cite{Andrews/Soares:10},
\cite{Beresteanu/Molinari:08},
\cite{Bugni:07},
\cite{Canay:07},
\cite{Chernozhukov/Hong/Tamer:07},
\cite{Galichon/Henry:06a},
\cite{Romano/Shaikh:06b}, \cite{Romano/Shaikh:06a},
and \cite{Rosen:05}, among others.
We contribute to this literature by considering inference with a continuum of inequalities. \ Contemporaneous and independently written research on conditional moment inequalities includes AS,
\cite{Kim:08}, and \cite{Menzel:08}. \ Our approach differs from all of these. Whereas we treat the problem with fundamentally nonparametric methods, AS provide inferential statistics that transform the model's conditional restrictions to unconditional ones through the use of instrument functions.\footnote{Thus, the two approaches also require different assumptions. We rely on the strong approximation of a studentized version of parametric or nonparametric bounding function estimators (e.g. conditional moment functions in the context of conditional moment inequalities), while AS require that a functional central limit theorem hold for the \emph{transformed} unconditional moment functions, which involve instrument functions not present in this paper.} In this sense our approach is similar in spirit to that of \cite{Haerdle/Mammen:93} (although they use the $L^2$ norm and we use a sup test), while the approach of AS parallels that of \cite{Bierens:82} for testing a parametric specification against a nonparametric alternative. As such, these approaches are complementary, each with their relative merits, as we describe in further detail below. \cite{Kim:08} proposes an inferential method related to that of AS, but where data dependent indicator functions play the role of instrument functions. \cite{Menzel:08} considers problems where the number of moment inequalities defining the identified set is large relative to the sample size.  He provides results on the use of a subset of such restrictions in any finite sample, where the number of restrictions employed grows with the sample size, and examines the sensitivity of estimation and inference methods to the rate with which the number of moments used grows with the sample size.

The classes of models to which our approach and others in the recent literature apply have considerable overlap, most notably in models comprised of conditional moment inequalities, equivalently models whose bounding functions are conditional moment functions. Relative to other approaches, our approach is especially convenient for inference in parametric and nonparametric models with a continuum of inequalities that are separable in parameters, i.e. those admitting representations of the form $$\sup\nolimits_{v\in \mathcal{V}}\theta ^{l}\left( v\right) \leq \theta^{\ast} \leq \inf\nolimits_{v\in \mathcal{V}}\theta ^{u}\left( v\right)\text{,}$$ as in \eqref{eq:Ibound}. 
Our explicit use of nonparametric estimation of bounding functions renders our method applicable in settings where the bounding functions depend on exogenous covariates in addition to the variable $V$, i.e. where the function $\theta(x)$ \emph{at a point} $x$ is the object of interest, with
$$\sup\nolimits_{v\in \mathcal{V}
}\theta ^{l}\left(x,v\right) \leq \theta(x) \leq \inf\nolimits_{v\in
\mathcal{V}}\theta ^{u}\left(x,v\right) \text{.}$$
When the functions $\theta ^{l}\left(x,v\right)$ and $\theta ^{u}\left(x,v\right)$ are nonparametrically specified, these can be estimated by either the series or kernel-type estimators we study in Section \ref{sec:leading-cases}.  At present most other approaches do not appear to immediately apply when we are interested in $\theta(x)$ at a point $x$, when covariates $X$ are continuously distributed, with the exception of the recent work by \cite{Fan/Park:11} in the context instrumental variable (IV) and monotone instrumental variable (MIV) bounds,  and that of \cite{Andrews/Shi:11}, which extends methods developed in AS to this case.\footnote{The complication is that inclusion of additional covariates in a nonparametric framework requires a method for localization of the bounding function around the point $x$.  With some non-trivial work and under appropriate conditions, the other approaches can likely be adapted to this context.}

To better understand the comparison between our point and interval estimators and those of AS when both are applicable, consider as a simple example the case where $\theta^* \leq E[Y|V]$ almost surely, with $E[Y|V=v]$ continuous in $v$. \ Then the upper bound on $\theta^*$ is $\theta_0  = \inf_{v \in \V} E[Y|V=v]$ over some region $\V$. \ $\theta_0$ is a nonparametric functional and can in general only be estimated at a nonparametric rate. That is, one can not construct point or interval estimators that converge to $\theta_0$ at superefficient rates, i.e. rates that exceed the optimal nonparametric rate for estimating $\theta(v) := \theta^{u}(v) = E[Y|V=v]$.\footnote{Suppose for example that $V_0= \arginf_{v \in \V}\theta(v)$ is singleton, with $\theta_0 = \theta(v)$ for some $v \in \V$. Then $\theta_0$ is a nonparametric function evaluated at a single point, which cannot be estimated faster than the optimal nonparametric rate. Lower bounds on the rates of convergence in nonparametric models are characterized e.g. by \cite{Stone:82} and \cite{Tsybakov:09}.  Having a uniformly super-efficient procedure would contradict these lower bounds.}  Our procedure delivers point and interval estimators that can converge to $\theta_0$ at this rate, up to an undersmoothing factor. However, there exist point and interval estimators that can achieve faster (superefficient) convergence rates at \textit{some} values of $\theta(\cdot)$. In particular, if the bounding function $\theta(\cdot)$ happens to be flat on the argmin set $V_0=\{ v \in \V: \theta(v)= \theta_0\}$, meaning that $V_0$ is a set of positive Lebesgue measure, then the point and interval estimator of AS can achieve the convergence rate of $n^{-1/2}$. As a consequence, their procedure for testing $\theta_{na} \leq \theta_0$ against $\theta_{na} > \theta_0$, where $\theta_{na} = \theta_0 + C/\sqrt{n}$ for $C>0$, has non-trivial asymptotic power, while our procedure does not. If, however, $\theta(\cdot)$ is not flat on $V_0$, then the testing procedure of AS no longer has power against the aforementioned $n^{-1/2}$ alternatives, and results in point and interval estimators that converge to $\theta_0$ at a sub-optimal rate.\footnote{With regard to confidence intervals/interval estimators, we  mean here that the upper bound of the confidence interval does not converge at this rate.} In contrast, our procedure delivers point and interval estimators that can converge at nearly the optimal rate, and hence can provide better power in these cases.   In applications both flat and non-flat cases are important, and we therefore believe that both testing procedures are useful.\footnote{Note also that non-flat cases can be justified as generic if e.g. one
takes $\theta(\cdot)$ as a random draw from the Sobolev ball equipped with the Gaussian (Wiener) measure.} For further comparisons, we refer the reader to our Monte-Carlo Section and to Supplemental Appendices \ref{sec:local-asy-power} and \ref{sec:mc-supplement}, which confirm these points both analytically and numerically. \footnote{See
Supplemental Appendix \ref{sec:local-asy-power} for specific examples, and see \cite{Armstrong:11}
for a comprehensive analysis of the power properties of the procedure of \cite{Andrews/Shi:08}.  We also note that this qualitative comparison of local asymptotic power properties conforms with related results regarding tests of parametric models versus nonparametric (PvNP) alternatives, which involve moment \emph{equalities}. \ Recall that our test relies on
nonparametric estimation of bound-generating functions, which often take the form of conditional moment inequalities, and is similar in spirit to the approach of e.g. \cite{Haerdle/Mammen:93} in the PvNP testing literature. On the other hand, the statistics employed by AS rely on a transformation of conditional restrictions to unconditional ones in similar spirit to \cite{Bierens:82}. \ Tests of the latter type have been found to have power against some $n^{-1/2}$ alternatives, while the former do not. However, tests of the first type typically have non-trivial power against a larger class of alternatives, and so achieve higher power against some classes of alternatives. \ For further details see for example \cite{Horowitz/Spokoiny:01} and the references therein. }

There have also been some more recent additions to the literature on conditional moment inequalities. \cite{Lee/Song/Whang:11} develop a test for conditional moment inequalities using a one-sided version of $L^p$-type functionals of kernel estimators. Their approach is based on a least favorable configuration that permits valid but possibly conservative inference using standard normal critical values. \cite{Armstrong:11} and \cite{Chetverikov:11} both propose interesting and important approaches to estimation and inference based on conditional moment inequalities, which can be seen as introducing full studentization in the procedure of AS, fundamentally changing its behavior.  The resulting procedures use a collection of fully-studentized nonparametric estimators for inference, bringing them much closer to the approach of this paper.   Their implicit nonparametric estimators are locally constant with an adaptively chosen bandwidth. Our approach is specifically geared to smooth cases, where $\theta^u(\cdot)$ and $\theta^l(\cdot)$ are continuously differentiable of order $s \geq 2$ resulting in more precise estimates and hence higher power in these cases.  On the other hand, in less smooth cases, the procedures of \cite{Armstrong:11} and \cite{Chetverikov:11} automatically adapt to deliver optimal estimation and testing procedures, and so can perform somewhat better than our approach in these cases.\footnote{Note that to harness power gains higher order kernels
or series estimators should be used; our analysis allows for either.} \cite{Armstrong:11b} derives the convergence rate and asymptotic distribution for a test statistic related to that in \cite{Armstrong:11} when evaluated at parameter values on the boundary of the identified set, drawing a connection to the literature on nonstandard M-estimation. \cite{Ponomareva:10} studies bootstrap procedures for inference using kernel-based estimators, including one that can achieve asymptotically exact inference when the bounding function is uniquely maximized at a single point and locally quadratic. Our simulation-based approach does not rely on these conditions for its validity, but will automatically achieve asymptotic exactness with appropriately chosen smoothing parameters in a sufficiently regular subset of such cases.

\subsection*{Plan of the Paper} We organize the paper as follows. In Section \ref{sec:examples and overview}, we motivate the analysis with examples and provide an informal overview of our
results. \ In Section \ref{sec:est-inf-general-conditions} we provide a formal treatment of our
method under high level conditions.  In Section \ref{sec:leading-cases} we provide conditions and theorems for validity for parametric and nonparametric series and kernel-type estimators. We provide several examples that demonstrate the use of primitive conditions to verify the conditions of Section \ref{sec:est-inf-general-conditions}. This includes sufficient conditions for the application of these estimators to models comprised of conditional moment inequalities. 
In Section \ref{sec:strong:series:main-text} we provide a theorem that establishes strong approximation for series estimators admitting an asymptotic linear representation, and which covers the examples of Section \ref{sec:leading-cases}. Likewise, we provide a theorem that establishes strong approximation for kernel-type estimators in Section \ref{sec:proofs-strong-kernel} of an on-line supplement. \ In Section \ref{sec:implementation-details} we provide step-by-step implementation guides for parametric and non-parametric series and kernel-type estimators. \ In Section \ref{sec:monte-carlo} we illustrate the performance of our method using both series and kernel-type estimators in Monte Carlo experiments, which we compare to that of AS in terms of coverage frequency and power. \ Our method performs well in these experiments, and we find that our approach and that of AS perform favorably in different models, depending on the shape of the bounding function. \ Section \ref{sec:conclusion} concludes. \ In Appendices \ref{sec:definitions and notation} - \ref{sec:proofs-strong-series} we recall the definition of strong approximation and provide proofs, including the proof of the strong approximation result for series estimators. \ The on-line supplement contains further appendices. The first of these, Appendix \ref{sec:omitted-proof-appendix} provides proofs omitted from the main text.\footnote{Specifically, Appendix \ref{sec:omitted-proof-appendix} contains the proofs of Lemmas \ref{lemma: estimation of V} and \ref{lemma:parametric}.} Appendices \ref{sec:est:local:CMIexample}-\ref{sec:est:local:proofs} concern kernel-type estimators, providing primitive conditions for their application to conditional moment inequalities, strong approximation results and the multiplier method enabling inference via simulation, and proofs. Appendix \ref{sec:prac:roymodel} provides additional details on the use of primitive conditions to verify an asymptotic linear expansion needed for strong approximation of series estimators and Appendix \ref{sec:prac:lle} gives some detailed arguments for local polynomial estimation of conditional moment inequalities. Appendix \ref{sec:local-asy-power} provides local asymptotic power analysis that supports the findings of our Monte Carlo experiments. Appendix \ref{sec:mc-supplement} provides further Monte Carlo evidence.

\subsection*{Notation}  For any two reals $a$ and $b$, $a\vee b = \max\{a,b\}$ and $a\wedge b = \min\{a,b\}$. $Q_p(X)$ denotes the $p$-th quantile of random variable $X$. We use wp$\rightarrow 1$ as shorthand for ``with probability approaching one as $n\rightarrow\infty$.''  We write $\mathcal{N}_k =_d N(0,I_k)$ to denote that the k-variate random vector $\mathcal{N}_k$ is distributed multivariate normal with mean zero and variance the $k\times k$ identity matrix. To denote probability statements conditional on observed data, we write statements conditional on $\mathcal{D}_n$.
$\mathbb{E}_n$ and $\mathbb{P}_n$ denote the sample mean and empirical measure, respectively. That is, given i.i.d. random vectors $X_1,\ldots,X_n$, we have $\mathbb{E}_n f = \int f d\mathbb{P}_n = n^{-1} \sum_{i=1}^n f(X_i)$.
In addition, let $\mathbb{G}_n f = \sqrt{n}(\mathbb{E}_n - E) f = n^{-1/2} \sum_{i=1}^n [ f(X_i) - E f(X) ]$.
The notation $a_n \lesssim b_n$ means that $a_n \leq C b_n$ for all $n$;  $X_n \lesssim_{\P_n} c_n$ abbreviates  $X_n=O_{\P_n}(c_n)$. $X_n\rightarrow_{\P_n}\infty$ means that for any constant $C>0$, $\P_n(X_n<C)\rightarrow 0$. We use $\textsf{V}$ to denote a generic compact subset of $\mathcal{V}$, and we write $\text{diam}(\textsf{V})$ to denote the diameter of $\textsf{V}$ in the Euclidean metric. $\|\cdot\|$ denotes the Euclidean norm, and for any two sets $A,B$ in Euclidean space, $d_H(A,B)$ denotes the Hausdorff pseudo-distance between $A$ and $B$ with respect to the Euclidean norm.  $C$ stands for a generic positive constant, which may be different in different
places, unless stated otherwise.  For a set $\textsf{V}$ and an element $v$ in Euclidean space, let $d(v,\textsf{V}) := \inf_{v' \in \textsf{V}} \| v - v' \|$.
For a function $p(v)$, let $\text{lip}(p)$ denote the Lipschitz coefficient, that is $\text{lip}(p) := L$ such that $\| p(v_1) - p(v_2) \| \leq L \| v_1 - v_2 \|$ for all $v_1$ and $v_2$ in the domain of $p(v)$.

\section{Motivating Examples and Informal Overview of Results}\label{sec:examples and overview}

In this section we briefly describe three examples of intersection
bounds from the literature and provide an informal overview of our results.

\subsection*{Example A: Treatment Effects and Instrumental Variables}

In the analysis of treatment response, the ability to uniquely
identify the distribution of potential outcomes is
typically lacking without either experimental data or strong
assumptions. \ This owes to the fact that for each individual unit
of observation, only the outcome from the received treatment is
observed; the counterfactual outcome that would have occurred
given a different treatment is not known. \ Although we focus here
on treatment effects, similar issues are present in other areas of
economics. \ In the analysis of markets, for example, observed
equilibrium outcomes reveal quantity demanded at the observed
price, but do not reveal what demand would have been at other
prices.

Suppose only that the support of the outcome
space is known, $Y\in[0,1]\text{,}$ but no other assumptions are made regarding the
distribution of counterfactual outcomes. \ \nocite{Manski:89}\nocite{Manski:90} Manski (1989, 1990) provides worst-case bounds on mean treatment
outcomes for any treatment $t$ conditional on observables $(X,V) = (x,v)$,
$$\theta ^{l}\left( x,v\right) \leq E\left[ Y\left( t\right) |X=x,V=v\right] \leq \theta ^{u}\left( x,v\right)\text{,}$$
where the bounds are
\begin{eqnarray*}
\theta ^{l}\left( x,v\right) &:=& E [ Y \cdot 1\{Z=t\}|X=x,V=v] \text{,} \\
\theta ^{u}\left( x,v\right) &:=& E [ Y \cdot 1\{Z=t\} + 1\{Z\neq t\}|X=x,V=v] \text{,}
\end{eqnarray*}
where $Z$ is the observed treatment. \ If $V$ is an
instrument satisfying $E\left[ Y\left( t\right) |X,V\right] =E%
\left[ Y\left( t\right) |X\right] $, then for any fixed $x$, bounds on
$\theta^\ast := \theta^\ast(x) :=  E\left[ Y\left( t\right) |X=x\right] $ are given by
$$\sup\nolimits_{v\in \mathcal{V}}\theta ^{l}\left(x,v\right) \leq \theta^\ast(x) \leq
\inf\nolimits_{v\in\mathcal{V}}\theta ^{u}\left(x,v\right),$$
for any $\mathcal{V} \subseteq \text{support}(V|X=x)$, where the subset
$\mathcal{V}$ will be taken as known for estimation purposes. Similarly,
bounds implied by restrictions such as monotone treatment
response, monotone treatment selection, and monotone
instrumental variables, as in \cite{Manski:97} and
\cite{Manski/Pepper:00}, also take the form of intersection
bounds.

\subsection*{Example B: Bounding Distributions to Account for Selection}

Similar analysis applies to inference on
distributions whose observations are
censored due to selection. \ This approach is used by \cite{Blundell/Gosling/Ichimura/Meghir:07} to study changes in male and
female wages. \ The starting
point of their analysis is that the cumulative distribution $F(w|x,v)$ of
wages $W$ at any point $w$, conditional on
observables $(X,V)=(x,v)$ must satisfy the worst case bounds
\begin{equation}
\theta ^{l}\left( x,v\right) \leq
F\left( w|x,v\right)
\leq \theta ^{u}\left( x,v\right) \text{,}
\label{Roy IV Bounds}
\end{equation}
where $D$ is an indicator of employment, and hence observability of $W$, so that
\begin{eqnarray*}
\theta ^{l}\left( x,v\right) &:=& E [ D \cdot 1\{W\leq w\}|X=x,V=v] \text{,} \\
\theta ^{u}\left( x,v\right) &:=& E [ D \cdot 1\{W\leq w\} + \(1-D\)|X=x,V=v] \text{.}
\end{eqnarray*}
This relation is used
to bound quantiles of conditional wage distributions.
Additional restrictions motivated by economic theory are then used to tighten the bounds.

One such restriction  is an exclusion restriction
of the continuous variable out-of-work income, $V$. \ They consider the use of $V$ as either an excluded or monotone instrument. \
 The former restriction implies bounds on the parameter $\theta^* := F\left( w|x\right)$,
\begin{align}\label{ex2-eq1}
\sup\nolimits_{v\in \mathcal{V}}\theta ^{l}\left(x,v\right) \leq  F\left( w|x\right)
\leq \inf\nolimits_{v\in\mathcal{V}}\theta ^{u}\left(x,v\right)
\text{,}
\end{align}
for any $\mathcal{V} \subseteq \text{support}(V|X=x)$, while the weaker monotonicity restriction, namely that
$F(w|x,v)$ is weakly increasing in $v$, implies the following bounds on $\theta^* := F\left( w|x,v_{0}\right)$ for any $v_{0}$ in $\text{support}(V|X=x)$,
\begin{equation}\label{ex2-eq2}
\sup\nolimits_{v \in \mathcal{V}_l}\theta ^{l}\left(x,v\right) \leq  F\left( w|x,v_{0}\right) \leq \inf\nolimits_{v \in \mathcal{V}_u}\theta ^{u}\left(x,v\right)\text{,}
\end{equation}
where $\mathcal{V}_l = \{v \in \mathcal{V}: v \leq v_0 \}$ and $\mathcal{V}_u = \{v \in \mathcal{V}: v \geq v_0 \}$.

\subsection*{Example C: (Conditional) Conditional Moment Inequalities}

Our inferential method can also be used for pointwise
inference on parameters restricted by (possibly conditional) conditional moment
inequalities. \ Such restrictions arise
naturally in empirical work in industrial organization, see for example
\cite{Pakes/Porter/Ho/Ishii:05} and \cite{Berry/Tamer:07}.

To illustrate, consider the restriction
\begin{equation}
E\left[ m_j\left( X,\mu _{0}\right) | Z=z\right] \geq 0\text{ for all } j=1,...,J \text { and }  z \in  \mathcal{Z}_j.
\label{cond-moment-restriction}
\end{equation}
where each $m_j\left( \cdot ,\cdot \right), j=1,...,J$, is a real-valued function,
$\left( X,Z\right) $ are observables, and$\ \mu _{0}$ is the parameter of interest.
Note that this parameter can depend on a particular covariate value. Suppose for instance that $Z=(Z_1,Z_2)$ and interest lies in the subgroup of the population with $Z_1 = z_1$, so that the researcher wishes to condition on $Z_1 = z_1$. In this case $\mu_0 = \mu_0(z_1)$ depends on $z_1$. Conditioning on this value, we have from \eqref{cond-moment-restriction} that $$E\left[ m_j\left( X,\mu _{0}\right) | Z_1=z_1, Z_2=z_2\right] \geq 0\text{ for all } j=1,...,J \text { and }  z_2 \in \text{ Supp}(Z_2|Z_1=z_1),$$ which is equivalent to \eqref{cond-moment-restriction} with $\mathcal{Z}_j = \text{ Supp} (Z|Z_1 = z_1)$. \ Note also that regions $\mathcal{Z}_j$ can depend on the inequality
$j$ as in \eqref{ex2-eq2} of the previous example, and that the previous two examples can in fact be cast as special cases of this one.

Suppose that we would like to test (\ref{cond-moment-restriction}) at level $\alpha $ for the
conjectured parameter value $\mu _{0}=\mu $ against an
unrestricted alternative.  \ To
see how this can be done, define
$$v = (z,j), \ \ \mathcal{V}:= \{ (z,j): z \in \mathcal{Z}_j, j \in \{1,...,J\}\} \text{
 and } \theta \left( \mu ,v\right) := E\left[ m_{j}\left( X,\mu \right)
|Z=z\right]$$ and $\widehat{\theta}\left( \mu ,v\right) $ a
consistent estimator. \ Under some continuity conditions
this is equivalent to a test of $\theta _{0}\left( \mu \right) := \inf_{v\in
\mathcal{V}}\theta \left( \mu ,v\right) \geq 0\text{ against
} \inf_{v\in \mathcal{V}}\theta \left( \mu ,v\right)
<0$ . \ Our method for
inference delivers a statistic
\[
\widehat{\theta}_{\alpha }( \mu ) =\inf_{v\in
\mathcal{V}}\left[ \widehat{\theta}\left( \mu ,v\right) + \widehat{k}\cdot
s\left( \mu ,v\right) \right]
\]%
such that $\lim\sup_{n\rightarrow \infty }P( \theta _{0}\left(
\mu \right) \geq \widehat{\theta}_{\alpha }(\mu)) \leq
\alpha $ under the null hypothesis. \
Here, $s\left( \mu ,v\right)$ is the standard error of $\widehat{\theta}\left( \mu ,v\right)$ and $\widehat{k}$ is an estimated critical value, as we describe below.
If $\widehat{\theta}_{\alpha }( \mu )  < 0$,
we reject the null hypothesis, while if
$\widehat{\theta}_{\alpha }( \mu ) \geq 0$, we do not.

\subsection*{Informal Overview of Results}

We now provide an informal description of our method for
estimation and inference. \ Consider an upper bound $\theta_0$ on
$\theta^\ast$ of the form
\begin{align}\label{upper-bound}
\theta^* \leq \theta_0 := \inf_{v \in \mathcal{V}} \theta(v),
\end{align}
where $v \mapsto \theta(v)$ is a bounding function, and
$\mathcal{V}$ is the set over which the infimum is taken. \ We
focus on describing our method for the upper bound \eqref{upper-bound}, as the lower bound is entirely symmetric. \ In fact, any combination of upper and lower bounds can be combined into upper bounds on an auxiliary function of $\theta^{\ast}$ of the form \eqref{upper-bound}, and this can be used for inference on $\theta^{\ast}$, as we describe in Section \ref{sec:implementation-details}.\footnote{Alternatively, one can combine one-sided intervals for lower and upper bounds for inference on the identified set $\Theta_I$ using Bonferroni's inequality, or for inference on $\theta^{\ast}$ using the method described in \cite{Chernozhukov/Lee/Rosen:09} Section 3.7, which is a slight generalization of methods previously developed by \cite{Imbens/Manski:04} and \cite{Stoye:07}.}

What are good \textit{estimators} and \textit{confidence
regions} for the bound $\theta_0$? A natural idea is to base estimation and inference on the sample
analog: $\inf_{v \in \mathcal{V}} \widehat \theta(v)$. However,
this estimator does not perform well in practice. First, the
analog estimator tends to be downward
biased in finite samples. As discussed in the introduction, this will typically result in bound estimates that are much narrower than those in the population, see e.g. \cite{Manski/Pepper:00} and \cite{Manski/Pepper:08} for more on this point. Second, inference must appropriately take account of
sampling error of the estimator $\widehat \theta(v)$
across all values of $v$. Indeed,
different levels of precision of $\widehat \theta(v)$ at
different points can severely distort the perception of the minimum
of the bounding function $\theta(v)$. Figure \ref{fig1}
illustrates these problems geometrically. The solid curve is
the true bounding function $v \mapsto \theta(v)$, and
the dash-dotted thick curve is its estimate $v \mapsto \widehat
\theta(v)$. The remaining dashed curves represent eight
additional potential realizations of the estimator,
illustrating its precision. In particular, we
see that the precision of the estimator is much lower on the
right side than on the left.  A na\"{i}ve sample analog
estimate for $\theta_0$ is provided by the minimum of the
dash-dotted curve, but this estimate can  in fact be quite far
away from $\theta_0$. This large deviation from the true value
arises from both the lower precision of the estimated curve  on
the right side of the figure and from the downward bias created
by taking the minimum of the estimated curve.

To overcome these problems, we propose a
\emph{precision-corrected} estimate of $\theta_0$:
\begin{align}\label{estimator}
\widehat \theta_0(p):= \inf_{v \in \V} [\widehat \theta(v) + k(p) \cdot s(v)],
\end{align}
where $s(v)$ is the standard error of $\widehat \theta(v)$,
 and $k(p)$ is a critical value, the selection of which is described below.
That is, our estimator $\widehat \theta_0 (p)$ minimizes the
\emph{precision-corrected curve} given by $\widehat \theta(v)$
plus critical value $k(p)$ times the pointwise standard error
$s(v)$. Figure \ref{fig2} shows a precision-corrected curve as
a dashed curve with a particular choice of critical value $k$.
In this figure, we see that the minimizer of the
precision-corrected curve can indeed be much closer to
$\theta_0$ than the sample analog $\inf_{v \in
\mathcal{V}}\widehat \theta(v)$.

These issues are important both in theory and in practice, as can be seen in the application of the working paper \cite{Chernozhukov/Lee/Rosen:09}. There we used the data from the National Longitudinal Survey of Youth of
1979 (NLSY79), as in \cite{Carneiro/Lee:07}, to estimate bounds on expected log wages $Y_i$ as a function of years of schooling $t$. We used Armed Forces Qualifying Test score (AFQT) normalized to have mean zero as a monotone instrumental variable, and estimated the MIV-MTR (monotone instrument variable - monotone treatment response) bounds of \cite{Manski/Pepper:00}. Figures \ref{fig2-real-b} and \ref{fig2-real} highlight the same issues as the schematic figures \ref{fig1} and \ref{fig2} with the NLSY data and the MIV-MTR upper bound for the parameter $\theta^{\ast} = \P[Y_i(t) > y|V_i=v]$, at $y = \log(24)$ ($\sim 90^{th}$ percentile of hourly wages) and $v=0$ for college graduates $(t=16)$.\footnote{The parameter $\theta^{\ast}$ used for this illustration differs from the conditional expectations bounded in \cite{Chernozhukov/Lee/Rosen:09}. For further details regarding the application and the data we refer to that version, available at \underline{http://cemmap.ifs.org.uk/wps/cwp1909.pdf}.}

Figure \ref{fig2-real-b} shows the nonparametric series estimate of the bounding function using B-splines as described in Section \ref{sec:monte-carlo-series} (solid curve) and 20 bootstrap estimates (dashed curves). The precision of the estimate is worst when the AFQT is near 2, as demonstrated by the bootstrap estimates. At the same time, the bounding function has a decreasing shape with the  minimum at $\text{AFQT} = 2$. Figure \ref{fig2-real} shows a precision-corrected curve (solid curve) that adjusts the bound estimate $\widehat{\theta}(v)$ (dashed curve) by an amount proportional to its point-wise standard error, and the horizontal dashed line shows the end point of a 95\% one-sided confidence interval. As in Figure \ref{fig2}, the minimizer of the precision-corrected curve is quite far from that of the uncorrected estimate of the bounding function.

The degree of precision correction, both in these figures and in general, is driven by the critical value $k(p)$. The main input in the selection of $k(p)$ for the estimator $\widehat \theta_0(p)$ in (\ref{estimator}) is the standardized process
$$
Z_n(v) = \frac{\theta(v) - \widehat \theta(v)}{\sigma(v)},
$$
where $\sigma(v)/s(v) \to 1$ in probability uniformly in $v$. Generally, the finite sample distribution of the process $Z_n$ is unknown, but we can
approximate it uniformly by a sequence of Gaussian processes $Z_n^{\ast}$ such that for an appropriate sequence of constants $\bar a_n$
\begin{equation}
\bar a_n \sup_{v \in \mathcal{V}} | Z_n(v) -Z_n^{\ast}(v)| = o_p(1)\text{.}
\end{equation}
For any compact set $\textsf{V}$, used throughout to denote a generic compact subset of $\mathcal{V}$, we then approximate the quantiles of $\sup_{v\in \textsf{V}}Z_n^{\ast}(v)$ either by analytical methods based on asymptotic approximations, or by simulation.  We then use the $p$-quantile of this statistic, $k_{n,\textsf{V}}(p)$, in place of $k(p)$ in (\ref{estimator}). We show that in general simulated critical values provide sharper inference, and therefore advocate their use.

For the estimator in (\ref{estimator}) to exceed $\theta_0$ with probability no less than $p$ asymptotically, we require that wp$\rightarrow 1$ the set $\textsf{V}$ contains the argmin set $$V_0:= \arginf_{v\in\mathcal{V}}\theta(v)\text{.}$$
A simple way to achieve this is to use $\textsf{V}=\mathcal{V}$, which leads to asymptotically valid but conservative inference. \ For construction of the critical value $k_{n,\textsf{V}}(p)$ above we thus propose the use of a preliminary set estimator $\widehat V_n$ for $V_0$ using a novel adaptive inequality selection procedure. \ Because the critical value $k_{n,\textsf{V}}(p)$ is non-decreasing in $\textsf{V}$ for $n$ large enough, this yields an asymptotic critical value no larger than those based on $\textsf{V}=\mathcal{V}$. \ The set estimator $\widehat V_n$ is shown to be sandwiched between two non-stochastic sequences of sets, a lower envelope $V_n$ and an upper envelope $\barVn$ with probability going to one. We show in Lemma \ref{lemma: concentrate on balls} that our inferential procedure using $\widehat V_n$ concentrates on the lower envelope $\Vn$, which is a neighborhood of the argmin set $V_0$. This validates our use of the set estimator $\widehat V_n$. The upper envelope $\barVn$, a larger - but nonetheless shrinking - neighborhood of the argmin set $V_0$, plays an important role in the derivation of estimation rates and local power properties of our procedure. Specifically, because this set contains $V_0$ wp $\to 1$, the tail behavior of $\sup_{v \in \barVn} Z^*_n(v)$ can be used to bound the estimation error of $\widehat \theta_{0}(p)$ relative to $\theta_{0}$.

Moreover, we show that in some cases inference based on simulated critical values using $\widehat V_n$ in fact ``concentrates'' on $V_0$ rather than just $V_n$. \ These cases require that the scaled penultimate process $\bar a_n Z_n^*$ behaves sufficiently well (i.e. to be stochastically equicontinuous) within $r_n$ neighborhoods of $V_0$, where $r_n$ denotes the rate of convergence of the set estimator $\widehat V_n$ to the argmin set $V_0$. \ When this holds the tail behavior of $\sup_{v \in V_0} Z^*_n(v)$ rather than $\sup_{v \in \barVn} Z^*_n(v)$ bounds the estimation error of our estimator. This typically leads to small improvements in the convergence rate of our estimator and the local power properties of our approach. The conditions for this to occur include the important special case where $V_0$ is singleton, and where the bounding function is locally quadratic, though can hold more generally. The formal conditions are given in Section \ref{sec:v-est}, where we provide conditions for consistency and rates of convergence of $\widehat V_n$ for $V_0$, and in Section \ref{sec:auto-sharpness} where we provide the aforementioned equicontinuity condition and a formal statement of the result regarding when inference concentrates on $V_0$.

At an abstract level our method does not distinguish parametric
estimators of $\theta(v)$ from nonparametric estimators;
however, details of the analysis and regularity conditions are
quite distinct. Our theory for nonparametric estimation relies on undersmoothing, although for locally constant or sign-preserving estimation of bounding functions, this does not appear essential since the approximation bias is conservatively signed. In such cases, our inference algorithm still applies to nonparametric estimates of bounding functions without undersmoothing, although our theory would require some minor modifications to handle this case. We do not formally pursue this here, but we provide some simulation results for kernel estimation without undersmoothing as part of the additional Monte Carlo experiments reported in supplementary appendix \ref{sec:mc-supplement}.

For all estimators, parametric and nonparametric, we employ strong approximation analysis to approximate
the quantiles of $\sup_{v\in \textsf{V}}Z_n(v)$, and we verify our conditions separately for each case. The formal definition of strong approximation is provided in Appendix \ref{sec:definitions and notation}.

\section{Estimation and Inference Theory under General Conditions}\label{sec:est-inf-general-conditions}

\subsection{Basic Framework}\label{sec:basic-framework}

In this and subsequent sections we allow the model and the probability measure to depend on $n$.
Formally, we work with a probability space $(A, \mathcal{A}, \P_n)$ throughout.  This
approach is conventionally used in asymptotic statistics to ensure robustness of statistical conclusions with respect to perturbations in $\P_n$. It guarantees the validity of our inference procedure under
any sequence of probability laws $\P_n$ that obey our conditions, including the case with fixed $\P$. We thus generalize our notation in this section to allow model parameters to depend on $n$.

The basic setting is as follows:
\begin{ConditionC}[Setting]
There is a non-empty compact set $\mathcal{V} \subset \mathcal{K} \subset \Bbb{R}^d$, where $\mathcal{V}$
can depend on $n$, and $\mathcal{K}$ is a bounded fixed set, independent of $n$. There is a continuous
real valued function $v \mapsto \theta_n(v)$.  There is an estimator $v \mapsto \widehat \theta_n(v)$ of this function, which is an a.s. continuous stochastic process.  There is  a continuous function $v \mapsto \sigma_n(v)$  representing non-stochastic normalizing factors bounded by $ \bar \sigma_n:= \sup_{v \in \V} \sigma_n(v)$, and there is an estimator $v \mapsto s_n(v)$ of these factors, which is an a.s. continuous stochastic process, bounded above by $  \bar s_n:= \sup_{v \in \V} s_n(v)$.
\end{ConditionC}
We are interested in constructing point estimators and one-sided interval estimators for
$$
\theta_{n0} = \inf _{v \in \V} \theta_n(v).
$$
The main input in this construction is the standardized process
$$
Z_n(v) = \frac{\theta_n(v) - \widehat \theta_n(v)}{\sigma_n(v)}.
$$
In the following we require that this process can be approximated by a standardized Gaussian process in the metric space
$\ell^\infty(\V)$  of bounded functions mapping $\V$ to $\Bbb{R}$, which can be simulated for inference.

\begin{ConditionC}[Strong Approximation] (a)
$Z_n$ is strongly approximated by a sequence of penultimate Gaussian processes $Z_n^*$ having zero mean and a.s. continuous sample paths:
\begin{eqnarray*}\label{C.1a}
 &  & \sup_{v \in \mathcal{V}}|Z_n (v) - Z_n^*(v) | =  o_{\Pn}\left(\delta_n\right),
\end{eqnarray*}
where $E_{\Pn}[(Z_n^*(v))^2]=1$ for each $v \in \mathcal{V}$, and $\delta_n = o(\bar a_n^{-1})$ for the sequence of constants $\bar a_n$ defined in Condition C.3 below. (b) Moreover, for simulation purposes, there is a process $Z_n^\star$, whose distribution is zero-mean Gaussian conditional on the data $\D_n$ and such that $E_{\Pn}[(Z_n^\star(v))^2 \mid \mathcal{D}_n] =1$ for each $v \in \mathcal{V}$, that can approximate an identical copy $\bar Z_n^*$ of $Z^*_n$, where $\bar Z_n^*$ is independent of $\D_n$,  namely
there is an $o(\delta_n)$ term such that
$$
\P_n\left[\sup_{v \in \V}|\bar Z_n^*(v) - Z_n^\star(v)|> o(\delta_n) \mid \D_n\right] = o_{\Pn}\left(1/\ell_n\right)
$$ for some $\ell_n \to \infty$ chosen below.

\end{ConditionC}
For convenience we refer to Appendix \ref{sec:definitions and notation}, where the definition
of strong approximation is recalled.  The penultimate process  $Z_n^*$ is often called  a coupling, and we construct such couplings for parametric and nonparametric estimators under both high-level and primitive conditions. It is convenient to work with $Z_n^*$, since we can rely on the
fine properties of Gaussian processes.  Note that $Z_n^*$ depends on $n$ and generally does not converge weakly
to a fixed Gaussian process, and therefore is not asymptotically Donsker.  Nonetheless we can
perform either analytical or simulation-based inference based on these processes.

Our next condition captures the so-called \textit{concentration} properties of Gaussian processes:

\begin{ConditionC}[Concentration]  For all $n$ sufficiently large and for any compact, non-empty $\textsf{V} \subseteq \V$, there is a normalizing factor $a_n(\textsf{V})$ satisfying
\begin{equation*}
1 \leq a_n(\textsf{V}) \leq a_n(\V)=:\bar a_n, \ \  a_n(\textsf{V}) \text{ is weakly increasing in } \textsf{V},
\end{equation*}
such that
 $$\mathcal{E}_n(\textsf{V}):= a_n(\textsf{V}) \left( \sup_{v \in \textsf{V}} Z^*_n(v)- {a}_n(\textsf{V})\right)$$
obeys
\begin{equation}
\Pn[\mathcal{E}_n(\textsf{V}) \geq x] \leq \Pr[\mathcal{E} \geq x],
\end{equation}
where $\mathcal{E}$ is a random variable with continuous distribution function such that for some $\eta>0$, $$\P(\mathcal{E} > x) \leq \exp(-x/\eta)\text{.}$$
\end{ConditionC}
The \textit{concentration} condition will be verified in our applications by appealing to the Talagrand-Samorodnitsky inequality
for the concentration of the suprema of Gaussian processes, which is sharper than the classical concentration inequalities.\footnote{For details see Lemma \ref{lemma: key concentration} in Appendix \ref{sec:proofs-max-strong-auxiliary}.} These concentration properties play a key role in our analysis, as they determine the uniform speed of convergence
$\bar a_n \bar \sigma_n$ of the estimator $\widehat \theta_{n0} (p)$
to $\theta_{n0}$, where the estimator is defined later.
In particular this property implies that for any compact $V_n \subseteq \V$,
$
E_{\P_n}[\sup_{v \in V_n} Z^*_n(v)] \lesssim \bar a_n$.
As there is concentration, there is an opposite force, called \emph{anti-concentration}, which implies that
under C.2(a) and C.3 for any $\delta_n = o(1/\bar a_n)$ we have
\begin{equation}\label{eq: anti}
\sup_{x \in \Bbb{R}} \Pn \Big ( |\sup_{v \in V_n} Z^*_n(v) -x|  \leq  \delta_n \Big ) \to 0.
\end{equation}
This follows from a generic anti-concentration inequality derived in \cite{Chernozhukov/Kato:11}, quoted in Appendix
\ref{sec:main-section-proofs} for convenience.  Anti-concentration simplifies the construction of our confidence intervals.
Finally, the exponential tail property of $\mathcal{E}$ plays an important
role in the construction of our adaptive inequality selector, introduced below, since
it allows us to bound moderate deviations of the one-sided estimation noise of $\sup_{v \in \textsf{V}} Z^*_n(v)$.

Our next assumption requires uniform consistency as well as suitable estimates of  $\sigma_n$:

\begin{ConditionC}[Uniform Consistency] \text{} We have that
$$
\di \text{(a) } \bar {a}_n \bar \sigma_n  = o\(1\) \ \ \text{ and }  \ \ \text{(b)} \ \sup_{v \in \V}  \left |\frac{s_n(v)}{\sigma_n(v)} - 1 \right|
 = o_{\Pn}\(\frac{\delta_n}{\bar a_n + \ell\ell_n}\), $$
where $\ell\ell_n \nearrow \infty$ is a sequence of constants defined below.

\end{ConditionC}

\noindent In what follows we let
$$
\ell_n := \log n, \text{ and } \ell\ell_n := \log \ell_n,
$$
but it should be noted that $\ell_n$ can be replaced by other slowly increasing sequences.

\subsection{The Inference and Estimation Strategy}\label{sec:inf-est-strategy}

For any compact subset $\textsf{V} \subseteq \mathcal{V}$ and $\gamma \in (0,1)$, define:
$$
\kappa _{n, \textsf{V}}(\gamma) :=  Q_{\gamma}\(\sup_{ v \in \textsf{V}} Z^*_n(v)\).
$$

The following result is useful for establishing validity of our inference procedure.

\begin{lemma}[\textbf{Inference Concentrates on a Neighborhood $\Vn$ of $V_0$}]\label{lemma: concentrate on balls} Under C.1-C.4
$$
\di \Pn \( \sup_{v \in \mathcal{V}} \frac{\theta_{n0} - \widehat \theta_n(v)}{s_n(v)} \leq x \)  \geq
 \Pn\(\sup_{v \in \Vn} {Z^*_n(v) }\leq  x\) - o(1),
$$
uniformly in  $x \in [0,\infty)$, where
\begin{eqnarray}\label{define:V-epsilon n}
&& \di \Vn := \left \{ v \in \mathcal{V}: \theta_n(v) \leq \theta_{n0} + \kappa_n \sigma_n(v) \right \},  \text{ for } \kappa_n:= \kappa_{n, \mathcal{V}}(\gamma_n'),
\end{eqnarray}
where $\gamma'_n$ is any sequence such that $\gamma_n' \nearrow1$ with
$\kappa_n\leq\bar a_n + \eta(\ell\ell_n+C^\prime)/\bar a_n$ for some constant $C^\prime>0$.
\end{lemma}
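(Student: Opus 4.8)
Write $d_n(v):=(\theta_n(v)-\theta_{n0})/\sigma_n(v)\ge 0$; then $\Vn=\{v\in\V:d_n(v)\le\kappa_n\}$, $d_n$ vanishes on the argmin set $V_0:=\{v\in\V:\theta_n(v)=\theta_{n0}\}$ (nonempty, by continuity of $\theta_n$ and compactness of $\V$), and $(\theta_{n0}-\widehat\theta_n(v))/s_n(v)=\bigl(Z_n(v)-d_n(v)\bigr)\,\sigma_n(v)/s_n(v)$. The plan is to compare $G_n:=\sup_{v\in\V}(\theta_{n0}-\widehat\theta_n(v))/s_n(v)$ with the drifted Gaussian supremum $\widetilde G_n:=\sup_{v\in\V}\bigl(Z_n^*(v)-d_n(v)\bigr)$, to show $|G_n-\widetilde G_n|=o_{\Pn}(\delta_n)$, to observe that $\widetilde G_n\le(\sup_{v\in\Vn}Z_n^*(v))\vee 0$ on a good event, and to invoke anti-concentration of $\widetilde G_n$. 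First I would fix an event $\Omega_n$ with $\Pn(\Omega_n)\to 1$ on which: (i) $\sup_{v\in\V}|Z_n-Z_n^*|\le\zeta_n$ for a deterministic $\zeta_n=o(\delta_n)$ (Condition C.2(a)); (ii) $\sup_{v\in\V}|s_n/\sigma_n-1|\le\tau_n$ for a deterministic $\tau_n=o(\delta_n/(\bar a_n+\ell\ell_n))$ (Condition C.4(b)); (iii) $\sup_{v\in\V}|Z_n^*(v)|\le M_n:=\bar a_n+\eta(\ell\ell_n+1)/\bar a_n$, which follows from Condition C.3 with $\textsf{V}=\V$ applied to both $Z_n^*$ and $-Z_n^*$ together with $\P(\mathcal E>x)\le e^{-x/\eta}$ (leaving probability $1-o(1)$); and (iv) $\sup_{v\in\V}Z_n^*(v)\le\kappa_n=Q_{\gamma_n'}(\sup_{v\in\V}Z_n^*(v))$, which has probability $\ge\gamma_n'\to 1$. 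On $\Omega_n$ one then has $|\widetilde G_n|\le M_n$, since $Z_n^*(v)-d_n(v)\le Z_n^*(v)$ and $\widetilde G_n\ge Z_n^*(v_0)$ for any $v_0\in V_0$.

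The step I expect to be the main obstacle is the estimate $|G_n-\widetilde G_n|\le r_n$ on $\Omega_n$ for some deterministic $r_n=o(\delta_n)$, which has to be established even though $d_n$, and hence the processes being maximized, are unbounded over $\V$. For the bound $G_n\le\widetilde G_n+o(\delta_n)$: by (i), $Z_n(v)-d_n(v)\le\widetilde G_n+\zeta_n$ for every $v$; multiplying by $\sigma_n(v)/s_n(v)\in[1-\tau_n,1+\tau_n]$, distinguishing the signs of $Z_n(v)-d_n(v)$ and of $\widetilde G_n+\zeta_n$, and using $|\widetilde G_n|\le M_n$ to bound the multiplicative correction, one gets $(Z_n(v)-d_n(v))\sigma_n(v)/s_n(v)\le\widetilde G_n+\zeta_n+(M_n+\zeta_n)\tau_n$ uniformly in $v$, hence $G_n\le\widetilde G_n+\zeta_n+(M_n+\zeta_n)\tau_n$. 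For the reverse bound I would evaluate at a maximizer $v^*$ of $Z_n^*(\cdot)-d_n(\cdot)$ (which exists by compactness of $\V$ and a.s.\ continuity); there $d_n(v^*)=Z_n^*(v^*)-\widetilde G_n\le 2M_n$ on $\Omega_n$, so every quantity at $v^*$ is $O(M_n)$ and the analogous estimate gives $(Z_n(v^*)-d_n(v^*))\sigma_n(v^*)/s_n(v^*)\ge\widetilde G_n-\zeta_n-(3M_n+\zeta_n)\tau_n$, whence $G_n\ge\widetilde G_n-\zeta_n-(3M_n+\zeta_n)\tau_n$. Since $\bar a_n\ge 1$ gives $M_n\le C(\bar a_n+\ell\ell_n)$, Condition C.4(b) makes every error term above $o(\delta_n)$ (this is precisely the point where the division by $\bar a_n+\ell\ell_n$ in Condition C.4(b) is needed); take $r_n$ to dominate them.

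Next I would record two facts, both valid on $\Omega_n$. First, $\widetilde G_n\le(\sup_{v\in\Vn}Z_n^*(v))\vee 0$: for $v\in\Vn$, $Z_n^*(v)-d_n(v)\le Z_n^*(v)\le\sup_{u\in\Vn}Z_n^*(u)$ because $d_n\ge 0$; for $v\in\V\setminus\Vn$, $d_n(v)>\kappa_n$, so by (iv) $Z_n^*(v)-d_n(v)<-\kappa_n+\sup_{u\in\V}Z_n^*(u)\le 0$. Consequently, for every $x\ge 0$, $\{\sup_{v\in\Vn}Z_n^*(v)\le x\}\cap\Omega_n\subseteq\{\widetilde G_n\le x\}$. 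Second, $\widetilde G_n$ anti-concentrates at scale $\delta_n$: it is the supremum of a Gaussian process with unit pointwise variance, and $0=E_{\Pn}[Z_n^*(v_0)]\le E_{\Pn}[\widetilde G_n]\le E_{\Pn}[\sup_{v\in\V}Z_n^*(v)]\lesssim\bar a_n$ by Condition C.3, so the anti-concentration inequality for suprema of (possibly non-centered) Gaussian processes quoted in Appendix~\ref{sec:main-section-proofs}, cf.\ \eqref{eq: anti}, together with $\delta_n=o(\bar a_n^{-1})$, gives $\sup_{a\in\mathbb{R}}\Pn(|\widetilde G_n-a|\le\delta_n)=o(1)$.

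Finally I would assemble these. Fix $x\ge 0$. On $\Omega_n$, $\widetilde G_n\le x-r_n$ forces $G_n\le\widetilde G_n+r_n\le x$, so $\Pn(G_n\le x)\ge\Pn(\widetilde G_n\le x-r_n)-\Pn(\Omega_n^c)\ge\Pn(\widetilde G_n\le x)-\sup_{a\in\mathbb{R}}\Pn(|\widetilde G_n-a|\le\delta_n)-o(1)=\Pn(\widetilde G_n\le x)-o(1)$, using $r_n=o(\delta_n)$ and the anti-concentration bound. On the other hand, by the first fact above, $\Pn(\widetilde G_n\le x)\ge\Pn(\{\sup_{v\in\Vn}Z_n^*(v)\le x\}\cap\Omega_n)\ge\Pn(\sup_{v\in\Vn}Z_n^*(v)\le x)-\Pn(\Omega_n^c)$. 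Combining the two inequalities, and noting every $o(1)$ term is free of $x$, yields $\Pn(G_n\le x)\ge\Pn(\sup_{v\in\Vn}Z_n^*(v)\le x)-o(1)$ uniformly in $x\in[0,\infty)$, which is the claim since $G_n=\sup_{v\in\V}(\theta_{n0}-\widehat\theta_n(v))/s_n(v)$.
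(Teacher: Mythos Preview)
Your argument is sound in outline and takes a genuinely different route from the paper's. The paper splits $\sup_{\V}$ directly into $\sup_{\Vn}$ and $\sup_{\V\setminus \Vn}$, obtaining the bound $A_n^*\vee(B_n^*-\kappa_n)$ plus a remainder $R_n=(\sup_{\V}|Z_n|+\kappa_n)\sup_{\V}|\sigma_n/s_n-1|$, and then applies a union bound together with Corollary~\ref{cor: anti} to the \emph{centered} suprema $A_n^*=\sup_{\Vn}Z_n^*$ and $B_n^*=\sup_{\V}Z_n^*$. Your approach via the drifted supremum $\widetilde G_n$ is more modular---it cleanly separates the approximation $G_n\approx\widetilde G_n$ from the comparison with $\sup_{\Vn}Z_n^*$---and your control of the unbounded drift $d_n$ in the $\sigma_n/s_n$ correction (bounding $d_n(v^*)\le 2M_n$ at the maximizer) is a nice device that the paper sidesteps by working with the bounded remainder $R_n$.

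There is, however, one gap. Your final assembly relies on anti-concentration of $\widetilde G_n=\sup_{\V}(Z_n^*-d_n)$, the supremum of a \emph{non-centered} Gaussian process. Lemma~\ref{lemma:anti-concentration} and Corollary~\ref{cor: anti} are stated only for zero-mean processes ($E[X_t]=0$), so your citation ``cf.~\eqref{eq: anti}'' does not cover this; no non-centered version is quoted in Appendix~\ref{sec:main-section-proofs}. The non-centered result is true (e.g., via Nazarov's inequality), but it is not what the paper provides. A fix that stays entirely within the paper's toolkit: drop item (iv) from $\Omega_n$ and replace the bound $\widetilde G_n\le(\sup_{\Vn}Z_n^*)\vee 0$ by the always-valid
\[
\widetilde G_n\le\Bigl(\sup_{v\in\Vn}Z_n^*(v)\Bigr)\vee\Bigl(\sup_{v\in\V}Z_n^*(v)-\kappa_n\Bigr),
\]
which follows from $d_n\ge 0$ on $\Vn$ and $d_n>\kappa_n$ off $\Vn$. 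Then for $x\ge 0$ the union bound gives $\Pn(G_n>x)\le\Pn(\sup_{\Vn}Z_n^*>x-r_n)+\Pn(\sup_{\V}Z_n^*>\kappa_n-r_n)+o(1)$, and Corollary~\ref{cor: anti} applied to each centered supremum together with the definition of $\kappa_n$ yields the claim uniformly in $x\ge 0$---which is precisely the paper's endgame, with your $\widetilde G_n$ as a cleaner intermediary.
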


Thus, with probability converging to one, the inferential process ``concentrates'' on a neighborhood of $V_0$ given by $\Vn$. The ``size" of the neighborhood is determined by $\kappa_n$, a
high quantile of $\sup_{v \in \V} Z^*_n(v)$, which summarizes the maximal one-sided estimation error over $\V$. We use this to construct half-median-unbiased estimators for $\theta_{n0}$  as well as  one-sided
interval estimators for $\theta_{n0}$ with correct asymptotic level, based on analytical and simulation methods for obtaining critical values proposed below.  

\begin{remark}[\textbf{Sharp Concentration of Inference}] In general, it is not possible for the inferential processes to concentrate on smaller subsets than $V_n$.  However, as shown, in Section \ref{sec:auto-sharpness}, in some special cases, e.g. when $V_0$ is a well-identified singleton, the inference process will in fact concentrate on $V_0$. In this case our simulation-based construction will automatically adapt to deliver median-unbiased estimators for $\theta_{n0}$ as well as one-sided interval estimators for $\theta_{n0}$ with exact asymptotic size. Indeed, in the special but extremely important case
of $V_0$ being singleton we can achieve
$$
\Pn\(\sup_{v \in \Vn} {Z^*_n(v) }  > x\) = \mathrm{Pr}\Big( N(0,1) >x \Big) - o(1),
$$
under some regularity conditions. In this case, our simulation-based procedure will automatically produce a critical value that approaches the $p$-th quantile of the standard normal, delivering
asymptotically exact inference.
\qed \end{remark}

Our construction relies first on an \emph{auxiliary critical value} $k_{n, \mathcal{V}}(\gamma_n)$, chosen so that wp $\to$ 1, \begin{equation} \label{auxiliary cv inequality}
 k_{n, \mathcal{V}}(\gamma_n) \geq  \kappa_{n, \V}(\gamma_n'),
 \end{equation}
where we set $\gamma_n:=1 - .1/\ell_n \nearrow 1$ and $\gamma_n \geq \gamma_n' = \gamma_n - o(1) \nearrow 1$. This critical value is used to obtain a preliminary set estimator
\begin{equation} \label{define: Vn}
\widehat V_n = \left\{ v \in \V: \widehat \theta_n(v) \leq \inf_{\tilde{v} \in \mathcal{V}}\( \widehat \theta_n(\tilde{v}) +  k_{n, \mathcal{V}}(\gamma_n) s_n(\tilde{v})\) + 2   k_{n, \mathcal{V}}(\gamma_n) s_n(v) \right \},
 \end{equation}
 The set estimator $\widehat V_n$ is then used in the construction of the \emph{principal critical value $k_{n,\widehat V_n}(p)$}, $p \geq 1/2,$ where we require that wp $\to$ 1,
\begin{equation} \label{principal cv inequality}
k_{n,\widehat V_n}(p) \geq \kappa_{n, \Vn}(p-o(1)).
 \end{equation}
The principal critical value is fundamental to our construction of confidence regions and estimators, which we now define.

\begin{definition}[Generic Interval and Point Estimators]\label{main definition} Let $p \geq 1/2$, then our interval estimator takes the form:
 \begin{equation} \label{define: thetap}
   \widehat \theta_{n0}(p) = \inf_{v \in \V}\left [\widehat \theta_n(v) +  k_{n,\widehat V_n}(p) s_n(v) \right],
\end{equation}
where the half-median unbiased estimator corresponds to $p=1/2$. 
\end{definition}

The principal and auxiliary critical values are constructed below so as to satisfy \eqref{auxiliary cv inequality} and \eqref{principal cv inequality} using either analytical or simulation methods. As a consequence, we show in Theorems \ref{theorem: inference analytical} and \ref{theorem: inference1} that
\begin{equation}\label{eq: key fact}
\di\Pn\left \{ \theta_{n0} \leq \widehat \theta_{n0}(p) \right \} \geq p - o(1)\text{,}
 \end{equation}
for any fixed $1/2 \leq p  < 1$.  The construction relies on the new set estimator $\widehat V_n$, which we
call an \textit{adaptive inequality selector} (AIS), since it uses the problem-dependent cutoff $k_{n, \mathcal{V}}(\gamma_n)$,
which is a bound on a high quantile of $\sup_{v \in \V} Z_n^*(v)$.  The analysis therefore must take into account the  moderate deviations (tail behavior) of the latter.

Before proceeding to the details of its construction, we note that the argument for establishing the coverage results and analyzing power properties of the procedure depends crucially on the following
result proven in Lemma \ref{lemma: estimation of V} below:
$$\Pn \Big \{ \Vn \subseteq \widehat V_n \subseteq \barVn \Big \}  \to 1, $$
where
\begin{equation} \label{define: Veps-bar-n}
\barVn := \left\{ v \in \V: \theta_n(v) \leq \theta_{n0} +  \bar\kappa_n  \bar\sigma_n \right \}, \text{  for  } \bar \kappa_n :=  7( \bar a_n  + \eta \ell\ell_n/\bar a_n),
 \end{equation}
where $\eta > 0 $ is defined by Condition C.3.
Thus, the preliminary set estimator $\widehat V_n$ is sandwiched between two deterministic sequences of sets, facilitating the analysis of its impact on the convergence of $\widehat \theta_{n0}(p)$ to $\theta_{n0}$.

\subsection{Analytical Method and Its Theory}\label{sec:analytical}

Our first construction is quite simple and demonstrates the main -- though not the finest -- points.
This construction uses the majorizing variable $\mathcal{E}$ appearing in C.3.

\begin{definition}[Analytical Method for Critical Values]\label{analytical definition} For any compact set $\textsf{V}$  and  any $p \in (0,1)$, we set
\begin{eqnarray} \label{define: anal 1}
k_{n, \textsf{V}}(p) = a_n(\textsf{V}) + c(p)/ a_n(\textsf{V}),
 \end{eqnarray}
 where $c(p) = Q_p(\mathcal{E})$ is the p-th quantile of the majorizing variable $\mathcal{E}$ defined in C.3.
 where for any fixed $p \in (0,1)$, we require that $\textsf{V} \mapsto k_{n, \textsf{V}}(p)$ is weakly monotone increasing in $\textsf{V}$ for sufficiently large $n$.

\end{definition}

\noindent The first main result is as follows.

\begin{theorem}[\textbf{Analytical Inference, Estimation, Power under C.1-C.4}]\label{theorem: inference analytical} Suppose C.1-C.4 hold. Consider  the interval estimator given in Definition \ref{main definition} with critical value function given in
Definition \ref{analytical definition}. Then,  for a given $p \in [1/2,1),$ \\
1. The interval estimator has asymptotic level $p$:
$$
\di\Pn\left \{ \theta_{n0} \leq \widehat \theta_{n0}(p) \right \} \geq p - o(1).
$$
2. The estimation risk is bounded by, wp $\to 1$ under $\Pn$,
\begin{equation*}
\di \left | \widehat \theta_{n0}(p)-\theta_{n0}\right |  \leq  4 \bar\sigma_n
 \left( a_{n}(\barVn)+  \frac{O_{\Pn}(1)}{{a}_{n}(\barVn)}\right) \lesssim_{\Pn} \bar\sigma_n \bar a_n.
\end{equation*}%
3. Hence, any
alternative $\theta_{na}> \theta_{n0}$ such that $$
\di \theta _{na} \geq \theta_{n0} +  4 \bar\sigma_n \left( a_{n}(\barVn)+  \frac{\mu_n }{{a}_{n}(\barVn)} \), \   \mu_n \to_{\Pn} \infty, $$
is rejected with probability converging to 1 under $\Pn$.
\end{theorem}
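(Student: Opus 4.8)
The plan is to prove the three parts in order, leveraging the coupling from C.2, the concentration bound from C.3, and the consistency estimates from C.4, together with the sandwiching $\Vn \subseteq \widehat V_n \subseteq \barVn$ wp $\to 1$ from Lemma \ref{lemma: estimation of V}. For Part 1, I would first check that the analytical construction of Definition \ref{analytical definition} satisfies the auxiliary inequality \eqref{auxiliary cv inequality} and principal inequality \eqref{principal cv inequality}. The key observation is that $k_{n,\textsf{V}}(p) = a_n(\textsf{V}) + c(p)/a_n(\textsf{V})$ dominates $\kappa_{n,\textsf{V}}(p) = Q_p(\sup_{v\in\textsf{V}}Z^*_n(v))$ precisely because of C.3: the random variable $\mathcal{E}_n(\textsf{V}) = a_n(\textsf{V})(\sup_{v\in\textsf{V}}Z^*_n(v) - a_n(\textsf{V}))$ is stochastically dominated by $\mathcal{E}$, so $Q_p(\sup_{v\in\textsf{V}}Z^*_n(v)) \leq a_n(\textsf{V}) + Q_p(\mathcal{E})/a_n(\textsf{V})$. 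The $\gamma_n$-versus-$\gamma_n'$ slack and the $o(1)$ terms are absorbed using the continuity of the distribution function of $\mathcal{E}$ and the exponential tail; for the principal critical value one applies the same domination with $\textsf{V} = \widehat V_n$, then uses monotonicity of $\textsf{V}\mapsto k_{n,\textsf{V}}(p)$ in $\textsf{V}$ together with $\widehat V_n \subseteq \barVn$ to get a deterministic upper bound and $\Vn \subseteq \widehat V_n$ to get the lower comparison against $\kappa_{n,\Vn}(p-o(1))$. With \eqref{auxiliary cv inequality} and \eqref{principal cv inequality} in hand, the coverage statement \eqref{eq: key fact} follows by combining Lemma \ref{lemma: concentrate on balls} with the C.2(a) coupling (to pass from $Z_n$ to $Z^*_n$, with the $\delta_n = o(\bar a_n^{-1})$ error being negligible relative to the scale $\bar\sigma_n \bar a_n$) and the C.4(b) estimate $s_n/\sigma_n \to 1$, together with the fact that $k_{n,\widehat V_n}(p) \geq \kappa_{n,\Vn}(p - o(1))$.

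For Part 2, I would decompose the estimation error. Since $\widehat\theta_{n0}(p) = \inf_{v\in\V}[\widehat\theta_n(v) + k_{n,\widehat V_n}(p) s_n(v)]$, on the one hand evaluating at any minimizer $v^* \in V_0$ (so $\theta_n(v^*) = \theta_{n0}$) gives an upper bound $\widehat\theta_{n0}(p) - \theta_{n0} \leq (\widehat\theta_n(v^*) - \theta_n(v^*)) + k_{n,\widehat V_n}(p) s_n(v^*)$, which is controlled by $\sigma_n(v^*)|Z_n(v^*)| + k_{n,\widehat V_n}(p) s_n(v^*) \lesssim \bar\sigma_n(\bar a_n + O_{\Pn}(1)/\bar a_n)$ after using the coupling, C.4, and the fact that $a_n(\widehat V_n) \leq a_n(\barVn)$ wp $\to 1$. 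For the lower bound one writes $\widehat\theta_n(v) + k_{n,\widehat V_n}(p) s_n(v) \geq \theta_n(v) - \sigma_n(v)Z_n(v) + k_{n,\widehat V_n}(p)s_n(v)$; the point here is that on $\barVn$ one has $\theta_n(v) \leq \theta_{n0} + \bar\kappa_n\bar\sigma_n$, so the infimum over $\V$ is essentially attained within $\barVn$, and there $\sup_{v\in\barVn}|Z_n(v)| \lesssim_{\Pn} a_n(\barVn) \lesssim \bar a_n$ by the concentration bound (C.3 gives $E_{\Pn}[\sup_{v\in\barVn}Z^*_n(v)] \lesssim a_n(\barVn)$ and the tail bound turns this into an $O_{\Pn}$ statement; a symmetric argument handles $\inf_{v\in\barVn} Z^*_n(v)$). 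Combining, $|\widehat\theta_{n0}(p) - \theta_{n0}| \leq 4\bar\sigma_n(a_n(\barVn) + O_{\Pn}(1)/a_n(\barVn))$, and since $1 \leq a_n(\barVn) \leq \bar a_n$ this is $\lesssim_{\Pn} \bar\sigma_n\bar a_n$.

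Part 3 is immediate from Part 2: if $\theta_{na} \geq \theta_{n0} + 4\bar\sigma_n(a_n(\barVn) + \mu_n/a_n(\barVn))$ with $\mu_n \to_{\Pn}\infty$, then the event $\{\theta_{na} \leq \widehat\theta_{n0}(p)\}$ implies $\widehat\theta_{n0}(p) - \theta_{n0} \geq 4\bar\sigma_n(a_n(\barVn) + \mu_n/a_n(\barVn))$, which by Part 2's bound would force $O_{\Pn}(1) \geq \mu_n \to_{\Pn}\infty$, a contradiction with probability $\to 1$; hence the alternative is rejected wp $\to 1$.

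\medskip

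The main obstacle, I expect, is the bookkeeping in Part 1 around the three layers of slack — the gap between $\gamma_n$ and $\gamma_n'$, the $o(1)$ in $\kappa_{n,\Vn}(p - o(1))$, and the coupling error $\delta_n$ — and verifying that each is dominated at the right scale so that the exponential-tail/continuity properties of $\mathcal{E}$ (C.3) and the anti-concentration inequality \eqref{eq: anti} can be invoked cleanly; in particular one must ensure the AIS cutoff $k_{n,\mathcal{V}}(\gamma_n)$ sits in the moderate-deviation window $\kappa_n \leq \bar a_n + \eta(\ell\ell_n + C')/\bar a_n$ required by Lemma \ref{lemma: concentrate on balls}, which is where the exponential tail of $\mathcal{E}$ and the choice $\gamma_n = 1 - 0.1/\ell_n$ are used. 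The rest is a careful but routine chaining of the coupling, concentration, and consistency estimates.
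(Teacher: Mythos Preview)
Your proposal is correct and follows the paper's approach: Part 1 via monotonicity of $\textsf{V} \mapsto k_{n,\textsf{V}}(p)$, the containment $\Vn \subseteq \widehat V_n$ from Lemma \ref{lemma: estimation of V}, Lemma \ref{lemma: concentrate on balls}, and then C.3 directly (in the analytical case no $p - o(1)$ slack is actually needed, since $k_{n,\Vn}(p) \geq \kappa_{n,\Vn}(p)$ holds exactly by stochastic domination); Part 3 is derived immediately from Part 2 exactly as you describe. For Part 2 the paper organizes the bound slightly differently --- rather than your upper/lower split it bounds $\bigl|\sup_{v\in\V}(\theta_{n0}-\widehat\theta_n(v))/\sigma_n(v)\bigr|$ in one stroke via the sandwich $\sup_{v\in V_0}Z^*_n(v) - o(\delta_n) \leq \cdot \leq (\sup_{v\in\Vn}Z^*_n(v))\vee 0 + o(\delta_n)$ (the upper side coming from the proof of Lemma \ref{lemma: concentrate on balls}) and then adds $k_{n,\widehat V_n}(p) \leq k_{n,\barVn}(p)$ --- but the ingredients and conclusion are the same.
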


Thus, $(-\infty,\widehat \theta_{n0}(p)]$ is a valid one-sided interval estimator for $\theta_{n0}$.  Moreover, $\widehat \theta_{n0}(1/2)$  is a half-median-unbiased estimator for
$\theta_{n0}$ in the sense that
$$
\liminf_{n \to \infty} \P_n\left[\theta_{n0} \leq \widehat \theta_{n0}(1/2)\right] \geq 1/2.
$$
The rate of convergence of $\widehat \theta_{n0}(p)$ to $\theta_{n0}$ is bounded above by the uniform rate $\bar\sigma_n \bar a_n$ for estimation of
the bounding function $v \mapsto \theta_n(v)$. This implies that the test of $\mbox{H}_0: \theta_{n0}= \theta_{na}$ that rejects if $\theta_{na} > \widehat \theta_{n0}(p)\text{ }$
asymptotically rejects all local alternatives that are more
distant\footnote{Here and below we ignore various constants appearing
in front of terms like $\bar\sigma_n\bar a_n$.} than $\bar\sigma_n \bar a_n$, including fixed alternatives as a special case.
In Section \ref{sec:leading-cases} below we show that in parametric cases this results in power against $n^{-1/2}$ local alternatives. \ For series estimators $\bar a_n\bar \sigma_n$ is proportional to $(\log n)^c \sqrt{K/n}$ where $c$ is some positive constant, and $K\rightarrow\infty$ is the number of series terms. \ For kernel-type estimators of bounding functions the rate $\bar a_n\bar \sigma_n$ is proportional to $(\log n)^c/\sqrt{nh^d}$ where $c$ is some positive constant and $h$ is the bandwidth, assuming some undersmoothing is done. For example, if the bounding function is $s$-times differentiable, $\bar \sigma_n$ can
be made close to $(\log n/n)^{s/(2s+d)} $ apart from some undersmoothing factor by considering a local polynomial estimator, see \cite{Stone:82}. For both series and kernel-type estimators we show below that $\bar a_n$
can be bounded by $\sqrt{\log n}$.

\subsection{Simulation-Based Construction and Its Theory}\label{sec:simulation}

Our main and preferred approach is based on the simple idea of simulating
quantiles of relevant statistics.

\begin{definition}[Simulation Method for Critical Values]\label{def:simulation-cv}
For any compact set $\textsf{V} \subseteq \V$, we set
\begin{equation} \label{define: kn}
k_{n,\textsf{V}}(p) = Q_{p}\(\sup_{ v \in \textsf{V}} Z^\star_n(v)\mid \D_n\).
\end{equation}
\end{definition}

\noindent We have the following result for simulation inference, analogous to that obtained for analytical inference.
\begin{theorem}[\textbf{Simulation Inference, Estimation, Power under C.1-C.4}]\label{theorem: inference1}   Suppose
C.1-C.4 hold. Consider  the interval estimator given in Definition \ref{main definition} with the critical value function
specified in
Definition \ref{def:simulation-cv}. Then,   for a given $p \in [1/2,1),$\\
1. The interval estimator has asymptotic level $p$:
$$
\di\Pn\left \{ \theta_{n0} \leq \widehat \theta_{n0}(p) \right \} \geq p - o(1).
$$
2. The estimation risk is bounded by, wp $\to 1$ under $\Pn$,
\begin{equation*}
\di \left | \widehat \theta_{n0}(p)-\theta _{n0}\right |  \leq  4 \bar\sigma_n
 \left( a_{n}(\barVn)+  \frac{O_{\Pn}(1)}{{a}_{n}(\barVn)}\right) \lesssim_{\Pn} \bar\sigma_n \bar a_n.
\end{equation*}%
3. Any
alternative $\theta_{na}> \theta_{n0}$ such that $$
\di \theta _{na} \geq \theta_{n0} +  4 \bar\sigma_n \left( a_{n}(\barVn)+  \frac{\mu_n }{{a}_{n}(\barVn)} \), \   \mu_n \to_{\Pn} \infty, $$
is rejected with probability converging to 1 under $\Pn$.
\end{theorem}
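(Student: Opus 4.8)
\textbf{Proof plan for Theorem \ref{theorem: inference1}.}

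The plan is to mirror the proof of Theorem \ref{theorem: inference analytical}, replacing the analytical critical values with the simulated ones, and to reduce everything to the validity of the two key inequalities \eqref{auxiliary cv inequality} and \eqref{principal cv inequality}, after which Parts 1--3 follow essentially verbatim from the analytical case. First I would establish that the simulated auxiliary critical value $k_{n,\mathcal{V}}(\gamma_n)$ from Definition \ref{def:simulation-cv} satisfies \eqref{auxiliary cv inequality}, i.e.\ $k_{n,\mathcal{V}}(\gamma_n) \geq \kappa_{n,\mathcal{V}}(\gamma_n')$ wp $\to 1$ with $\gamma_n' = \gamma_n - o(1)$. This is where Condition C.2(b) does the work: the conditional-on-$\mathcal{D}_n$ Gaussian process $Z_n^\star$ is within $o(\delta_n)$ of an independent copy $\bar Z_n^*$ of $Z_n^*$ with conditional probability $1 - o_{\Pn}(1/\ell_n)$, so the conditional $\gamma_n$-quantile of $\sup_{v\in\mathcal{V}} Z_n^\star(v)$ differs from the (unconditional) $\gamma_n$-quantile of $\sup_{v\in\mathcal{V}} Z_n^*(v)$ by at most $o(\delta_n)$ plus a quantile-distortion term of order $o(1/\ell_n)$ in the probability level; since $\gamma_n = 1 - 0.1/\ell_n$, absorbing this distortion only costs an $o(1/\ell_n)$ shift in the quantile level and an $o(\delta_n) = o(1/\bar a_n)$ shift in value, which by the sub-exponential tail of $\mathcal{E}$ in C.3 translates to a negligible additive change. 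The monotonicity $a_n(\textsf{V})$ weakly increasing in $\textsf{V}$ (C.3) and the analogous monotonicity of the simulated $k_{n,\textsf{V}}(p)$ in $\textsf{V}$ give that $k_{n,\textsf{V}}(p)$ is sandwiched appropriately, which is what makes the AIS step go through.

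Next I would use Lemma \ref{lemma: estimation of V} to conclude $\Pn\{\Vn \subseteq \widehat V_n \subseteq \barVn\} \to 1$; this step is identical to the analytical case since it depends only on \eqref{auxiliary cv inequality} and Conditions C.1--C.4, not on how the critical value was computed. Then, on the event $\Vn\subseteq\widehat V_n$, monotonicity of $\textsf{V}\mapsto k_{n,\textsf{V}}(p)$ and a second application of C.2(b) — now with probability level $p$ rather than $\gamma_n$, and over the random set $\widehat V_n$ — yield the principal-critical-value inequality \eqref{principal cv inequality}, namely $k_{n,\widehat V_n}(p) \geq \kappa_{n,\Vn}(p - o(1))$ wp $\to 1$; here I would need a uniform-in-$\textsf{V}$ version of the quantile comparison so that conditioning on the data-dependent set $\widehat V_n$ causes no trouble, which follows because C.2(b) controls $\sup_{v\in\V}|\bar Z_n^*(v) - Z_n^\star(v)|$ uniformly over all of $\V$ and hence over every subset. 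With \eqref{principal cv inequality} in hand, Part 1 follows from Lemma \ref{lemma: concentrate on balls}: write
$$
\Pn\{\theta_{n0} \leq \widehat\theta_{n0}(p)\} = \Pn\Big(\sup_{v\in\V}\tfrac{\theta_{n0} - \widehat\theta_n(v)}{s_n(v)} \leq k_{n,\widehat V_n}(p)\Big) \geq \Pn\Big(\sup_{v\in\Vn} Z_n^*(v) \leq k_{n,\widehat V_n}(p)\Big) - o(1),
$$
and then bound this below by $\Pn(\sup_{v\in\Vn} Z_n^*(v) \leq \kappa_{n,\Vn}(p-o(1))) - o(1) \geq p - o(1)$, using \eqref{principal cv inequality} and the anti-concentration inequality \eqref{eq: anti} to handle the $o(1)$ slack in the quantile level.

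For Parts 2 and 3 I would argue exactly as in Theorem \ref{theorem: inference analytical}: the upper bound on $|\widehat\theta_{n0}(p) - \theta_{n0}|$ comes from combining the lower bound $\widehat\theta_{n0}(p) \geq \theta_{n0} - $ (one-sided noise) with an upper bound obtained by evaluating the infimum defining $\widehat\theta_{n0}(p)$ at a near-minimizer in $\barVn$ and using $\widehat V_n \subseteq \barVn$ together with the concentration bound $E_{\Pn}[\sup_{v\in\barVn} Z_n^*(v)] \lesssim a_n(\barVn)$ and the sub-exponential deviation control from C.3; the simulated critical value $k_{n,\widehat V_n}(p)$ is itself $\lesssim_{\Pn} a_n(\barVn) + O_{\Pn}(1)/a_n(\barVn)$ by the same quantile comparison applied at level $p$, which is all that Part 2 needs. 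Part 3 is then an immediate corollary of Part 2 by the definition of the test. The main obstacle, and the only place requiring genuine care beyond the analytical proof, is the double use of C.2(b) to pass from conditional quantiles of $Z_n^\star$ to unconditional quantiles of $Z_n^*$ — once at the near-one level $\gamma_n$ for the AIS and once at level $p$ over the data-dependent set $\widehat V_n$ — keeping track of how the $o_{\Pn}(1/\ell_n)$ conditional exceptional probability and the $o(\delta_n)$ coupling error feed into both the quantile level and the quantile value without degrading the $p - o(1)$ conclusion; the sub-exponential tail of $\mathcal{E}$ and the choice $\ell_n = \log n$ are exactly what make these errors asymptotically negligible.
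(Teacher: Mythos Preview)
Your proposal is correct and follows essentially the same route as the paper: reduce to the quantile-comparison Lemma~\ref{lemma: quantiles are close} (via C.2(b)) to relate $k_{n,\textsf{V}}(p)$ and $\kappa_{n,\textsf{V}}(p\pm\varepsilon_n)$, invoke Lemma~\ref{lemma: estimation of V} for the containment $\Vn\subseteq\widehat V_n\subseteq\barVn$, and then run the chain through Lemma~\ref{lemma: concentrate on balls} and anti-concentration for Part~1, with Parts~2--3 following Theorem~\ref{theorem: inference analytical} once $k_{n,\widehat V_n}(p)\leq a_n(\barVn)+O(1)/a_n(\barVn)$ is obtained from the upper quantile comparison on $\barVn$. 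One small ordering fix: in your Part~1 display you invoke Lemma~\ref{lemma: concentrate on balls} with the \emph{random} $k_{n,\widehat V_n}(p)$ on the right, but that lemma is stated for deterministic $x\in[0,\infty)$; the paper first uses monotonicity to pass to $k_{n,\Vn}(p)$, then the quantile comparison to the deterministic $(\kappa_{n,\Vn}(p-\varepsilon_n)-o(\delta_n))_+$, and only then applies Lemma~\ref{lemma: concentrate on balls} --- so your uniform-in-$\textsf{V}$ concern dissolves, since once you land on the nonrandom set $\Vn$ a single application of C.2(b) over $\Vn$ suffices.
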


\subsection{Properties of the Set Estimator $\widehat V_{n}$}\label{sec:v-est}

In this section we establish some containment properties for the estimator $\widehat V_n$. Moreover, these containment properties imply  a useful rate result under the following condition:

\begin{ConditionV}[Degree of Identifiability for $V_0$] There exist constants $\rho_n>0$ and $c_n>0$, possibly
dependent on $n$, and a positive constant $\delta$, independent of $n$, such that
\begin{equation}\label{identifiability of V}
\theta_n(v ) - \theta_{n0} \geq (c_n d (v, V_0))^{\rho_n} \wedge \delta, \ \  \forall v \in \V\text{.}
\end{equation}
\end{ConditionV}
We say $(c_n, 1/\rho_n)$ characterize the degree of identifiability of $V_0$, as these parameters determine the rate at which $V_0$ can be consistently estimated. Note that if $V_0 = \mathcal{V}$, then this condition holds with $c_n = \infty$ and $\rho_n =1$, where we adopt the convention that $0 \cdot \infty = 0$.

We have the following result, whose first part we use in the proof of Theorems \ref{theorem: inference analytical} and \ref{theorem: inference1} above, and whose second part we use below in the proof of Theorem \ref{theorem: sharp inference}.

\begin{lemma}[\textbf{Estimation of $\Vn$} and $V_0$]\label{lemma: estimation of V} Suppose C.1-C.4 hold.  \\ 1. (Containment).  Then wp $\to$ 1,
for either analytical or simulation methods,
$$
\Vn \subseteq \widehat V_n \subseteq \barVn,
$$
for $\Vn$ defined in (\ref{define:V-epsilon n}) with $\gamma_n' = \gamma_n - o(1)$, and $\barVn$ defined in (\ref{define: Veps-bar-n}).

\noindent 2. (Rate) If also Condition V holds and $\bar \kappa_n \bar \sigma_n \to 0$, then wp $\to$ 1
\begin{align*}
 d_H(\widehat V_n, V_{0}) &\leq d_H(\widehat V_n, \Vn) +  d_H(\Vn, V_{0})
   \\
 & \leq d_H( \barVn, \Vn) + d_H( \Vn, V_{0})   \leq
r_n:=  2  (\bar \kappa_n \bar \sigma_n)^{1/\rho_n}/c_n.
 \end{align*}

\end{lemma}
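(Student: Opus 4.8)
The plan is to prove Part~1 (containment) by a two-sided inequality chase carried out after transferring everything to the penultimate Gaussian process $Z_n^*$, and then to deduce Part~2 (rate) from Part~1 together with Condition~V. For Part~1 I would first isolate an event $\Omega_n$ with $\Pn(\Omega_n)\to1$ on which, simultaneously: (i) $\sup_{v}|Z_n(v)-Z_n^*(v)|\le\delta_n$ (C.2(a)); (ii) $\sup_{v}|s_n(v)/\sigma_n(v)-1|\le\epsilon_n$ with $\epsilon_n=o(\delta_n/(\bar a_n+\ell\ell_n))$ (C.4(b)); (iii) $\sup_{v}|Z_n^*(v)|\le\kappa_n:=\kappa_{n,\V}(\gamma_n')$, which holds with probability at least $2\gamma_n'-1\to1$ since $\kappa_n$ is the $\gamma_n'$-quantile of $\sup_v Z_n^*(v)$ and, by the symmetry of the zero-mean Gaussian process, also of $-\inf_v Z_n^*(v)$; and (iv) the critical value is sandwiched, $\kappa_n\le k_{n,\V}(\gamma_n)\le\bar a_n+\eta(\ell\ell_n+C_1)/\bar a_n$, the lower bound being \eqref{auxiliary cv inequality} and the upper bound following from C.3 (take $x=\eta(\ell\ell_n+C_1)$, so $\exp(-x/\eta)=e^{-C_1}/\ell_n\le 1-\gamma_n'$ for $C_1\ge\log 10$): in the analytical case directly from $k_{n,\V}(\gamma_n)=a_n(\V)+c(\gamma_n)/a_n(\V)$ and $c(\gamma_n)=Q_{\gamma_n}(\mathcal E)\le\eta\log(1/(1-\gamma_n))$, and in the simulation case by transferring this tail bound to $\sup_v Z_n^\star$ via C.2(b). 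On $\Omega_n$ one then has $\sup_v|Z_n(v)|\le\kappa_n+\delta_n$; both $k_{n,\V}(\gamma_n)$ and $\kappa_n$ are of size $\bar a_n+O(\ell\ell_n/\bar a_n)$; and since $\bar a_n+\eta\ell\ell_n/\bar a_n\ge2\sqrt{\eta\ell\ell_n}\to\infty$, these sizes are absorbed by $\bar\kappa_n=7(\bar a_n+\eta\ell\ell_n/\bar a_n)$ for $n$ large.

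For $\widehat V_n\subseteq\barVn$: given $v\in\widehat V_n$, bound the infimum in the definition of $\widehat V_n$ above by inserting any $v_0\in V_0$ (the argmin set is nonempty because $\theta_n$ is continuous on the compact $\V$), substitute $\widehat\theta_n=\theta_n-\sigma_n Z_n$, and apply $|Z_n|\le\kappa_n+\delta_n$, $s_n\le\bar s_n\le\bar\sigma_n(1+\epsilon_n)$, $\sigma_n\le\bar\sigma_n$; collecting terms gives $\theta_n(v)-\theta_{n0}\le\bar\sigma_n[2(\kappa_n+\delta_n)+3k_{n,\V}(\gamma_n)(1+\epsilon_n)]\le\bar\kappa_n\bar\sigma_n$, i.e. $v\in\barVn$. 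For $\Vn\subseteq\widehat V_n$: given $v\in\Vn$, the crucial step is that $\inf_{\tilde v}(\widehat\theta_n(\tilde v)+k_{n,\V}(\gamma_n)s_n(\tilde v))\ge\theta_{n0}$ on $\Omega_n$, which holds because $\widehat\theta_n(\tilde v)+k_{n,\V}(\gamma_n)s_n(\tilde v)\ge\theta_n(\tilde v)\ge\theta_{n0}$ whenever $(\theta_n(\tilde v)-\widehat\theta_n(\tilde v))/s_n(\tilde v)\le k_{n,\V}(\gamma_n)$, and the supremum over $\tilde v$ of this $s_n$-studentized process is at most $(\kappa_n+\delta_n)/(1-\epsilon_n)\le k_{n,\V}(\gamma_n)$ on $\Omega_n$ as soon as $k_{n,\V}(\gamma_n)-\kappa_n\gtrsim\delta_n$. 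Combining this with $\theta_n(v)\le\theta_{n0}+\kappa_n\sigma_n(v)$, $-Z_n(v)\le\kappa_n+\delta_n$, and $s_n(v)\ge\sigma_n(v)(1-\epsilon_n)$ yields $\widehat\theta_n(v)\le\theta_{n0}+\sigma_n(v)(2\kappa_n+\delta_n)\le\theta_{n0}+2k_{n,\V}(\gamma_n)s_n(v)\le\inf_{\tilde v}(\widehat\theta_n(\tilde v)+k_{n,\V}(\gamma_n)s_n(\tilde v))+2k_{n,\V}(\gamma_n)s_n(v)$, i.e. $v\in\widehat V_n$.

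For Part~2, note that on the event of Part~1 we also have $V_0\subseteq\Vn$ (since $\kappa_n\ge0$ for $n$ large), so $V_0\subseteq\Vn\subseteq\widehat V_n\subseteq\barVn$. Monotonicity of the Hausdorff pseudo-distance under nesting then gives $d_H(\widehat V_n,\Vn)\le d_H(\barVn,\Vn)\le\sup_{v\in\barVn}d(v,V_0)$ and $d_H(\Vn,V_0)\le\sup_{v\in\barVn}d(v,V_0)$, so the displayed chain reduces to bounding $\sup_{v\in\barVn}d(v,V_0)$. For $v\in\barVn$ we have $\theta_n(v)-\theta_{n0}\le\bar\kappa_n\bar\sigma_n<\delta$ for $n$ large (using $\bar\kappa_n\bar\sigma_n\to0$), so the minimum in Condition~V must be taken by its first argument; hence $(c_n d(v,V_0))^{\rho_n}\le\bar\kappa_n\bar\sigma_n$, i.e. $d(v,V_0)\le(\bar\kappa_n\bar\sigma_n)^{1/\rho_n}/c_n$. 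Taking the supremum over $v\in\barVn$ and combining with the two reductions gives $d_H(\widehat V_n,V_0)\le2(\bar\kappa_n\bar\sigma_n)^{1/\rho_n}/c_n=r_n$.

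I expect the delicate point to be the inclusion $\Vn\subseteq\widehat V_n$, specifically securing the quantitative margin $k_{n,\V}(\gamma_n)\ge\kappa_{n,\V}(\gamma_n')+\delta_n$ (a strengthening of \eqref{auxiliary cv inequality}) while keeping $\gamma_n'=\gamma_n-o(1)$ with $\gamma_n'\nearrow1$. For the analytical method this needs only continuity and strict monotonicity of $\gamma\mapsto Q_\gamma(\mathcal E)$ near $\gamma_n$ together with $a_n(\V)\delta_n=o(1)$; for the simulation method one must instead use C.2(b) to couple $Z_n^\star$ with an independent copy of $Z_n^*$ and then invoke the anti-concentration inequality \eqref{eq: anti} to ensure that the induced $o(1/\ell_n)$ perturbation of the quantile level costs only $o(\delta_n)$ in quantile value. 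A secondary nuisance, also handled through the $s_n$-studentized reformulation above, is controlling those $v\in\Vn$ at which $\sigma_n(v)$ is small.
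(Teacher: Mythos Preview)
Your argument is correct, and for the upper containment $\widehat V_n\subseteq\barVn$ and the rate (Part~2) it is essentially the paper's proof. The lower containment $\Vn\subseteq\widehat V_n$, however, is organized differently. The paper does not try to secure $\inf_{\tilde v}\bigl(\widehat\theta_n(\tilde v)+k_{n,\V}(\gamma_n)s_n(\tilde v)\bigr)\ge\theta_{n0}$ on a single event; instead it bounds $A_n(v):=\widehat\theta_n(v)-\inf_{\tilde v}(\cdots)$ above by $B_n(v)=-Z_n(v)\sigma_n(v)+\kappa_n\sigma_n(v)+\sup_{\tilde v}\{\theta_{n0}-\widehat\theta_n(\tilde v)-k_{n,\V}(\gamma_n)s_n(\tilde v)\}$ and then applies a union bound: the $\sup$ piece is controlled exactly as in Theorems~\ref{theorem: inference analytical}--\ref{theorem: inference1}, while for the remaining piece one only needs $-Z_n(v)\le 2k_{n,\V}(\gamma_n)(s_n/\sigma_n)-\kappa_n$. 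Because $2k_{n,\V}(\gamma_n)-\kappa_n\ge\kappa_n$ (from $k_{n,\V}(\gamma_n)\ge\kappa_n$ alone), this reduces to $\sup_v(-Z_n^*(v))\le\kappa_n-o(\delta_n)$, handled by symmetry of $Z_n^*$ plus anti-concentration; no quantitative margin $k_{n,\V}(\gamma_n)-\kappa_n\gtrsim\delta_n$ is ever needed. Your decomposition spends one $\kappa_n$ to force $\inf(\widehat\theta_n+k_{n,\V}s_n)\ge\theta_{n0}$ and another to force $\widehat\theta_n(v)\le\theta_{n0}+2k_{n,\V}(\gamma_n)s_n(v)$, which is why you are left needing the strengthening. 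Your proposed fix works (choose $\gamma_n'$ below $\gamma_n$ by an amount exceeding $C\bar a_n\delta_n=o(1)$; anti-concentration then forces the corresponding quantile gap to exceed $\delta_n$), but the paper's use of the factor~$2$ in the definition of $\widehat V_n$ sidesteps the issue entirely. In short: your single-event framing is clean and makes the logic transparent, while the paper's split is a bit more economical with the auxiliary inequality~\eqref{auxiliary cv inequality}.
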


\subsection{Automatic Sharpness of Simulation Construction}\label{sec:auto-sharpness}

When the penultimate process $Z^*_n$ does not lose equicontinuity too fast,
and $V_0$ is sufficiently well-identified, our simulation-based inference
procedure becomes sharp in the sense of not only achieving
the right level but in fact automatically achieving the right size.  In such
cases we typically have some small improvements in the rates of convergence
of the estimators.  The most
important case covered is that where $V_0$ is singleton
(or a finite collection of points) and  $\theta_n$ is locally quadratic, i.e. $\rho_n \geq 2$
and $c_n \geq c >0$ for all $n$. These sharp situations occur when the inferential process concentrates
on $V_0$ and not just on the neighborhood $\Vn$, in the sense described below. For this to happen we impose the following condition.

\begin{ConditionS}[\textbf{Equicontinuity radii are not smaller than $r_n$}] Under Condition V holding, the scaled penultimate process $\bar a_n Z_n^*$
has an equicontinuity radius $\varphi_n$ that is no smaller than $r_n :=  2  (\bar \kappa_n \bar \sigma_n)^{1/\rho_n}/c_n$, namely
$$
\sup_{\|v-v'\|\leq \varphi_n} \bar a_n |Z^*_n(v) - Z^*_n(v')| =  o_{\Pn}(1), \ \ r_n \leq \varphi_n.
$$
\end{ConditionS}

When $Z_n^*$ is Donsker, i.e. asymptotically equicontinuous,  this condition holds automatically, since in this case $\bar a_n \propto1$, and  for any $o(1)$ term, equicontinuity radii obey $\varphi_n = o(1)$, so that consistency $r_n = o(1)$ is sufficient. When $Z_n^*$ is not Donsker, its finite-sample equicontinuity
properties decay as $n \to \infty$, with radii $\varphi_n$ characterizing the decay.   However, as long as $\varphi_n$ is not smaller than $r_n$, we have just enough finite-sample equicontinuity left to achieve the following result.

\begin{lemma}[\textbf{Inference Sometimes Concentrates on $V_{0}$}]\label{lemma: concentrate on it} Suppose C.1-C.4, S, and V hold.
Then,
$$
\di \Pn \( \sup_{v \in \mathcal{V}} \frac{\theta_{n0} - \widehat \theta_n(v)}{s_n(v)} \leq x \)  =
 \Pn\(\sup_{v \in V_{0}} {Z^*_n(v) }\leq  x\) + o(1).
$$
\end{lemma}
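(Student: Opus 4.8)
The plan is to prove the two one‑sided inequalities that together yield the claimed equality, in each case reducing the left‑hand quantity to the supremum of the penultimate process $Z_n^*$ over $V_0:=\arginf_{v\in\V}\theta_n(v)$, which is a nonempty compact set since $\theta_n$ is continuous on the compact $\V$. Approximation errors of order $o_{\Pn}(1/\bar a_n)$ will be absorbed using the anti‑concentration inequality \eqref{eq: anti}, which holds for any compact subset of $\V$ and in particular for $V_0$; concretely I would fix once and for all a deterministic sequence $\delta_n'=o(1/\bar a_n)$ dominating (wp $\to 1$) every $o_{\Pn}(1/\bar a_n)$ error below — such a sequence exists by a routine extraction — and apply \eqref{eq: anti} with $\delta_n=\delta_n'$. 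I argue uniformly over $x\in[0,\infty)$, which is the relevant range (the principal critical value uses quantiles of level $p\ge 1/2$). Write $W_n:=\sup_{v\in\V}(\theta_{n0}-\widehat\theta_n(v))/s_n(v)$.

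\emph{Lower bound, $\Pn(W_n\le x)\ge \Pn(\sup_{v\in V_0}Z_n^*(v)\le x)-o(1)$.} By Lemma \ref{lemma: concentrate on balls}, $\Pn(W_n\le x)\ge \Pn(\sup_{v\in\Vn}Z_n^*(v)\le x)-o(1)$ uniformly over $x\ge 0$, with $\Vn$ as in \eqref{define:V-epsilon n}. It then suffices to show $\sup_{v\in\Vn}Z_n^*(v)=\sup_{v\in V_0}Z_n^*(v)+o_{\Pn}(1/\bar a_n)$. The inequality ``$\ge$'' is immediate since $V_0\subseteq\Vn$. For ``$\le$'', I invoke the rate part of Lemma \ref{lemma: estimation of V}, which under Condition V gives $\sup_{v\in\Vn}d(v,V_0)\le d_H(\Vn,V_0)\le r_n$ wp $\to 1$ with $r_n=2(\bar\kappa_n\bar\sigma_n)^{1/\rho_n}/c_n$; then Condition S supplies $r_n\le\varphi_n$ and $\sup_{\|v-v'\|\le\varphi_n}\bar a_n|Z_n^*(v)-Z_n^*(v')|=o_{\Pn}(1)$, so for each $v\in\Vn$, choosing $v'\in V_0$ with $\|v-v'\|\le r_n\le\varphi_n$ yields $Z_n^*(v)\le\sup_{u\in V_0}Z_n^*(u)+o_{\Pn}(1/\bar a_n)$; taking the supremum over $v\in\Vn$ finishes this claim. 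Hence, on an event of probability $\to 1$, $\{\sup_{v\in V_0}Z_n^*(v)\le x-\delta_n'\}\subseteq\{\sup_{v\in\Vn}Z_n^*(v)\le x\}$, and \eqref{eq: anti} upgrades this to $\Pn(\sup_{v\in\Vn}Z_n^*(v)\le x)\ge\Pn(\sup_{v\in V_0}Z_n^*(v)\le x)-o(1)$ uniformly in $x$.

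\emph{Upper bound, $\Pn(W_n\le x)\le \Pn(\sup_{v\in V_0}Z_n^*(v)\le x)+o(1)$.} This direction does not use the concentration argument: since $V_0\subseteq\V$ and $\theta_n(v)=\theta_{n0}$ on $V_0$,
\[
W_n\ \geq\ \sup_{v\in V_0}\frac{\theta_n(v)-\widehat\theta_n(v)}{s_n(v)}=\sup_{v\in V_0}Z_n(v)\,\frac{\sigma_n(v)}{s_n(v)}.
\]
Writing $Z_n(v)\sigma_n(v)/s_n(v)=Z_n^*(v)+R_n(v)$ and bounding $\sup_{v\in\V}|R_n(v)|$ by the sum of $\sup_\V|Z_n-Z_n^*|$, $\sup_\V|Z_n^*|\cdot\sup_\V|\sigma_n/s_n-1|$ and a negligible cross term, Condition C.2(a) ($\sup_\V|Z_n-Z_n^*|=o_{\Pn}(\delta_n)$), Condition C.3 together with symmetry of the centered Gaussian law of $Z_n^*$ ($\sup_\V|Z_n^*|\lesssim_{\Pn}\bar a_n$), and Condition C.4(b) ($\sup_\V|s_n/\sigma_n-1|=o_{\Pn}(\delta_n/(\bar a_n+\ell\ell_n))$) give $\sup_\V|R_n|=o_{\Pn}(\delta_n)=o_{\Pn}(1/\bar a_n)$, so $W_n\ge\sup_{v\in V_0}Z_n^*(v)-\delta_n'$ wp $\to 1$. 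Applying \eqref{eq: anti} once more yields $\Pn(W_n>x)\ge\Pn(\sup_{v\in V_0}Z_n^*(v)>x)-o(1)$, i.e. the desired upper bound; combined with the lower bound this proves the lemma.

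\emph{Main obstacle.} The crux is the passage from $\sup_{v\in\Vn}Z_n^*(v)$ to $\sup_{v\in V_0}Z_n^*(v)$ in the lower‑bound step. In the non‑Donsker regime $\bar a_n Z_n^*$ is not asymptotically equicontinuous, so one cannot appeal to weak convergence; the argument works precisely because Condition S guarantees that the finite‑sample equicontinuity radius $\varphi_n$ of $\bar a_n Z_n^*$ does not shrink faster than the set‑estimation rate $r_n$ of Lemma \ref{lemma: estimation of V}, so that just enough equicontinuity survives on the $r_n$‑shell around $V_0$ — this rate‑matching is exactly what makes the simulation construction ``automatically sharp.'' By comparison, verifying the uniform remainder bound $\sup_\V|Z_n(v)\sigma_n(v)/s_n(v)-Z_n^*(v)|=o_{\Pn}(1/\bar a_n)$ (in particular that $\sup_\V|Z_n^*|\cdot\sup_\V|\sigma_n/s_n-1|$ is negligible, which is why C.4(b) is calibrated to $\bar a_n+\ell\ell_n$ rather than $\bar a_n$), selecting the buffer $\delta_n'$, and restricting to $x\ge 0$ are all routine.
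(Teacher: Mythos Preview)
Your proof is correct and follows essentially the same route as the paper's: the lower bound combines Lemma~\ref{lemma: concentrate on balls} with the equicontinuity step $\sup_{v\in\Vn}Z_n^*(v)=\sup_{v\in V_0}Z_n^*(v)+o_{\Pn}(1/\bar a_n)$ (via $d_H(\Vn,V_0)\le r_n\le\varphi_n$ from Conditions V and S) and anti-concentration, while the upper bound restricts to $V_0$ directly and controls $Z_n(v)\sigma_n(v)/s_n(v)-Z_n^*(v)$ via C.2--C.4. Your write-up is in fact more explicit than the paper's in decomposing the remainder $R_n$ and in flagging the restriction to $x\ge 0$ inherited from Lemma~\ref{lemma: concentrate on balls}.
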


\noindent Under the stated conditions,  our inference and estimation procedures \textit{automatically} become sharp
in terms of size and rates.

\begin{theorem}[\textbf{Sharpness of Simulation Inference}]\label{theorem: sharp inference}  Suppose
 C.1-C.4, S, and V hold. Consider the interval estimator given in Definition \ref{main definition} with the critical value function
  specified in Definition \ref{def:simulation-cv}. Then,   for a given $p \in [1/2,1),$\\
1. The interval estimator has asymptotic size $p$:
$$
\di\Pn\left \{ \theta_{n0} \leq \widehat \theta_{n0}(p) \right \} = p + o(1).
$$
2. Its estimation risk is bounded by, wp $\to 1$ under $\Pn$,
\begin{equation*}
\di \left | \widehat \theta_{n0}(p)-\theta _{n0}\right |  \leq  4  \bar\sigma_n
 \left( a_{n}(V_{0})+  \frac{O_{\Pn}(1)}{{a}_{n}(V_{0})}\right) \lesssim_{\Pn} \bar\sigma_n a_n(V_0).
\end{equation*}%
3. Any
alternative $\theta_{na}> \theta_{n0}$ such that $$
\di \theta _{na} \geq \theta_{n0} + 4  \bar\sigma_n \left( a_{n}(V_{0})
+  \frac{\mu_n }{a_{n}(V_{0})} \), \   \mu_n \to_{\Pn} \infty, $$
is rejected with probability converging to 1 under $\Pn$.
\end{theorem}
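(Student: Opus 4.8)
The plan is to follow the proof of Theorem~\ref{theorem: inference1}, sharpening it at the two points where Condition~S and Lemma~\ref{lemma: concentrate on it} provide extra leverage: the neighbourhood $\barVn$ is everywhere replaced by $V_0$, and all remainder terms are tracked at the scale $o_{\Pn}(\bar a_n^{-1})$ rather than the coarser $o(1)$. Write $T_n := \sup_{v \in \V}(\theta_{n0} - \widehat \theta_n(v))/s_n(v)$ and $S_n := \sup_{v \in V_0} Z_n^*(v)$. Since $p \geq 1/2$ and $Z_n^\star$ is, conditionally on $\D_n$, a mean-zero Gaussian process with a.s.\ continuous paths, $k_{n,\widehat V_n}(p) \geq 0$, so the coverage event is exactly $\{ \theta_{n0} \leq \widehat \theta_{n0}(p) \} = \{ T_n \leq k_{n,\widehat V_n}(p) \}$, and Lemma~\ref{lemma: concentrate on it} gives $\Pn(T_n \leq x) = \Pn(S_n \leq x) + o(1)$ at any deterministic $x \geq 0$.

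The key step is to pin the random critical value down to $k_{n,\widehat V_n}(p) = Q_p(S_n) + o_{\Pn}(\bar a_n^{-1})$. By Lemma~\ref{lemma: estimation of V}.1 we have $\Vn \subseteq \widehat V_n \subseteq \barVn$ wp $\to 1$, so by monotonicity of $\textsf{V} \mapsto Q_p(\sup_{v \in \textsf{V}} Z_n^\star(v) \mid \D_n)$ the critical value $k_{n,\widehat V_n}(p)$ is squeezed between the same quantile taken over $\Vn$ and over $\barVn$. By Lemma~\ref{lemma: estimation of V}.2 (invoking Condition~V), $d_H(\barVn, V_0) \leq r_n \leq \varphi_n$, so every point of $\barVn$ lies within the equicontinuity radius $\varphi_n$ of $V_0$; Condition~S then collapses $\sup_{v \in \barVn}(\cdot)$ and $\sup_{v \in \Vn}(\cdot)$ to $\sup_{v \in V_0}(\cdot)$ up to $o_{\Pn}(\bar a_n^{-1})$, first for an independent copy $\bar Z_n^*$ of $Z_n^*$ (which has the same law and hence obeys the same equicontinuity bound), and then, through the simulation coupling in C.2(b), for $Z_n^\star$ conditionally on $\D_n$. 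Because $\bar Z_n^*$ is independent of $\D_n$ its conditional and unconditional laws agree, and the passage from closeness of the suprema to closeness of their $p$-quantiles is handled by the anti-concentration bound \eqref{eq: anti} applied to $S_n$; this yields the stated representation of $k_{n,\widehat V_n}(p)$.

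Part~1 then follows by a squeeze: write $k_{n,\widehat V_n}(p) = Q_p(S_n) + R_n$ with $|R_n|$ dominated wp $\to 1$ by a deterministic $\epsilon_n = o(\bar a_n^{-1})$, so that $\Pn(T_n \leq Q_p(S_n) - \epsilon_n) - o(1) \leq \Pn\{ \theta_{n0} \leq \widehat \theta_{n0}(p) \} \leq \Pn(T_n \leq Q_p(S_n) + \epsilon_n) + o(1)$; applying Lemma~\ref{lemma: concentrate on it} at the deterministic points $Q_p(S_n) \mp \epsilon_n$, then \eqref{eq: anti} with $\delta_n = \epsilon_n$, and the continuity of the distribution of $S_n$ at $Q_p(S_n)$ (a supremum of a non-degenerate a.s.-continuous Gaussian process has no atom there), both bounds converge to $p$. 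For Part~2, bound $\widehat \theta_{n0}(p) - \theta_{n0}$ above by evaluating the infimum at a point of $V_0$ and controlling $\sup_{v \in V_0}|Z_n^*(v)|$ by $a_n(V_0) + O_{\Pn}(1)/a_n(V_0)$ using Condition~C.3 for both $Z_n^*$ and $-Z_n^*$ (same law) together with C.4(b); bound it below by splitting $\V$ at $\barVn$: on $\barVn$ use $\theta_n(v) \geq \theta_{n0}$, drop $k_{n,\widehat V_n}(p) s_n(v) \geq 0$, and control $\sup_{v \in \barVn}|Z_n(v)|$ via C.2(a) and Condition~S by the same $a_n(V_0) + O_{\Pn}(1)/a_n(V_0)$, while on $\V \setminus \barVn$ use $\theta_n(v) - \theta_{n0} > \bar\kappa_n \bar\sigma_n$ together with $\sup_{v \in \V} Z_n^*(v) \leq \bar a_n + O_{\Pn}(1)/\bar a_n$ and the factor $7$ in $\bar\kappa_n$ to see the bracket is positive wp $\to 1$. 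This gives $|\widehat \theta_{n0}(p) - \theta_{n0}| \leq 4\bar\sigma_n(a_n(V_0) + O_{\Pn}(1)/a_n(V_0)) \lesssim_{\Pn} \bar\sigma_n a_n(V_0)$ since $a_n(V_0) \geq 1$, and Part~3 is immediate: any $\theta_{na} \geq \theta_{n0} + 4\bar\sigma_n(a_n(V_0) + \mu_n/a_n(V_0))$ with $\mu_n \to_{\Pn} \infty$ exceeds $\widehat \theta_{n0}(p)$ wp $\to 1$, so the test rejects wp $\to 1$.

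The main obstacle is the second step: controlling the data-dependent critical value $k_{n,\widehat V_n}(p)$ to within $o_{\Pn}(\bar a_n^{-1})$ of the deterministic target $Q_p(S_n)$. This is where all the sharpness ingredients are consumed at once --- the set-estimation rate of Lemma~\ref{lemma: estimation of V}, the equicontinuity radius $\varphi_n \geq r_n$ of Condition~S, the simulation coupling C.2(b), and the anti-concentration inequality \eqref{eq: anti} --- and the bookkeeping is delicate precisely because the fluctuations of $\sup_{v \in V_0} Z_n^*(v)$ around $a_n(V_0)$ are themselves of order $a_n(V_0)^{-1} \gtrsim \bar a_n^{-1}$, so the $o(1)$ accounting that sufficed for Theorem~\ref{theorem: inference1} no longer is enough.
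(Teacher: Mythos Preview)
Your proposal is correct and follows essentially the same route as the paper, with two organizational differences worth noting.

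For Part~1, the paper is more economical: it establishes only the \emph{upper} bound on coverage by showing, wp $\to 1$,
\[
k_{n,\widehat V_n}(p) \leq k_{n,\barVn}(p) \leq \kappa_{n,V_0}(p+\varepsilon_n) + o(\bar a_n^{-1})
\]
via Lemma~\ref{lemma: estimation of V}, C.2(b), Condition~S, and Lemma~\ref{lemma: quantiles are close}, then invokes Lemma~\ref{lemma: concentrate on it} and anti-concentration to get $\Pn\{\theta_{n0}\leq\widehat\theta_{n0}(p)\}\leq p+o(1)$; the lower bound is simply inherited from Theorem~\ref{theorem: inference1}. Your two-sided squeeze is equally valid but redoes work already done. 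One imprecision: the representation $k_{n,\widehat V_n}(p)=Q_p(S_n)+o_{\Pn}(\bar a_n^{-1})$ is slightly stronger than what the argument actually delivers. What Lemma~\ref{lemma: quantiles are close} gives is $k_{n,\widehat V_n}(p)\in[\kappa_{n,V_0}(p-\varepsilon_n)-o(\bar a_n^{-1}),\,\kappa_{n,V_0}(p+\varepsilon_n)+o(\bar a_n^{-1})]$ for some $\varepsilon_n\searrow 0$ in the \emph{probability} scale; anti-concentration then absorbs the $\varepsilon_n$ shift only at the level of the coverage probability, not at the level of the quantile. This does not break your squeeze argument, but the paper's formulation, which keeps the $p\pm\varepsilon_n$ explicit until the final step, is cleaner.

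For Parts~2 and~3, the paper simply bounds $k_{n,\widehat V_n}(p)\leq a_n(V_0)+O(1)/a_n(V_0)$ via C.3 and the quantile comparison, then says Parts~2 and~3 ``follow identically to the Proof of Parts~2 and~3 of Theorem~\ref{theorem: inference analytical}'' with $V_0$ replacing $\barVn$ in the relevant bounds. Your direct upper/lower-bound argument is a valid alternative and perhaps more transparent about where Condition~S is used to collapse $\sup_{\barVn}|Z_n^*|$ to $\sup_{V_0}|Z_n^*|$, but it duplicates the chain of inequalities already laid out in that earlier proof.
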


\section{Inference on Intersection Bounds in Leading Cases} \label{sec:leading-cases}

\subsection{Parametric estimation of bounding function.}\label{Section:ConditionP}

We now show that the above conditions apply to various parametric estimation methods for $v \mapsto \theta_n(v)$.
This is an important practical, and indeed tractable, case. The required conditions cover standard parametric estimators of bounding functions such as least squares, quantile regression, and other estimators.

\begin{ConditionP}[\textbf{Finite-Dimensional Bounding Function}] We have that (i) $\theta_n(v) := \theta_n(v, \beta_{n})$, where $\V \times \mathcal{B} \mapsto \theta_n(v,\beta)$ is a known function parameterized by finite-dimensional vector $\beta \in \mathcal{B}$, where $\mathcal{V}$ is a compact subset of $\Bbb{R}^d$
and $\mathcal{B}$ is a subset of $\Bbb{R}^k$, where the sets do not depend on $n$.
(ii) The function  $(v, \beta) \mapsto p_n(v, \beta):= \partial
\theta_n(v, \beta)/\partial \beta$ is uniformly Lipschitz with Lipschitz coefficient $L_n \leq L$,
where $L$ is a finite constant that does not depend on $n$. (iii) An estimator $\widehat \beta_n$ is available such that
$$\Omega_n^{-1/2}\sqrt{n} (\widehat \beta_n - \beta_{n}) = \mathcal{N}_k + o_{\Pn}(1),
 \ \ \mathcal{N}_k =_d N(0, I_k),
$$
i.e. $\mathcal{N}_k$ is a random $k$-vector with the multivariate standard normal distribution. (iv) $\|p_n(v, \beta_n)\|$ is bounded away from zero, uniformly in $v$ and $n$. The eigenvalues of $\Omega_n$ are bounded from above
and away from zero, uniformly in $n$. (v) There is also a consistent estimator $\widehat \Omega_n$
such that $\|\widehat \Omega_n - \Omega_n\| = O_{\P_n}(n^{-b})$ for some constant $b>0$, independent of $n$.
 \end{ConditionP}

\begin{example}[\textbf{A Saturated Model}]As a simple, but relevant example we consider the following model. Suppose that $v$ takes on a finite set of values, denoted $1,...,k$, so that $\theta_n(v,\beta) = \sum_{j=1}^k \beta_j 1(v = j)\text{.}$
Suppose first that $\P_n= \P$ is fixed, so that $\beta_n = \beta_0$, a fixed value. Condition (ii) and the boundedness requirement of (iv) follow from  $\partial
\theta_n(v, \beta)/\partial \beta_j = 1(v = j)$ for each $j =1,\ldots,k$.   Condition (v) applies to many estimators. Then if the estimator $\widehat \beta$
satisfies $\Omega^{-1/2}\sqrt{n} (\widehat \beta - \beta_0) \to_d N(0, I_k)$ where $\Omega$ is positive definite,
the strong approximation in condition (iii) follows from Skorohod's theorem and Lemma \ref{lemma: DP}.\footnote{See Theorem 1.10.3 of \cite{VanDerVaart/Wellner:96} on page 58 and the subsequent historical discussion attributing the earliest such results to \cite{Skorohod:56}, later generalized by Wichura and Dudley.}
Suppose next that $\Pn$ and the true value $\beta_n = (\beta_{n1},\ldots,\beta_{nk})'$ change with $n$. Then if
$$
\Omega_n^{-1/2} \sqrt{n} (\hat \beta_n - \beta_n) = \frac{1}{\sqrt{n}} \sum_{i=1}^n u_{i,n} + o_{\Pn}(1),
$$
with $\{u_{i,n}\}$ i.i.d. vectors with mean zero and variance matrix $I_k$ for each $n$, and $E\|u_{i,n}\|^{2 + \delta}$ bounded uniformly in $n$ for some $\delta > 0$,  then
$\Omega_n^{-1/2} \sqrt{n} (\hat \beta_n - \beta_n) \to_d N(0,I_k)$, and again condition (iii)
follows from Skorohod's theorem and Lemma \ref{lemma: DP}.
 \qed \end{example}

\begin{lemma}[\textbf{P and V imply C.1-C.4, S}]\label{lemma:parametric}
Condition P implies Conditions C.1-C.4, where, for $p_n(v,\beta) :=  \frac{\partial \theta_n(v, \beta)}{\partial \beta}$,
\begin{eqnarray*}
& & Z_n(v) = \frac{ \theta_n(v) - \widehat \theta_n(v)}{\sigma_n(v)}, \ Z_n^*(v) =  \frac{ p_n(v,\beta_n)' \Omega_n^{1/2}}{\| p_n(v,\beta_n)' \Omega_n^{1/2}\|}\mathcal{N}_k, \ Z_n^{\star}(v) =  \frac{p_n(v,\hat \beta_n)' \widehat \Omega_n^{1/2}}{\|p_n(v,\hat\beta_n)' \widehat \Omega_n^{1/2}\|}\mathcal{N}_k, \\
& & \sigma_n(v) = \| n^{-1/2}p_n(v,\beta_n)' \Omega_n^{1/2} \|,
\ \ s_n(v) = \| n^{-1/2}p_n(v,\hat \beta_n)'\widehat \Omega_n^{1/2} \|,\   \delta_n = o(1),\\
& &  \bar a_n \lesssim 1, \ \ \bar \sigma_n \lesssim \sqrt{1/n},  \ \ a_n(\textsf{V}) = \(2 \sqrt{ \log \{ C ( 1+ C ' L_n \text{diam}(\textsf{V}))^d \} }\) \vee (1 + \sqrt{d}),
\end{eqnarray*}
for some positive constants $C$ and $C'$, and $ \ P[\mathcal{E} > x] = \exp( - x/2 )$. Furthermore, if also Condition V holds and $c_n^{-1}(\ell\ell_n/\sqrt{n})^{1/\rho_n} = o(1),$ then
Condition S holds.
\end{lemma}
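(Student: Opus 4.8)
The plan is to reduce everything to the fixed finite-dimensional Gaussian vector $\mathcal N_k$ of Condition P(iii): I would linearize $v\mapsto\widehat\theta_n(v)=\theta_n(v,\widehat\beta_n)$ around $\beta_n$, read off $Z_n^*$ and $Z_n^\star$ as stated, and then verify C.1--C.4 and S directly from this representation. Condition C.1 is immediate from P(i)--(ii): $\mathcal V\subset\mathcal K\subset\mathbb R^d$ with $\mathcal K$ fixed and bounded; the maps $v\mapsto\theta_n(v)$, $\widehat\theta_n(v)$, $\sigma_n(v)$, $s_n(v)$ are (a.s.) continuous because $p_n$ is Lipschitz hence continuous, and $\bar\sigma_n,\bar s_n<\infty$ by compactness of $\mathcal V$. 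That $\bar\sigma_n\lesssim n^{-1/2}$ holds because $\|p_n(v,\beta_n)\|$ is uniformly bounded above (Lipschitz on the bounded set $\mathcal K$) and the eigenvalues of $\Omega_n$ are bounded above by P(iv); the stated formula for $a_n(\textsf V)$ is weakly increasing in $\textsf V$ (as $\text{diam}$ is), satisfies $a_n(\textsf V)\ge 1+\sqrt d>1$, and gives $\bar a_n=a_n(\mathcal V)\lesssim 1$ since $L_n\le L$ and $\text{diam}(\mathcal V)\le\text{diam}(\mathcal K)$.

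For C.2, a mean-value expansion in $\beta$ together with P(ii) gives, for each $v$, some $\tilde\beta_n(v)\in[\widehat\beta_n,\beta_n]$ with $\widehat\theta_n(v)-\theta_n(v)=p_n(v,\beta_n)'(\widehat\beta_n-\beta_n)+R_n(v)$ and $\sup_{v\in\mathcal V}|R_n(v)|\le L_n\|\widehat\beta_n-\beta_n\|^2=O_{\Pn}(1/n)$, using $\|\widehat\beta_n-\beta_n\|=O_{\Pn}(n^{-1/2})$ from P(iii)--(iv). Substituting P(iii) and dividing by $\sigma_n(v)=n^{-1/2}\|p_n(v,\beta_n)'\Omega_n^{1/2}\|\ge c\,n^{-1/2}$ (uniform lower bound by P(iv)) yields
$$
Z_n(v)=-\frac{p_n(v,\beta_n)'\Omega_n^{1/2}}{\|p_n(v,\beta_n)'\Omega_n^{1/2}\|}\mathcal N_k+r_n(v),\qquad \sup_{v\in\mathcal V}|r_n(v)|=o_{\Pn}(1),
$$
where $r_n$ absorbs the unit-vector contraction of the $o_{\Pn}(1)$ term of P(iii) and $R_n(v)/\sigma_n(v)=O_{\Pn}(n^{-1/2})$. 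Since $-\mathcal N_k=_d\mathcal N_k$, taking $Z_n^*$ to be the leading term gives a zero-mean, unit-variance, a.s.\ continuous process with exactly the stated law, and C.2(a) holds for any deterministic $\delta_n\to0$ dominating $\sup_v|r_n(v)|$ (such $\delta_n$ exists since any $o_{\Pn}(1)$ sequence is $o_{\Pn}(\epsilon_n)$ for some deterministic $\epsilon_n\downarrow0$). For C.2(b) I would take $Z_n^\star(v)=b_n(v)'\mathcal N_k$ and $\bar Z_n^*(v)=a_n(v)'\mathcal N_k$ with the \emph{same} $\mathcal N_k\sim N(0,I_k)$ drawn independently of $\mathcal D_n$, where $a_n(v),b_n(v)$ are the normalized vectors built from $(\beta_n,\Omega_n)$ and $(\widehat\beta_n,\widehat\Omega_n)$ respectively; then $\bar Z_n^*$ is an independent copy of $Z_n^*$, $Z_n^\star$ is conditionally zero-mean Gaussian with unit conditional variance, and, because $x\mapsto x/\|x\|$ is Lipschitz away from $0$, P(ii), P(iv), P(v) yield $\sup_v\|a_n(v)-b_n(v)\|=O_{\Pn}(n^{-b'})$ with $b'=\min\{1/2,b\}$. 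Conditioning on $\mathcal D_n$, $|\bar Z_n^*(v)-Z_n^\star(v)|\le\|a_n(v)-b_n(v)\|\,\|\mathcal N_k\|$ and $\P(\|\mathcal N_k\|>\log n)\le\exp(-(\log n)^2/4)=o(1/\ell_n)$ deliver the required conditional coupling bound for a suitable $o(\delta_n)$.

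For C.3, the process $v\mapsto Z_n^*(v)=\pm a_n(v)'\mathcal N_k$ is, on any compact $\textsf V\subseteq\mathcal V$, a separable Gaussian process with constant variance $1$ and canonical metric $\|a_n(v)-a_n(v')\|\le CL_n\|v-v'\|$; I would then invoke the paper's Talagrand--Samorodnitsky-type concentration lemma (Lemma~\ref{lemma: key concentration}) with the entropy bound $\log N(\textsf V,CL_n\|\cdot\|,\epsilon)\lesssim d\log(1+CL_n\,\text{diam}(\textsf V)/\epsilon)$, which bounds $E_{\Pn}[\sup_{v\in\textsf V}Z_n^*(v)]$ by the stated $a_n(\textsf V)$ and produces the majorant $\mathcal E$ with $\P(\mathcal E>x)=\exp(-x/2)$. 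For C.4(a), $\bar a_n\bar\sigma_n\lesssim n^{-1/2}=o(1)$. For C.4(b), $s_n(v)/\sigma_n(v)=\|p_n(v,\widehat\beta_n)'\widehat\Omega_n^{1/2}\|/\|p_n(v,\beta_n)'\Omega_n^{1/2}\|$, with numerator and denominator bounded away from $0$ and above and differing uniformly in $v$ by $O_{\Pn}(n^{-b'})$, so $\sup_v|s_n(v)/\sigma_n(v)-1|=O_{\Pn}(n^{-b'})$; this is $o_{\Pn}(\delta_n/(\bar a_n+\ell\ell_n))$ once the deterministic $\delta_n$ is chosen to decay more slowly than $n^{-b'}\ell\ell_n$, which is consistent with the choice made for C.2(a) since the only non-polynomial piece of $r_n$ is the P(iii) term, dominated by $\delta_n$ from above by construction.

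Finally, under Condition V and $c_n^{-1}(\ell\ell_n/\sqrt n)^{1/\rho_n}=o(1)$: since $\bar\kappa_n=7(\bar a_n+\eta\ell\ell_n/\bar a_n)\lesssim\ell\ell_n$ and $\bar\sigma_n\lesssim n^{-1/2}$, one gets $r_n=2(\bar\kappa_n\bar\sigma_n)^{1/\rho_n}/c_n\lesssim c_n^{-1}(\ell\ell_n/\sqrt n)^{1/\rho_n}=o(1)$ (which also gives $\bar\kappa_n\bar\sigma_n\to0$, as required for Lemma~\ref{lemma: estimation of V}(2)). Choosing any $\varphi_n$ with $r_n\le\varphi_n\to0$ and using $\bar a_n|Z_n^*(v)-Z_n^*(v')|\le\bar a_n CL_n\|v-v'\|\,\|\mathcal N_k\|=O_{\Pn}(\varphi_n)$ whenever $\|v-v'\|\le\varphi_n$ (since $\bar a_n\lesssim1$ and $\|\mathcal N_k\|=O_{\Pn}(1)$) gives $\sup_{\|v-v'\|\le\varphi_n}\bar a_n|Z_n^*(v)-Z_n^*(v')|=o_{\Pn}(1)$, i.e. Condition S. I expect the main obstacle to be C.3 --- matching the metric-entropy bound for $\textsf V$ (through the Lipschitz coefficient $L_n$ of $p_n$) to the exact closed form of $a_n(\textsf V)$ and extracting the clean exponential majorant $\mathcal E$; the remaining items are essentially delta-method bookkeeping, the one delicate point there being the simultaneous choice of the deterministic sequence $\delta_n$ reconciling C.2(a) and C.4(b) given that P(iii) supplies only an unrated $o_{\Pn}(1)$.
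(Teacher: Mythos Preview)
Your proposal is correct and follows essentially the same route as the paper's proof: mean-value expansion for C.2(a), the elementary inequality $\|a/\|a\|-b/\|b\|\|\le 2\|a-b\|/\|a\|$ for C.2(b) and C.4(b), Lemma~\ref{lemma: key concentration} (Talagrand--Samorodnitsky) with the Lipschitz-based covering bound for C.3, and the Lipschitz bound $|Z_n^*(v)-Z_n^*(v')|\lesssim \|v-v'\|\,\|\mathcal N_k\|$ for S. The only cosmetic differences are that the paper uses Markov's inequality rather than a Gaussian tail bound on $\|\mathcal N_k\|$ in C.2(b), and it does not explicitly address the sign (your remark that $-\mathcal N_k=_d\mathcal N_k$ is the right way to match the stated $Z_n^*$); your discussion of reconciling the choice of $\delta_n$ across C.2(a) and C.4(b) is slightly more careful than the paper's.
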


\noindent The following is an immediate consequence of Lemma \ref{lemma:parametric} and Theorems \ref{theorem: inference analytical}, \ref{theorem: inference1}, and \ref{theorem: sharp inference}.

\begin{theorem}[\textbf{Estimation and Inference with Parametrically Estimated Bounding Functions}]\label{Theorem:Parametric}
Suppose Condition P holds and  consider  the interval estimator $\widehat \theta_{n0}(p)$ given in Definition \ref{main definition} with
 simulation-based critical values specified in
Definition \ref{def:simulation-cv} for the simulation process $Z_n^\star$ specified above. (1) Then $ (i) \ \Pn[\theta_{n0} \leq \widehat \theta_{n0} (p)] \geq  p - o(1),  \ (ii) \ |\theta_{n0} - \widehat \theta_{n0} (p)| = O_{\Pn}(\sqrt{1/n}),  \ (iii) \ \Pn( \theta_{n0} +   \mu_n \sqrt{1/n} \geq \widehat \theta_{n0}(p) ) \to 1$
 for any $\mu_n \to_{\Pn} \infty$.  (2) If Condition V holds with $c_n\geq c>0$ and $\rho_n \leq \rho< \infty$, then
  $ \Pn[\theta_{n0} \leq \widehat \theta_{n0} (p)] =  p + o(1)$.
\end{theorem}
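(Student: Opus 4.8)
The plan is to derive the theorem purely by specialization, never touching Condition~P directly: it enters only through Lemma~\ref{lemma:parametric}, which converts it into Conditions~C.1--C.4 and exhibits the explicit forms of the coupling $Z_n^*$, of the simulation process $Z_n^\star$ (precisely the one feeding Definition~\ref{def:simulation-cv}), of the normalizers $\sigma_n$ and $s_n$, of the majorant $\mathcal{E}$ with $\P[\mathcal{E}>x]=\exp(-x/2)$, of the concentration functions $a_n(\textsf{V})$, and --- decisively for the rate claims --- of the bounds $\bar a_n \lesssim 1$ and $\bar\sigma_n \lesssim \sqrt{1/n}$. So the first step is to invoke Lemma~\ref{lemma:parametric} and record two easy consequences used repeatedly below: since $1 \leq a_n(\textsf{V}) \leq \bar a_n \lesssim 1$ for every compact $\textsf{V} \subseteq \mathcal{V}$, the function $a_n$ is uniformly bounded above and below; and since $\bar\kappa_n = 7(\bar a_n + \eta\,\ell\ell_n/\bar a_n) \lesssim \ell\ell_n$ we get $\bar\kappa_n \bar\sigma_n \lesssim \ell\ell_n/\sqrt{n} \to 0$ (and likewise $\bar a_n\bar\sigma_n\to 0$), so every side condition required by the general theorems is in force.

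For part~(1) I would simply invoke Theorem~\ref{theorem: inference1}, the simulation-based version, with the critical values of Definition~\ref{def:simulation-cv} instantiated at the process $Z_n^\star$ from Lemma~\ref{lemma:parametric}. Its conclusion~1 is exactly (i). Its conclusion~2 bounds $|\widehat\theta_{n0}(p) - \theta_{n0}|$ by $4\bar\sigma_n\bigl(a_n(\barVn) + O_{\Pn}(1)/a_n(\barVn)\bigr)$, which by the two-sided bound on $a_n(\barVn)$ collapses to $\lesssim_{\Pn} \bar\sigma_n \lesssim \sqrt{1/n}$, giving (ii). For (iii), conclusion~3 rejects wp~$\to 1$ any alternative at distance at least $4\bar\sigma_n\bigl(a_n(\barVn) + \mu_n/a_n(\barVn)\bigr)$ for any $\mu_n \to_{\Pn} \infty$; so, given an arbitrary $\mu_n' \to_{\Pn} \infty$, I would take $\mu_n := \mu_n'/(4C) - \bar C \to_{\Pn} \infty$, where $C,\bar C$ are the constants in $\bar\sigma_n \leq C\sqrt{1/n}$ and $a_n(\barVn) \leq \bar C$, so that $\mu_n'\sqrt{1/n} \geq 4\bar\sigma_n\bigl(a_n(\barVn) + \mu_n/a_n(\barVn)\bigr)$ for $n$ large, whence $\theta_{n0} + \mu_n'\sqrt{1/n}$ is rejected wp~$\to 1$, which is (iii).

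For part~(2) the only additional check is that, under Condition~V with $c_n \geq c > 0$ and $\rho_n \leq \rho < \infty$, the extra hypothesis of Lemma~\ref{lemma:parametric} yielding Condition~S --- namely $c_n^{-1}(\ell\ell_n/\sqrt{n})^{1/\rho_n} = o(1)$ --- holds automatically: for $n$ large $\ell\ell_n/\sqrt{n} < 1$, hence $(\ell\ell_n/\sqrt{n})^{1/\rho_n} \leq (\ell\ell_n/\sqrt{n})^{1/\rho} \to 0$, and dividing by $c_n \geq c$ preserves this. Thus Condition~S holds, Theorem~\ref{theorem: sharp inference} applies, and its conclusion~1 gives exact asymptotic size $\Pn[\theta_{n0} \leq \widehat\theta_{n0}(p)] = p + o(1)$. (Its conclusions~2--3 would further sharpen the rate in (ii)--(iii) from $\bar\sigma_n\bar a_n$ to $\bar\sigma_n a_n(V_0)$, but that refinement is not part of the claim.)

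The main obstacle does not lie in this argument at all: given Lemma~\ref{lemma:parametric} and Theorems~\ref{theorem: inference1} and~\ref{theorem: sharp inference}, the proof is bookkeeping. The substance is upstream --- in Lemma~\ref{lemma:parametric} itself (a Taylor expansion of $\theta_n(v,\cdot)$ about $\beta_n$ to linearize $\widehat\theta_n$, a Skorohod-embedding step for the asymptotically linear $\widehat\beta_n$, and a covering-number bound for the unit-sphere-valued map $v \mapsto p_n(v,\beta_n)'\Omega_n^{1/2}/\|p_n(v,\beta_n)'\Omega_n^{1/2}\|$ producing the logarithmic $a_n(\textsf{V})$), and in the general theorems of Section~\ref{sec:est-inf-general-conditions}. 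Within the present proof the only points needing care are the $O_{\Pn}$/$\lesssim_{\Pn}$ accounting behind (ii)--(iii), which exploits that $a_n(\textsf{V})$ is bounded above and below in the parametric case, and the observation that boundedness of $\rho_n$ is exactly what forces $(\ell\ell_n/\sqrt{n})^{1/\rho_n} \to 0$ --- which is why $\rho_n \leq \rho < \infty$ is imposed in part~(2).
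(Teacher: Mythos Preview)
Your proposal is correct and matches the paper's approach exactly: the paper states the theorem as ``an immediate consequence of Lemma~\ref{lemma:parametric} and Theorems~\ref{theorem: inference analytical}, \ref{theorem: inference1}, and~\ref{theorem: sharp inference}'' without further elaboration, and you have simply spelled out that bookkeeping --- invoking Lemma~\ref{lemma:parametric} to obtain C.1--C.4 (and~S under the extra hypothesis of part~(2)), then reading off the conclusions of Theorems~\ref{theorem: inference1} and~\ref{theorem: sharp inference} with the parametric bounds $\bar a_n \lesssim 1$ and $\bar\sigma_n \lesssim \sqrt{1/n}$. Your verification that $c_n \geq c > 0$ and $\rho_n \leq \rho < \infty$ force $c_n^{-1}(\ell\ell_n/\sqrt{n})^{1/\rho_n} \to 0$ is the one nontrivial check, and it is correct.
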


We next provide two examples that generalize the simple, but well-used, saturated example of Example 1 to more substantive cases. Aside from being practically relevant due to the common use of parametric restriction in applications, these examples offer a natural means of transition to the next section, which deals with series estimation, and which can be viewed as parametric estimation with parameters of increasing dimension and vanishing approximation errors.

\begin{example}[\textbf{Linear Bounding Function}]
Suppose that $ \theta_n(v,\beta_n) = p_n(v)'\beta_n,$
where $  p_n(v)'\beta: \V\times\mathcal{B} \mapsto \Bbb{R}$.
Suppose that  (a) $v \mapsto p_n(v)$ is Lipschitz with Lipschitz coefficient $L_n \leq L$, for all $n$, with the first
component equal to 1,
(b) there is an estimator available that is asymptotically linear
$$
\Omega_n^{-1/2} \sqrt{n} (\hat \beta_n - \beta_n) = \frac{1}{\sqrt{n}} \sum_{i=1}^n u_{i,n} + o_{\Pn}(1),
$$
with $\{u_{i,n}\}$ i.i.d. vectors with mean zero and variance matrix $I_k$ for each $n$, and $E\|u_{i,n}\|^{2 + \delta}$ bounded uniformly in $n$ for some $\delta > 0$, and (c)
$\Omega_n$ has eigenvalues bounded away from zero and from above. These conditions
imply Condition P(i)-(iv).  Indeed, (i),(ii), and (iv) hold immediately, while
(iii) follows from the Lindeberg-Feller CLT, which implies that under $\Pn$
$$\Omega_n^{-1/2} \sqrt{n} (\hat \beta_n - \beta_n) \to_d N(0,I_k),$$
and the strong approximation follows by the Skorohod representation
and Lemma \ref{lemma: DP} by suitably enriching the probability space if needed.   Note that if $\theta_n(v,\beta_n)$ is the conditional expectation of $Y_i$ given $V_i=v$, then $\widehat \beta_n$
can be obtained by the mean regression of $Y_i$ on $p_n(V_i)$, $i=1,...,n$; if $\theta_n(v,\beta_n)$ is the conditional u-quantile of $Y_i$ given $V_i=v$, then $\widehat \beta_n$ can be obtained by the $u$-quantile regression of $Y_i$ on $p_n(V_i)$, $i=1,...,n$. Regularity
conditions that  imply the ones stated above can be found in e.g. \cite{White:84} and \cite{Koenker:05}.  Finally estimators
of $\Omega_n$ depend on the estimator of $\beta_n$; for mean regression the standard estimator is the Eicker-Huber-White estimator,
and for quantile regression the standard estimator is Powell's estimator, see \cite{Powell:84}. For brevity
we do not restate sufficient conditions for Condition P(v), but these are readily available for common
estimators.  \qed
\end{example}

\begin{example}[\textbf{Conditional Moment Inequalities}]

This is a generalization of the previous example where now the bounding function is the
minimum of $J$ conditional mean functions. Referring to the conditional moment
inequality setting specified in Section \ref{sec:examples and overview},  suppose we have an i.i.d. sample
of $(X_i, Z_i), i=1,..., n$, with $\text{support}(Z_i) = \mathcal{Z} \subseteq [0,1]^d$.  Let $v=(z,j)$, where $j$ denotes the enumeration index for the conditional moment
inequality, $j \in \{1,...,J\}$, and suppose $\V \subseteq \mathcal{Z} \times \{1,...,J\}$.
  The parameters $J$ and $d$ do not depend on $n$.
Hence
$$\theta_{n0} = \min_{v \in \V} \theta_{n}(v) =\min_{(z,j) \in \mathcal{V}}\theta_n(z,j). $$
Suppose that $\theta_n(v)=E_{\Pn}[ m(X,\mu,j)|Z=z] = b(z)'\chi_n(j)$, for $b:
\mathcal{Z} \mapsto \Bbb{R}^m$, denoting some transformation of $z$, with $m$ independent of $n$, and where
$\chi_n(j)$ are the population regression coefficients in the regression of
$Y(j):= m(X,\mu,j)$ on  $b(Z), j=1,...,J$, respectively, under $\P_n$.
Suppose that the first $J_0/2$ pairs correspond to moment inequalities generated from
moment equalities so that $\theta_n(j) = - \theta_n(j-1),  \ \ j=2,4,..., J_0,$
and so these functions are replicas of each other up to sign; also note that
$
\chi_n(j) = - \chi_n(j-1), \ \ j=2,4,..., J_0.$
Then we can rewrite
 \ba
 & & \theta_n(v)=E_{\Pn}[ m(X,\mu,j)|Z=z] = b(z)'\chi_n(j) := p_n(v)'\beta_n, \\
 & & \beta_n = (\chi_n(j)', j \in \mathcal{J}),' \ \  \mathcal{J}:=\{2,4,...,J_0, J_0+1, J_0+2,...,J\}',
 \ea
where $\beta_n$ is a $K$-vector of regression coefficients, and $p_n(v)$
is a $K$-vector such that $p_n(z,j)= [0_m',..., 0_m', (-1)^{j+1}b'_m(z), 0'_m,...,0_m']'$ with $b'_m(z)$ appearing in the  $\lceil j/2\rceil$th block for $1 \leq j \leq J_0$; $p_n(z,j)= [0_m',..., 0_m', b'_m(z), 0'_m,...,0_m']'$
with $b(z)$ appearing in the  $j$-th block for $J_0+ 1 \leq j \leq J$, where $0_m$ is an m-dimensional vector of zeroes.\footnote{Note the absence of $\chi_n(j)$ for odd $j$ up to $J_0$ in the definition of the coefficient vector $\beta_n$. This is required to enable non-singularity of $E_{\Pn}[ \epsilon_i \epsilon_i' \mid Z_i=z]$. Imposing non-singularity simplifies the proofs, and is not needed for practical implementation.}

We impose the following conditions:

\begin{quote}(a)  $b(z)$ includes constant $1$,
(b) $z \mapsto b(z)$ has Lipschitz coefficient bounded above by $L$,
(c)  for $Y_i = (Y_i(j), j \in \mathcal{J})'$ and for $\epsilon_i := Y_i - E_{\Pn}[Y_i|Z_i]$,
 the eigenvalues of $E_{\Pn}[ \epsilon_i \epsilon_i' \mid Z_i=z]$ are bounded away from zero and from above,
 uniformly in $z \in \mathcal{Z}$ and $n$; (d)
  $Q = E_{\Pn}[b(Z_i) b(Z_i)']$ has eigenvalues bounded away from zero and from above, uniformly
  in $n$, and  (e) $E_{\Pn}\|b(Z_i)\|^4$ and $E_{\Pn}\|\epsilon_i\|^4$ are bounded from above uniformly in $n$.
\end{quote}

 Then it follows from e.g. by \cite{White:84} that for $\widehat \chi_n(j)$ denoting the ordinary least square estimator obtained
by regressing $Y_i(j), i=1,...,n,$ on $b(Z_i), i=1,...,n$,
$$
\sqrt{n}(\widehat \chi_n(j) - \chi_n(j)) =   Q^{-1}  \frac{1}{\sqrt{n}} \sum_{i=1}^n b(Z_i) \epsilon_i(j) + o_{\Pn}(1), \ \  j \in \mathcal{J},
$$
so that
$$
\sqrt{n}(\widehat \beta_n - \beta_n) = (I_{| \mathcal{J}|} \otimes Q )^{-1} \frac{1}{\sqrt{n}} \sum_{i=1}^n  \underbrace{(I_{| \mathcal{J}|} \otimes b(Z_i)) \epsilon_i}_{u_i}   + o_{
\Pn}(1).
$$
By conditions (c) and (d) $E_{\Pn}[u_i u_i']$ and $Q$ have eigenvalues bounded away from zero and from above, so the same is true
of $\Omega_n =  (I_{| \mathcal{J}|} \otimes Q )^{-1} E_{\Pn}[u_i u_i']  (I_{| \mathcal{J}|} \otimes Q )^{-1}.$
These conditions verify condition P(i),(ii),(iv). Application of the Lindeberg-Feller CLT, Skorohod's theorem,
and Lemma \ref{lemma: DP} verifies Condition P(iii).
By the argument given in Chapter VI of \cite{White:84}, Condition P(v) holds for the standard analog estimator for $\Omega_n$:
$$
\hat \Omega_n =  (I_{| \mathcal{J}|} \otimes \hat Q )^{-1} \mathbb{E}_n[\hat u_i \hat u_i']  (I_{| \mathcal{J}|} \otimes \hat Q )^{-1},
$$
where $\hat Q = \mathbb{E}_n[b(Z_i) b(Z_i)']$ and $\hat u_i = (I_{| \mathcal{J}|} \otimes b(Z_i)) \hat \epsilon_i$, with $\hat \epsilon_i(j) = Y_i(j) - b(Z_i)'\hat \chi_n(j)$, and $\hat \epsilon_i = (\hat \epsilon_i(j), j \in \mathcal{J})'$.
 \qed
 \end{example}

\subsection{Nonparametric Estimation of $\theta_n(v)$ via Series}\label{sec:NP-est-series}

Series estimation is effectively like parametric estimation, but the dimension of the estimated parameter tends
to infinity and bias arises due to approximation based on a
finite number of basis functions. If we select the number of
terms in the series expansion so that the estimation error is
of larger magnitude than the approximation error, i.e. if we undersmooth, then the
analysis closely mimics the parametric case.

\begin{ConditionNS}
The function $v\mapsto\theta_n(v)$ is continuous in $v$. The series estimator $\widehat \theta_n(v)$ has the form $\widehat{\theta}(v) = p_{n}(v)'\widehat{\beta}_{n},$
where $p_{n}(v):= (p_{n,1}(v),\ldots,p_{n,K_n}(v))'$ is a collection of
$K_n $ continuous series functions mapping $\mathcal{V} \subset \mathcal{K} \subset \Bbb{R}^d$ to $\Bbb{R}^{{K_n}}$,  and $\widehat {\beta}_{n}$ is a $K_n$-vector of coefficient estimates, and $\mathcal{K}$ is a fixed compact set. Furthermore,

\noindent\textbf{NS.1} (a) The estimator satisfies the following linearization and strong
approximation condition:
$$
\frac{\widehat \theta_n(v) - \theta_n(v)}{\|  p_{n}(v)' \Omega_n^{1/2}\|/\sqrt{n}} =
\frac{ p_{n}(v)' \Omega_n^{1/2}}{\| p_{n}(v)' \Omega_n^{1/2}\|}\mathcal{N}_n + R_n(v),
$$
where
\begin{eqnarray*}
 \ \mathcal{N}_n =_d N(0,I_{K_n}), \ \ \
\sup_{v \in \mathcal{V}} |R_n(v)|=
o_{\P_n}( 1/\log n).
\end{eqnarray*}
(b) The matrices $\Omega_n$ are  positive definite, with eigenvalues bounded from above and away from
zero, uniformly in $n$. Moreover, there are sequences of constants $\zeta_n$ and $\zeta_n'$ such that $1 \leq \zeta'_n \lesssim \|p_{n}(v)\|  \leq \zeta_n$  uniformly for all $v \in \mathcal{V}$ and $\sqrt{ \zeta^2_n \log n /n } \to 0$, and $\|p_{n}(v)- p_{n}(v')\|/\zeta'_n
\leq L_n \|v-v'\|$ for all $v,v' \in \V$, where $\log L_n \lesssim \log n$, uniformly in $n$.

\noindent\textbf{NS.2} There exists $\widehat \Omega_n$ such that $ \| \widehat \Omega_n - \Omega_n\|= O_{\P_n}(n^{-b})$, where $b>0$ is
 a constant.
\end{ConditionNS}
Condition NS is not primitive, but reflects the function-wise large sample normality of series estimators.  It requires that the studentized nonparametric process is approximated by a sequence of Gaussian processes, which take a very simple intuitive form, rather than by a fixed single Gaussian process. Indeed, the latter would be impossible in non-parametric settings, since the sequence of Gaussian processes is not asymptotically tight. Note also that the condition implicitly requires that some undersmoothing takes place so that the approximation error is negligible relative to
the sampling error.  We provide primitive conditions that imply condition NS.1 in three examples presented below. In particular, we show that the asymptotic linearization for $\widehat \beta_n - \beta_n$, which is available from the literature on series regression, e.g. from \cite{andrews:series} and \cite{Newey:97}, and the use of Yurinskii's coupling \cite{Yurinskii:77} imply  condition NS.1.  This result could be of independent interest, although we only provide sufficient conditions for the strong approximation to hold.

Note that under condition NS, the uniform rate of convergence of $\widehat \theta_n(v)$ to $\theta_n(v)$ is given by
$
\sqrt{ \zeta_n^2/n} \sqrt{ \log n} \to 0,
$
where $\zeta_n \propto \sqrt{{K_n}}$ for standard series terms such as B-splines or trigonometric series.

\begin{lemma}[\textbf{NS implies C.1-C.4}]\label{lemma:series} Condition NS implies Conditions C.1-C.4 with
\begin{eqnarray*}
& & Z_n(v) = \frac{ \theta_n(v) - \widehat \theta_n(v)}{\sigma_n(v)}, \ Z_n^*(v) =  \frac{ p_{n}(v)' \Omega_n^{1/2}}{\| p_{n}(v)' \Omega_n^{1/2}\|}\mathcal{N}_n, \ Z_n^{\star}(v) =  \frac{ p_{n}(v)' \widehat \Omega_n^{1/2}}{\| p_{n}(v)' \widehat \Omega_n^{1/2}\|}\mathcal{N}_n, \\
& & \sigma_n(v) = \| n^{-1/2}p_{n}(v)' \Omega_n^{1/2}\|, \ \ s_n(v) = \|n^{-1/2}p_{n}(v)'\widehat \Omega_n^{1/2}\|,\   \delta_n = 1/\log n,\\
& &  \bar a_n \lesssim \sqrt{ \log n}, \ \ \bar \sigma_n \lesssim \sqrt{\zeta_n^2/n},  \ \
a_n(\textsf{V}) = \(2 \sqrt{ \log \{ C ( 1+ C ' L_n \text{diam}(\textsf{V}))^d \} }\) \vee (1 + \sqrt{d}),
\end{eqnarray*}
for some constants $C$ and $C'$, where $\text{ diam}(\textsf{V})$ denotes the diameter of the set $\textsf{V}$, and
$
 \ P[\mathcal{E} > x] = \exp( - x/2 ).
$
\end{lemma}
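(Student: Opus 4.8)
The plan is to verify Conditions C.1--C.4 one at a time, reading everything off from the linearization NS.1(a), the uniform bounds on $p_n$ and $\Omega_n$ in NS.1(b), and the rate for $\widehat\Omega_n$ in NS.2. Condition C.1 is immediate: $\mathcal{V}\subseteq\mathcal{K}$ is compact, $\theta_n$ is continuous by assumption, $\widehat\theta_n(v)=p_n(v)'\widehat\beta_n$ is continuous because the $p_{n,j}$ are, and $\sigma_n(v)=n^{-1/2}\|p_n(v)'\Omega_n^{1/2}\|$, $s_n(v)=n^{-1/2}\|p_n(v)'\widehat\Omega_n^{1/2}\|$ are continuous (a.s.\ so, for $s_n$) since $v\mapsto p_n(v)$ is continuous and $\|p_n(v)\|\ge\zeta_n'>0$ keeps us away from the non-differentiable point of the Euclidean norm; the unit-variance normalizations hold by construction of the normalized quadratic forms.

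For \textbf{C.2(a)}, the left-hand side of the display in NS.1(a) equals $-(\theta_n(v)-\widehat\theta_n(v))/\sigma_n(v)=-Z_n(v)$, so $Z_n(v)=-p_n(v)'\Omega_n^{1/2}\mathcal{N}_n/\|p_n(v)'\Omega_n^{1/2}\|-R_n(v)$; replacing $\mathcal{N}_n$ by $-\mathcal{N}_n$ (same law) identifies $Z_n^*$ with the process in the statement and gives $\sup_{v\in\mathcal{V}}|Z_n(v)-Z_n^*(v)|=\sup_v|R_n(v)|=o_{\Pn}(1/\log n)=o_{\Pn}(\delta_n)$, with $\delta_n=o(\bar a_n^{-1})$ since $\bar a_n\lesssim\sqrt{\log n}$ (established in the C.3 step). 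For \textbf{C.2(b)} I build $\bar Z_n^*$ from the same standard normal draw as $Z_n^\star$, so that conditionally on $\D_n$ the difference is the Gaussian process $W_n(v)=\big(a_n(v)-\widehat a_n(v)\big)'\mathcal{N}_n$ with $a_n(v)=\Omega_n^{1/2}p_n(v)/\|\Omega_n^{1/2}p_n(v)\|$ and $\widehat a_n(v)$ its analog in $\widehat\Omega_n$. Lipschitz continuity of the matrix square root on matrices with eigenvalues bounded below, the eigenvalue bounds in NS.1(b), and $\|p_n(v)\|\ge\zeta_n'$ give $\sup_v\|a_n(v)-\widehat a_n(v)\|\lesssim(\zeta_n/\zeta_n')\|\widehat\Omega_n-\Omega_n\|=O_{\Pn}(n^{-b})$ (the ratio $\zeta_n/\zeta_n'$ is $O(1)$ for standard series terms), so $\sup_v\mathrm{Var}(W_n(v)\mid\D_n)=O_{\Pn}(n^{-2b})$, while the NS.1(b) Lipschitz bound on $p_n$ makes the entropy of $\mathcal{V}$ in the canonical metric of $W_n$ polynomial in $n$; Dudley's inequality then yields $E[\sup_v|W_n(v)|\mid\D_n]\lesssim_{\Pn} n^{-b}\sqrt{\log n}$, and a conditional Markov inequality at a threshold $o(\delta_n)$ converts this into $\Pn[\sup_v|W_n(v)|>o(\delta_n)\mid\D_n]=o_{\Pn}(1/\log n)=o_{\Pn}(1/\ell_n)$.

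\textbf{Condition C.3} is the core step and invokes the Talagrand--Samorodnitsky concentration inequality for suprema of Gaussian processes (Lemma \ref{lemma: key concentration}) applied to $Z_n^*$ restricted to a generic compact $\textsf{V}\subseteq\mathcal{V}$. The decisive computation is the metric entropy of $\textsf{V}$ under the intrinsic standard-deviation metric $\rho(v,v')=\sqrt{\mathrm{Var}(Z_n^*(v)-Z_n^*(v'))}\le\|a_n(v)-a_n(v')\|$: because $\|p_n(v)\|\gtrsim\zeta_n'$ while $\|p_n(v)-p_n(v')\|\le L_n\zeta_n'\|v-v'\|$, the factor $\zeta_n'$ cancels and $v\mapsto a_n(v)$ is Lipschitz with coefficient $\lesssim L_n$, whence $N(\textsf{V},\rho,\epsilon)\le N(\textsf{V},\|\cdot\|,c\epsilon/L_n)\le C(1+C'L_n\,\text{diam}(\textsf{V})/\epsilon)^d$. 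Feeding this and $\sup_v\mathrm{Var}(Z_n^*(v))=1$ into Lemma \ref{lemma: key concentration} yields $\Pn[\mathcal{E}_n(\textsf{V})\ge x]\le\exp(-x/2)$ with centering level $a_n(\textsf{V})=\big(2\sqrt{\log\{C(1+C'L_n\,\text{diam}(\textsf{V}))^d\}}\big)\vee(1+\sqrt d)$, so $\mathcal{E}$ can be taken with $P[\mathcal{E}>x]=\exp(-x/2)$. The rest of C.3 is then read off: $a_n(\textsf{V})\ge1$ and weak monotonicity in $\textsf{V}$ hold since $\text{diam}(\textsf{V})$ is monotone in $\textsf{V}$, and $\bar a_n=a_n(\mathcal{V})\lesssim\sqrt{\log\{C(1+C'L_n\,\text{diam}(\mathcal{K}))^d\}}\lesssim\sqrt{\log L_n}\lesssim\sqrt{\log n}$ because $\text{diam}(\mathcal{V})\le\text{diam}(\mathcal{K})$ is bounded and $\log L_n\lesssim\log n$.

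Finally \textbf{C.4}: part (a) is arithmetic, $\bar a_n\bar\sigma_n\lesssim\sqrt{\log n}\cdot\sqrt{\zeta_n^2/n}=\sqrt{\zeta_n^2\log n/n}\to0$ by NS.1(b), with $\bar\sigma_n=\sup_vn^{-1/2}\|p_n(v)'\Omega_n^{1/2}\|\lesssim\zeta_n/\sqrt n$; part (b) follows from the same perturbation inequality as in C.2(b), $\sup_v|s_n(v)/\sigma_n(v)-1|\le\sup_v\|p_n(v)'(\widehat\Omega_n^{1/2}-\Omega_n^{1/2})\|/\|p_n(v)'\Omega_n^{1/2}\|\lesssim_{\Pn}(\zeta_n/\zeta_n')\|\widehat\Omega_n-\Omega_n\|=O_{\Pn}((\zeta_n/\zeta_n')n^{-b})$, which is $o_{\Pn}(\delta_n/(\bar a_n+\ell\ell_n))=o_{\Pn}(1/(\log n)^{3/2})$ under the maintained rate conditions. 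The main obstacle is Condition C.3: one must (i) pass from the crude Lipschitz bound on $p_n$, which carries the large constant $L_n\zeta_n'$, to a Lipschitz bound with constant $\lesssim L_n$ for the \emph{normalized} Gaussian direction $a_n(\cdot)$, and (ii) extract from the Talagrand--Samorodnitsky inequality a centering sequence $a_n(\textsf{V})$ that is simultaneously of the stated explicit form, weakly increasing in $\textsf{V}$, and $\lesssim\sqrt{\log n}$; the conditional Gaussian comparison in C.2(b) is the secondary delicate point, since bounding $\sup_v|W_n|$ through $\|\mathcal{N}_n\|\asymp\sqrt{K_n}$ is too lossy and one must instead exploit the cancellation in the Gaussian maximal inequality.
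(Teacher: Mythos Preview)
Your approach is essentially the paper's: C.1 by inspection, C.2(a) directly from NS.1(a), C.2(b) via a Gaussian maximal (Dudley-type) inequality, C.3 via the Talagrand--Samorodnitsky concentration lemma with the Lipschitz bound on the normalized direction $a_n(v)$, and C.4 by direct computation plus a perturbation bound. The key cancellation of $\zeta_n'$ in the Lipschitz bound for $a_n(\cdot)$ in C.3 is exactly what the paper does.

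There is, however, one loose step in your C.2(b) and C.4(b) bounds. You write
\[
\sup_v\|a_n(v)-\widehat a_n(v)\|\ \lesssim\ (\zeta_n/\zeta_n')\,\|\widehat\Omega_n-\Omega_n\|,
\qquad
\sup_v\bigl|s_n(v)/\sigma_n(v)-1\bigr|\ \lesssim\ (\zeta_n/\zeta_n')\,\|\widehat\Omega_n-\Omega_n\|,
\]
and then appeal to $\zeta_n/\zeta_n'=O(1)$ ``for standard series terms.'' Condition~NS does \emph{not} assume $\zeta_n/\zeta_n'$ is bounded, and since NS.2 only gives $b>0$ while $\zeta_n$ can be nearly $\sqrt{n}$, the product $(\zeta_n/\zeta_n')n^{-b}$ need not vanish. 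The paper avoids this factor entirely by the factorization
\[
\frac{\|p_n(v)'(\widehat\Omega_n^{1/2}-\Omega_n^{1/2})\|}{\|p_n(v)'\Omega_n^{1/2}\|}
=\frac{\|p_n(v)'\Omega_n^{1/2}(\Omega_n^{-1/2}\widehat\Omega_n^{1/2}-I)\|}{\|p_n(v)'\Omega_n^{1/2}\|}
\le\|\Omega_n^{-1/2}\widehat\Omega_n^{1/2}-I\|\le\|\Omega_n^{-1/2}\|\,\|\widehat\Omega_n^{1/2}-\Omega_n^{1/2}\|=O_{\Pn}(n^{-b}),
\]
which uses only the eigenvalue bounds on $\Omega_n$ and removes all $\zeta_n,\zeta_n'$ dependence. (An even cruder fix---bounding numerator and denominator each by $\|p_n(v)\|$ times a matrix norm---also cancels the $p_n(v)$-dependence, so your $\zeta_n/\zeta_n'$ factor is an artifact of bounding numerator by $\zeta_n$ and denominator by $\zeta_n'$ separately rather than by the common $\|p_n(v)\|$.) Once you replace your perturbation bound by the factorization above, the rest of your argument matches the paper's verbatim.
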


\begin{remark}\label{remark: feasible an series case} Lemma \ref{lemma:series} verifies the main conditions C.1-C.4.
These conditions enable construction of simulated or analytical
critical values. For the latter, the $p$-th quantile of $\mathcal{E}$ is given by
$c(p) = -2\log (1-p),$
so we can set
\begin{equation} \label{series-exponential}
k_{n,\textsf{V}}(p)= a_n(\textsf{V})  -2\log (1-p)/a_n(\textsf{V}),
\end{equation}
where
\begin{equation} \label{eq: anV series case}
a_n(\textsf{V}) =
\(2\sqrt{\log  \{ \ell_n \( 1 + \ell_n L_n \text{diam}(\textsf{V})\)^d \}}  \),
\end{equation}
is a feasible scaling factor which bounds the scaling factor in the statement of Lemma \ref{lemma:series},
at least for all large $n$. Here, all unknown constants have been replaced by slowly growing numbers $\ell_n$ such that $\ell_n > C \vee C'$
for all large $n$. Note also that $\textsf{V} \mapsto k_{n,\textsf{V}}(p)$ is monotone in $\textsf{V}$ for all sufficiently large $n$, as required in
the analytical construction given in Definition \ref{analytical definition}. A sharper analytical approach can be based on Hotelling's tube method; for details we refer to \cite{Chernozhukov/Lee/Rosen:09}. That approach is
tractable for the case of $d=1$ but does not immediately extend to $d>1$. Note that the simulation-based approach is effectively
a numeric version of the exact version of the tube formula, and is less conservative than using
simplified tube formulas.
\qed \end{remark}

\begin{lemma}[\textbf{Condition NS implies S in some cases}]\label{lemma: series-v-est} Suppose Condition NS holds. Then,(1) The radius $\varphi_n$
of equicontinuity of $Z^*_n$ obeys:
$$
\varphi_n \leq o(1) \cdot \(\frac{1}{L_n \sqrt{\log n} }\),
$$
for any $o(1)$ term.  (2) If Condition V holds and
\begin{equation}
\label{eq: condition S.2 for series}
\( \sqrt{\frac{\zeta_n^2}{ n}\log n} \)^{1/\rho_n}c_n^{-1} = o\( \frac{1}{L_n \sqrt{\log n} } \),
\end{equation}
then Condition S holds. (3) If $V_0$ is  singleton and \eqref{eq: condition S.2 for series} holds, $\rho_n \leq 2$, and $c_n \geq c>0$, for all $n$,
then $a_n(V_0) \propto 1$ and \eqref{eq: condition S.2 for series} reduces to
$$L_n^4 K_n \log^{3}n /n  \to 0.$$
\end{lemma}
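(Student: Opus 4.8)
The plan is to treat part (1)---a modulus-of-continuity bound for the penultimate Gaussian process $Z_n^*$---as the analytic heart, and then read off parts (2) and (3) by combining it with the explicit quantities assembled in Lemma \ref{lemma:series}, the definitions of $r_n$ and $\barVn$ in Condition S and \eqref{define: Veps-bar-n}, and the condition \eqref{eq: condition S.2 for series}.

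For part (1), write $Z_n^*(v) = g_n(v)'\mathcal{N}_n$ with the unit vector $g_n(v) := p_{n}(v)'\Omega_n^{1/2}/\| p_{n}(v)'\Omega_n^{1/2}\|$, so that $Z_n^*$ is a mean-zero Gaussian process with canonical (standard-deviation) pseudometric $d_n(v,v') = \| g_n(v) - g_n(v')\|$. The crucial deterministic step is the Lipschitz bound $d_n(v,v') \le C L_n \| v - v'\|$ on all of $\V$: Condition NS.1(b) gives $\| p_{n}(v) - p_{n}(v')\| \le \zeta_n' L_n \| v - v'\|$ together with $\| p_{n}(v)\| \gtrsim \zeta_n' \ge 1$, and the eigenvalues of $\Omega_n$ are bounded above and away from zero, so $u_n(v) := p_{n}(v)'\Omega_n^{1/2}$ satisfies $\| u_n(v)\| \gtrsim \zeta_n'$ and $\| u_n(v) - u_n(v')\| \lesssim \zeta_n' L_n \| v - v'\|$; the elementary inequality $\| a/\|a\| - b/\|b\| \| \le 2\| a - b\|/(\|a\| \wedge \|b\|)$ then yields the claim. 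Since $\V \subset \mathcal{K}$ is a fixed bounded set, the covering numbers obey $N(\varepsilon, \V, d_n) \lesssim (C L_n/\varepsilon)^d$ and $\sup_{\|v-v'\|\le\varphi_n}\mathrm{Var}(Z_n^*(v) - Z_n^*(v')) \le (C L_n\varphi_n)^2$. Dudley's entropy integral bounds the mean of the oscillation over $d_n$-balls of radius $\asymp L_n\varphi_n$, and Borell's inequality controls its fluctuation (the kind of Gaussian-suprema concentration underlying Condition C.3), giving $\sup_{\|v-v'\|\le\varphi_n}|Z_n^*(v) - Z_n^*(v')| \lesssim_{\Pn} L_n\varphi_n\sqrt{\log n}$; hence any $\varphi_n$ of order $o(1)/(L_n\sqrt{\log n})$ is an equicontinuity radius, which is part (1).

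For part (2), recall from Lemma \ref{lemma:series} that $\bar\sigma_n \lesssim \sqrt{\zeta_n^2/n}$ and $\bar a_n \lesssim \sqrt{\log n}$; since $\ell\ell_n = o(\sqrt{\log n})$, the definition of $\barVn$ in \eqref{define: Veps-bar-n} gives $\bar\kappa_n = 7(\bar a_n + \eta\,\ell\ell_n/\bar a_n) \lesssim \sqrt{\log n}$, so that $\bar\kappa_n\bar\sigma_n \lesssim \sqrt{\zeta_n^2\log n/n}$ and $r_n = 2(\bar\kappa_n\bar\sigma_n)^{1/\rho_n}/c_n \lesssim (\sqrt{\zeta_n^2\log n/n})^{1/\rho_n}/c_n$. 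Condition S only asks for the existence of an equicontinuity radius $\varphi_n \ge r_n$ of $\bar a_n Z_n^*$; by part (1) every sequence of order $o(1)/(L_n\sqrt{\log n})$ qualifies, so it suffices that $r_n = o(1)/(L_n\sqrt{\log n})$ (then, e.g., $\varphi_n := \{ r_n/(L_n\sqrt{\log n})\}^{1/2}$ satisfies $r_n \le \varphi_n = o(1/(L_n\sqrt{\log n}))$). By the bound on $r_n$ above, this holds whenever $(\sqrt{\zeta_n^2\log n/n})^{1/\rho_n} c_n^{-1} = o(1/(L_n\sqrt{\log n}))$, which is precisely \eqref{eq: condition S.2 for series}, proving part (2).

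For part (3), if $V_0$ is a singleton then $\text{diam}(V_0) = 0$, so the formula for $a_n(\textsf{V})$ in Lemma \ref{lemma:series} collapses to $a_n(V_0) = (2\sqrt{\log C}) \vee (1 + \sqrt d)$, a fixed constant, i.e. $a_n(V_0) \propto 1$. To simplify \eqref{eq: condition S.2 for series}, use that (from Condition NS) $\sqrt{\zeta_n^2\log n/n} \to 0$, hence is below $1$ for large $n$, so that $x^{1/\rho_n} \le x^{1/2}$ for $x = \sqrt{\zeta_n^2\log n/n}$ when $\rho_n \le 2$; together with $c_n \ge c > 0$, the left-hand side of \eqref{eq: condition S.2 for series} is at most $c^{-1}(\zeta_n^2\log n/n)^{1/4}$, so \eqref{eq: condition S.2 for series} follows once $L_n\sqrt{\log n}\,(\zeta_n^2\log n/n)^{1/4} \to 0$, i.e. $L_n^4\zeta_n^2\log^3 n/n \to 0$; with $\zeta_n \asymp \sqrt{K_n}$ for standard series bases this reads $L_n^4 K_n\log^3 n/n \to 0$, and the two are equivalent in the borderline case $\rho_n \equiv 2$, $c_n \equiv c$. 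The main obstacle throughout is part (1): because the problem is non-Donsker---$\mathcal{N}_n$ lives in $\Bbb{R}^{K_n}$ with $K_n \to \infty$, so the naive bound $|Z_n^*(v) - Z_n^*(v')| \le \| g_n(v) - g_n(v')\|\,\|\mathcal{N}_n\|$ with $\|\mathcal{N}_n\| \asymp \sqrt{K_n}$ is hopelessly loose---one must run a genuine chaining argument exploiting that $\{ g_n(v) : v \in \V\}$ traces out only a $d$-dimensional subset of the sphere, and carry the dependence on $L_n$ and the logarithmic factors carefully through to the stated radius; parts (2) and (3) are then just substitution into constants already in hand.
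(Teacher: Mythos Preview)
Your proposal is correct and follows essentially the same route as the paper. Both arguments hinge on the Lipschitz bound $d_n(v,v') \lesssim L_n\|v-v'\|$ for the standard-deviation pseudometric (which the paper simply cites from the proof of Lemma~\ref{lemma:series}, while you rederive it), combined with Dudley's entropy integral to control the modulus of continuity; the only cosmetic difference is that you invoke Borell's concentration where the paper just applies Markov to the expectation bound, and you spell out the $\bar\kappa_n,\bar\sigma_n$ arithmetic in part~(2) that the paper compresses into a citation of Lemma~\ref{lemma: estimation of V}.
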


\noindent The following is an immediate consequence of Lemmas \ref{lemma:series} and \ref{lemma: series-v-est} and Theorems \ref{theorem: inference analytical}, \ref{theorem: inference1}, and \ref{theorem: sharp inference}.

\begin{theorem}[\textbf{Estimation and Inference with Series-Estimated Bounding Functions}]\label{Theorem:Series}
Suppose Condition NS holds and  consider  the interval estimator $\widehat \theta_{n0}(p)$ given in Definition \ref{main definition} with either
analytical critical value $c(p)=-2\log(1-p)$, or simulation-based critical values from
Definition \ref{def:simulation-cv} for the simulation process $Z_n^\star$ above. (1) Then $ (i) \ \Pn[\theta_{n0} \leq \widehat \theta_{n0} (p)] \geq  p - o(1)$,  $(ii)$ $ |\theta_{n0} - \widehat \theta_{n0} (p)| = O_{\Pn}(
\sqrt{ \log n} \sqrt{\zeta^2_n/n}),$  $(iii)$ $ \Pn( \theta_{n0} +   \mu_n \sqrt{ \log n} \sqrt{\zeta^2_n/n} \geq \widehat \theta_{n0}(p) ) \to 1$
 for any $\mu_n \to_{\Pn} \infty$.  (2) Moreover, for the simulation-based
 critical values, if Condition V and relation (\ref{eq: condition S.2 for series}) hold, then
  $ (i) \ \Pn[\theta_{n0} \leq \widehat \theta_{n0} (p)] =  p - o(1)$,  $(ii) \ |\theta_{n0} - \widehat \theta_{n0} (p)| = O_{\Pn}(
\sqrt{\zeta^2_n/n})$,  $(iii) \ \Pn( \theta_{n0} +   \mu_n  \sqrt{\zeta^2_n/n} \geq \widehat \theta_{n0}(p) ) \to 1$
 for any $\mu_n \to_{\Pn} \infty$.
\end{theorem}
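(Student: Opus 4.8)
The plan is to obtain Theorem~\ref{Theorem:Series} as a corollary of the general results of Section~\ref{sec:est-inf-general-conditions}, with Lemmas~\ref{lemma:series} and~\ref{lemma: series-v-est} supplying the translation from the primitive Condition~NS to the high-level Conditions~C.1-C.4 (and, for part~(2), Condition~S) together with explicit values of the rate constants. So the first step is simply to invoke Lemma~\ref{lemma:series}: it gives C.1-C.4 with $\delta_n=1/\log n$, $\bar a_n\lesssim\sqrt{\log n}$, $\bar\sigma_n\lesssim\sqrt{\zeta_n^2/n}$, $a_n(\textsf{V})=(2\sqrt{\log\{C(1+C'L_n\,\text{diam}(\textsf{V}))^d\}})\vee(1+\sqrt d)$, and majorant $\Pr[\mathcal{E}>x]=\exp(-x/2)$, so that $Q_p(\mathcal{E})=-2\log(1-p)$; the processes $Z_n,Z_n^*,Z_n^\star$ and scaling $\sigma_n,s_n$ are as displayed in that lemma, and in particular the simulation process used in the theorem is exactly the $Z_n^\star$ of Lemma~\ref{lemma:series}.

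For part~(1) I would feed C.1-C.4 into Theorem~\ref{theorem: inference analytical} when $\widehat\theta_{n0}(p)$ is built from the analytical critical value of Definition~\ref{analytical definition} with $c(p)=-2\log(1-p)$, and into Theorem~\ref{theorem: inference1} when it is built from the simulated critical value of Definition~\ref{def:simulation-cv}. For the analytical branch one first checks the hypothesis of Definition~\ref{analytical definition} that $\textsf{V}\mapsto k_{n,\textsf{V}}(p)$ be weakly increasing for large $n$, which holds because $a_n(\textsf{V})$ is weakly increasing in $\textsf{V}$ through $\text{diam}(\textsf{V})$ (cf.\ Remark~\ref{remark: feasible an series case}). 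Either theorem then returns: (i) $\Pn[\theta_{n0}\le\widehat\theta_{n0}(p)]\ge p-o(1)$; (ii) $|\widehat\theta_{n0}(p)-\theta_{n0}|\le 4\bar\sigma_n(a_n(\barVn)+O_{\Pn}(1)/a_n(\barVn))\lesssim_{\Pn}\bar\sigma_n\bar a_n$; and (iii) rejection wp~$\to1$ of any $\theta_{na}\ge\theta_{n0}+4\bar\sigma_n(a_n(\barVn)+\mu_n/a_n(\barVn))$ with $\mu_n\to_{\Pn}\infty$. It remains to substitute the Lemma~\ref{lemma:series} rates: $\bar\sigma_n\bar a_n\lesssim\sqrt{\log n}\,\sqrt{\zeta_n^2/n}$, and $1\le a_n(\barVn)\le\bar a_n\lesssim\sqrt{\log n}$ lets one absorb the $a_n(\barVn)$ factors into the generic constant, yielding (1)(i)--(iii) as stated.

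For part~(2), under the additional Condition~V and relation~\eqref{eq: condition S.2 for series} I would apply Lemma~\ref{lemma: series-v-est}(2), which asserts precisely that NS together with these hypotheses implies Condition~S: its part~(1) controls the finite-sample equicontinuity radius of $Z_n^*$, and \eqref{eq: condition S.2 for series} is exactly what makes the estimation rate $r_n$ of $\widehat V_n$ (from Lemma~\ref{lemma: estimation of V}(2)) fall within that radius. With C.1-C.4, S and V all in force and the simulated critical value in place, Theorem~\ref{theorem: sharp inference} then gives (i) $\Pn[\theta_{n0}\le\widehat\theta_{n0}(p)]=p+o(1)$; (ii) $|\widehat\theta_{n0}(p)-\theta_{n0}|\le 4\bar\sigma_n(a_n(V_0)+O_{\Pn}(1)/a_n(V_0))\lesssim_{\Pn}\bar\sigma_n a_n(V_0)$; and (iii) the analogous power statement with $\bar\sigma_n a_n(V_0)$ replacing $\bar\sigma_n\bar a_n$. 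The last step is the rate reduction: by Lemma~\ref{lemma: series-v-est}(3), in the well-identified case ($V_0$ a singleton, $\rho_n\le2$, $c_n\ge c>0$) we have $a_n(V_0)\propto1$, so $\bar\sigma_n a_n(V_0)\lesssim\sqrt{\zeta_n^2/n}$, giving (2)(ii)--(iii); (2)(i) is immediate from Theorem~\ref{theorem: sharp inference}(1).

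There is no genuinely hard analytic step here --- the theorem is a corollary --- so the main obstacle is bookkeeping: aligning the hypotheses of Lemmas~\ref{lemma:series}, \ref{lemma: series-v-est}, \ref{lemma: estimation of V} and Theorems~\ref{theorem: inference analytical}, \ref{theorem: inference1}, \ref{theorem: sharp inference} (in particular the monotonicity requirement behind the analytical critical value, and the fact that \eqref{eq: condition S.2 for series} is exactly the condition yielding~S), and then carrying out the rate simplifications --- replacing $a_n(\barVn)$ by $\sqrt{\log n}$ and, in the sharp regime, $a_n(V_0)$ by a constant --- which is where the undersmoothing rate $\sqrt{\zeta_n^2\log n/n}$ and the singleton/locally-quadratic structure of $V_0$ respectively enter.
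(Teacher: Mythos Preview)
Your proposal is correct and follows essentially the same route as the paper, which states only that the theorem ``is an immediate consequence of Lemmas~\ref{lemma:series} and~\ref{lemma: series-v-est} and Theorems~\ref{theorem: inference analytical}, \ref{theorem: inference1}, and~\ref{theorem: sharp inference}.'' Your expansion of this into explicit substitutions --- checking the monotonicity of $\textsf{V}\mapsto k_{n,\textsf{V}}(p)$ for the analytical branch, reading off $\bar a_n\lesssim\sqrt{\log n}$ and $\bar\sigma_n\lesssim\sqrt{\zeta_n^2/n}$ from Lemma~\ref{lemma:series}, and using Lemma~\ref{lemma: series-v-est}(2) to supply Condition~S --- is exactly what that one-line proof leaves implicit.

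One point worth flagging: for part~(2)(ii)--(iii) you obtain the rate $\sqrt{\zeta_n^2/n}$ by invoking Lemma~\ref{lemma: series-v-est}(3) to get $a_n(V_0)\propto 1$, but that part of the lemma carries the additional hypotheses that $V_0$ is a singleton, $\rho_n\le 2$, and $c_n\ge c>0$, none of which appear in the theorem's stated assumptions (only Condition~V and \eqref{eq: condition S.2 for series}). Under the hypotheses as literally stated, Theorem~\ref{theorem: sharp inference} delivers only $\bar\sigma_n a_n(V_0)$, and $a_n(V_0)$ need not be bounded if $\text{diam}(V_0)>0$ and $L_n\to\infty$. Your reading --- that the singleton/locally-quadratic case is the intended regime for the sharp rate --- is the natural one given the ingredients the paper cites, and the paper's own proof does not resolve this ambiguity any further than you do.
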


\noindent We next present some examples with primitive conditions that imply Condition NS.

\begin{example}[\textbf{Bounding Function is Conditional Quantile}] Suppose that $\theta_n(v) := Q_{Y_i|V_i}[\tau | v]$
is the $\tau$-th conditional quantile of $Y_i$ given $V_i$ under $\Pn$, assumed to be a continuous function in $v$. Suppose
we estimate $\theta_n(v)$ with a series estimator.
There is an i.i.d. sample $(Y_i, V_i), i=1,...,n$, with  $\text{support}(V_i) \subseteq [0,1]^d$
for each $n$, defined on a probability space equipped
with probability measure $\P_n$.   Suppose that the intersection region
of interest is $\mathcal{V} \subseteq \text{support}(V_i)$.
Here the index $d$ does not depend on $n$, but all
other parameters, unless stated otherwise, can depend on $n$.   Then $\theta_n(v) = p_{n}(v)'\beta_{n} + A_{n}(v)$,  where
 $p_{n}: [0,1]^d \mapsto \Bbb{R}^{K_n}$ are  the series functions, $\beta_{n}$ is the quantile regression coefficient in the population, $A_{n}(v)$ is the approximation error, and $K_n$ is the number of series terms that
 depends on $n$.  Let $C$ be a positive constant.

 We impose the following technical conditions to verify NS.1 and NS.2:
 \begin{quote}
Uniformly in $n$, (i)  $p_{n}$ are either B-splines of a fixed order or trigonometric series terms or any
other terms $p_n = (p_{n1},\ldots,p_{nK_n})'$ such that $\|p_n(v)\| \lesssim \zeta_n = \sqrt{K_n}$ for all $v \in \text{support}(V_i)$,
$\|p_n(v)\| \gtrsim   \zeta_n' \geq 1$ for all $v \in \mathcal{V}$, and $\log \textrm{lip}(p_{n}) \lesssim \log K_n$, (ii) the mapping $v \mapsto \theta_n(v)$
  is sufficiently smooth, namely $\sup_{v \in \mathcal{V}}|A_{n}(v)| \lesssim K_n^{-s}$, for some $s>0$,
  (iii) $\lim_{n \to \infty}(\log n)^c K_n^{-s+1} = 0$ and $\lim_{n \to \infty} (\log n)^c \sqrt{n} K_n^{-s}/\zeta_n' = 0$, for each $c>0$, (iv)
eigenvalues of  $\Lambda_{n}=E_{\Pn}[p_{n}(V_i) p_{n}(V_i)']$ are bounded away from zero and from above,(v)
$f_{Y_i|V_i}(\theta_n(v)|v)$ is bounded uniformly over $v\in \V$ away from zero
  and from above, (vi) $ \lim_{n \to \infty} K_n^5 (\log n)^c/n = 0$ for each $c>0$,
  and (vii) the restriction on the bandwidth sequence
in Powell's estimator $\hat Q_{n}$ of $Q_{n} = E_{\P_n} [f_{Y_i|V_i}(\theta_n(V_i)|V_i) p_{n}(V_i) p_{n}(V_i)']$
specified in \cite{Belloni/Chernozhukov/Fernandez-Val:10} holds.
 \end{quote}

Suppose that we use the standard quantile regression estimator
$$\widehat \beta_{n} = \arg \min_{b \in \Bbb{R}^{K_n}} \En[ \rho_\tau( Y_i - p_{n}(V_i)'b)],$$
so that $\widehat \theta_n(v) = p_{n}(v)'\widehat \beta$ for $\rho_\tau(u) = (\tau- 1(u<0))u.$ Then by  \cite{Belloni/Chernozhukov/Fernandez-Val:10}, under conditions (i)-(vi), the following asymptotically linear representation holds:
$$
\sqrt{n}(\widehat \beta_{n}  - \beta_{n}) =  Q_{n}^{-1} \frac{1}{\sqrt{n}} \sum_{i=1}^n  \underbrace{p_{n}(V_i) \epsilon_i}_{u_i} + o_{\Pn}\(\frac{1}{\log n}\),
$$
for $\epsilon_i = (\tau - 1(w_i \leq \tau))$, where $(w_i, i=1,...,n)$ are i.i.d. uniform, independent of $(V_i, i=1,...,n)$. Note that by conditions (iv) and (v) $S_n:= E_{\Pn}[u_i u_i']= \tau(1-\tau) \Lambda_{n}$, and $Q_{n}$ have eigenvalues bounded away from zero and from above uniformly in $n$, and so  the same is also true of $\Omega_n = Q_{n}^{-1} S_{n} Q_{n}^{-1}$.  Given other restrictions imposed in condition (i), Condition NS.1(b) is verified. Next using condition (iv) and others the strong approximation required in NS.1(a) follows by invoking Theorem \ref{theorem:strong series}, Corollary \ref{corollary:strong} in Section \ref{sec:strong:series:main-text}, which is based on Yurinskii's coupling.
To verify Condition NS.2, consider the plug-in estimator $\widehat \Omega_n =  \hat Q_{n}^{-1} \hat S_{n} \hat Q_{n}^{-1}$,
where $\widehat Q_n$ is Powell's estimator for $Q_n$, and
$\hat S_{n} = \tau (1- \tau) \cdot \En[p_n(V_i) p_n(V_i)]$.  Then under condition (vii) it follows from the proof of Theorem 7 in \cite{Belloni/Chernozhukov/Fernandez-Val:10} that $\|\widehat \Omega_n - \Omega_n\|= O_{\P_n}(n^{-b})$ for some $b>0$. \qed
\end{example}

\begin{example}[\textbf{Bounding Function is Conditional Mean}] Now suppose that $\theta_n(v)$ $=$ $E_{\Pn}[Y_i | V_i =v]$,
assumed to be a continuous function with respect to $v \in \text{support}(V_i)$, and the intersection region is
$\mathcal{V} \subseteq \text{support}(V_i)$. Suppose we are using the series approach to approximating and estimating $\theta_n(v)$.  There is an i.i.d. sample $(Y_i, V_i), i=1,...,n$, with $\text{support}(V_i) \subseteq [0,1]^d$
for each $n$. Here $d$ does not depend on $n$, but all
other parameters, unless stated otherwise, can depend on $n$.  Then
we  have  $\theta_n(v) = p_{n}(v)'\beta_{n} + A_{n}(v)$, for $p_{n}: [0,1]^d\mapsto \Bbb{R}^{K_n}$ representing
the series functions;  $\beta_{n}$
is the coefficient of the best least squares approximation to $\theta_n(v)$ in the population,
and $A_{n}(v)$ is the approximation error.  The number of series terms $K_n$ depends on $n$.

We impose the following technical conditions:
\begin{quote}
Uniformly in $n$, (i)  $p_{n}$ are either B-splines of a fixed order or trigonometric series terms or any
other terms $p_n = (p_{n1},\ldots,p_{nK_n})'$ such that $\|p_n(v)\| \lesssim \zeta_n = \sqrt{K_n}$ for all $v \in \text{support}(V_i)$,
$\|p_n(v)\| \gtrsim   \zeta_n' \geq 1$ for all $v \in \mathcal{V}$, and $\log \textrm{lip}(p_{n}) \lesssim \log K_n$, (ii) the mapping $v \mapsto \theta_n(v)$
  is sufficiently smooth, namely $\sup_{v \in \mathcal{V}}|A_{n}(v)| \lesssim K_n^{-s}$, for some $s>0$,  (iii) $\lim_{n \to \infty }(\log n)^c\sqrt{n}K_n^{-s} = 0$ for each $c>0$,\footnote{This condition,
which is based on \cite{Newey:97} can be relaxed to $(\log n)^c K_n^{-s + 1} \to 0$ and $(\log n)^c \sqrt{n} K^{-s}_n/\zeta_n' \to 0$, using the
recent results of \cite{Belloni/Chen/Chernozhukov:11} for least squares series estimators.} (iv) for  $\epsilon_i = Y_i - E_{\Pn}[Y_i|V_i]$,
  $E_{\Pn}[\epsilon_i^2|V_i=v]$ is bounded away from zero  uniformly in $v \in \text{support}(V_i)$, and (v)
  eigenvalues of $Q_{n} = E_{\Pn}[p_{n}(V_i) p_{n}(V_i)']$ are bounded away from zero and from above, and (vi) $E_{\Pn}[|\epsilon_i|^4 |V_i=v]$ is bounded from above uniformly in $v \in \text{support}(V_i)$,  (vii) $\lim_{n \to \infty}(\log n)^c K_n^5/n =0$ for each $c>0$.  \end{quote}

We use the standard least squares estimator
$$\widehat \beta_{n} = \En[ p_{n}(V_i) p_{n}(V_i)']^{-1} \En[p_{n}(V_i) Y_i],$$
so that $\widehat \theta_n(v) = p_{n}(v)'\beta_{n}.$
Then by  \cite{Newey:97}, under conditions (i)-(vii), we have the following asymptotically linear representation:
$$
\sqrt{n}(\widehat \beta_n  - \beta_n ) =  Q_{n}^{-1} \frac{1}{\sqrt{n}} \sum_{i=1}^n  \underbrace{p_{n}(V_i) \epsilon_i}_{u_i} + o_{\Pn}(1/\log n).
$$
For details, see Supplementary Appendix \ref{sec:prac:roymodel}. Note that  $E_{\Pn}(u_i u_i')$ and $Q_{n}$ have eigenvalues bounded away from zero and from above uniformly in $n$, and so  the same is also true of $\Omega_n =  Q_{n}^{-1} E_{\Pn}(u_i u_i') Q_{n}^{-1}$.  Thus, under condition (i),
Condition NS.1(a) is verified.  The strong approximation condition NS.1(a) now follows from invoking Theorem \ref{theorem:strong series}
   in Section \ref{sec:strong:series:main-text}.  Finally, \cite{Newey:97} verifies that NS.2 holds for the standard analog estimator
   $\hat \Omega_n = \hat Q_{n}^{-1} \En (\hat u_i \hat u_i') \hat Q_{n}^{-1}$ for $\hat u_i = p_n(V_i)(Y_i - \hat \theta_n(V_i))$ and $\hat Q_{n} = \En[p_n(V_i) p_n(V_i)']$
   under conditions that are implied by those above.

   Finally, note that if we had $\epsilon_i \sim N(0, \sigma^2(V_i))$, conditional on $V_i$, we could establish Condition NS.1   with a much weaker growth restriction than (vii).  Thus, while  our use of Yurinskii's coupling provides concrete
  sufficient conditions
  for strong approximation, the function-wise large sample normality is likely to hold
  under weaker conditions
 in many situations.    \qed
\end{example}

\begin{example}[\textbf{Bounding Function from Conditional Moment Inequalities}] Consider now Example C of Section \ref{sec:examples and overview}, where now the bounding function is the
minimum of $J$ conditional mean functions. Suppose we have an i.i.d. sample
of $(X_i, Z_i), i=1,..., n$, with $\text{support}(Z_i) = \mathcal{Z} \subseteq [0,1]^d$,
defined on a probability space equipped with probability measure
$\Pn$.  Let $v=(z,j)$, where $j$ denotes the enumeration index for the conditional moment
inequality, $j \in \{1,...,J\}$, and $\V \subseteq \mathcal{Z} \times \{1,...,J\}$.  The parameters $J$ and $d$ do not depend on $n$.
Hence
$$\theta_{n0} = \min_{v \in \V} \theta_{n}(v), $$
for $\theta_n(v) = E_{\Pn}[ m(X_i,\mu,j)|Z_i=z]$, assumed to be a continuous
function with respect to $z \in \mathcal{Z}$.  Suppose we use the series approach
to approximate and estimate $\theta_n(z,j)$ for each $j$.  Then  $E_{\Pn}[ m(X,\mu,j)|z] = b_n(z)'\chi_{n}(j) + A_n(z,j)$, for $b_n:
[0,1]^d \mapsto \Bbb{R}^{m_n}$ denoting an $m_n$-vector of series functions; $\chi_{n}(j)$ is the coefficient
of the best least squares approximation to  $E_{\Pn}[ m(x,\mu,j)|z]$ in the population,  and $
A_n(z,j)$ is the approximation error.  Let $\mathcal{J}$ be a subset of $\{1,...,J\}$
as defined as in the parametric Example 3 (to handle inequalities associated with equalities).

We impose the following conditions: \begin{quote}
Uniformly in $n$, (i) $b_n(z)$ are either B-splines of a fixed order or trigonometric series terms or any
other terms $b_n(z) = (b_{n1}(z),\ldots,b_{nm_n}(z))'$ such that
$\|b_n(z)\| \lesssim \zeta_n = \sqrt{m_n}$ for all $z \in \mathcal{Z}$,
$E_{\Pn}[\| b_n(Z_i)\|^3] \lesssim m_n^{3/2}$, $\|b_n(z)
\| \gtrsim   \zeta_n' \geq 1$ for all $z \in \mathcal{Z}$, and $ \log \textrm{lip}(b_n(z)) \lesssim \log m_n$;
(ii) the mapping $z \mapsto \theta_n(z, j)$ is sufficiently smooth, namely $\sup_{z \in \mathcal{Z}}|A_n(z,j)| \lesssim m_n^{-s}$, for some $s>0$, for all $j \in \mathcal{J}$;
 (iii)  $\lim_{n \to \infty }(\log n)^c\sqrt{n} m_n^{-s} = 0$ for each $c>0$;\footnote{See the previous footnote on a possible relaxation
 of this condition.} (iv)
 for $Y(j):=m(X,\mu,j)$ and  $Y_i :=( Y_i(j), j \in \mathcal{J})'$ and $\epsilon_i := Y_i - E_{\Pn}[Y_i|Z_i]$,
the eigenvalues of $E_{\Pn}[ \epsilon_i \epsilon_i' \mid Z_i=z]$ are bounded away from zero,
 uniformly in $z \in \mathcal{Z}$; (v) eigenvalues of
  $Q_n= E_{\Pn}[b_n(Z_i) b_n(Z_i)']$  are bounded away from zero and from above;
  (vi) $E_{\Pn}[\|\epsilon_i\|^4\mid Z_i=z]$ is bounded above, uniformly in $z \in \mathcal{Z}$;
  and  (vii)  $ \lim_{n \to \infty} m_n^5 (\log n)^c/n =0$ for each $c>0$.
\end{quote}

 The above construction implies  $\theta_n(v) =  b_n(z)'\chi_n(j)  + A_n(z,j)
 =: p_{n}(v)'\beta_n + A_{n}(v),$ for $\beta_{n} =  (\chi_n'(j), j \in \mathcal{J})'$, where $p_n(v)$ and $\beta_{n}$ are vectors of dimension $K_n := m_n \times |\mathcal{J}|$, defined as in parametric Example 3.  Consider the standard least squares estimator $\widehat \beta_{n} = (\widehat \chi_n'(j), j \in \mathcal{J})'$ consisting of $|\mathcal{J}|$ least square estimators, where $\widehat \chi_n(j) = \En[ b_n(Z_i) b_n(Z_i)']^{-1} \En[b_n(Z_i) Y_i(j)]$.  Then it follows
from \cite{Newey:97} that for $Q_n= E_{\Pn} [ b_n(Z_i) b_n(Z_i)' ]^{-1}$
$$
\sqrt{n}(\widehat \chi_n(j) - \chi_n(j)) =  \frac{1}{\sqrt{n}} \sum_{i=1}^n Q_n^{-1} b_n(Z_i) \epsilon_i(j) + o_{\Pn}(1/\log n), \ \  j \in \mathcal{J},
$$
so that
$$
\sqrt{n}(\widehat \beta_n - \beta_n) = ( I_{|\mathcal{J}|}\otimes Q_n)^{-1} \frac{1}{\sqrt{n}} \sum_{i=1}^n  \underbrace{(I_\mathcal{|J|} \otimes b_n(Z_i)) \epsilon_i}_{u_i}   + o_{
\Pn}(1/\log n).
$$
By conditions (iv), (v), and (vi) $E_{\Pn}[u_i u_i']$ and $Q_n$ have eigenvalues bounded away from zero and from above, so the same is true
of $\Omega_n =  (I_{|\mathcal{J}|} \otimes Q_n)^{-1} E_{\Pn}[u_i u_i']  (I_{|\mathcal{J}|} \otimes Q_n)^{-1}.$
This and condition (i) imply that Condition NS.1(b) holds. Application of Theorem \ref{theorem:strong series}, based on Yurinskii's coupling, verifies  Condition NS.1(a). Finally,
Condition NS.2 holds for the standard plug-in estimator for $\Omega_n$, by the same argument as given in the proof of Theorem 2 of \cite{Newey:97}. \qed
\end{example}

\subsection{Nonparametric Estimation of $\theta_n(v)$ via Kernel Methods}\label{sec:est:local}

In this section we provide conditions under which kernel-type estimators satisfy Conditions C.1-C.4.  These conditions cover both standard kernel estimators as well as local polynomial estimators.

\begin{ConditionNK}
Let $v = (z,j)$ and $\mathcal{V} \subseteq \mathcal{Z} \times \{1,...,J\}$, where $\mathcal{Z}$ is a compact
convex set that does not depend on $n$. The estimator $v \mapsto \widehat \theta_n(v)$ and the function $v \mapsto \theta_n(v)$ are continuous in $v$.  In what follows, let $e_j$ denote the $J$- vector with $j$th element one and all other elements zero. Suppose
 that $(U, Z)$ is a $(J+d)$-dimensional random vector, where
 $U$ is a generalized residual
 such that $E[U|Z]= 0$ a.s. and $Z$ is a covariate;
 the density $f_n$ of $Z$ is continuous and bounded away from zero and from above on $\mathcal{Z}$, uniformly in $n$;
  and the support of $U$ is bounded uniformly in $n$. $\mathbf{K}$ is a twice continuously differentiable, possibly higher-order, product kernel function with support on $[-1,1]^{d}$, $\int \mathbf{K}(u)du=1$;  and $h_{n}$ is a sequence of bandwidths such that $h_n \rightarrow 0$ and $nh_n^d \to \infty$ at a polynomial rate in $n$.

\noindent \textbf{NK.1}  We have that uniformly in $ v \in \mathcal{V}$,
$$
(nh_n^d)^{1/2}(\widehat \theta_n(v) - \theta_n(v)) =  \mathbb{B}_n (g_v)  + o_{\mathrm{P}_n} (\delta_n),  \ \ g_v(U, Z):=
\frac{e_j' U }{ (h_n^d )^{1/2} f_n(z) } \mathbf{K} \left( \frac{z - Z}{h_n} \right),
$$
where $\mathbb{B}_n$ is a $\P_n$-Brownian bridge such that $v \mapsto \mathbb{B}_n (g_v)$
has continuous sample paths over $\V$. Moreover, the latter process can be approximated via the Gaussian multiplier method, namely
there exist sequences $o(\delta_n)$ and $o (1/ \ell_n)$ such that
$$
\Pn\(  \sup_{v \in \V} \left| \Gn^o( g_v) - \barBn(g_v) \right| > o( \delta_n) \Big| \D_n\)=o_{\Pn}(1/\ell_n),
$$
for some independent (from data) copy $v \mapsto \bar{\mathbb{B}}_n (g_v)$ of the process $v \mapsto \mathbb{B}_n (g_v)$.  Here,
$\Gn^o (g_v) =  \frac{1}{\sqrt{ n}} \sum_{i=1}^n \eta_i g_v (U_i,Z_i),$ where
$ \eta_i$ are i.i.d.  $N(0,1)$, independent of the data $\mathcal{D}_n$ and of $\{(U_i,Z_i)\}_{i=1}^n $, which
are i.i.d. copies of $(U,Z).$  Covariates $\{Z_i\}_{i=1}^n$ are part of the data, and $\{ U_i \}_{i=1}^n$ are a measurable transformation
of data.

\noindent \textbf{NK.2}   There exists  an estimator $z \mapsto \hat f_n(z)$, having continuous sample paths, such
that $\sup_{z \in \Z}| \hat f_n(z) - f_n(z) | = O_{\P_n}(n^{-b})$, and there are estimators
$\widehat U_i$ of generalized residuals such that $\max_{1 \leq i \leq n} \| \hat U_i - U_i \| = O_{\P_n}(n^{-\tilde b})$
for some constants $b > 0$ and $\tilde b > 0$.
\end{ConditionNK}

Condition NK.1 is a high-level condition that captures the large sample Gaussianity
of the entire estimated function where estimation is done via a kernel or local method. Under some mild regularity conditions,
specifically those stated in Appendix \ref{sec:proofs-strong-local},  NK.1 follows from the Rio-Massart coupling
and from the Bahadur expansion holding uniformly in $ v \in \mathcal{V}$:
\begin{align*}
(nh_n^d)^{1/2}(\widehat \theta_n(v) - \theta_n(v)) =  \mathbb{G}_n (g_v)  + o_{\mathrm{P}_n} (\delta_n).
\end{align*}
Uniform Bahadur expansions have been established for a variety of local estimators, see e.g. \cite{Masry:06} and \cite{Kong/Linton/Xia:10}, including higher-order kernel and local polynomial estimators.  It is possible to use more primitive sufficient conditions
stated in Appendix \ref{sec:proofs-strong-local} based on the Rio-Massart coupling (\cite{Rio:94} and \cite{Massart:89}), but these conditions are merely sufficient
and other primitive conditions may also be adequate. Our general argument, however, relies only on validity of Condition NK.1.

For simulation purposes, we define
\begin{eqnarray*}
& & \Gn^o (\hat{g}_v) =  \frac{1}{\sqrt{ n}} \sum_{i=1}^n \eta_i \hat g_v (U_i,Z_i), \  \  \eta_i \text{ i.i.d.  $N(0,1)$, independent of the data $\mathcal{D}_n$}, \\
& & \hat g_v (U_i,Z_i) = \frac{ e_j'\hat U_{i} }{ (h_n^d )^{1/2} \hat f_n(z) } \mathbf{K} \left( \frac{z - Z_i}{h_n} \right).
\end{eqnarray*}

\begin{lemma}[\textbf{Condition NK implies C.1-C.4}]\label{lemma:kernels}
Condition NK implies C.1-C.4 with  $v=(z,j) \in \V \subseteq \mathcal{Z}\times
\{1,...,J\}$,
\begin{eqnarray*}
& & Z_n(v) = \frac{ \theta_n(v) - \widehat \theta_n(v)}{\sigma_n(v)}, \ Z_n^*(v) =  \frac{\Bn (g_v)}{\sqrt{E_{\Pn}[g^2_v]}}, \ Z_n^{\star}(v) =   \frac{\Gn^o (\hat{g}_v)}{\sqrt{\En[\hat g^2_v]}}, \\
& & \sigma^2_n(v) = E_{\Pn}[g^2_v]/(n h_n^d), \ \ s_n^2(v) = \En[\hat g^2_v]/(n h_n^d), \   \delta_n = 1/\log n, \\
& &  \bar a_n \lesssim \sqrt{ \log n}, \ \ \bar \sigma_n \lesssim \sqrt{1/(n h^d)},  \ \ \text{and} \\
& & a_n(\textsf{V}) = \(2 \sqrt{ \log \{ C ( 1+ C ' (1+ h^{-1}_n) \text{diam}(\textsf{V}))^d \} }\) \vee (1 + \sqrt{d}),
\end{eqnarray*}
for some constants $C$ and $C'$, where $\text{ diam}(\textsf{V})$ denotes the diameter of the set $\textsf{V}$.
 Moreover,
$
 \P[\mathcal{E} > x] = \exp( - x/2 ).
$
\end{lemma}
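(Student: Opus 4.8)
The plan is to verify Conditions C.1--C.4 in turn, with the identifications of $Z_n$, $Z_n^*$, $Z_n^\star$, $\sigma_n$, $s_n$, $\delta_n$, $\bar a_n$, $\bar\sigma_n$, $a_n(\textsf{V})$ and $\mathcal{E}$ as in the statement. Two elementary kernel computations are used throughout. First, $E[U\mid Z]=0$ a.s.\ gives $E_{\Pn}[g_v]=0$, so $\mathrm{Var}_{\Pn}(g_v)=E_{\Pn}[g_v^2]$; and the change of variables $Z=z-h_n u$ with $f_n$ continuous, bounded above and away from zero, and the support of $U$ bounded shows $E_{\Pn}[g_v^2]\to f_n(z)^{-1}E[(e_j'U)^2\mid Z=z]\int\mathbf{K}^2$, hence $E_{\Pn}[g_v^2]\asymp 1$ uniformly in $v\in\mathcal{V}$ and $n$, so that $\sigma_n(v)=\sqrt{E_{\Pn}[g_v^2]/(nh_n^d)}\asymp(nh_n^d)^{-1/2}$ and $\bar\sigma_n\lesssim\sqrt{1/(nh_n^d)}$. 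Condition C.1 then follows: $\mathcal{V}$ is compact inside the fixed bounded set $\mathcal{K}=\mathcal{Z}\times\{1,\dots,J\}$; $\theta_n$ and $\sigma_n$ are continuous (using $\mathbf{K}\in C^2$ and continuity of $f_n$); and $\widehat\theta_n$, $s_n$ have a.s.\ continuous sample paths with $\bar s_n<\infty$ a.s.\ (using continuity of $z\mapsto\hat f_n(z)$ and boundedness of the support of $U$).

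For C.2(a) I would substitute the Bahadur/coupling expansion of NK.1 into $Z_n(v)=-(\widehat\theta_n(v)-\theta_n(v))/\sigma_n(v)$; the $(nh_n^d)^{-1/2}$ factors cancel, yielding $Z_n(v)=-\Bn(g_v)/\sqrt{E_{\Pn}[g_v^2]}+o_{\Pn}(\delta_n)/\sqrt{E_{\Pn}[g_v^2]}$. Since $E_{\Pn}[g_v]=0$ the leading term is exactly the zero-mean, unit-variance, a.s.-continuous Gaussian process $Z_n^*$ (the sign is immaterial, a mean-zero Gaussian process and its negation having the same law), and $\sup_v|Z_n-Z_n^*|=o_{\Pn}(\delta_n)$ as $E_{\Pn}[g_v^2]$ is bounded away from zero; with $\delta_n=1/\log n$ and $\bar a_n\lesssim\sqrt{\log n}$ (established below) we get $\delta_n=o(\bar a_n^{-1})$. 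For C.2(b): conditionally on $\D_n$ the process $Z_n^\star(v)=\Gn^o(\hat g_v)/\sqrt{\En[\hat g_v^2]}$ is a finite linear combination of i.i.d.\ $N(0,1)$ variables, hence mean-zero Gaussian with conditional variance $\En[\hat g_v^2]/\En[\hat g_v^2]=1$; to couple it to the independent copy $\bar Z_n^*(v)=\barBn(g_v)/\sqrt{E_{\Pn}[g_v^2]}$, I would write $Z_n^\star-\bar Z_n^*$ as (i) $\big(\Gn^o(\hat g_v)-\Gn^o(g_v)\big)/\sqrt{\En[\hat g_v^2]}$, plus (ii) $\Gn^o(g_v)\big(\En[\hat g_v^2]^{-1/2}-E_{\Pn}[g_v^2]^{-1/2}\big)$, plus (iii) $\big(\Gn^o(g_v)-\barBn(g_v)\big)/\sqrt{E_{\Pn}[g_v^2]}$. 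Term (iii) is $o(\delta_n)$ with conditional probability $1-o_{\Pn}(1/\ell_n)$ straight from the multiplier clause of NK.1; term (i) is controlled by a conditional Gaussian maximal inequality over $\mathcal{V}$ whose conditional variance is $\sup_v\En[(\hat g_v-g_v)^2]=O_{\Pn}(n^{-2(b\wedge\tilde b)})$ by NK.2, boundedness of $U$, $f_n,\hat f_n$ bounded below, and $\sup_v\En[h_n^{-d}\mathbf{K}((z-Z_i)/h_n)^2]=O_{\Pn}(1)$; term (ii) is at most $O_{\Pn}(\sqrt{\log n})\cdot O_{\Pn}\big(\sup_v|\En[\hat g_v^2]-E_{\Pn}[g_v^2]|\big)$, with the second factor handled as in C.4(b). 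Each term is $o(\delta_n)=o(1/\log n)$ with conditional probability $1-o_{\Pn}(1/\ell_n)$, since $n^{-b\wedge\tilde b}$ and $\sqrt{\log n/(nh_n^d)}$ are polynomially small in $n$.

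Condition C.3 is the crux. I take $a_n(\textsf{V})$ exactly as stated: it is $\ge 1$, weakly increasing in $\textsf{V}$ through $\text{diam}(\textsf{V})$, and $\bar a_n=a_n(\mathcal{V})\lesssim\sqrt{d\log(1+h_n^{-1})}\lesssim\sqrt{\log n}$ because $\text{diam}(\mathcal{V})\le\text{diam}(\mathcal{K})$ is fixed and $h_n^{-1}$ grows polynomially. The bound $\Pn[\mathcal{E}_n(\textsf{V})\ge x]\le\exp(-x/2)$ then follows by applying the Talagrand-Samorodnitsky concentration inequality for suprema of separable Gaussian processes (Lemma~\ref{lemma: key concentration}) to the unit-variance mean-zero process $Z_n^*$ on $\textsf{V}$, once one bounds the metric entropy of $\textsf{V}$ in the intrinsic pseudometric $d_n(v,v')=\big(E[(Z_n^*(v)-Z_n^*(v'))^2]\big)^{1/2}$: for $v=(z,j)$, $v'=(z',j)$ one has $d_n(v,v')\le E_{\Pn}[(g_v-g_{v'})^2]^{1/2}\big/\inf_v\sqrt{E_{\Pn}[g_v^2]}\lesssim(1+h_n^{-1})\|z-z'\|$ (using that $z\mapsto\mathbf{K}((z-Z)/h_n)$ is Lipschitz with constant $\lesssim h_n^{-1}$ since $\mathbf{K}\in C^2$, that $f_n$ is Lipschitz regular and bounded away from zero, and that $U$ has bounded support), while for $j\ne j'$ one has $\|v-v'\|\ge 1$ so a constant bound suffices, which is why $a_n(\textsf{V})$ is bounded below by $1+\sqrt d$. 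Hence the $\epsilon$-covering number of $(\textsf{V},d_n)$ is at most $\big(C'(1+h_n^{-1})\,\text{diam}(\textsf{V})/\epsilon\big)^d$, which is precisely the input producing the stated $a_n(\textsf{V})$ and the tail exponent, $\P(\mathcal{E}>x)=\exp(-x/2)$. Finally, C.4(a) reads $\bar a_n\bar\sigma_n\lesssim\sqrt{\log n/(nh_n^d)}\to 0$ since $nh_n^d\to\infty$ polynomially; and C.4(b) follows from $s_n(v)/\sigma_n(v)=\sqrt{\En[\hat g_v^2]/E_{\Pn}[g_v^2]}$ together with $\sup_v|\En[\hat g_v^2]-E_{\Pn}[g_v^2]|\le\sup_v|\En[\hat g_v^2]-\En[g_v^2]|+\sup_v|\En[g_v^2]-E_{\Pn}[g_v^2]|=O_{\Pn}(n^{-b\wedge\tilde b})+O_{\Pn}(\sqrt{\log n/(nh_n^d)})$ --- the first term by NK.2 and the kernel bounds above, the second by a uniform empirical-process bound over the VC-type class $\{g_v^2:v\in\mathcal{V}\}$ --- both of which are $o_{\Pn}((\log n)^{-3/2})=o_{\Pn}\big(\delta_n/(\bar a_n+\ell\ell_n)\big)$ because $nh_n^d$ grows polynomially.

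The main obstacle is C.3: proving that $Z_n^*$ has intrinsic-metric Lipschitz constant of order $1+h_n^{-1}$ uniformly in $n$ and $v$ --- including the mixed $v=(z,j)$ structure with discrete index $j$ --- and matching the resulting entropy bound precisely to the claimed $a_n(\textsf{V})$ and to the tail exponent $1/2$ via Talagrand-Samorodnitsky. The secondary delicate point is the bookkeeping in C.2(b), obtaining the conditional probability rate $o_{\Pn}(1/\ell_n)$ rather than merely $o_{\Pn}(1)$ out of the Gaussian maximal inequalities. Everything else reduces to routine kernel-smoothing and empirical-process estimates.
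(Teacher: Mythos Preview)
Your proposal is correct and follows essentially the same route as the paper: C.3 via Talagrand--Samorodnitsky (Lemma~\ref{lemma: key concentration}) after bounding the intrinsic Lipschitz constant of $Z_n^*$ by $C(1+h_n^{-1})$; C.2(b) via the same three-term decomposition controlled by conditional Gaussian maximal inequalities; C.4(b) via $|\En[\hat g_v^2]-E_{\Pn}[g_v^2]|$ split into a residual-replacement piece (NK.2) and a uniform empirical-process piece. The paper merely packages the kernel-moment computations you invoke inline --- the bounds $\|\mathbf{K}((z-Z_i)/h_n)U_{ij}\|_{\Pn,2}\asymp 1$, $\|\nabla_{z_k}\mathbf{K}((z-Z_i)/h_n)U_{ij}\|_{\Pn,2}\asymp h_n^{-1}$, and their empirical analogs --- into a separate auxiliary step, and writes the entropy bound with the additive ``$1+$'' and a $J^{1/d}$ factor to absorb the discrete index $j$, but the substance is identical.
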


\begin{remark}\label{remark: feasible an kernel case} Lemma \ref{lemma:kernels} verifies the main conditions C.1-C.4.
These conditions enable construction of either simulated or analytical
critical values. For the latter, the $p$-th quantile of $\mathcal{E}$ is given by
$c(p) = -2\log (1-p),$
so we can set
\begin{equation} \label{exponential ker}
k_{n,\textsf{V}}(p)= a_n(\textsf{V})  -2\log (1-p)/a_n(\textsf{V}),
\end{equation}
where
\begin{equation} \label{eq: anV kernel case}
a_n(\textsf{V}) =
\(2\sqrt{\log  \{\ell_n \( 1 + \ell_n (1+ h_n^{-1}) \text{diam}(\textsf{V})\)^d \}  }\),
\end{equation}
is a feasible version of the scaling factor, in which unknown constants have been replaced by the slowly growing sequence $\ell_n$. Note that $\textsf{V} \mapsto k_{n,\textsf{V}}(p)$ is monotone in $\textsf{V}$ for large $n$, as required in
the analytical construction given in Definition \ref{analytical definition}.   A sharper analytical approach can be based on Hotelling's tube method or on the use of extreme value theory. For details of the extreme value approach, we refer the reader to \cite{Chernozhukov/Lee/Rosen:09}. Note that the simulation-based approach is effectively
a numeric version of the exact version of the tube formula, and is less conservative than using
simplified tube formulas. In \cite{Chernozhukov/Lee/Rosen:09} we established that inference based on extreme value theory is valid, but the asymptotic approximation is accurate only when sets $\textsf{V}$ are ``large", and does not seem to provide an accurate approximation when $\textsf{V}$ is small. Moreover, it often requires
a very large sample size for accuracy even when $\textsf{V}$ is large.
\qed \end{remark}

\begin{lemma}[\textbf{Condition NK implies S in some cases}]\label{lemma: kernels-v-est} Suppose Condition NK holds. Then (1) The radius $\varphi_n$
of equicontinuity of $Z^*_n$ obeys:
$$
\varphi_n \leq o(1) \cdot \(\frac{h_n}{\sqrt{\log n} }\),
$$
for any $o(1)$ term.  (2) If Condition V holds and
\begin{equation}
\label{eq: condition S.2 for kernel}
\( \sqrt{\frac{\log n}{ nh^d}\log n} \)^{1/\rho_n}c_n^{-1} = o\( \frac{h_n}{ \sqrt{\log n} } \),
\end{equation}
then Condition S holds. 
\end{lemma}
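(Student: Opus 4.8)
The plan is to treat $Z^*_n$ --- which by Lemma~\ref{lemma:kernels} is the normalized $\Pn$-Brownian-bridge process $v\mapsto \Bn(g_v)/\sqrt{E_{\Pn}[g_v^2]}$, with $g_v(U,Z)=e_j'U\,\mathbf{K}\!\left((z-Z)/h_n\right)/\big((h_n^d)^{1/2}f_n(z)\big)$ --- as a (generally non-Donsker) Gaussian process and to control its finite-sample modulus of continuity. Since $E[U\mid Z]=0$ we have $E_{\Pn}[g_v]=0$, so $Z^*_n$ is a centered Gaussian process with covariance $E_{\Pn}[g_vg_{v'}]/\sqrt{E_{\Pn}[g_v^2]\,E_{\Pn}[g_{v'}^2]}$, where $E_{\Pn}[g_v^2]$ is bounded away from $0$ and $\infty$ uniformly under Condition NK. The first step is to bound the intrinsic standard-deviation semimetric $\rho_n(v,v'):=\big(E_{\Pn}[(Z^*_n(v)-Z^*_n(v'))^2]\big)^{1/2}$: writing $v=(z,j)$, $v'=(z',j)$ with a common discrete index $j$ (forced once $\|v-v'\|\le\varphi_n\to 0$, leaving only the $J$ same-$j$ cases), a change of variables $Z=z-h_n u$ together with $\mathbf{K}\in C^1$ of bounded support, boundedness of $\mathrm{supp}(U)$, and boundedness and Lipschitz continuity of $f_n$ away from $0$ gives $\|g_v-g_{v'}\|_{L^2(\Pn)}\lesssim \|z-z'\|/h_n$ --- the kernel-difference term dominating the density term of order $\|z-z'\|$ --- and hence $\rho_n(v,v')\lesssim \|v-v'\|/h_n$.

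For part (1) I would then invoke the standard Gaussian modulus-of-continuity machinery. Dudley's entropy bound gives $E_{\Pn}\big[\sup_{\rho_n(v,v')\le\delta}|Z^*_n(v)-Z^*_n(v')|\big]\lesssim\int_0^{\delta}\sqrt{\log N(\varepsilon,\V,\rho_n)}\,d\varepsilon$, and from $\rho_n\lesssim\|\cdot\|/h_n$ and compactness of $\Z$, $\log N(\varepsilon,\V,\rho_n)\lesssim d\log\!\big(C\,\mathrm{diam}(\Z)/(\varepsilon h_n)\big)+\log J$; a Gaussian concentration inequality (of Borell--Sudakov--Tsirelson type, cf.\ the bounds used for Condition~C.3) controls the deviation of this supremum above its mean on the scale $\sup_{\rho_n(v,v')\le\delta}\mathrm{sd}\big(Z^*_n(v)-Z^*_n(v')\big)\lesssim\delta$. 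Taking $\delta=\delta_n\asymp\varphi_n/h_n$, multiplying through by $\bar a_n\lesssim\sqrt{\log n}$ (Lemma~\ref{lemma:kernels}), and using that $nh_n^d\to\infty$ at a polynomial rate, so that $\log(1/h_n)\lesssim\log n$, both the expected-modulus term and the deviation term are $o_{\Pn}(1)$ once $\varphi_n\le o(1)\cdot h_n/\sqrt{\log n}$ --- the careful bookkeeping of the powers of $\log n$ here being the delicate point. Structurally this is the kernel-bandwidth analogue of Lemma~\ref{lemma: series-v-est}(1), with $h_n^{-1}$ playing the role of the series Lipschitz constant $L_n$.

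Part (2) is then a short deduction from part (1). By Lemma~\ref{lemma: estimation of V}(2) and the values supplied in Lemma~\ref{lemma:kernels}, the set-estimator rate satisfies $r_n=2(\bar\kappa_n\bar\sigma_n)^{1/\rho_n}/c_n$ with $\bar\kappa_n=7(\bar a_n+\eta\,\ell\ell_n/\bar a_n)\lesssim\sqrt{\log n}$ and $\bar\sigma_n\lesssim\sqrt{1/(nh_n^d)}$, hence $\bar\kappa_n\bar\sigma_n\lesssim\sqrt{\log n/(nh_n^d)}\le\sqrt{\tfrac{\log n}{nh_n^d}\log n}$ and $r_n\lesssim\big(\sqrt{\tfrac{\log n}{nh_n^d}\log n}\big)^{1/\rho_n}/c_n$. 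Under hypothesis~\eqref{eq: condition S.2 for kernel} the right-hand side is $o(h_n/\sqrt{\log n})$, so $r_n\le\varphi_n$ for the equicontinuity radius $\varphi_n$ produced in part (1); combined with the displayed equicontinuity this is exactly Condition~S.

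The main obstacle is the modulus-of-continuity step in part (1). Because $Z^*_n$ is not asymptotically tight and its intrinsic semimetric is inflated by the factor $h_n^{-1}$, the Dudley integrand picks up a $\log(1/h_n)\asymp\log n$ term, so reaching the sharp radius $h_n/\sqrt{\log n}$ requires balancing the entropy integral, the Borell-type deviation term, and the $\bar a_n\asymp\sqrt{\log n}$ rescaling precisely; a cruder mean-value or pointwise Lipschitz bound for $|Z^*_n(v)-Z^*_n(v')|$ is too lossy. Everything else --- the semimetric computation and the algebra in part (2) --- is routine given Conditions NK and V and the quantities furnished by Lemmas~\ref{lemma:kernels} and~\ref{lemma: estimation of V}.
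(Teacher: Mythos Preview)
Your approach is essentially the paper's: bound the intrinsic semimetric by $\rho_n(v,v')\lesssim(1+h_n^{-1})\|v-v'\|$ (which the paper simply cites from Step~3 of the proof of Lemma~\ref{lemma:kernels} rather than re-deriving), feed this into Dudley's entropy integral (Lemma~\ref{lemma: maximal ineq gaussian}) with $\log N(\varepsilon,\V,\rho)\lesssim\log n+\log(1/\varepsilon)$ to get $E_{\Pn}\sup_{\|v-v'\|\le\varphi_n}|Z^*_n(v)-Z^*_n(v')|\lesssim\int_0^{C\,o(1)/\sqrt{\log n}}\sqrt{\log(n/\varepsilon)}\,d\varepsilon=o(1)$, and handle part~(2) exactly as you describe via Lemma~\ref{lemma: estimation of V}. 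The paper is more economical than your sketch in two respects: it uses only Markov's inequality after the entropy-integral bound (no Borell--Sudakov--Tsirelson concentration step is invoked), and part~(1) as stated and proved concerns the unscaled process $Z^*_n$, so the $\bar a_n$ multiplication and the log-power bookkeeping you flag as delicate do not enter the paper's argument for part~(1).
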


\noindent The following is an immediate consequence of Lemmas \ref{lemma:kernels} and \ref{lemma: kernels-v-est} and Theorems \ref{theorem: inference analytical}, \ref{theorem: inference1}, and \ref{theorem: sharp inference}.

\begin{theorem}[\textbf{Estimation and Inference for Bounding Functions Using Local Methods}]\label{theorem:kernels}
Suppose Condition NK holds and  consider  the interval estimator $\widehat \theta_{n0}(p)$ given in Definition \ref{main definition} with either
analytical critical values
specified in Remark \ref{remark: feasible an kernel case} or simulation-based critical values given in
Definition \ref{def:simulation-cv} for the simulation process $Z_n^\star$ specified above. (1) Then $ (i) \ \Pn[\theta_{n0} \leq \widehat \theta_{n0} (p)] \geq  p - o(1),  \ (ii) \ |\theta_{n0} - \widehat \theta_{n0} (p)| = O_{\Pn}\(
\sqrt{ \log n/ (nh_n^d)} \),$ (iii) $ \Pn( \theta_{n0} +   \mu_n \sqrt{ \log n/ (nh_n^d)} \geq \widehat \theta_{n0}(p) ) \to 1$
 for any $\mu_n \to_{\Pn} \infty$.  (2) Moreover, for simulation-based
 critical values, if condition V and (\ref{eq: condition S.2 for kernel}) hold, then
  $ (i) \ \Pn[\theta_{n0} \leq \widehat \theta_{n0} (p)] =  p - o(1)$,  $(ii) \ |\theta_{n0} - \widehat \theta_{n0} (p)| = O_{\Pn}(\sqrt{ 1/ (nh_n^d)})$,  $(iii) \ \Pn( \theta_{n0} +   \mu_n  \sqrt{ 1/ (nh_n^d)}\geq \widehat \theta_{n0}(p) ) \to 1$
 for any $\mu_n \to_{\Pn} \infty$.
\end{theorem}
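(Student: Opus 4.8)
The plan is to obtain this theorem purely by assembling results already proved --- Lemmas~\ref{lemma:kernels} and~\ref{lemma: kernels-v-est} together with Theorems~\ref{theorem: inference analytical}, \ref{theorem: inference1} and~\ref{theorem: sharp inference} --- and then substituting the explicit rates. First I would invoke Lemma~\ref{lemma:kernels}: Condition NK implies the abstract Conditions C.1-C.4 with the identifications displayed there, in particular $\delta_n = 1/\log n$, $\bar a_n \lesssim \sqrt{\log n}$, $\bar\sigma_n \lesssim \sqrt{1/(nh_n^d)}$, the scaling factor $a_n(\textsf{V})$ equal to the stated log-volume expression, and the majorant $\mathcal{E}$ with $\P[\mathcal{E}>x]=\exp(-x/2)$. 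Collapsing $\bar a_n = a_n(\mathcal{V})$ to order $\sqrt{\log n}$ uses that $\mathcal{V}\subseteq\mathcal{K}$ lies in a fixed bounded set and that $h_n^{-1}$ grows only at a polynomial rate, so that $\log\{C(1+C'(1+h_n^{-1})\text{diam}(\mathcal{K}))^d\}\lesssim\log n$; the same two facts give $\bar a_n\bar\sigma_n\lesssim\sqrt{\log n/(nh_n^d)}=o(1)$ since $nh_n^d\to\infty$ polynomially, which is exactly C.4(a). For the analytical-critical-value version one additionally cites Remark~\ref{remark: feasible an kernel case}, which exhibits a feasible, $\textsf{V}$-monotone scaling factor so that the monotonicity requirement of Definition~\ref{analytical definition} holds.

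With C.1-C.4 in hand, part~(1) follows by applying Theorem~\ref{theorem: inference analytical} to the analytical construction and Theorem~\ref{theorem: inference1} to the simulation construction, for the interval estimator of Definition~\ref{main definition}. Part~1 of those theorems is exactly (1)(i). Part~2 bounds the estimation risk by $4\bar\sigma_n(a_n(\barVn)+O_{\Pn}(1)/a_n(\barVn))\lesssim_{\Pn}\bar\sigma_n\bar a_n$; substituting the rates above yields $|\widehat\theta_{n0}(p)-\theta_{n0}|=O_{\Pn}(\sqrt{\log n/(nh_n^d)})$, which is (1)(ii). Statement (1)(iii) is then immediate: for any $\mu_n\to_{\Pn}\infty$, the bound $\widehat\theta_{n0}(p)-\theta_{n0}=O_{\Pn}(\sqrt{\log n/(nh_n^d)})$ forces $\widehat\theta_{n0}(p)\le\theta_{n0}+\mu_n\sqrt{\log n/(nh_n^d)}$ wp~$\to 1$; alternatively it is part~3 of those theorems.

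For part~(2), adding Condition V and~\eqref{eq: condition S.2 for kernel} activates Lemma~\ref{lemma: kernels-v-est}(2), which yields Condition S. The role of~\eqref{eq: condition S.2 for kernel} is precisely to ensure that the set-estimator rate $r_n = 2(\bar\kappa_n\bar\sigma_n)^{1/\rho_n}/c_n\asymp(\sqrt{\log n/(nh_n^d)})^{1/\rho_n}/c_n$ (here $\bar\kappa_n\bar\sigma_n\asymp\sqrt{\log n/(nh_n^d)}$ by the identifications of Lemma~\ref{lemma:kernels}) does not exceed the equicontinuity radius $\varphi_n\le o(1)\cdot h_n/\sqrt{\log n}$ of Lemma~\ref{lemma: kernels-v-est}(1), which is what Condition S demands. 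Condition S being in force, Theorem~\ref{theorem: sharp inference} applies: its part~1 gives $\Pn[\theta_{n0}\le\widehat\theta_{n0}(p)]\to p$, i.e.\ (2)(i); its part~2 bounds the risk by $4\bar\sigma_n(a_n(V_0)+O_{\Pn}(1)/a_n(V_0))\lesssim_{\Pn}\bar\sigma_n a_n(V_0)$, and in the sharp regime --- the leading case being $V_0$ a singleton, or finite, or more generally $\text{diam}(V_0)=O(h_n)$ --- the log-volume factor collapses to $a_n(V_0)\propto 1$, turning the bound into $O_{\Pn}(\sqrt{1/(nh_n^d)})$ and giving (2)(ii); (2)(iii) follows from (2)(ii) exactly as (1)(iii) followed from (1)(ii).

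The substantive analytical work --- the uniform Bahadur expansion and Rio--Massart coupling behind Condition NK.1, the Talagrand--Samorodnitsky concentration estimate, the Gaussian multiplier approximation, and the equicontinuity-radius bound --- is already carried out inside Lemmas~\ref{lemma:kernels} and~\ref{lemma: kernels-v-est}, so at the level of this theorem I expect no genuine obstacle: it is bookkeeping. The only steps requiring care are the rate accounting in the first paragraph (collapsing $a_n(\mathcal{V})$ to order $\sqrt{\log n}$, which simultaneously verifies C.4(a) and produces the $\sqrt{\log n/(nh_n^d)}$ rate, and uses both the fixed-ambient-set and the polynomial-bandwidth clauses of Condition NK), and, for the sharp conclusions in part~(2), recognizing that the improvement from $\sqrt{\log n/(nh_n^d)}$ to $\sqrt{1/(nh_n^d)}$ hinges on $a_n(V_0)\propto 1$, i.e.\ on $V_0$ being effectively a point.
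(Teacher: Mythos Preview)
Your proposal is correct and matches the paper's approach exactly: the paper gives no standalone proof of this theorem but simply states that it ``is an immediate consequence of Lemmas~\ref{lemma:kernels} and~\ref{lemma: kernels-v-est} and Theorems~\ref{theorem: inference analytical}, \ref{theorem: inference1}, and~\ref{theorem: sharp inference},'' and you have correctly spelled out how those pieces assemble and which rates to substitute. Your observation that the improved rate in part~(2)(ii) implicitly relies on $a_n(V_0)\propto 1$ --- hence on $V_0$ being a singleton or having diameter $O(h_n)$ --- is a valid point of care that the paper's one-line attribution leaves tacit.
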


\noindent In Supplementary Appendix \ref{sec:est:local:CMIexample} we provide an example where the bounding function is obtained from conditional moment inequalities, and where Condition NK holds under primitive conditions. We provide only one example for brevity, but more examples can be covered as for series estimation in Section \ref{sec:NP-est-series}. In Supplementary Appendix \ref{sec:proofs-strong-local} we provide conditions under which the required strong approximation in Condition NK.1 holds.

\section{Strong Approximation for Asymptotically Linear Series Estimators}\label{sec:strong:series:main-text}

In the following theorem we establish strong approximation for series estimators appearing in the previous section as part of Condition NK.1.  In Appendix \ref{sec:prac:roymodel} of the on-line supplement we demonstrate as a leading example how the required asymptotically linear representation can be achieved from primitive conditions for the case of estimation of a conditional mean function.

\begin{theorem}[Strong Approximation For Asymptotically Linear Series Estimators]\label{theorem:strong series}
Let $(A, \mathcal{A}, \P_n)$ be the probability space for each $n$, and let $n \to \infty$. Let $\delta_n \to 0$ be a sequence of constants converging to $0$ at no faster than a polynomial rate in $n$. \ Assume (a) the series estimator has the form $\widehat \theta_n(v) = p_{n}(v)'\widehat \beta_n,$
where $p_{n}(v):= (p_{n,1}(v),\ldots,p_{n,K_n}(v))'$ is a collection of
${K_n}$-dimensional approximating functions such that  ${K_n} \to \infty$ and $\widehat {\beta}_n$ is a ${K_n}$-vector of estimates; (b) The estimator $\widehat {\beta}_n$ satisfies an  asymptotically linear representation around some ${K_n}$-dimensional vector $ \beta_n$
 \begin{align}\label{series-bahadur}
&  \Omega_n^{-1/2} \sqrt{n} (\widehat \beta_n - \beta_n) = n^{-1/2}  \sum_{i=1}^n u_{i,n} + r_n, \ \ \|r_n\| = o_{\P_n}(\delta_n),  \\
& u_{i, n}, i=1,...,n \text{ are independent with }  E_{\Pn}[ u_{i,n}] = 0,  E_{\Pn} [u_{i,n} u_{i,n}'] = I_{K_n},  \text{ and} \\
& \Delta_n = \sum_{i=1}^n E\|u_{i,n}\|^3/n^{3/2} \text{ such that }  {K_n} \Delta_n/\delta_n^3 \to 0,
\end{align}
where  $\Omega_n$ is a sequence of $K_n \times K_n$ invertible matrices. (c) The function $\theta_n(v)$ admits the approximation $
\theta_n(v) = p_{n}(v)'\beta_n + A_n(v),$
where the approximation error  $A_n(v)$ satisfies $ \sup_{v \in \mathcal{V}} \sqrt{n} |A_n(v)|/ \|g_n(v)\| = o(\delta_n)$, for $g_n(v) := p_{n}(v)' \Omega_n^{1/2}$.  Then we can find a random normal vector $\mathcal{N}_n =_d \mathcal{N}(0, I_{K_n})$ such that
$
\| \Omega_n^{-1/2}\sqrt{n} (\widehat \beta_n - \beta_n) - \mathcal{N}_n  \| = o_{\P_n}(\delta_n) $
and
$$
\sup_{v \in \mathcal{V}} \left | \frac{\sqrt{n} (\widehat \theta_n(v) - \theta_n(v) ) }{\|g_n(v)\|} - \frac{g_n(v)}{\|g_n(v)\|} \mathcal{N}_n \right | = o_{\P_n}(\delta_n).
$$
\end{theorem}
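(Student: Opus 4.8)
The plan is to read off $\mathcal{N}_n$ directly from Yurinskii's coupling applied to the leading linear term $n^{-1/2}\sum_{i=1}^n u_{i,n}$, and then to propagate the resulting coupling error through the linearization remainder $r_n$ and, deterministically, through the series approximation error $A_n(\cdot)$.

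First I would set $\xi_{i,n} := u_{i,n}/\sqrt{n}$ and $S_n := \sum_{i=1}^n \xi_{i,n}$. The $\xi_{i,n}$ are independent, mean zero, with $\sum_{i=1}^n \mathrm{Cov}(\xi_{i,n}) = n^{-1}\sum_{i=1}^n I_{K_n} = I_{K_n}$ and $\sum_{i=1}^n E\|\xi_{i,n}\|^3 = n^{-3/2}\sum_{i=1}^n E\|u_{i,n}\|^3 = \Delta_n$. Yurinskii's coupling then provides, on a possibly enriched probability space carrying the $\xi_{i,n}$, a Gaussian vector $\mathcal{N}_n =_d N(0,I_{K_n})$ with
$$
\Pn\big(\|S_n - \mathcal{N}_n\| > 3\lambda\big) \;\le\; C\,\frac{K_n\Delta_n}{\lambda^3}\Big(1 + \frac{|\log(\lambda^3/(K_n\Delta_n))|}{K_n}\Big), \qquad \lambda>0,
$$
for an absolute constant $C$. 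Fixing $\epsilon>0$ and taking $\lambda = \epsilon\delta_n/3$: since $\delta_n$ vanishes no faster than a polynomial rate, $|\log(\delta_n^3/(K_n\Delta_n))| = o(n^c)$ for every $c>0$, while $K_n\Delta_n/\delta_n^3 \to 0$ by hypothesis, so the bound tends to $0$. Hence $\|S_n - \mathcal{N}_n\| = o_{\Pn}(\delta_n)$. Combining this with assumption (b), namely $\Omega_n^{-1/2}\sqrt{n}(\widehat\beta_n-\beta_n) = S_n + r_n$ with $\|r_n\| = o_{\Pn}(\delta_n)$, the triangle inequality yields $\|\Omega_n^{-1/2}\sqrt{n}(\widehat\beta_n-\beta_n) - \mathcal{N}_n\| = o_{\Pn}(\delta_n)$, which is the first claimed display.

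For the functional statement, I would use $\theta_n(v) = p_{n}(v)'\beta_n + A_n(v)$ and $g_n(v) = p_{n}(v)'\Omega_n^{1/2}$ to write
$$
\frac{\sqrt{n}(\widehat\theta_n(v)-\theta_n(v))}{\|g_n(v)\|} - \frac{g_n(v)}{\|g_n(v)\|}\mathcal{N}_n
= \frac{g_n(v)}{\|g_n(v)\|}\Big(\Omega_n^{-1/2}\sqrt{n}(\widehat\beta_n-\beta_n) - \mathcal{N}_n\Big) - \frac{\sqrt{n}\,A_n(v)}{\|g_n(v)\|}.
$$
By Cauchy-Schwarz, the first term on the right is bounded in absolute value, uniformly over $v\in\mathcal{V}$, by $\|\Omega_n^{-1/2}\sqrt{n}(\widehat\beta_n-\beta_n) - \mathcal{N}_n\| = o_{\Pn}(\delta_n)$, and the second term is $o(\delta_n)$ uniformly over $v$ by assumption (c). Taking $\sup_{v\in\mathcal{V}}$ gives the result.

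The main obstacle is the Yurinskii step: one must invoke the precise form of Yurinskii's coupling bound with its logarithmic correction term and check that this correction does not spoil the rate, which is exactly where the polynomial-rate assumption on $\delta_n$ combines with the dimension/moment condition $K_n\Delta_n/\delta_n^3\to 0$; one should also be explicit that the coupling is constructed on an enlargement of the original probability space, so that all random elements used downstream are transported along (in the spirit of the enrichments invoked elsewhere in the paper). Everything after the coupling is elementary linear algebra together with the triangle inequality.
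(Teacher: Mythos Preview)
Your proof is correct and follows essentially the same route as the paper: apply Yurinskii's coupling to $S_n=n^{-1/2}\sum_i u_{i,n}$ to produce $\mathcal{N}_n$, absorb the linearization remainder $r_n$ by the triangle inequality, and then pass to the functional statement via the unit-vector bound $|g_n(v)'x|/\|g_n(v)\|\le\|x\|$ together with the approximation-error assumption on $A_n$. The only cosmetic difference is that the paper dispatches the logarithmic correction in Yurinskii's bound more tersely (since $B_n:=K_n\Delta_n/\delta_n^3\to 0$ already forces $B_n(1+|\log(1/B_n)|/K_n)\to 0$), whereas your appeal to the polynomial rate of $\delta_n$ is not strictly needed there.
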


The following corollary covers the cases considered in the examples of the previous section.

\begin{corollary}[\textbf{A Leading Case of Influence Function}]\label{corollary:strong} Suppose the conditions
of Theorem \ref{theorem:strong series} hold with
  $u_{i,n}:= \Omega_n^{-1/2} Q_n^{-1} p_{n}(V_i) \epsilon_i$, where $(V_i,\epsilon_i)$  are i.i.d. with $E_{\Pn}[\epsilon_i p_{n}(V_i)] = 0$,  $S_n := E_{\Pn}[ \epsilon_i^2 p_{n}(V_i) p_{n}(V_i)']$ , and \ $ \Omega_n :=  Q_n^{-1} S_n (Q_n^{-1})',$
where  $Q_n^{-1}$ is a non-random invertible matrix, and $\| \Omega_n^{-1/2} Q_n^{-1} \| \leq \tau_n$;  $E_{\Pn}[|\epsilon_i|^3|V_i=v]$ is bounded above uniformly in $v \in \textrm{support}(V_i)$,
and $E_{\Pn}[\|p_n(V_i)\|^3] \leq C_n K_n^{3/2}$. Then, the key growth restriction
on the number of series terms ${K_n} \Delta_n / \delta_n^3 \to 0$ holds if
$\tau_n^6 C_n^2 {K_n}^5/(n\delta_n^6) \to 0.$
\end{corollary}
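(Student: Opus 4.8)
The plan is to reduce the growth restriction $K_n \Delta_n/\delta_n^3 \to 0$ to a bound on a single third moment, exploiting the i.i.d.\ structure. Since $(V_i,\epsilon_i)$ are i.i.d., the summands $u_{i,n} = \Omega_n^{-1/2} Q_n^{-1} p_{n}(V_i)\epsilon_i$ are i.i.d.\ as well, so that $\Delta_n = \sum_{i=1}^n E_{\Pn}\|u_{i,n}\|^3/n^{3/2} = n^{-1/2}\, E_{\Pn}\|u_{1,n}\|^3$. For consistency with the hypotheses of Theorem \ref{theorem:strong series} I would also note that $E_{\Pn}[u_{i,n}] = \Omega_n^{-1/2}Q_n^{-1}E_{\Pn}[p_{n}(V_i)\epsilon_i] = 0$ by the stated orthogonality condition, and $E_{\Pn}[u_{i,n}u_{i,n}'] = \Omega_n^{-1/2}Q_n^{-1}S_n(Q_n^{-1})'\Omega_n^{-1/2} = I_{K_n}$ by the definitions of $S_n$ and $\Omega_n$ (which in particular presumes $S_n$, hence $\Omega_n$, invertible, as the notation $\Omega_n^{-1/2}$ already requires).

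Next I would bound $\|u_{1,n}\|$ pointwise: by sub-multiplicativity of the operator norm, $\|u_{1,n}\| \leq \|\Omega_n^{-1/2}Q_n^{-1}\|\,\|p_{n}(V_1)\|\,|\epsilon_1| \leq \tau_n\, \|p_{n}(V_1)\|\,|\epsilon_1|$. Cubing, taking expectations, conditioning on $V_1$, and using that $E_{\Pn}[|\epsilon_1|^3\mid V_1=v]$ is bounded above uniformly in $v$ gives $E_{\Pn}\|u_{1,n}\|^3 \leq \tau_n^3\, E_{\Pn}\big[\|p_{n}(V_1)\|^3\, E_{\Pn}[|\epsilon_1|^3\mid V_1]\big] \lesssim \tau_n^3\, E_{\Pn}[\|p_{n}(V_1)\|^3] \leq C\,\tau_n^3 C_n K_n^{3/2}$, invoking the moment bound $E_{\Pn}[\|p_n(V_i)\|^3]\leq C_n K_n^{3/2}$. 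Hence $\Delta_n \lesssim \tau_n^3 C_n K_n^{3/2} n^{-1/2}$.

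Finally, plugging this into the growth quantity yields $K_n\Delta_n/\delta_n^3 \lesssim \tau_n^3 C_n K_n^{5/2}/(n^{1/2}\delta_n^3)$, and squaring gives $\big(K_n\Delta_n/\delta_n^3\big)^2 \lesssim \tau_n^6 C_n^2 K_n^5/(n\delta_n^6)$; thus the assumed condition $\tau_n^6 C_n^2 K_n^5/(n\delta_n^6)\to 0$ implies $K_n\Delta_n/\delta_n^3 \to 0$, which is the key growth restriction of Theorem \ref{theorem:strong series}. All remaining hypotheses of that theorem are inherited directly from those assumed here, so the conclusion of the corollary follows. There is essentially no serious obstacle: the argument is a short chain of norm inequalities together with the law of iterated expectations. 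The only point requiring mild care is the bookkeeping that turns the operator-norm bound $\|\Omega_n^{-1/2}Q_n^{-1}\|\leq\tau_n$ into the pointwise vector bound on $\|u_{i,n}\|$ and then correctly tracks the powers of $\tau_n$, $C_n$, $K_n$, $n$, and $\delta_n$ through the cube; the concluding squaring step merely matches the exponent conventions used in the statement.
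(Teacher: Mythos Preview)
Your proposal is correct and follows essentially the same approach as the paper: bound $E_{\Pn}\|u_{i,n}\|^3$ via the operator-norm inequality $\|u_{i,n}\|\leq\|\Omega_n^{-1/2}Q_n^{-1}\|\,\|p_n(V_i)\epsilon_i\|$, condition on $V_i$ to separate $|\epsilon_i|^3$ from $\|p_n(V_i)\|^3$, and then insert the resulting bound $E_{\Pn}\|u_{i,n}\|^3\lesssim\tau_n^3 C_n K_n^{3/2}$ into $\Delta_n$. The paper's Step~3 is just a one-line version of your argument; your explicit verification of $E_{\Pn}[u_{i,n}]=0$, $E_{\Pn}[u_{i,n}u_{i,n}']=I_{K_n}$, and the squaring that matches the stated exponents is additional detail the paper omits.
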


\begin{remark}[Applicability] In this paper $\delta_n = 1/\log n$.  Sufficient conditions for linear approximation (b) follow from results
in the literature on series estimation, e.g. \cite{andrews:series},  \cite{Newey:95}, and
\cite{Newey:97}, and \cite{Belloni/Chernozhukov/Fernandez-Val:10}. See also \cite{Chen:07} and references therein for a general overview of sieve estimation and recent developments. The main text provides several examples, including mean and quantile regression, with primitive conditions that provide sufficient conditions for the linear approximation. \qed \end{remark}

\section{Implementation}\label{sec:implementation-details}

In Section \ref{param-and-series-est-simulation-cv} we lay out steps for implementation of parametric and series estimation of bounding functions, while in Section \ref{kernel-est-simulation-cv} we provide implementation steps for kernel-type estimation. \ The end goal in each case is to obtain estimators $\widehat{\theta}_{n0}(p)$ that provide bias-corrected estimates or the endpoints of confidence intervals depending on the chosen value of $p$, e.g. $p=1/2$ or $p=1-\alpha$.
As before, we focus here on the upper bound. If instead
$\widehat{\theta}_{n0}(p)$ were the lower bound for $\theta^{\ast}$, given by
the supremum of a bounding function, the same algorithm could be applied to perform inference on $-\theta^{\ast}$, bounded above by the infimum of the negative of the original bounding function, and then any inference statements for $-\theta^{\ast}$ could trivially be transformed to inference statements for $\theta^{\ast}$. Indeed, any set of lower and upper bounds can be similarly transformed to a collection of upper bounds, and the above algorithm applied to perform inference on $\theta^*$, e.g. according to the methods laid out for inference on parameters bounded by conditional moment inequalities in Section \ref{sec:est-inf-general-conditions}.\footnote{For example if we have
$
\theta_n^{l}\left( z\right) \leq \theta_n^{\ast }\leq \theta_n ^{u}\left(
z\right) \text{ for all }z\in \mathcal{Z}\text{,}
$
then we can equivalently write
$
\min_{z\in \mathcal{Z}}\min_{j=1,2}g_n\left( \theta_n ^{\ast },z,j\right) \geq 0
\text{,}
$
where
$g_n\left( \theta_n^{\ast },z,1\right) =\theta_n ^{u}\left(
z\right) -\theta_n ^{\ast }$ and $g_n\left( \theta_n ^{\ast },z,2\right)
=\theta_n^{\ast }-\theta_n^{l}\left( z\right)$. Then we can apply our method through use of the auxiliary function $g_n(\theta_n,z,j)$, in similar fashion as in Example C with multiple conditional moment inequalities.} Alternatively, if one wishes to perform inference on the identified set in such circumstances one can use the intersection of upper and lower one-sided intervals each based on $\tilde{p}=(1+p)/2$ as an asymptotic level-$p$ confidence set for $\Theta_I$, which is valid by Bonferroni's inequality.\footnote{In an earlier version of this paper, \cite{Chernozhukov/Lee/Rosen:09}, we provided a different method for inference on a parameter with both lower and upper bounding functions, which can also be used for valid inference on $\theta^{\ast}$.}

\subsection{Parametric and Series Estimators}\label{param-and-series-est-simulation-cv}

Let $\beta_n$ denote the bounding function parameter vector if parametric estimation is used, while $\beta_n$ denotes the coefficients of the series terms if series estimation is used, as in Section \ref{sec:NP-est-series}. $K$ denotes the dimension of $\beta_n$ and $I_{K}$ denotes the $K$-dimensional identity matrix. As in the main text let $p_n(v)=\partial\theta_n                                                                                                                                                                        \left(
    v,\widehat{\beta}_n\right)
    /\partial\beta_n$, which are simply the series terms in the case of series estimation.

\begin{algorithm}[Implementation for Parametric and Series Estimation] (1) Set $\tilde \gamma_n \equiv 1- .1/\log n$. Simulate a large number $R$ of draws denoted $Z_{1},...,Z_{R}$ from the $K$-variate standard normal distribution $\mathcal{N}\left(  0,I_{K}\right)$. (2) Compute $\widehat{\Omega}_n$, a consistent estimator for the large sample variance of $\sqrt{n}\left( \widehat{\beta}_n-\beta_{n}\right)$. (3) For each $v\in\mathcal{V}$, compute $\widehat{g}\left(  v\right)=p_n(v)'\widehat{\Omega }_n^{1/2}$ and set $s_n(v) =  \| \widehat {g}\left(  v\right) \|/\sqrt{n}$. (4) Compute
$$
k_{n,\mathcal{V}}\(\tilde{\gamma}_n\) = {\tilde\gamma}_n-\text{quantile of } \{\sup_{ v \in \mathcal{V}}
\left(\widehat{g}\left(  v\right)
^{\prime}Z_{r}/\left\Vert \widehat {g}\left(  v\right)
\right\Vert\right) , r=1,...,R \}\text{, and}
$$
$$
\widehat V_n = \{ v \in \V: \widehat \theta_n(v) \leq \min_{v \in \mathcal{V}}\( \widehat \theta_n(v) +  k_{n, \mathcal{V}}(\tilde \gamma_n)s_n(v)\) + 2   k_{n, \mathcal{V}}(\tilde \gamma_n) s_n(v) \},$$
(5) Compute
$$k_{n,\widehat V_n}\left(  p\right) =p-\text{quantile of } \left \{ \sup_{v\in
\widehat{V}_{n}}\left(\widehat{g}\left(  v\right)
^{\prime}Z_{r}/\left\Vert \widehat {g}\left(  v\right)
\right\Vert\right), r=1,...,R\right \}\text{, and set}$$
$$\widehat{\theta}_{n0}(p)=\inf_{v\in \mathcal{V}}\left [
    \widehat{\theta }_n \left(  v\right)  + k_{n,\widehat V_n}(p) \left\Vert \widehat {g}\left(  v\right)
\right\Vert/\sqrt{n}  \right ] . $$
\end{algorithm}

An important special case of the parametric setup is that where the support of $v$ is finite, as in Example 1 of Section \ref{Section:ConditionP}, so
that $\mathcal{V=}\left\{1,...,J\right\}  $. \ In this
case the algorithm
applies with $\theta_n\left(  v,\beta_n\right)  =\sum_{j=1}%
^{J}1[v=j]\beta_{nj}$, i.e. where for each $j$,
$\theta_n\left(j,\beta_n\right)  =\beta_{nj}$ and
$\widehat{g}\left(  v\right) =\left(  1\left[  v=1\right]
,...,1\left[  v=J\right]  \right) \cdot\widehat{\Omega}_n^{1/2}$. Note that this covers the case where the bounding function is a conditional mean or quantile with discrete conditioning variable, such as conditional mean estimation with discrete regressors, in which case $\beta_{nj} = E[Y|V=j]$ can be estimated by a sample mean.

\begin{remark} In the case of series estimation, if desired one can bypass simulation of
the stochastic process by instead employing the analytical critical value in step 4, $k_{n,\textsf{V}}(p)= a_n(\textsf{V})  -2\log (1-p)/a_n(\textsf{V})$ from Remark \ref{remark: feasible an series case} in Section \ref{sec:NP-est-series}.
This is convenient because it does not involve simulation, though it requires computation of $a_n(\widehat V_n) = 2\sqrt{\log  \{ \ell_n ( 1 + \ell_n L_n \text{diam}(\widehat V_n))^d \}}$.
Moreover, it could be too conservative in some applications. Thus, we recommend using simulation, unless the
computational cost is too high. 
\end{remark}

\subsection{Kernel-Type Estimators}\label{kernel-est-simulation-cv}

In this section we describe the steps for implementation of kernel-type estimators.

\begin{algorithm}[Implementation for Kernel Case] (1)  Set ${\gamma}_n \equiv 1- .1/\log n$. Simulate $R \times n$ independent draws from $N (0,1)$, denoted by $\{\eta_{ir}:i=1,\ldots,n, r=1,\ldots,R \}$, where $n$ is the sample size and $R$ is the number of simulation repetitions. (2) For each $v\in\mathcal{V}$ and $r = 1,\ldots,R$, compute
$
\Gn^{o} (\hat{g}_v; r) =  \frac{1}{\sqrt{ n}} \sum_{i=1}^n \eta_{ir}  \hat g_v (U_i,Z_i),
$
where $\hat g_v (U_i,Z_i)$ is defined in Section \ref{sec:est:local}, that is
\begin{align*}
\hat g_v (U_i,Z_i) = \frac{ e_j'\hat U_{i} }{ (h_n^d )^{1/2} \hat f_n(z) } \mathbf{K} \left( \frac{z - Z_i}{h_n} \right).
\end{align*}
Let $s_n^2(v) = \En[\hat g^2_v]/(n h_n^d)$ and $\En[\hat g^2_v] = n^{-1} \sum_{i=1}^n   \hat g_v^2 (U_i,Z_i)$. Here, $\hat U_{i}$ is the kernel-type  regression residual and $\hat f_n(z)$ is the kernel density estimator of density of $Z_i$.
(3) Compute
$
k_{n,\mathcal{V}}\({\gamma}_n\) =  {\gamma}_n-\text{quantile of }
\left \{\sup_{ v \in \mathcal{V}} \Gn^{o}  (\hat{g}_v; r)/ \sqrt{\En[\hat g^2_v]}, r=1,...,R \right \},
$
and
$
\widehat V_n = \{ v \in \V: \widehat \theta_n(v) \leq \min_{v \in \mathcal{V}}\( \widehat \theta_n(v) +  k_{n, \mathcal{V}}(\gamma_n)s_n(v)\) + 2   k_{n, \mathcal{V}}(\gamma_n) s_n(v) \}$.
(4) Compute
$k_{n,\widehat V_n}\left(  p\right) =p-\text{quantile of } \{ \sup_{v\in \widehat{V}_{n}}  \Gn^{o}  (\hat{g}_v; r)/ \sqrt{\En[\hat g^2_v]} , r=1,...,R \},$
and set    $\widehat{\theta}_{n0}(p)=\inf_{v\in \mathcal{V}} [
    \widehat{\theta }\left(  v\right)  + k_{n,\widehat V_n}(p) s_n(v) ].$
\end{algorithm}

\begin{remark}(1) The researcher also has the option of employing an analytical
approximation in place of simulation if desired. This can be done by using $k_{n,\textsf{V}}(p)= a_n(\textsf{V})  -2\log (1-p)/a_n(\textsf{V})$ from Remark \ref{remark: feasible an kernel case}, but
requires computation of $$a_n(\widehat V_n) = 2\sqrt{\log  \{\ell_n ( 1 + \ell_n (1+ h_n^{-1}) \text{diam}(\widehat V_n))^d\}  }.$$ This approximation could be too conservative in some applications, and thus we recommend using simulation, unless the
computational cost is too high. (2) In the case where the bounding function is non-separable in a parameter of interest, a confidence interval for this parameter can be constructed as described in Section \ref{param-and-series-est-simulation-cv}, where step (1) is carried out once and steps (2)-(4) are executed iteratively on a set of parameter values approximating the parameter space. However, the bandwidth, $\hat f_n(z)$, and $\mathbf{K} \left( \frac{z - Z_i}{h_n} \right)$ do not vary across iterations and thus only need to computed once. \qed\end{remark}

\section{Monte Carlo Experiments}\label{sec:monte-carlo}

In this section we present results of Monte Carlo
experiments to illustrate the finite-sample performance of
our method. We consider a Monte Carlo design with bounding function
\begin{align}\label{mp-bound-sim}
\theta(v) := L \phi(v),
\end{align}
where $L$ is a constant and $\phi(\cdot)$ is the standard normal density function. Throughout the Monte Carlo
experiments, the parameter of interest is $\theta_0=\sup_{v \in \mathcal{V}} \theta (v)$.\footnote{Previous sections focused on $\theta_0=\inf_{v \in \mathcal{V}} \theta (v)$ rather than $\theta_0=\sup_{v \in \mathcal{V}} \theta (v)$. This is not a substantive difference as for any function $\theta (\cdot)$, $\sup_{v \in \mathcal{V}} \theta (v)=-\inf_{v \in \mathcal{V}} ( - \theta (v) )$.}

\subsection{Data-Generating Processes}

We consider four Monte Carlo designs for the sake of illustration.\footnote{We consider some additional Monte Carlo designs in Section \ref{sec:mc-supplement} of the on-line supplement.}  In the first Monte Carlo design, labeled DGP1, the bounding function is completely flat so that $V_0 = \mathcal{V}$.  In the second design, DGP2, the bounding function is non-flat, but smooth in a neighborhood of its maximizer, which is unique so that $V_0$ is singleton.  In DGP3 and DGP4, the bounding function is also non-flat and smooth in a neighborhood of its (unique) maximizer, though relatively peaked. Illustrations of these bounding functions are provided in Figures \ref{figure_dgp12} and \ref{figure_dgp34} of our on-line supplement. In practice the shape of the bounding function is unknown, and the inference and estimation methods we consider do not make use of this information. \ As we describe in more detail below, we evaluate the finite sample performance of our approach in terms of coverage probability for the true point $\theta_0$ and coverage for a false parameter value $\theta$ that is close to but below $\theta_0$.  We compare the performance of our approach to that of the Cramer Von-Mises statistic proposed by AS. \ DGP1 and DGP2 in particular serve to effectively illustrate the relative advantages of both procedures as we describe below. \ Neither approach dominates.

For all DGPs we generated 1000 independent samples from the following model:
$$
V_i
\sim \text{Unif}[-2,2],
U_i = \min \{\max\{-3, \sigma \tilde{U}_i\}, 3 \} ,
\ \text{and}
\ \ Y_i = L \phi (V_i) + U_i,
$$
where
$\tilde{U}_i \sim N(0,1)$ and $L$ and $\sigma$ are constants.
We set these constants in the following way:
\begin{eqnarray*}
& & \textrm{DGP1:  } L = 0 \text{ and } \sigma = 0.1 \text{; } \ \ \ \textrm{DGP2:  } L = 1 \text{ and } \sigma = 0.1 \text{;} \\
& & \textrm{DGP3:  } L = 5 \text{ and } \sigma = 0.1 \text{; } \ \ \ \textrm{DGP4:  } L = 5 \text{ and } \sigma = 0.01 \text{.}
\end{eqnarray*}

We considered sample sizes  $n=500$ and  $n=1000$, and we implemented both
series and kernel-type estimators to estimate the
bounding function $\theta(v)$ in
\eqref{mp-bound-sim}.
We set $\mathcal{V}$ to be an interval between the $.05$ and $.95$ sample quantiles of $V_i$'s in order to avoid undue influence of
outliers at the boundary of the support of $V_i$.
For both types of estimators, we computed critical values via simulation as described in
Section \ref{sec:implementation-details}, and we implemented our
method with both the conservative but simple, non-stochastic choice $\widehat{V} = \mathcal{V}$ and the set estimate $\widehat{V} = \widehat V_n$
described in Section \ref{sec:inf-est-strategy}.

\subsection{Series Estimation}\label{sec:monte-carlo-series}  For basis functions we use polynomials and cubic B-splines
 with knots equally spaced over the sample quantiles of $V_i$.
 The number $K=K_n$ of approximating functions was obtained by the following simple rule-of-thumb:
   \begin{align}\label{K-rule}
K = \underline{\widehat{K}}, \ \ \widehat{K} :=  \widehat{K}_{cv} \times n^{-1/5} \times n^{2/7},
\end{align}
where $\underline{a}$ is defined as the largest integer that is smaller than or equal to $a$, and $\widehat{K}_{cv}$ is the minimizer of the leave-one-out least squares cross validation score. If $\theta(v)$ is twice continuously differentiable, then a cross-validated $K$ has the form $K \propto n^{1/5}$
asymptotically. Hence, the multiplicative factor $n^{-1/5} \times n^{2/7}$ in \eqref{K-rule} ensures that
the bias is asymptotically negligible from under-smoothing. \footnote{For B-splines the optimal $\widehat{K}_{cv}$ was first selected from the first $5 \times n^{1/5}$ values starting from $5$, with $n^{1/5}$ rounded up to the nearest integer. If the upper bound was selected, the cross validation (CV) score of $\widehat{K}_{cv}$ was compared to that of $\widehat{K}_{cv}+1$ iteratively, such that $\widehat{K}_{cv}$ was increased until further increments resulted in no improvement. This allows $\widehat{K}_{cv} \propto n^{1/5}$ and provides a crude check against the upper bound binding in the CV search, though in these DGPs results differed little from those searching over $\{5,6,7,8,9\}$, reported in \cite{Chernozhukov/Lee/Rosen:09}.  For polynomials the CV search was limited to the set $\{3,4,5,6\}$ due to multicollinearity issues that arose when too many terms were used.}

\subsection{Kernel-Type Estimation}\label{monte-carlo-kernel}\footnote{Appendices \ref{sec:proofs-strong-local} and \ref{sec:est:local:proofs} of the on-line supplement provide strong approximation results and proofs for kernel-type estimators, including the local linear estimator used here.}
We use local linear smoothing since
it is known to behave better at the boundaries of the support than the standard
kernel method.  We used the kernel function $K(s)=\frac{15}{16}%
(1-s^{2})^{2}1(|s|\leq 1)$ and the rule of thumb bandwidth:
\begin{align}\label{h-rule}
h = \widehat{h}_{ROT} \times \widehat{s}_v \times n^{1/5} \times n^{-2/7},
\end{align}
where $\widehat{s}_v$ is the square root of the sample variance of the $V_i$, and $\widehat{h}_{ROT}$ is the rule-of-thumb bandwidth for estimation of
$\theta(v)$ with studentized $V$, as prescribed in Section 4.2 of
\cite{Fan/Gijbels:96}.
The exact form of $\widehat{h}_{ROT}$ is
\begin{align*}
\widehat{h}_{ROT} = 2.036 \left[ \frac{ \tilde{\sigma}^2 \int w_0(v) dv }{ n^{-1} \sum_{i=1}^n \left\{
\tilde{\theta}^{(2)}(\tilde{V}_i) \right\}^2 w_0(\tilde{V}_i) } \right]^{1/5} n^{-1/5},
\end{align*}
where
$\tilde{V}_i$'s are studentized $V_i$'s,
$\tilde{\theta}^{(2)}(\cdot)$ is the second-order derivative of the global quartic parametric fit of
$\theta(v)$ with studentized $V_i$,
$\tilde{\sigma}^2$ is the simple average of squared residuals from the parametric fit,
$w_0(\cdot)$ is a uniform weight function that has value 1 for any $\tilde{V}_i$ that is between
the $.10$  and $.90$ sample quantiles of $\tilde{V}_i$.
Again, the factor $n^{1/5} \times n^{-2/7}$ is multiplied in \eqref{h-rule} to ensure that
the bias is asymptotically negligible due to under-smoothing.

\subsection{Simulation Results}

To evaluate the relative performance of our inference method, we also implemented one of the inference methods proposed by AS, specifically their Cram\'{e}r-von Mises-type (CvM) statistic with both plug-in asymptotic (PA/Asy) and asymptotic generalized moment selection (GMS/Asy) critical values.  For instrument functions we used countable hypercubes and the $S$-function of AS Section 3.2.\footnote{All three $S$-functions in AS Section 3.2 are equivalent in our design, since there is a single conditional moment inequality.} We set the weight function and tuning parameters for the CvM statistic exactly as in AS (see AS Section 9). \ These values performed well in their simulations, but our Monte Carlo design differs from theirs, and alternative choices of tuning parameters could perform more or less favorably in our design. We did not examine sensitivity to the choice of tuning parameters for the CvM statistic.

The coverage probability (CP) of confidence intervals with nominal level 95\% is evaluated for the true lower bound $\theta_0$, and false coverage probability (FCP) is reported at $\theta = \theta_0 - 0.02$. There were 1,000 replications for each experiment. \ Tables \ref{mc1}, \ref{mc2}, and \ref{mc1-cont} summarize the results. CLR and AS refer to our inference method and that of AS, respectively.

We first consider the performance of our method for DGP1.  In terms of coverage for $\theta_0$ both series estimators and the local linear estimator perform reasonably well, with the series estimators performing best. The polynomial series and local linear estimators perform somewhat better in terms of false coverage probabilities, which decrease with the sample size for all estimators. The argmax set $V_0$ is the entire set $\mathcal{V}$, and our set estimator $\widehat V_n$ detects this.  Turning to DGP2 we see that coverage for $\theta_0$ is in all cases roughly .98 to .99.  There is non-trivial power against the false parameter $\theta$ in all cases, with the series estimators giving the lowest false coverage probabilities.  For DGP3 the bounding function is relatively peaked compared to the smooth but non-flat bounding function of DGP2.  Consequently the average endpoints of the preliminary set estimator $\widehat V_n$ become more concentrated around 0, the maximizer of the bounding function. Performance in terms of coverage probabilities improves in nearly all cases, with the series estimators performing significantly better when $n=1000$ and $\widehat V_n$ is used. With DGP4 the bounding function remains as in DGP3, but now with the variance of $Y_i$ decreased by a factor of 100.  The result is that the bounding function is more accurately estimated at every point. \ Moreover, the set estimator $\widehat V_n$ is now a much smaller interval around 0. \ Coverage frequencies for $\theta_0$ do not change much relative to DGP3, but false coverage probabilities drop to 0.
Note that in DGPs 2-4, our method performs
better when $V_n$ is estimated in that it makes the coverage probability more accurate and the false coverage probability smaller. DGPs 3-4 serve to illustrate the convergence of our set estimator $\widehat{V}_n$ when the bounding function is peaked and precisely estimated, respectively.

In Table \ref{mc2} we report the results of using the CvM statistic of AS to perform inference.  For DGP1 with a flat bounding function the CvM statistic with both the PA/Asy and GMS/Asy performs well.  Coverage frequencies for $\theta_0$ were close to the nominal level, closer than our method using polynomial series or local linear regression.  The CvM statistic has a lower false coverage probability than the CLR confidence intervals in this case, although at a sample size of 1000 the difference is not large.  For DGP2 the bounding function is non-flat but smooth in a neighborhood of $V_0$ and the situation is much different. \ For both PA/Asy and GMS/Asy critical values with the CvM statistic, coverage frequencies for $\theta_0$ were 1.  Our confidence intervals also over-covered in this case, with coverage frequencies of roughly .98 to .99.  Moreover, the CvM statistic has low power against the false parameter $\theta$, with coverage 1 with PA/Asy and coverage .977 and .933 with sample size 500 and 1000, respectively using GMS/Asy critical values.  For DGP3 and DGP4 both critical values for the CvM statistic gave coverage for $\theta_0$ and the false parameter $\theta$ equal to one.  Thus under DGPs 2,3, and 4 our confidence intervals perform better by both measures. In summary, overall neither approach dominates.

 Thus, in our Monte Carlo experiments the CvM statistic exhibits better power when the bounding function is flat, while our confidence intervals exhibit better power when the bounding function is non-flat. \ AS establish that the CvM statistic has power against some $n^{-1/2}$ local alternatives under conditions that are satisfied under DGP1, but that do not hold when the bounding function has a unique minimum.\footnote{Specifically Assumptions LA3 and LA3' of AS Theorem 4 do not hold when the sequence of models has a fixed bounding function with a unique minimum. As they discuss after the statement of Assumptions LA3 and LA3', in such cases GMS and plug-in asymptotic tests have trivial power against $n^{-1/2}$ local alternatives.} We have established local asymptotic power for nonparametric estimators of polynomial order less distant than $n^{-1/2}$ that apply whether the bounding function is flat or non-flat. \ Our Monte Carlo results accord with these findings.\footnote{We did not do CP-correction in our reported results. Our conclusion will remain valid even with CP-correction as in AS, since our method performs better
in DGP2-DGP4 where we have over-coverage.}
In the on-line supplement, we present further supporting Monte Carlo evidence and local asymptotic power analysis to show why our method performs better than the AS method in non-flat cases.

In Table \ref{mc-time} we report computation times for our Monte Carlo experiments.\footnote{These computation times were obtained on a 2011 iMac desktop with a 2.7 GHz processor and 8GB RAM using our implementation. Generally speaking, performance time for both methods will depend on the efficiency of one's code, and more efficient implementation times for both methods may be possible.}  The fastest performance in terms of total simulation time was achieved with the CvM statistic of AS, which took roughly 9 minutes to execute a total of 16,000 replications. Simulations using our approach with B-spline series, polynomial series, and local linear polynomials took roughly 58, 19, and 84 minutes, respectively. Based on these times the table shows for each statistic the average time for a single test, and the relative performance of each method to that obtained using the CvM statistic.\footnote{The difference in computation time between polynomial and B-spline series implementation was almost entirely due to the simpler cross-validation search from polynomials. With a search over only 4 values of $\widehat{K}_{cv}$, simulation time for B-splines took roughly 20 minutes. Cross-validation is also in part accountable for slower performance relative to AS. We followed Section 9 of AS in choosing tuning parameters for the CvM statistic, which does not involve cross-validation. Using B-splines with a deterministic bandwidth resulted in a computation time of 12 minutes, roughly 1.4 times the total computation time for the CvM statistic. Nonetheless, we prefer cross-validation for our method in practice.}


In practice one will not perform Monte Carlo experiments but will rather be interested in computing a single confidence region for the parameter of interest. When the bounding function is separable our approach offers the advantage that the critical value does not vary with the parameter value being tested. As a result, we can compute a confidence region in the same amount of time it takes to compute a single test. On the other hand, to construct a confidence region based on the CvM statistic, one must compute the statistic and its associated critical value at a large number of points in the parameter space, where the number of points required will depend on the size of the parameter space and the degree of precision desired.  If however the bounding function is not separable in the parameter of interest, then both approaches use parameter-dependent critical values.

\section{Conclusion}\label{sec:conclusion}

In this paper we provided a novel method for inference on
intersection bounds.  Bounds of this form are common in the
recent literature, but two issues have posed difficulties for
valid asymptotic inference and bias-corrected estimation.
First, the application of the supremum and infimum operators to
boundary estimates results in finite-sample bias. Second,
unequal sampling error of estimated bounding functions
complicates inference.  We overcame these difficulties by
applying a precision-correction to the estimated bounding
functions before taking their intersection.  We employed strong
approximation to justify the magnitude of the correction in
order to achieve the correct asymptotic size.  As a by-product,
we proposed a bias-corrected estimator for intersection bounds
based on an asymptotic median adjustment.  We provided formal
conditions that justified our approach in both parametric and
nonparametric settings, the latter using either kernel or
series estimators.

At least two of our results may be of independent interest
beyond the scope of inference on intersection bounds.  First,
our result on the strong approximation of series estimators is
new.  This essentially provides a functional central limit
theorem for any series estimator that admits a linear
asymptotic expansion, and is applicable quite generally.
Second, our method for inference applies to any value that can
be defined as a linear programming problem with either finite
or infinite dimensional constraint set. Estimators of this form
can arise in a variety of contexts, including, but not limited
to intersection bounds.  We therefore anticipate that although
our motivation lay in inference on intersection bounds, our
results may have further application.

\appendix

\section{Definition of Strong Approximation}\label{sec:definitions and notation}

The following definitions are used extensively.

\begin{definition}[Strong approximation]\label{def:strong}
Suppose that for each $n$ there are random variables $Z_n$ and $Z'_n$ defined on a probability space $(A, \mathcal{A}, \P_n)$
 and taking values in the separable metric space $(S,d_S)$.  We say  that $Z_n =_d Z_n' +  o_{\P_n}(\delta_n)$,
for $\delta_n \to 0$,   if there are identically distributed copies of $Z_n$ and $Z_n'$, denoted
$\bar Z_n$ and $\bar Z_n'$, defined on $(A, \mathcal{A}, \P_n)$ (suitably enriched if needed), such that
$$
d_S(\bar Z_n, \bar Z_n') = o_{\P_n}(\delta_n).
$$
\end{definition}

\noindent  Note that copies $\bar Z_n$ and $\bar Z_n'$ can be always defined on $(A, \mathcal{A}, \P_n)$ by suitably enriching this space by taking product probability spaces.
It turns out that for the Polish spaces, this definition implies the following stronger, and much more convenient, form.

\begin{lemma}[A Convenient Implication for Polish Spaces via Dudley and Philipp]\label{lemma: DP} Suppose that $(S,d_S)$ is Polish, i.e. complete, separable metric space,  and  $(A, \mathcal{A}, \P_n)$
has been suitably enriched. Suppose that Definition \ref{def:strong} holds, then there is also an identical copy $Z^*_n$ of $Z_n'$ such that
 $Z_n =Z^*_n +  o_{\P_n}(\delta_n)$, that is,
$$
d_S(Z_n, Z^*_n) = o_{\P_n}(\delta_n)
$$

\end{lemma}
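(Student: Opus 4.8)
The plan is to invoke the classical transfer (or \emph{randomization}) theorem in the spirit of Dudley and Philipp: on a Polish space one may move a given coupling onto a prescribed marginal at the cost of one extra independent uniform random variable. First I would unpack Definition \ref{def:strong}: it supplies, on the (already enriched) space $(A,\mathcal{A},\P_n)$, random elements $\bar Z_n,\bar Z_n'$ with $\bar Z_n =_d Z_n$, $\bar Z_n' =_d Z_n'$, and $d_S(\bar Z_n,\bar Z_n') = o_{\P_n}(\delta_n)$. Let $\mu_n$ be the joint law of $(\bar Z_n,\bar Z_n')$ on $S\times S$, which is again Polish, and let $\nu_n$ be the law of $Z_n$ (equivalently of $\bar Z_n$). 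Since $S$ is Polish, $\mu_n$ disintegrates: there is a Markov kernel $x\mapsto K_n(x,\cdot)$ from $S$ to $S$ with $\mu_n(dx\,dy)=\nu_n(dx)\,K_n(x,dy)$, i.e. $K_n(\bar Z_n,\cdot)$ is a regular conditional distribution of $\bar Z_n'$ given $\bar Z_n$. Existence of such regular conditional probabilities is exactly where Polishness (Borel-isomorphism with a Borel subset of $[0,1]$) is used.

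Next I would realize this kernel against the \emph{genuine} $Z_n$. Enrich $(A,\mathcal{A},\P_n)$ once and for all by forming the product with $([0,1],\mathcal{B}[0,1],\lambda)$, and let $U$ be the coordinate projection, so $U$ is uniform on $[0,1]$ and independent of $Z_n$. Because $S$ is Polish there is a jointly measurable map $\phi_n:S\times[0,1]\to S$ with $\phi_n(x,U)\sim K_n(x,\cdot)$ for every $x\in S$ (the standard randomized realization of a kernel from a single uniform). Set $Z_n^*:=\phi_n(Z_n,U)$.

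Then I would verify the two required properties. By construction the pair $(Z_n,Z_n^*)$ has first marginal $\nu_n$ and conditional law $K_n$ given the first coordinate, so $(Z_n,Z_n^*)=_d(\bar Z_n,\bar Z_n')\sim\mu_n$. In particular $Z_n^* =_d \bar Z_n' =_d Z_n'$, so $Z_n^*$ is an identical copy of $Z_n'$, and $Z_n$ itself (not a copy) appears in the pair. Moreover $d_S$ is a Borel function on $S\times S$, hence $d_S(Z_n,Z_n^*)=_d d_S(\bar Z_n,\bar Z_n')$; therefore for every $\varepsilon>0$ we get $\P_n(d_S(Z_n,Z_n^*)>\varepsilon\delta_n)=\P_n(d_S(\bar Z_n,\bar Z_n')>\varepsilon\delta_n)\to0$, which is precisely $d_S(Z_n,Z_n^*)=o_{\P_n}(\delta_n)$.

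The only genuinely nontrivial step is the disintegration/randomization — existence of the kernel $K_n$ and of the measurable map $\phi_n$ realizing it from one uniform — and both are standard consequences of $S$ being Polish (see, e.g., Dudley's transfer theorem or Theorem 1.10.4 of \cite{VanDerVaart/Wellner:96}); the remainder is bookkeeping with laws of measurable images. One should only note that the enrichment can be carried out uniformly in $n$ (a single product factor $[0,1]$, or a countable product, suffices), since the $Z_n^*$ across different $n$ need not be jointly coupled.
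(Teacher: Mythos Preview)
Your proposal is correct and follows essentially the same approach as the paper: both enrich the space by a product with $([0,1],\lambda)$ to obtain an independent uniform $U$, then invoke the Dudley--Philipp transfer/randomization mechanism (the paper cites their Lemma~2.11 directly, while you unpack it via disintegration and the measurable kernel realization $\phi_n$) to produce $Z_n^*$ with $(Z_n,Z_n^*)=_d(\bar Z_n,\bar Z_n')$, from which the $o_{\P_n}(\delta_n)$ bound transfers by equality in law. Your write-up simply makes explicit the machinery inside the cited lemma.
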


\textbf{Proof. } We start with the original probability space
$(A', \mathcal{A}', \P_{n}')$ that can carry $Z_n$ and $(\bar Z_n, \bar Z'_n)$.
In order to apply Lemma 2.11 of \cite{Dudley/Philipp:83}, we need to carry a standard uniform random variable $U \sim U(0,1)$ that is independent
of $Z_n$.  To guarantee this we can always consider $U \sim U(0,1)$ on the standard space  $([0,1], \mathcal{F}, \lambda)$, where $\mathcal{F}$
is the Borel sigma algebra on $[0,1]$ and $\lambda$ is the usual Lebesgue measure, and  then enrich the original space $(A', \mathcal{A}', \P_{n}')$  by creating
formally a new space  $(A, \mathcal{A}, \P_{n})$
as the product of  $(A', \mathcal{A}', \P_{n}')$  and $([0,1], \mathcal{F}, \lambda)$.
 Then using Polishness of $(S,d_S)$, given the joint law  of $(\bar Z_n, \bar Z_n')$, we can apply Lemma 2.11 of \cite{Dudley/Philipp:83}
to construct $Z_n^*$ such that $(Z_n, Z_n^*)$
has the same law as $(\bar Z_n, \bar Z'_n)$, so that $d_{S}(\bar Z_n,\bar Z_n') = o_{\P_n}(\delta_n)$ implies
$d_S(Z_n,Z_n^*) = o_{P_n}(\delta_n)$. \qed

Since in all of our cases  the relevant metric spaces are either the space of continuous functions
defined on a compact set equipped with the uniform
metric or finite-dimensional Euclidean spaces, which are all Polish spaces,
we can use Lemma \ref{lemma: DP} throughout the paper. Using this implication of strong approximation makes our proofs slightly simpler.

\section{Proofs for Section \ref{sec:est-inf-general-conditions}}\label{sec:main-section-proofs}

\subsection{Some Useful Facts and Lemmas}\label{sec:auxiliary-results}

A useful result in our case is the anti-concentration inequality derived in \cite{Chernozhukov/Kato:11}.
\begin{lemma}[\textbf{Anti-Concentration Inequality (\cite{Chernozhukov/Kato:11})}]\label{lemma:anti-concentration} Let $X=(X_{t})_{t \in T}$ be a separable Gaussian process
indexed by a semimetric space $T$ such that $E_P[ X_{t} ] = 0$ and $E_P[ X^{2}_{t} ] = 1$ for all $t \in T$.  Then
\begin{equation}
\sup_{x \in \mathbb{R}} P\left(\Big | \sup_{t \in T} X_{t} - x \Big | \leq \epsilon \right) \leq C \epsilon \left( E_P\left[ \sup_{t \in T} X_{t} \right] \vee 1 \right), \ \forall \epsilon > 0, \label{anti}
\end{equation}
where $C$ is an absolute constant.
\end{lemma}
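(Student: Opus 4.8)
The inequality is exactly the Gaussian anti-concentration bound of \cite{Chernozhukov/Kato:11}, so the plan is to recall how its proof is organized. It proceeds in three stages: a reduction to a finite index set, a reduction of the anti-concentration bound to a pointwise bound on the density of the maximum, and the density bound itself, which is the substantive step. For the first stage, use separability of $X$ to pick a countable $T_0\subseteq T$ with $\sup_{t\in T}X_t = \sup_{t\in T_0}X_t$ $P$-a.s., and write $T_0$ as an increasing union of finite sets $T_m$, so that $\sup_{t\in T_m}X_t\uparrow\sup_{t\in T_0}X_t$ a.s. If $\sup_{t\in T}X_t=+\infty$ a.s.\ then the left-hand side of \eqref{anti} is $0$ and there is nothing to prove; otherwise $\sup_{t\in T}X_t$ is a.s.\ finite, and since $\mathrm{Var}(X_t)\le 1$ Borell's inequality gives $E_P[\sup_{t\in T}X_t]<\infty$ together with $E_P[\sup_{t\in T_m}X_t]\to E_P[\sup_{t\in T}X_t]$. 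A routine monotone-limit argument, using $E_P[\sup_{t\in T_m}X_t]\vee 1\le E_P[\sup_{t\in T}X_t]\vee 1$, then reduces \eqref{anti} to the case of finite $T$; that is, to bounding $\sup_x P(|Z-x|\le\epsilon)$ where $Z:=\max_{1\le j\le p}X_j$ with $(X_1,\dots,X_p)$ jointly Gaussian, $E_P[X_j]=0$, $E_P[X_j^2]=1$.

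For the second stage, it suffices to show that $Z$ admits a Lebesgue density $f_Z$ with $\sup_z f_Z(z)\le C(E_P[Z]\vee 1)$ for an absolute constant $C$: indeed $P(|Z-x|\le\epsilon)=\int_{x-\epsilon}^{x+\epsilon}f_Z\le 2\epsilon\sup_z f_Z(z)$, and taking the supremum over $x$ yields \eqref{anti} after relabeling the constant.

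The third stage is the heart of the matter and the step I expect to be hardest. Whitening, write $X=A\xi$ with $\xi\sim N(0,I)$ and every row $A_j$ of unit norm, so that $z\mapsto\max_j\langle A_j,z\rangle$ is convex and $1$-Lipschitz; Gaussian concentration then gives $P(|Z-E_P[Z]|>t)\le 2e^{-t^2/2}$, i.e.\ $Z$ has unit-scale fluctuations around a center lying within $O(1)$ of $E_P[Z]$. The density bound itself is established in \cite{Chernozhukov/Kato:11} by a Gaussian interpolation argument: one smooths $Z$ by the log-sum-exp surrogate $Z_\tau:=\tau\log\sum_j e^{X_j/\tau}$, which obeys $Z\le Z_\tau\le Z+\tau\log p$ and $\|\nabla_\xi Z_\tau\|\le 1$, bounds the density of $Z_\tau$ via Gaussian integration by parts (Stein's identity) in a way that brings in $E_P[Z_\tau]$, and finally optimizes the smoothing level $\tau$ and lets $\tau\downarrow 0$. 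The subtle point, and the reason one cannot settle for a cruder argument, is that the resulting bound must be governed by $E_P[Z]$ rather than by the dimension-dependent quantity $\sqrt{\log p}$, which it equals only when the $X_j$ are nearly uncorrelated; in particular a Gaussian surface-area estimate for the polyhedron $\{Z\le z\}$ would give only the $\sqrt{\log p}$ scaling, and the correlation structure must be exploited through the interpolation computation. With this density bound in hand, the first two stages close the argument.
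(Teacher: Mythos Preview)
The paper does not prove this lemma; it is quoted verbatim as a result from \cite{Chernozhukov/Kato:11} and used as a black box (see the opening line of Appendix~B.1). There is therefore no proof in the paper to compare your sketch against. Your outline---reduction to finite index sets via separability and monotone convergence, reduction to a uniform density bound for $\max_j X_j$, and the density bound via smooth-max interpolation and Stein's identity---is a faithful summary of the argument in the cited reference, and is correct as a proof plan.
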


An immediate consequence of this lemma is the following result:
\begin{corollary}[\textbf{Anti-concentration for} $ \sup_{v \in V_n} Z_n^*(v)$]\label{cor: anti}  Let $V_n$ be any sequence
of compact non-empty subsets in $\mathcal{V}$.  Then under condition C.2-C.3, we have that for $\delta_n \to 0$ such that $\delta_n = o(1/\bar a_n)$
$$
\sup_{x \in \mathbb{R}} \Pn\left(\Big | \sup_{v \in V_n} Z_n^*(v) - x \Big | \leq \delta_n \right) = o(1).
$$
\end{corollary}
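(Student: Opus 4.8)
The plan is to apply the Chernozhukov--Kato anti-concentration inequality (Lemma \ref{lemma:anti-concentration}) to the Gaussian process $X_v := Z_n^*(v)$ indexed by $T := V_n$, and then to control the resulting expected supremum by means of the concentration Condition C.3. First I would verify the hypotheses of Lemma \ref{lemma:anti-concentration}: by Condition C.2(a), $Z_n^*$ has zero mean, a.s.\ continuous sample paths, and $E_{\Pn}[(Z_n^*(v))^2]=1$ for every $v \in \mathcal{V}$, hence for every $v \in V_n$; since $V_n$ is a nonempty compact (hence separable) subset of $\mathbb{R}^d$ and the sample paths are continuous, $(Z_n^*(v))_{v\in V_n}$ is a separable, mean-zero, unit-variance Gaussian process, and $\sup_{v\in V_n}Z_n^*(v)$ is a finite random variable. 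The lemma then gives, for every $n$,
\[
\sup_{x\in\mathbb{R}}\Pn\!\left(\Big|\sup_{v\in V_n}Z_n^*(v)-x\Big|\le\delta_n\right)\le C\,\delta_n\left(E_{\Pn}\!\left[\sup_{v\in V_n}Z_n^*(v)\right]\vee 1\right).
\]

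The second step is to bound $E_{\Pn}[\sup_{v\in V_n}Z_n^*(v)]$ using Condition C.3 with $\textsf{V}=V_n$. There the variable $\mathcal{E}_n(V_n)=a_n(V_n)\big(\sup_{v\in V_n}Z_n^*(v)-a_n(V_n)\big)$ satisfies $\Pn[\mathcal{E}_n(V_n)\ge x]\le\Pr[\mathcal{E}\ge x]$ for all $x$, where $\mathcal{E}$ has a continuous distribution with $\Pr(\mathcal{E}>x)\le\exp(-x/\eta)$. Hence
\[
E_{\Pn}[\mathcal{E}_n(V_n)]\le E_{\Pn}[\mathcal{E}_n(V_n)\vee 0]=\int_0^\infty\Pn[\mathcal{E}_n(V_n)\ge x]\,dx\le\int_0^\infty e^{-x/\eta}\,dx=\eta.
\]
Dividing through by $a_n(V_n)$ and using $1\le a_n(V_n)\le a_n(\mathcal{V})=\bar a_n$ yields $E_{\Pn}[\sup_{v\in V_n}Z_n^*(v)]\le a_n(V_n)+\eta/a_n(V_n)\le\bar a_n+\eta$; since $\bar a_n\ge 1$ this quantity is at least $1$, so the $\vee 1$ in the previous display is inactive.

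Combining the two displays, the left-hand side is at most $C\delta_n(\bar a_n+\eta)$. Because $\eta$ is a fixed constant and $\delta_n=o(1/\bar a_n)$, both $\delta_n\bar a_n$ and $\delta_n\eta$ tend to zero, so $C\delta_n(\bar a_n+\eta)=o(1)$, which is the assertion. I do not anticipate a genuine obstacle here; the only subtlety worth flagging is that the stochastic domination in C.3 controls only the upper tail of $\mathcal{E}_n(V_n)$, which is exactly what is needed for an \emph{upper} bound on $E_{\Pn}[\sup_{v\in V_n}Z_n^*(v)]$ (any negativity of the expected supremum only helps).
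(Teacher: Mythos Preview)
Your proposal is correct and follows essentially the same route as the paper: apply the Chernozhukov--Kato anti-concentration lemma to $Z_n^*$ on $V_n$, then use Condition C.3 to bound $E_{\Pn}[\sup_{v\in V_n}Z_n^*(v)]$ by a constant times $\bar a_n$, so that $\delta_n=o(1/\bar a_n)$ forces the product to vanish. The paper's version is terser (it bounds the supremum over $V_n$ by that over $\mathcal{V}$ and asserts the expectation is $\le K\bar a_n$), whereas you spell out the integration-by-tails step explicitly; both are valid and yield the same conclusion.
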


\begin{proof} Continuity in Condition C.2 implies separability of $Z^*_n$.  Condition C.3 implies that $E_{\P_n}[\sup_{v \in V_n} Z^*_n(v)]\leq E_{\P_n}[\sup_{v \in \V} Z^*_n(v)]  \leq K \bar a_n$ for some constant $K$ that depends only on $\eta$, so that
$$
\sup_{x \in \mathbb{R}} \Pn\left(\Big | \sup_{v \in V_n} Z_n^*(v) - x \Big | \leq \delta_n \right) \leq C \delta_n [K \bar a_n \vee 1] = o(1).
$$
\end{proof}

\begin{lemma}[\textbf{Closeness in Conditional Probability Implies Closeness of Conditional Quantiles Unconditionally}]\label{lemma: quantiles are close} Let $X_n$ and $Y_n$ be random variables and $\D_n$ be a random vector. Let $F_{X_n}(x\mid\D_n)$ and $F_{Y_n}(y\mid\D_n)$
denote the conditional distribution functions, and  $F^{-1}_{X_n}(p\mid\D_n)$ and $F^{-1}_{Y_n}(p\mid\D_n)$
denote the corresponding conditional quantile functions. If $\P_n(|X_n - Y_n|> \xi_n \mid \D_n) = o_{\P_n}(\tau_n)$ for some sequence $\tau_n \searrow 0$,
then with unconditional probability $\P_n$ converging to one, for some $\varepsilon_n = o(\tau_n)$,
$$
F^{-1}_{X_n}(p\mid\D_n) \leq F^{-1}_{Y_n}(p+\varepsilon_n\mid\D_n) + \xi_n \text{ and } F^{-1}_{Y_n}(p\mid\D_n) \leq F^{-1}_{X_n}(p+\varepsilon_n\mid\D_n) + \xi_n, \forall p \in (0, 1- \varepsilon_n).
$$
\end{lemma}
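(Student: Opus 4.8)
The plan is to reduce the quantile comparison to a pointwise comparison of the conditional distribution functions, which itself follows from the hypothesis by a union bound. First I would replace the stochastic bound by a deterministic one: writing $W_n := \P_n(|X_n - Y_n| > \xi_n \mid \D_n)$, the assumption $W_n = o_{\P_n}(\tau_n)$ means $W_n/\tau_n \to_{\P_n} 0$, and a routine diagonal/subsequence argument then yields a deterministic sequence $\varepsilon_n$ with $\varepsilon_n = o(\tau_n)$ and $\P_n(A_n) \to 1$, where $A_n := \{ W_n \le \varepsilon_n \}$. All subsequent statements are made on the event $A_n$, and we may assume (as is implicit in the statement) that a regular conditional distribution of $(X_n,Y_n)$ given $\D_n$ exists, so that the conditional probabilities below are computed on a common version.

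Next, on $A_n$ I would establish, for every real $x$, the inequality $F_{Y_n}(x + \xi_n \mid \D_n) \ge F_{X_n}(x \mid \D_n) - \varepsilon_n$. This follows from $\{X_n \le x\} \cap \{|X_n - Y_n| \le \xi_n\} \subseteq \{Y_n \le x + \xi_n\}$, which gives $F_{Y_n}(x+\xi_n\mid\D_n) \ge \P_n(X_n \le x,\ |X_n - Y_n| \le \xi_n \mid \D_n) \ge F_{X_n}(x \mid \D_n) - W_n \ge F_{X_n}(x \mid \D_n) - \varepsilon_n$, the middle step being a union bound. Interchanging the roles of $X_n$ and $Y_n$ gives $F_{X_n}(x + \xi_n \mid \D_n) \ge F_{Y_n}(x \mid \D_n) - \varepsilon_n$ on the same event.

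Then I would convert these two inequalities into the stated quantile bounds. Fix $p \in (0, 1 - \varepsilon_n)$, so that $p + \varepsilon_n \in (0,1)$, and set $x = F^{-1}_{Y_n}(p + \varepsilon_n \mid \D_n)$. Since a conditional distribution function is right-continuous, $F_{Y_n}(x \mid \D_n) \ge p + \varepsilon_n$, and the second inequality above yields $F_{X_n}(x + \xi_n \mid \D_n) \ge p$; by the definition of the generalized inverse this means $F^{-1}_{X_n}(p \mid \D_n) \le x + \xi_n = F^{-1}_{Y_n}(p + \varepsilon_n \mid \D_n) + \xi_n$. Applying the same reasoning with $x = F^{-1}_{X_n}(p + \varepsilon_n \mid \D_n)$ and the first inequality gives $F^{-1}_{Y_n}(p \mid \D_n) \le F^{-1}_{X_n}(p + \varepsilon_n \mid \D_n) + \xi_n$. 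Because the pointwise distribution-function inequalities hold for all $x$ on the single event $A_n$, both quantile inequalities hold for all $p \in (0, 1-\varepsilon_n)$ on $A_n$, and $\P_n(A_n) \to 1$ completes the proof.

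The only genuinely delicate points are the extraction of the deterministic $\varepsilon_n$ of the precise order $o(\tau_n)$ from a statement about conditional probabilities — this is exactly where the "unconditional probability converging to one" in the conclusion originates — and the careful bookkeeping with the generalized inverse, in particular invoking right-continuity to guarantee $F(F^{-1}(q)\mid\D_n) \ge q$ and keeping track of the reparametrization $p \mapsto p + \varepsilon_n$. Neither is a serious obstacle; the argument is otherwise a direct union-bound computation.
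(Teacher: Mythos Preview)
Your argument is correct and follows essentially the same route as the paper: extract a deterministic $\varepsilon_n=o(\tau_n)$ with $\P_n(W_n\le\varepsilon_n)\to 1$, derive the two CDF comparison inequalities on that event via the inclusion/union-bound step, and then invert to quantiles. The paper's proof is simply a terser version of yours, writing the CDF inequalities as $F_{X_n}(x\mid\D_n)+\varepsilon_n\ge F_{Y_n+\xi_n}(x\mid\D_n)$ (and symmetrically) and invoking ``definition of the quantile function and equivariance to location shifts'' where you spell out the generalized-inverse step explicitly.
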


\noindent \textbf{Proof.}  We have that  for some $\varepsilon_n = o(\tau_n)$, $\P_n [ \P_n\{ |X_n - Y_n| >\xi_n \mid \D_n\} \leq \varepsilon_n] \to 1$, that is,
there is a set $\Omega_n$  such that $\P_n(\Omega_n) \to 1$
such that $P_n\{ |X_n - Y_n| >\xi_n \mid \D_n\} \leq \varepsilon_n$ for all $\D_n \in \Omega_n$. So, for all $\D_n \in \Omega_n$
$$
F_{X_n}(x \mid \D_n)+ \varepsilon_n \geq F_{Y_n+ \xi_n}(x \mid \D_n) \text { and } F _{Y_n}(x\mid \D_n) + \varepsilon_n \geq F_{X_n+ \xi_n}(x\mid \D_n), \forall x \in \Bbb{R},
$$
which implies the inequality stated in the lemma, by definition of the conditional quantile function and equivariance of quantiles to location shifts. \qed

\subsection{Proof of Lemma \ref{lemma: concentrate on balls}.}(Concentration of Inference on $\Vn$.)   Step 1. Letting
 \ba
&&\A_n :=   \sup_{v\in \Vn} Z_n(v),   \ \  \B_n := \sup_{v\in \V} Z_n(v), \ \ R_n := \(\sup_{v \in \V} |Z_n(v)| + \kappa_n\) \sup_{v \in \V} \left | \frac{\sigma_n(v)}{s_n(v)} -1 \right |, \\
&& \A^*_n :=   \sup_{v\in \Vn} Z^*_n(v),   \ \  \B^*_n := \sup_{v\in \V} Z^*_n(v), \ \ R^*_n := \(\sup_{v \in \V} |Z^*_n(v)| + \kappa_n\) \sup_{v \in \V} \left | \frac{\sigma_n(v)}{s_n(v)} -1 \right |,
\ea
we obtain
 \begin{eqnarray*}
& & \sup_{v \in \mathcal{V}} \frac{\theta_{n0} - \widehat \theta_n(v)}{s_n(v)} =
\sup_{v \in \mathcal{V}} \left\{ \frac{\theta_{n0} - \theta_n(v)}{s_n(v)} + Z_n(v) \frac{\sigma_n(v)}{s_n(v)} \right\} \\
& & =  \sup_{v\in \Vn} \left\{ \frac{(\theta_{n0} - \theta_n(v))}{s_n(v)} + Z_n(v)  \frac{ \sigma_n(v)}{s_n(v)}\right\}  \vee
  \sup_{v\not \in \Vn} \left\{ \frac{(\theta_{n0} - \theta_n(v))}{s_n(v)} + Z_n(v)  \frac{ \sigma_n(v)}{s_n(v)}\right\}  \\
& & \leq_{(1)}  \sup_{v\in \Vn} \left\{  Z_n(v)  \frac{ \sigma_n(v)}{s_n(v)} \right\}  \vee
  \sup_{v\not \in \Vn} \left\{ \frac{- \kappa_n \sigma_n(v)}{s_n(v)} + Z_n(v)  \frac{ \sigma_n(v)}{s_n(v)}\right\}  \\
&  & \leq   \A_n \vee (B_n - \kappa_n) + 2R_n \leq_{(2)}    \A^*_n \vee (B^*_n - \kappa_n) + 2R^*_n +  o_{P_n}(\delta_n), \ \
 \end{eqnarray*}
where in (1) we used that $\theta_n(v) \geq \theta_{n0}$ and $\theta_{n0} - \theta_n(v) \leq - \kappa_n \sigma_n(v)$ outside $\Vn$,
and in (2) we used C.2.  Next, since we assumed in the statement of the lemma that
$\kappa_n \lesssim \bar a_n + \ell\ell_n$, and by C.4:
$
 R^*_n = O_{\Pn}(\bar a_n + \bar a_n + \ell\ell_n) o_{\Pn}(\delta_n/(\bar a_n + \ell\ell_n))= o_{\Pn}(\delta_n).
$
Therefore, there is a deterministic term $ o(\delta_n)$  such that $\Pn(2R^*_n + o_{P_n}(\delta_n) >  o(\delta_n)) = o(1)$.\footnote{
Throughout the paper we use the elementary fact: If $X_n = o_{\P_n}(\Delta_n)$, for some $\Delta_n \searrow 0$, then there is $o(\Delta_n)$ term
such that $\P_n\{ |X_n|> o(\Delta_n)  \} \to 0$.}

Hence uniformly in $ x \in [0,\infty)$
 \begin{eqnarray*}
&& \di \Pn\left( \sup_{v \in \mathcal{V}} \frac{(\theta_{n0} - \widehat \theta_n(v))}{s_n(v)}> x\right) \leq \Pn(   \A^*_n +  o(\delta_n)
 >x) + \Pn(\B^*_n - \kappa_n +  o(\delta_n) > 0)  + o(1) \\
&& \ \ \leq   \Pn(   \A^*_n
 >x) + \Pn(\B^*_n - \kappa_n > 0)  + o(1) \leq \Pn(   \A^*_n > x) + (1-\gamma_n') + o(1),
\end{eqnarray*}
where the last two inequalities follow by Corollary \ref{cor: anti} and by $\kappa_n = Q_{\gamma_n'}(\B^*_n)$.

Step 2. To complete the proof, we must show that there is $\gamma_n' \nearrow 1$
that obeys the stated condition.  Let $1-\gamma_n' \searrow
0$ such that $1- \gamma_n' \geq C/\ell_n$.  It suffices to show that
\begin{eqnarray}
 \label{ineq: a1}
\kappa_n  \leq \( \bar a_n + \frac{c(\gamma_n')}{\bar a_n}\) \leq  \( \bar a_n + \frac{\eta \ell\ell_n + \eta \log C^{-1}}{\bar a_n}\),
\end{eqnarray}
where $c(\gamma_n') = Q_{\gamma_n'}(\mathcal{E})$.
To show the first inequality in (\ref{ineq: a1}) note
 \begin{eqnarray*}
  \P_n\(  \sup_{v \in \V} Z_n^*(v)  \leq (\bar a_n + c(\gamma'_n)/\bar a_n)  \) &=_{(1)}& \P_n \( \mathcal{E}_n(\V) \leq c(\gamma'_n) \)  \geq_{(2)} \P_n \( \mathcal{E} \leq c(\gamma'_n)\)  = \gamma_n',
  \end{eqnarray*}
where (1) holds by
definition of $\mathcal{E}_n(\V)$ and (2) by C.3. To show
 the second inequality in (\ref{ineq: a1}) note that by C.3 $\Pr\( \mathcal{E} > t \) \leq \exp \(- t \eta^{-1}\)$, for some constant $\eta>0$, so that $c(\gamma_n') \leq - \eta \log (1-\gamma_n') \leq \eta \ell\ell_n + \eta \log C^{-1}$. \qed

\subsection{Proof of Theorem \ref{theorem: inference analytical} (Analytical Construction).} Part 1.(Level) Observe that
\begin{align*}
\di & \Pn \left (\theta_{n0} \leq \widehat \theta_{n0}(p) \right)= \Pn\left ( \sup_{v \in \V} \frac{\theta_{n0} - \widehat \theta_n(v)}{s_n(v)}  \leq k_{n, \widehat V_n}(p)\right ) \\
& \geq_{(1)} \Pn\( \sup_{v \in \V} \frac{\theta_{n0} - \widehat \theta_n(v)}{s_n(v)}  \leq k_{n,  \Vn}(p) \)  - \P_n\( \Vn \not \subseteq \widehat V_n \) \\
& \geq_{(2)} \Pn\( \sup_{v \in \Vn} Z_n^*(v) \leq k_{n,  \Vn}(p) \)  - o(1) \\
& = \Pn\( \mathcal{E}_n(\Vn) \leq c(p)  \)  - o(1) \geq_{(3)} \Pn\( \mathcal{E} \leq c(p) \)  - o(1) =_{(4)} p -o(1),
\end{align*}
where (1) follows by monotonicity of $\textsf{V} \mapsto k_{n, \textsf{V}}(p)= a_n(\textsf{V}) + c(p)/a_n(\textsf{V})$ for large $n$ holding by construction, (2) holds by Lemma \ref{lemma: concentrate on balls}, by
$\P_n\( \Vn \not \subseteq \widehat V_n \)=o(1)$ holding by  Lemma \ref{lemma: estimation of V}, and also by the fact
that the critical value $k_{n,  \Vn}(p) \geq 0$ is non-stochastic, and (3) and (4) hold by C.3.

Part 2.(Estimation Risk) We have that under $\P_n$
\begin{align*}
\di &\left | \widehat \theta_{n0}(p)-\theta _{n0}\right |
 = \left | \inf_{v\in\mathcal{V}} \left [\widehat{\theta}_n( v)
+ k_{n,\widehat V_n}(p)s_n( v) \right ] -\theta_{n0}\right | \\
& = \left | \sup_{v\in\mathcal{V}}
\( \left [\frac{\theta_{n0} - \widehat{\theta}_n(v)}{s_n(v)}
- k_{n,\widehat V_n}(p)\right ]\sigma_n(v) \frac{s_n(v)}{\sigma_n(v)} \) \right|   \notag \\
& \leq_{(1)}  \left ( \left| \sup_{v\in\mathcal{V}} \frac{\theta_{n0} - \widehat{\theta}_n(v)}{s_n(v)}\right|
+ k_{n,\widehat V_n} (p) \right) \bar \sigma_n \left(1 + o_{\Pn}\(\frac{\delta_n}{\bar a_n + \ell\ell_n}\)\right)\\
& \leq_{(2)}  \left ( \left| \sup_{v\in\mathcal{V}} \frac{\theta_{n0} - \widehat{\theta}_n(v)}{\sigma_n(v)}\right|
+ k_{n,\widehat V_n} (p) \right) \bar \sigma_n \left(1 + o_{\Pn}\(\frac{\delta_n}{\bar a_n + \ell\ell_n}\)\right)^2\\
& \leq_{(3)}  \(  \sup_{v\in \Vn} \left | Z^*_n(v) \right | + o_{\P_n}(\delta_n)
+  k_{n,\widehat V_n} (p)\) \bar \sigma_n \left(1 + o_{\Pn}\(\frac{\delta_n}{\bar a_n + \ell\ell_n}\)\right)^2 \text{ wp $\to 1$ } \\
& \leq_{(4)}  \(  \sup_{v\in \Vn} \left | Z^*_n(v) \right | + o_{\P_n}(\delta_n)
+ k_{n,\barVn}(p)\) \bar \sigma_n \left(1 + o_{\Pn}\(\frac{\delta_n}{\bar a_n + \ell\ell_n}\)\right)^2 \text{ wp $\to 1$ } \\
& \leq_{(5)}  3 \left |  {a}_n(\barVn) + \frac{O_{\Pn}(1)}{a_n(\barVn)} + o_{\P_n}(\delta_n) \right |\bar\sigma_n  \left(1 + o_{\Pn}\(\frac{\delta_n}{\bar a_n + \ell\ell_n}\)\right)^2 \text{ wp $\to 1$} \\
& \leq_{(6)} 4  \left |  {a}_n(\barVn) + \frac{O_{\Pn}(1)}{a_n(\barVn)}  \right |\bar\sigma_n  \text{ wp $\to 1$, }
\end{align*}
where (1) holds by C.4 and the triangle inequality; (2) holds by C.4;
(3) follows because  wp $\to 1$, for some $o(\delta_n)$
$$
\sup_{v \in V_0 } Z^*_n(v) - o(\delta_n) \leq_{(a)} \sup_{v \in V_0 } Z_n(v) \leq_{(b)} \sup_{v\in\mathcal{V}} \frac{\theta_{n0} - \widehat{\theta}_n(v)}{\sigma_n(v)} \leq_{(c)} \( \sup_{v\in \Vn}  Z^*_n(v)\) \vee 0 \ + o(\delta_n),
$$
where (a) is by C.2, (b) by definition of $Z_n$, while (c) by the proof of Lemma \ref{lemma: concentrate on balls},
so that wp $\to 1$
$$
\left| \sup_{v\in\mathcal{V}} \frac{\theta_{n0} - \widehat{\theta}_n(v)}{\sigma_n(v)} \right| \leq  \sup_{v\in \Vn} \left | Z^*_n(v) \right | + o_{\P_n}(\delta_n);
$$
(4) follows by  Lemma \ref{lemma: estimation of V}
 which implies  $\Vn \subseteq \widehat V_n \subseteq  \barVn$  wp $\to $ 1, so that
$$\di k_{n,\widehat V_n}(p) \leq k_{n,\barVn}(p) =  {a}_n(\barVn) + \frac{c(p)}{ a_n(\barVn)},$$
Condition C.3 gives (5).  Inequality (6) follows because  $a_n(\barVn) \geq 1$,
$\bar a_n \geq 1$, and $\delta_n = o(1)$;  this inequality is the claim that we needed to prove.

Part 3. We have that
\begin{align*}
\di \theta _{na}-\theta _{n0}  &\geq   4 \bar \sigma_n \left ( {a}_n(\barVn)+ \frac{\mu_n}{{a}_{n}(\barVn)}\right )  >   \widehat{\theta}_{n0}(p) -\theta_{n0} \text{ wp $\to$ 1},
\end{align*}
with the last inequality occurring by Part 2 since $\mu_n \to_{\Pn} \infty$.  \qed

\subsection{Proof of Theorem \ref{theorem: inference1} (Simulation Construction).} Part 1. (Level Consistency) Let us compare  critical values
$$
 \di k_{n,\Vn}(p) = Q_p\(\sup_{ v \in \Vn} Z_n^\star(v) \mid \D_n\)  \text{ and }
 \kappa_{n,\Vn}(p) = Q_p\(\sup_{ v \in \Vn} \bar Z^*_n(v)\).
$$
The former is data-dependent while the latter is deterministic.
Note that $k_{n,\Vn}(p) \geq 0 $ by C.2(b) for $p \geq 1/2$.  By C.2,   for some deterministic term $o(\delta_n)$,
$$
\P_n\(|\sup_{ v \in \Vn} Z^\star_n(v) - \sup_{v \in \Vn} \bar Z^*_n(v)| > o(\delta_n) \mid \mathcal{D}_n\) = o_{\P_n}(1),
$$
which implies by Lemma \ref{lemma: quantiles are close} that for some $\varepsilon_n \searrow 0$, wp $\to 1$
\begin{equation}\label{eq: quantile comparison}
k_{n,\Vn}(p) \geq   (\kappa_{n,\Vn}(p-\varepsilon_n) - o(\delta_n))_+    \ \ \text{ for all } \ \ p\in [1/2, 1- \varepsilon_n).
\end{equation}
 The result follows analogously to the proof in Part 1 of Theorem \ref{theorem: inference analytical}, namely:
\begin{align*}
\di & \Pn \left (\theta_{n0} \leq \widehat \theta_{n0}(p)\right)= \Pn\left ( \sup_{v \in \V} \frac{\theta_{n0} - \widehat \theta_n(v)}{s_n(v)}  \leq k_{n, \widehat V_n}(p)\right ) \\
& \geq_{(1)}  \Pn\left ( \sup_{v \in \V} \frac{\theta_{n0} - \widehat \theta_n(v)}{s_n(v)}  \leq k_{n,  \Vn}(p) \right ) - o(1)\\
& \geq_{(2)} \Pn\( \sup_{v \in \V} \frac{\theta_{n0} - \widehat \theta_n(v)}{s_n(v)}  \leq  (\kappa_{n,  \Vn}(p-\varepsilon_n)- o(\delta_n))_+  \)  -o(1) \\
& \geq_{(3)} \Pn\( \sup_{v \in \Vn} Z_n^*(v) \leq (\kappa_{n,  \Vn}(p-\varepsilon_n)  - o(\delta_n))_+  \)  - o(1)  \\
& \geq \Pn\( \sup_{v \in \Vn} Z_n^*(v) \leq \kappa_{n,  \Vn}(p-\varepsilon_n)  - o(\delta_n)  \)  - o(1) \geq_{(4)}  p-\varepsilon_n -o(1) = p- o(1),
\end{align*}
where (1) follows by monotonicity of $\textsf{V} \mapsto k_{n, \textsf{V}}(p)$ and by
$\P_n\( \Vn \not \subseteq \widehat V_n \)=o(1)$ shown in  Lemma \ref{lemma: estimation of V},
(2) holds by the comparison of quantiles in equation (\ref{eq: quantile comparison}), (3) by  Lemma \ref{lemma: concentrate on balls}.
(4) holds by anti-concentration Corollary \ref{cor: anti}.

Parts 2 \& 3.(Estimation Risk and Power) By Lemma \ref{lemma: estimation of V} wp $\to $ 1, $ \widehat V_n \subseteq  \barVn$, so that
$
 k_{n,\widehat V_n}(p) \leq  k_{n,\barVn}(p).
$
By C.2 for some deterministic term $o(\delta_n)$,
\begin{equation}\label{eq: close stars}
\P_n\(|\sup_{ v \in \barVn} Z^\star_n(v) - \sup_{v \in \barVn} \bar Z^*_n(v)| > o(\delta_n) | \mathcal{D}_n\)  = o_{\P_n}(1/\ell_n),
 \end{equation}
which implies by Lemma \ref{lemma: quantiles are close} that for some $\varepsilon_n \searrow 0$, wp $\to 1$, for all $p\in(\varepsilon_n, 1- \varepsilon_n)$
\begin{equation}\label{eq: quantile comparison-re}
 k_{n,\barVn}(p) \leq \kappa_{n,\barVn}(p+\varepsilon_n)  + o(\delta_n)
\end{equation}
where the terms $o(\delta_n)$ are different in different places.  By C.3, for any fixed $p \in (0,1)$,
$$    \kappa_{\barVn}(p+\varepsilon_n) \leq {a}_n(\barVn) + c(p+\varepsilon_n)/ a_n(\barVn) =  {a}_n(\barVn) + O(1)/ a_n(\barVn).$$
Thus, combining inequalities above and $o(\delta_n)   = o(\bar a_n^{-1})= o(a^{-1}_n(\barVn))$ by C.2,  wp $\to 1$,
$$    k_{n,\widehat V_n}(p)  \leq {a}_n(\barVn) + O(1)/ a_n(\barVn).$$
Now  Parts 2 and 3 follow as in the Proof of Parts 2 and 3 of  Theorem \ref{theorem: inference analytical} using
this bound on the simulated critical value instead of the bound on the analytical critical value. \qed

\subsection{Proof of Lemma \ref{lemma: concentrate on it} (Concentration on $V_0$).}
By Conditions S and V. $\text{wp} \rightarrow 1$,
\begin{align} \label{eq: sharp 1}
|\sup_{ v \in \Vn} Z^*_n(v)
 - \sup_{v \in V_{0}} Z_n^*(v)| \leq \sup_{\|v - v'\| \leq r_n} |Z_n^*(v) - Z_n^*(v')| =   o_{\P_n}(\bar a_n^{-1}).
\end{align}
Conclude similarly to the proof of Lemma \ref{lemma: concentrate on balls}, using anti-concentration Corollary \ref{cor: anti}, that
\begin{align*}
\P_n \( \sup_{v \in \V} \frac{\theta_{n0} - \widehat \theta_n(v)}{s_n(v)} \leq x \)
    &\geq \P_n \(  \sup_{v \in V_{0}} Z_n^*(v) + o(\bar a_n^{-1}) \leq x  \) - o(1)
     \geq \P_n \(  \sup_{v \in V_{0}} Z_n^*(v)  \leq x  \)- o(1)
    \end{align*}
This gives a lower bound.  Similarly, using C.3 and C.4 and  anti-concentration Corollary \ref{cor: anti}
\begin{align*}
& \P_n \( \sup_{v \in \V} \frac{\theta_{n0} - \widehat \theta_n(v)}{s_n(v)} \leq x \)
    \leq \P_n \(  \sup_{v \in V_{0}} Z_n (v) \frac{\sigma_n(v)} {s_n(v)}\leq x  \) \\
    &\leq \P_n \(  \sup_{v \in V_{0}} Z_n^*(v) - o(\delta_n) \leq x  \) + o(1) \leq \P_n \(  \sup_{v \in V_{0}} Z_n^*(v) \leq x  \) + o(1)
    \end{align*}
   where $o(\cdot)$ terms above are different in different places, and
   the first inequality follows from
  $$
  \sup_{v \in \V} \frac{\theta_{n0} - \widehat \theta_n(v)}{s_n(v)} \geq   \sup_{v \in V_0} \frac{\theta_{n0} - \widehat \theta_n(v)}{s_n(v)}  =  \sup_{v \in V_{0}} Z_n (v) \frac{\sigma_n(v)} {s_n(v)}.
  $$
    This gives the upper bound.  \qed

\subsection{Proof of Theorem \ref{theorem: sharp inference} (When Simulation Inference Becomes Sharp)} Part 1.
 (Size)
 By Lemma \ref{lemma: estimation of V} wp $\to $ 1, $ \widehat V_n \subseteq  \barVn$, so that
$ k_{n,\widehat V_n}(p) \leq  k_{n,\barVn}(p)$ wp $\to $ 1.
So let us compare  critical values
$$
 \di k_{n,\barVn}(p) = Q_p\(\sup_{ v \in \barVn} Z_n^\star(v) \mid \D_n\)  \text{ and }
 \kappa_{n,V_{0}}(p) = Q_p\(\sup_{ v \in V_{0}} \bar Z^*_n(v)\).
$$
The former is data-dependent while the latter is deterministic.
Recall that by C.2 wp $\to 1$ we have (\ref{eq: close stars}).
By Condition V $d_H(\barVn, V_0) \leq r_n$, and so by S, we have for some  $o(\bar a_n^{-1})$,
$$
\P_n\( | \sup_{ v \in \barVn} \bar Z^*_n(v)  - \sup_{v \in V_0} \bar Z^*_n(v) | >  o(\bar a_n^{-1}) \mid \D_n \) = o_{\P_n}(1).
$$
Combining (\ref{eq: close stars}) and this relation, we obtain that for some $o(\bar a_n^{-1})$ term,
$$
\P_n\( | \sup_{ v \in \barVn} Z^\star_n(v)  - \sup_{v \in V_0} \bar Z^*_n(v) | >  o(\bar a_n^{-1}) \mid \D_n \) = o_{\P_n}(1).
$$
 This  implies by Lemma \ref{lemma: quantiles are close} that for some $\varepsilon_n \searrow 0$, and any $p \in (\varepsilon_n,1-\varepsilon_n)$, wp $\to 1$,
\begin{equation}\label{eq: quantile comparison 4}
 k_{n,\widehat V_n}(p)  \leq k_{n,\barVn}(p) \leq   \kappa_{n,V_0}(p+\varepsilon_n) + o(\bar a_n^{-1}) .
\end{equation}
Hence, for any fixed $p$,
\begin{align*}
\di & \Pn \left (\theta_{n0} \leq \widehat \theta_{n0}(p)\right)= \Pn\left ( \sup_{v \in \V} \frac{\theta_{n0} - \widehat \theta_n(v)}{s_n(v)}  \leq k_{n, \widehat V_n}(p)\right ) \\
& \leq_{(1)}  \Pn\left ( \sup_{v \in \V} \frac{\theta_{n0} - \widehat \theta_n(v)}{s_n(v)}  \leq \kappa_{n, V_0}(p+ \varepsilon_n) + o(\bar a_n^{-1}) \right ) + o(1) \\
& \leq_{(2)} \Pn\( \sup_{v \in V_{0}} Z_n^*(v) \leq \kappa_{n, V_0}(p+ \varepsilon_n) + o(\bar a_n^{-1})  \)  +o(1)  \leq_{(3)}  p+\varepsilon_n +o(1) = p + o(1),
\end{align*}
where (1) is by the quantile comparison (\ref{eq: quantile comparison 4}), (2) is by Lemma \ref{lemma: concentrate on it},
and (3) is by anti-concentration Corollary \ref{cor: anti}. Combining this with the lower bound of Theorem \ref{theorem: inference1}, we have the result.

Parts 2 \& 3.(Estimation Risk and Power)
We have that  by C.3
$$  \kappa_{n,V_{0}}(p+\varepsilon_n) \leq a_n(V_{0}) + c(p+\varepsilon_n)/ a_n(V_{0}) =  {a}_n(V_{0}) + O(1)/ a_n(V_{0}).$$
Hence combining this with equation (\ref{eq: quantile comparison 4}) we have wp $\to 1$
$$
 k_{n,\widehat V_n}(p) \leq {a}_n(V_{0}) + O(1)/ a_n(V_{0}) + o(\bar a_n^{-1}) = {a}_n(V_{0}) + O(1)/ a_n(V_{0}).
$$
 Then Parts 2 and 3 follow identically to the Proof of Parts 2 and 3 of  Theorem \ref{theorem: inference analytical} using this bound on the simulated critical value instead of the bound on the analytical critical value. \qed

\vspace{-.1in}
\section{Proofs for Section \ref{sec:leading-cases}}\label{sec:proofs-maximal-strong}

\subsection{Tools and Auxiliary Lemmas}\label{sec:proofs-max-strong-auxiliary}
We shall heavily rely on the Talagrand-Samorodnitsky Inequality, which was obtained by Talagrand
sharpening earlier results by Samorodnitsky. Here it is restated
from \cite{VanDerVaart/Wellner:96} Proposition A.2.7, page 442:

\noindent\textbf{Talagrand-Samorodnitsky Inequality:}  Let $X$ be a separable zero-mean Gaussian process indexed by
a set $T$. Suppose that for some $\Gamma>\sigma(X) = \sup_{t \in
T}\sigma(X_t)$, $0< \epsilon_0\leq \sigma(X)$,
$$ N(\varepsilon, T,\rho)  \leq \left( \frac{\Gamma}{\varepsilon}\right)^\nu, \ \mbox{for} \ 0 <\varepsilon < \epsilon_0,$$ where $N(\varepsilon,T,\rho)$ is the covering number of $T$ by $\varepsilon$-balls w.r.t. the standard deviation metric $\rho(t,t') =\sigma(X_t - X_{t'})$. Then there exists a universal constant $D$ such that for every $\lambda \geq \sigma^2(X)(1+\sqrt{\nu})/\epsilon_0$ we have
\begin{equation}\label{eq:TalagrandInequality}
P\left( \sup_{t \in T} X_t > \lambda \right) \leq \left(\frac{D\Gamma\lambda}{\sqrt{\nu}\sigma^2(X)} \right)^v (1- \Phi(\lambda /\sigma(X)))\text{,}
\end{equation}
where $\Phi(\cdot)$ denotes the standard normal cumulative distribution function.

The following lemma is an  application of this inequality that we use:

\begin{lemma}[\textbf{Concentration Inequality via Talagrand-Samorodnitsky}]\label{lemma: key concentration} Let $Z_n$ be a separable zero-mean Gaussian process indexed by
a set $\textsf{V}$ such that $\sup_{v \in \textsf{V}}\sigma(Z_n(v))=1$. Suppose that for some $ \Gamma_n(\textsf{V})>1$, and $d \geq 1$
$$ N(\varepsilon, \textsf{V},\rho)  \leq \left( \frac{\Gamma_n(\textsf{V})}{\varepsilon}\right)^d, \  \ \mbox{for} \ \  0 <\varepsilon < 1,$$ where $N(\varepsilon,\textsf{V},\rho)$ is the covering number of $\textsf{V}$ by $\varepsilon$-balls w.r.t. the standard deviation metric $\rho(v,v') =\sigma(Z_n(v) - Z_n(v'))$. Then for
$$
a_n(\textsf{V}) = (2\sqrt{\log L_n(\textsf{V})  }) \vee (1 +\sqrt{d}), \ \ L_n(\textsf{V}):= C'_n \(\frac{\Gamma_n(\textsf{V})}{\sqrt{d}}  \)^d,
$$
where for $D$ denoting Talagrand's constant in (\ref{eq:TalagrandInequality}), and $C'_n $ such that
$$
C'_n \geq D^d C_d  \frac{1}{\sqrt{2\pi}},  \ \  C_d := \max_{\lambda \geq 0} \lambda^{d-1} e^{- \lambda^2/4},
$$
we have for $z \geq 0$
$$ P\left( a_n(\textsf{V})\(\sup_{v \in \textsf{V}} Z_n(v) - a_n(\textsf{V})\) > z \right) \leq \exp\( - \frac{z}{2} - \frac{z^2}{4a^2_n
(\textsf{V})}\)\leq \exp(-z/2).$$
\end{lemma}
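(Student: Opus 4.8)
The plan is to apply the Talagrand--Samorodnitsky inequality quoted above to $X = Z_n$ on $T = \textsf{V}$, with $\sigma(X) = \sup_{v \in \textsf{V}} \sigma(Z_n(v)) = 1$, $\nu = d$, $\Gamma = \Gamma_n(\textsf{V})$, and $\epsilon_0 = 1$. The covering hypothesis $N(\varepsilon, \textsf{V}, \rho) \le (\Gamma_n(\textsf{V})/\varepsilon)^d$ for $0 < \varepsilon < 1$ is precisely the form required, and the side conditions $\Gamma_n(\textsf{V}) > 1 = \sigma(X)$ and $0 < \epsilon_0 = 1 \le \sigma(X)$ hold by assumption. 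Hence for every $\lambda \ge \sigma^2(X)(1 + \sqrt{\nu})/\epsilon_0 = 1 + \sqrt{d}$ the inequality gives
\[
\Pr\Big(\sup_{v \in \textsf{V}} Z_n(v) > \lambda\Big) \le \Big(\frac{D\,\Gamma_n(\textsf{V})\,\lambda}{\sqrt{d}}\Big)^{d}\bigl(1 - \Phi(\lambda)\bigr).
\]

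Next I would specialize to $\lambda = \lambda(z) := a_n(\textsf{V}) + z/a_n(\textsf{V})$ for $z \ge 0$. Since $a_n(\textsf{V}) \ge 1 + \sqrt{d}$ by construction and $z/a_n(\textsf{V}) \ge 0$, the threshold condition $\lambda(z) \ge 1 + \sqrt{d}$ is met, and the events match up:
\[
\{\, a_n(\textsf{V})(\sup_{v \in \textsf{V}} Z_n(v) - a_n(\textsf{V})) > z \,\} = \{\, \sup_{v \in \textsf{V}} Z_n(v) > \lambda(z) \,\}.
\]
Applying the Mills-ratio bound $1 - \Phi(\lambda) \le (\sqrt{2\pi}\,\lambda)^{-1} e^{-\lambda^2/2}$ turns the right-hand side into $\frac{D^d \Gamma_n(\textsf{V})^d}{d^{d/2}\sqrt{2\pi}}\, \lambda^{d-1} e^{-\lambda^2/2}$. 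I would then split $e^{-\lambda^2/2} = e^{-\lambda^2/4} e^{-\lambda^2/4}$ and absorb the polynomial factor using the definition of $C_d$, namely $\lambda^{d-1} e^{-\lambda^2/4} \le C_d$, obtaining the bound $\frac{D^d \Gamma_n(\textsf{V})^d C_d}{d^{d/2}\sqrt{2\pi}}\, e^{-\lambda^2/4}$.

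It remains to unwind this last exponential. Expanding $\lambda(z)^2 = a_n(\textsf{V})^2 + 2z + z^2/a_n(\textsf{V})^2$ yields $e^{-\lambda(z)^2/4} = e^{-a_n(\textsf{V})^2/4}\, e^{-z/2}\, e^{-z^2/(4 a_n(\textsf{V})^2)}$. By the definition of $a_n(\textsf{V})$ (which is at least $2\sqrt{\log L_n(\textsf{V})}$, and in any case makes $e^{-a_n(\textsf{V})^2/4} \le 1$) one has $e^{-a_n(\textsf{V})^2/4} \le 1/L_n(\textsf{V}) = d^{d/2}/(C_n' \Gamma_n(\textsf{V})^d)$, so the factors $\Gamma_n(\textsf{V})^d$ and $d^{d/2}$ cancel, leaving $\frac{D^d C_d}{\sqrt{2\pi}\, C_n'}\, e^{-z/2}\, e^{-z^2/(4 a_n(\textsf{V})^2)}$. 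The hypothesis $C_n' \ge D^d C_d / \sqrt{2\pi}$ makes the leading constant at most $1$, which is the first claimed inequality; dropping the nonnegative term $z^2/(4 a_n(\textsf{V})^2)$ in the exponent gives the second. The proof is essentially bookkeeping: the points demanding care are the (mild) verification of the Talagrand--Samorodnitsky hypotheses, the choice of $\lambda(z)$ that both meets the threshold $1+\sqrt d$ and rewrites the event, and keeping $C_d$ aligned with the split $e^{-\lambda^2/2} = e^{-\lambda^2/4}e^{-\lambda^2/4}$; I do not expect any genuine obstacle beyond this.
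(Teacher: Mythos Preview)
Your proof is correct and follows essentially the same approach as the paper: apply the Talagrand--Samorodnitsky inequality with $\sigma(X)=1$, $\nu=d$, $\Gamma=\Gamma_n(\textsf{V})$, $\epsilon_0=1$, use the Mills-ratio bound and the inequality $\lambda^{d-1}e^{-\lambda^2/4}\le C_d$ to reduce to $L_n(\textsf{V})\,e^{-\lambda^2/4}$, then substitute $\lambda=a_n(\textsf{V})+z/a_n(\textsf{V})$ and expand. The paper's proof is the same computation, just with the steps arranged slightly differently (bounding by $L_n(\textsf{V})e^{-\lambda^2/4}$ first, then substituting $\lambda$), and with less detail written out.
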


\noindent\textbf{Proof.} We apply the TS inequality by setting $t=v$, $X =Z$, $\sigma(X)=1$, $\epsilon_0 =1$, $\nu=d$, with
$ \lambda  \geq (1 + \sqrt{d})$, so that
\begin{eqnarray*}
\di P\( \sup_{v \in \textsf{V}} Z_n(v) > \lambda \) & \leq &   \(\frac{D \Gamma_n(\textsf{V}) \lambda}{\sqrt{d}}  \)^d (1 - \Phi(\lambda)) \\
 & \leq &  \(\frac{D \Gamma_n(\textsf{V}) \lambda}{\sqrt{d}}  \)^d \frac{1}{\sqrt{2\pi}} \frac{1}{\lambda} e^{-\lambda^2/2}
 \leq   L_n(\textsf{V}) e^{- \lambda^2/4}.
\end{eqnarray*}
Setting for $z \geq 0$,
$
\lambda = \frac{z}{a_n(\textsf{V})} + a_n(\textsf{V}) \geq (1 + \sqrt{d}),
$
we obtain
$$
L_n(\textsf{V}) \exp\(-\frac{\lambda^2}{4}\)  \leq  \exp\( - \frac{z}{2} - \frac{z^2}{4a^2_n(\textsf{V})}\).
$$
\qed

The following lemma is an immediate consequence of Corollary 2.2.8 of \cite{VanDerVaart/Wellner:96}.

\begin{lemma}[\textbf{Maximal Inequality for a Gaussian Process}]\label{lemma: maximal ineq gaussian} Let $X$ be a separable zero-mean Gaussian process indexed by
a set $T$.
 Then for every $\delta>0$
$$
E \sup_{\rho(s,t) \leq \delta } | X_s - X_{t}| \lesssim \int_0^{\delta}\sqrt{ \log N(\varepsilon, T, \rho) } d \varepsilon,
\ \ E \sup_{t \in T} | X_t| \lesssim \sigma(X) + \int_0^{2\sigma(X)}\sqrt{ \log N(\varepsilon, T, \rho) } d \varepsilon,
$$
where $\sigma(X) = \sup_{t \in
T}\sigma(X_t)$, and $N(\varepsilon,T, \rho)$ is the covering number of $T$ with respect
to the semi-metric $\rho(s,t) = \sigma(X_s - X_t)$.
\end{lemma}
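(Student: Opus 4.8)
The plan is to deduce both displayed inequalities directly from Corollary 2.2.8 of \cite{VanDerVaart/Wellner:96}, the Dudley-type entropy bound for separable sub-Gaussian processes, so the substance of the argument is just verifying that its hypotheses apply here and translating its conclusion into the stated form. First I would check that a separable zero-mean Gaussian process $X$ is sub-Gaussian with respect to its own standard-deviation semi-metric $\rho(s,t)=\sigma(X_s-X_t)$: for any $s,t\in T$ the increment $X_s-X_t$ is centered Gaussian with variance $\rho(s,t)^2$, so the usual Gaussian tail bound gives $\Pr(|X_s-X_t|>x)\le 2\exp\!\big(-x^2/(2\rho(s,t)^2)\big)$ for all $x>0$, which is exactly the sub-Gaussian increment condition required by that corollary. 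Second, since covering and packing numbers obey $N(\varepsilon,T,\rho)\le D(\varepsilon,T,\rho)\le N(\varepsilon/2,T,\rho)$, the entropy integral formed with packing numbers (as in the corollary) is bounded above by twice the entropy integral formed with covering numbers after the change of variables $\varepsilon\mapsto\varepsilon/2$; I would absorb this factor into the generic constant implicit in $\lesssim$, which lets me state everything with $N$.

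Granting these two observations, the first inequality is verbatim the modulus-of-continuity half of Corollary 2.2.8. For the second inequality I would first invoke the oscillation half of the corollary,
$$
E\sup_{s,t\in T}|X_s-X_t|\lesssim\int_0^{\operatorname{diam}(T,\rho)}\sqrt{\log N(\varepsilon,T,\rho)}\,d\varepsilon,
$$
and then bound the diameter: by the $L^2$ triangle inequality $\rho(s,t)=\sqrt{E(X_s-X_t)^2}\le\sqrt{EX_s^2}+\sqrt{EX_t^2}\le 2\sigma(X)$, so $\operatorname{diam}(T,\rho)\le 2\sigma(X)$, and since $N(\varepsilon,T,\rho)=1$ for $\varepsilon$ beyond the diameter the upper limit of integration may be raised to $2\sigma(X)$ without changing the integral. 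Finally, fixing any $t_0\in T$, writing $\sup_{t\in T}|X_t|\le |X_{t_0}|+\sup_{s,t\in T}|X_s-X_t|$, taking expectations, and using $E|X_{t_0}|\le\sigma(X_{t_0})\le\sigma(X)$, I would obtain
$$
E\sup_{t\in T}|X_t|\lesssim\sigma(X)+\int_0^{2\sigma(X)}\sqrt{\log N(\varepsilon,T,\rho)}\,d\varepsilon,
$$
as claimed.

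There is no substantive obstacle here — which is precisely why the lemma is billed as an immediate consequence of the cited corollary. The only mildly delicate points are the sub-Gaussianity verification (so that $\rho$ is the correct entropy metric), the packing-versus-covering comparison, and the diameter estimate $\operatorname{diam}(T,\rho)\le 2\sigma(X)$, all of which are routine. If one preferred not to cite Corollary 2.2.8, one could instead reprove it by the standard chaining argument — successive refinements of $\varepsilon$-nets, Gaussian maximal inequalities over the resulting finite sets, and a union bound summing the layer-by-layer contributions — but for our purposes the citation suffices.
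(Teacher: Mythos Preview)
Your proposal is correct and follows essentially the same approach as the paper: both deduce the first inequality directly from Corollary 2.2.8 of \cite{VanDerVaart/Wellner:96} via the packing/covering comparison $N(\varepsilon,T,\rho)\le D(\varepsilon,T,\rho)\le N(\varepsilon/2,T,\rho)$, and obtain the second by fixing $t_0\in T$, bounding $E\sup_t|X_t|\le E|X_{t_0}|+E\sup_{s,t}|X_s-X_t|$, applying the first inequality with $\delta=\operatorname{diam}(T,\rho)\le 2\sigma(X)$, and using $E|X_{t_0}|\le\sigma(X)$. The paper's proof is simply terser, omitting the explicit sub-Gaussianity verification that you spell out.
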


\noindent\textbf{Proof.}  The first conclusion follows from Corollary 2.2.8 of \cite{VanDerVaart/Wellner:96} since covering and packing numbers are related by $N(\varepsilon, T, \rho) \leq D(\varepsilon, T, \rho) \leq N(\varepsilon/2, T, \rho)$.
The second conclusion follows from the special case of the first conclusion: for any $t_0 \in T$,
$E \sup_{t \in T} | X_t|
\lesssim E  | X_{t_0}| + \int_0^{\text{diam}(T)} \sqrt{ \log N(\varepsilon, T, \rho) } d \varepsilon  \leq \sigma(X) + \int_0^{2\sigma(X)}\sqrt{ \log N(\varepsilon, T, \rho) } d \varepsilon.$\qed

\subsection{Proof of Lemma \ref{lemma:series}}

Step 1. Verification of C.1.  This condition holds by inspection, in view of continuity of
$v \mapsto p_{n}(v)$ and by $\Omega_n$ and $\widehat \Omega_n$ being positive definite.

Step 2. Verification of C.2  Set $\delta_n = 1/\log n$. Condition NS.1 directly assumes C.2(a).

In order to show C.2(b), we employ the maximal inequality stated in Lemma \ref{lemma: maximal ineq gaussian}. Set $
X_t = Z^*_n (v) - Z_n^\star (v), \ t = v, \ T = \V$
and note that for some absolute constant $C$, conditional on $\D_n$,
$$
N(\varepsilon, T,\rho) \leq \left( \frac{1+ C \Upsilon_n \text{diam}(T)}{\varepsilon}\right)^d, \ \ 0 < \varepsilon< 1,
$$
since
$
\sigma(X_t - X_{t'}) \lesssim \Upsilon_n \| t - t'\|,  \ \ T \subset \Bbb{R}^d,
$
where $\Upsilon_n$ is an upper bound on the Lipschitz constant of the function
$$
v \mapsto  \frac{p_{n}(v)' \Omega^{1/2}_n}{\|p_{n}(v)' \Omega^{1/2}_n\|}
- \frac{p_{n}(v)'\widehat \Omega^{1/2}_n}{\|p_{n}(v)'\widehat \Omega^{1/2}_n\|}  ,
$$
where $\text{diam}(T)$ is the diameter of set $T$ under the Euclidian metric.  Using inequality \eqref{eq: eleimentary}
we can bound
$$
\Upsilon_n \leq 2 L_n \frac{\lambda_{\max} (\Omega_n^{1/2})}{\lambda_{\min} (\Omega_n^{1/2})}   +  2 L_n   \frac{\lambda_{\max} (\widehat \Omega_n^{1/2})}{\lambda_{\min} (\widehat \Omega_n^{1/2})} = O_{\Pn}(L_n),
$$
where $L_n$ is the constant defined in NS.1, and by assumption $\log L_n \lesssim \log n$. Here we
use the fact the eigenvalues of $\Omega_n$ and $\widehat \Omega_n$ are bounded away from zero and from above
by NS.1 and NS.2. Therefore,
$
\log N(\varepsilon, T,\rho) \lesssim \log n  + \log(1/\varepsilon).
$

 Using (\ref{eq: eleimentary}) again,
 \begin{eqnarray*}
 \sigma(X) && \lesssim \sup_{v \in \V} \left \| \frac{p_{n}(v)' \Omega^{1/2}_n}{\|p_{n}(v)' \Omega^{1/2}_n\|}
- \frac{p_{n}(v)'\widehat \Omega^{1/2}_n}{\|p_{n}(v)' \widehat \Omega^{1/2}_n\|}  \right\| \\
&&  \leq    \sup_{v \in \V} 2  \frac{\|p_{n}(v)'(\widehat \Omega_n^{1/2} - \Omega_n^{1/2})\|}{\|p_{n}(v)'\Omega_n^{1/2}\|}   \leq   \sup_{v \in \V}  2  \frac{\|p_{n}(v)'\Omega_n^{1/2}( \Omega_n^{-1/2} \widehat\Omega_n^{1/2}-I)\|}{\|p_{n}(v)'\Omega_n^{1/2}\|}   \\
& &  \leq  \| \Omega_n^{-1/2} \widehat \Omega_n^{1/2}-I\| \leq \|\Omega_n^{-1/2}\| \| \widehat \Omega_n^{1/2}-\Omega_n^{1/2}\|= O_{\P_n}(n^{-b})
\end{eqnarray*}
for some constant $b>0$, where we have used that the eigenvalues of $\Omega_n$ and $\widehat \Omega_n$ are bounded away from zero and from above
under NS.1 and NS.2, and the assumption $\|\widehat \Omega_n - \Omega_n\| = O_{\Pn}(n^{-b})$.
Hence
$$
E \left ( \sup_{t \in T} | X_t| \mid \mathcal{D}_n \right) \lesssim \sigma(X) +  \int_0^{2 \sigma(X)} \sqrt{ \log (n/\varepsilon)} d \varepsilon  = O_{\P_n}( n^{-b} \sqrt{ \log n})  .
$$
Hence for each $C>0$
$$
\Pn \( \sup_{v \in \V} | Z^*_n(v) - Z^\star_n(v)| > C\delta_n \mid \D_n \) \lesssim \frac{1}{C \delta_n} O_{\P_n}( n^{-b}\sqrt{\log n}) = o_{\P_n}(1/\ell_n),
$$
which verifies C.2(b).

Step 3. Verification of C.3.  We shall employ Lemma \ref{lemma: key concentration}, which
has the required notation in place. We only need to compute an upper bound
on the covering numbers $N(\varepsilon,\textsf{V}, \rho)$ for the process $Z_n^{\ast}$. We have
that
\begin{eqnarray*}
 \sigma(Z^*_n(v) - Z^*_n(\tilde v) )  & \leq &    \left\| \frac{p_{n}(v)'\Omega^{1/2}_n}{\|p_{n}(v)'\Omega^{1/2}_n\|}-   \frac{p_{n}(\tilde v)'\Omega^{1/2}_n}{\|p_{n}(\tilde v)'\Omega^{1/2}_n\|} \right\|  \leq   2 \left\| \frac{( p_{n}(v) - p_{n}(\tilde v) )'\Omega^{1/2}_n}{\|p_{n}(v)'\Omega^{1/2}_n\|}\right\|   \\
& \leq &   2 L_n \frac{\lambda_{\max}(\Omega_n^{1/2})}{\lambda_{\min}(\Omega_n^{1/2})} \| v - \tilde v\| \leq  C L_n \| v - \tilde v\|,
 \end{eqnarray*}
 where $C$ is some constant that does not depend on $n$, by the eigenvalues of $\Omega_n$ bounded
away from zero and from above.  Hence it follows that
$$
N(\varepsilon, \textsf{V},\rho) \leq \left( \frac{1+  C L_n  \text{diam}(\textsf{V})}{\varepsilon}\right)^d, \  0 < \varepsilon< 1,
$$
where the diameter of $\textsf{V}$ is measured by the Euclidian metric.
Condition C.3 now follows by Lemma \ref{lemma: key concentration}, with
$
a_n(\textsf{V}) = (2\sqrt{\log L_n(\textsf{V})  }) \vee (1 +\sqrt{d}),  \ \ L_n(\textsf{V}) =  C'  \( 1 + C L_n \text{diam}(\textsf{V}) \)^d.
$
where $C'$ is some positive  constant.

Step 4. Verification of C.4.  Under Condition NS, we have that
$$
a_n(\textsf{V}) \leq \bar a_n:= a_n(\mathcal{V}) \lesssim \sqrt{ \log \ell_n  + \log n} \lesssim \sqrt{ \log n},
$$
so that C.4(a) follows if $\sqrt{ \log n } \sqrt{ \zeta_n^2/n } \to 0$.

To verify C.4(b) note that uniformly in $v \in \V$,
\begin{eqnarray*}
& & \left | \frac{\|p_{n}(v)'\widehat \Omega_n^{1/2}\|}{\|p_{n}(v)'\Omega_n^{1/2}\|} -1 \right | \leq
  \left | \frac{\|p_{n}(v)'\widehat \Omega_n^{1/2}\|- \|p_{n}(v)'\Omega_n^{1/2}\|}{\|p_{n}(v)'\Omega_n^{1/2}\|}  \right |\\
& &  \leq  \frac{\|p_{n}(v)'(\widehat \Omega_n^{1/2} - \Omega_n^{1/2})\|}{\|p_{n}(v)'\Omega_n^{1/2}\|}  \leq
 \frac{\|p_{n}(v)'\Omega^{1/2}(\Omega_n^{-1/2} \widehat\Omega_n^{1/2}-I)\|}{\|p_{n}(v)'\Omega_n^{1/2}\|}   \\
& & \leq  \| \Omega_n^{-1/2} \widehat\Omega_n^{1/2}-I\| \leq \|\Omega_n^{-1/2}\| \| \widehat \Omega_n^{1/2}-\Omega_n^{1/2}\| = o_{\P_n}(\delta_n/\bar a_n),
\end{eqnarray*}
by $ \| \widehat \Omega^{1/2} - \Omega^{1/2}_n\|=O_{\P_n}(n^{-b})$ and  $\|\Omega_n^{-1/2}\|$ bounded,
both implied by the assumptions.
 \qed

\subsection{Proof of Lemma \ref{lemma: series-v-est}}
To show claim (1), we need to establish that  for
$
\varphi_n = o(1) \cdot \(\frac{1}{L_n \sqrt{\log n}}\),
$
with any $o(1)$ term, we have that
$
\sup_{\|v - \tilde v \| \leq \varphi_n} | Z^*_n(v) - Z_n^*(\tilde v) | = o_{\Pn}(1).
$

Consider the stochastic process $X =\{Z^*_n(v), v \in \V\}$. We shall use the standard maximal inequality
stated in Lemma \ref{lemma: maximal ineq gaussian}. From the proof of Lemma \ref{lemma:series} we have
$\sigma(Z^*_n(v) - Z^*_n(\tilde v) ) \leq    C L_n \| v - \tilde v\|,$
where $C$ is some constant that does not depend on $n$, and
$
\log N(\varepsilon, \textsf{V},\rho) \lesssim \log n + \log (1/\varepsilon).
$
Since
$
\|v - \tilde v \| \leq \varphi_n \implies \sigma(Z^*_n(v) - Z^*_n(\tilde v) ) \leq  C   \frac{o(1)}{\sqrt{\log n}}
$
we have
$$
E \sup_{\|v - \tilde v \| \leq \varphi_n  } | X_v - X_{\tilde v}| \lesssim \int_0^{C   \frac{o(1)}{\sqrt{\log n}} }\sqrt{ \log (n/\varepsilon) } d \varepsilon
\lesssim  \frac{o(1)}{\sqrt{\log n}} \sqrt{ \log n} = o(1).
$$
Hence the conclusion follows from Markov's Inequality.

Under Condition V by Lemma \ref{lemma: estimation of V}
$
r_n \lesssim \(\sqrt{\log n \frac{\zeta_n^2}{ n}}\)^{1/\rho_n} c_n^{-1},
$
so $r_n = o(\varphi_n)$ if
\begin{equation}
\(\sqrt{\log n \frac{\zeta_n^2}{ n}}\)^{1/\rho_n} c_n^{-1} = o\(\frac{1}{L_n \sqrt{\log n}}\).
\end{equation}
Thus, Condition S holds. The remainder of the lemma follows by direct calculation. \qed

\section{Proofs for Section \ref{sec:strong:series:main-text}}\label{sec:proofs-strong-series}

\subsection{Proof of Theorem \ref{theorem:strong series} and Corollary \ref{corollary:strong}.} The first step of our proof uses Yurinskii's (1977) coupling. \nocite{Yurinskii:77} For completeness we now state the formal result from \cite{Pollard:02}, page 244.

\noindent\textbf{Yurinskii's Coupling}:  Consider a sufficiently rich probability space $(A, \mathcal{A}, \P). $\ Let $\xi_1,...,\xi_n$ be independent ${K_n}$-vectors with
$E \xi_i = 0$ for each $i$, and $\Delta := \sum_{i} E \|\xi_i\|^3$ finite. Let
$S = \xi_1 +...+ \xi_n$. For each $\delta>0$ there exists a random vector $T$ with $N(0, \text{var }(S))$ distribution such that
$$
\P\{ \| S- T\| > 3 \delta \} \leq C_0 B \left( 1  + \frac{|\log (1/B)|}{{K_n}}   \right) \text{ where }
B:= \Delta {K_n} \delta^{-3},
$$
for some universal constant $C_0$.

The proof has two steps: in the first, we couple the estimator $\sqrt{n}(\widehat \beta_n - \beta_n)$ with the normal vector;  in the second, we establish the strong approximation.

\textsc{Step 1.} In order to apply the coupling, consider
$$
 \sum_{i=1}^n \xi_i, \ \ \xi_i = u_{i,n}/\sqrt{n} \sim (0, I_{K_n}/n),
$$
Then we have that $\sum_{i=1}^n E\|\xi_i\|^3  = \Delta_n$. Therefore, by Yurinskii's coupling,
\begin{eqnarray*}
\P_n \left \{ \left \| \sum_{i=1}^n \xi_i - \mathcal{N}_n \right\|  \geq 3 \delta_n \right\} \to 0 & \; \text{ if $K_n \Delta_n/\delta_n^3  \to 0$}.
\end{eqnarray*}
Combining this with the assumption on the linearization error $r_n$, we obtain
\begin{eqnarray*}
\| \Omega_n^{-1/2} \sqrt{n} ( \widehat \beta_n - \beta_n) - \mathcal{N}_n \| & \leq &  \| \sum_{i=1}^n \xi_i - \mathcal{N}_n \|
  +   \|  \Omega_n^{-1/2} \sqrt{n} (\widehat \beta_n - \beta_n) - \sum_{i=1}^n \xi_i\|  \\
 & =  & o_{\P_n}( \delta_n) + r_n = o_{\P_n}(\delta_n). \end{eqnarray*}

\textsc{Step 2.}   Using the result of Step 1 and that
$$
\frac{\sqrt{n} p_n(v)'(\widehat \beta_n - \beta_n)}{ \|g_n(v)\|} = \frac{\sqrt{n } g_n(v) '\Omega_n^{-1/2}(\widehat \beta_n - \beta_n)}{ \|g_n(v)\|}\text{,}
$$
we  conclude that
\begin{align}\begin{split}\label{eq:boundS}
|S_n(v)| & := \Big |\frac{\sqrt{n } g_n(v) '\Omega_n^{-1/2}(\widehat \beta_n - \beta_n)}{ \|g_n(v)\|}  -  \frac{g_n(v)' \mathcal{N}_n}{\| g_n(v)\|} \Big |  \\
& \quad \leq  \left \|  \sqrt{n} \Omega_n^{-1/2} (\widehat \beta_n - \beta_n)  - \mathcal{N}_n \right \| = o_{\P_n}(\delta_n),
\end{split}\end{align}
uniformly in $v \in \mathcal{V}$. Finally,
\begin{eqnarray*}
& &\sup_{v \in \mathcal{V}}  \Big |\frac{\sqrt{n } (\widehat \theta_n (v) - \theta_n(v))}{ \|g_n(v)\|}  -  \frac{g_n(v)' \mathcal{N}_n}{\| g_n(v)\|} \Big | \\
& & \quad \leq  \sup_{v \in \mathcal{V}} \Big |\frac{\sqrt{n } (\widehat \theta_n (v) - \theta_n(v))}{ \|g_n(v)\|}  - \frac{\sqrt{n } g_n(v) '\Omega_n^{-1/2}(\widehat \beta_n - \beta_n)}{ \|g_n(v)\|} \Big | \\
& & \quad +  \sup_{v \in \mathcal{V}}\Big |\frac{\sqrt{n } g_n(v) '\Omega_n^{-1/2}(\widehat \beta_n - \beta_n)}{ \|g_n(v)\|}  -  \frac{g_n(v)' \mathcal{N}_n}{\| g_n(v)\|} \Big |   \\
& & \quad = \sup_{v \in \mathcal{V}}| \sqrt{n} A_n(v)/ \| g_n(v)\| | + \sup_{v \in \mathcal{V}}| S_n(v)| = o(\delta_n) + o_{\P_n}(\delta_n),
\end{eqnarray*}
using the assumption on the approximation error  $A_n(v) = \theta(v) - p_n(v)'\beta_n$ and (\ref{eq:boundS}).  This proves
the theorem.

\textsc{Step 3.} To show the corollary note that
\begin{eqnarray*}
E_{\Pn}\|u_{i,n}\|^3  & \leq & \| \Omega_n^{-1/2} Q_n^{-1} \| ^{3} \cdot E_{\Pn} \| p_n(V_i) \epsilon_i \|^3
\lesssim   \tau_n^{3}K_n^{3/2} C_n,
\end{eqnarray*}
using the boundedness assumptions stated in the corollary.   \qed

\footnotesize
\bibliographystyle{econometrica}
\bibliography{ibounds_mar2013}

\clearpage
\begin{figure}
  \includegraphics[scale = 0.36]{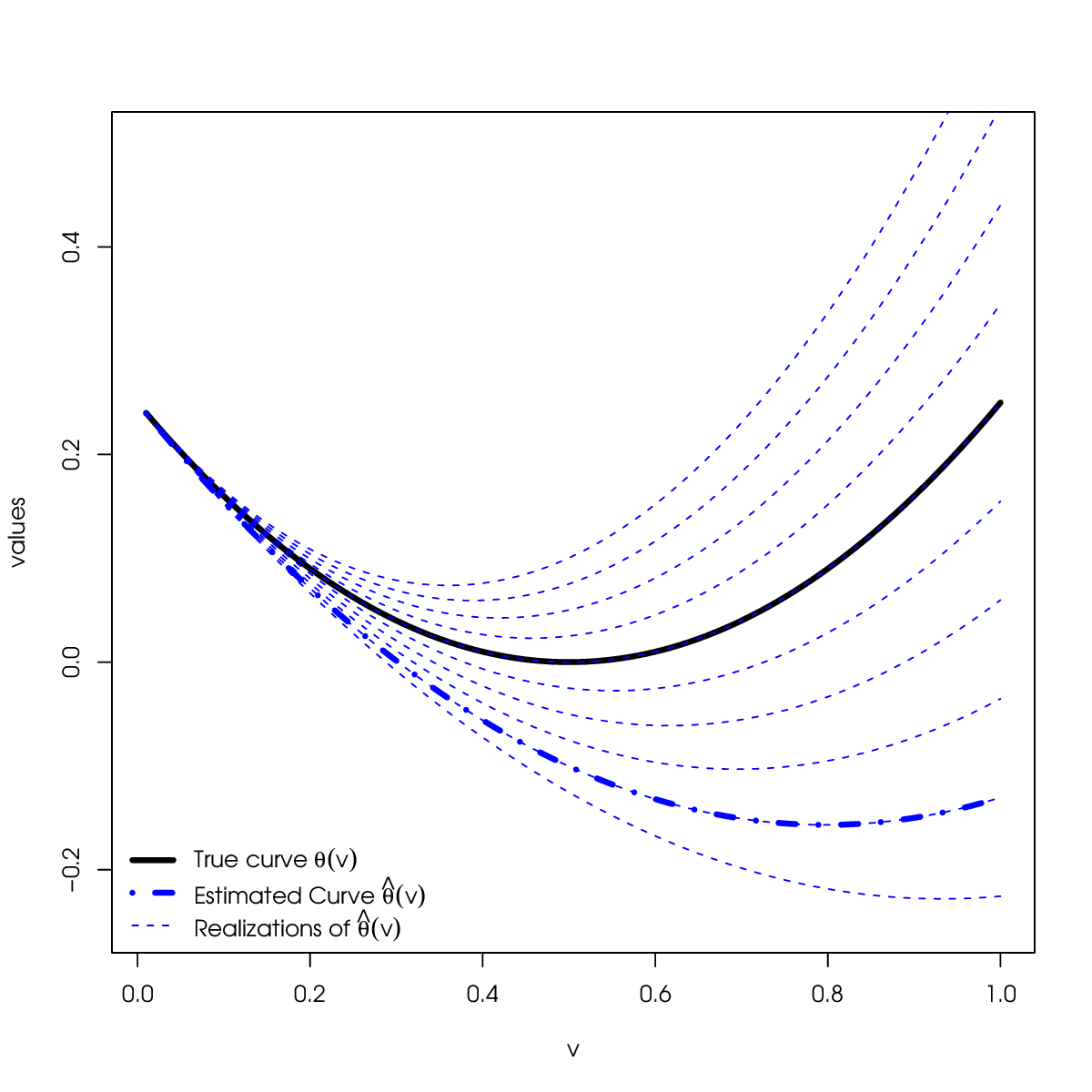}\\
  \caption{This figure illustrates how variation in the precision of the analog estimator at different points may impede inference.  The solid curve is the true bounding function $\theta(v)$, while the dash-dot curve is a single realization of its estimator, $\widehat{\theta}(v)\text{.}$  The lighter dashed curves depict eight additional representative realizations of the estimator, illustrating its precision at different values of $v$.  The minimum of the estimator $\widehat{\theta}(v)$
  is indeed quite far from the minimum of $\theta(v)$, making the empirical upper bound unduly tight. }\label{fig1}

  \includegraphics[scale = 0.36]{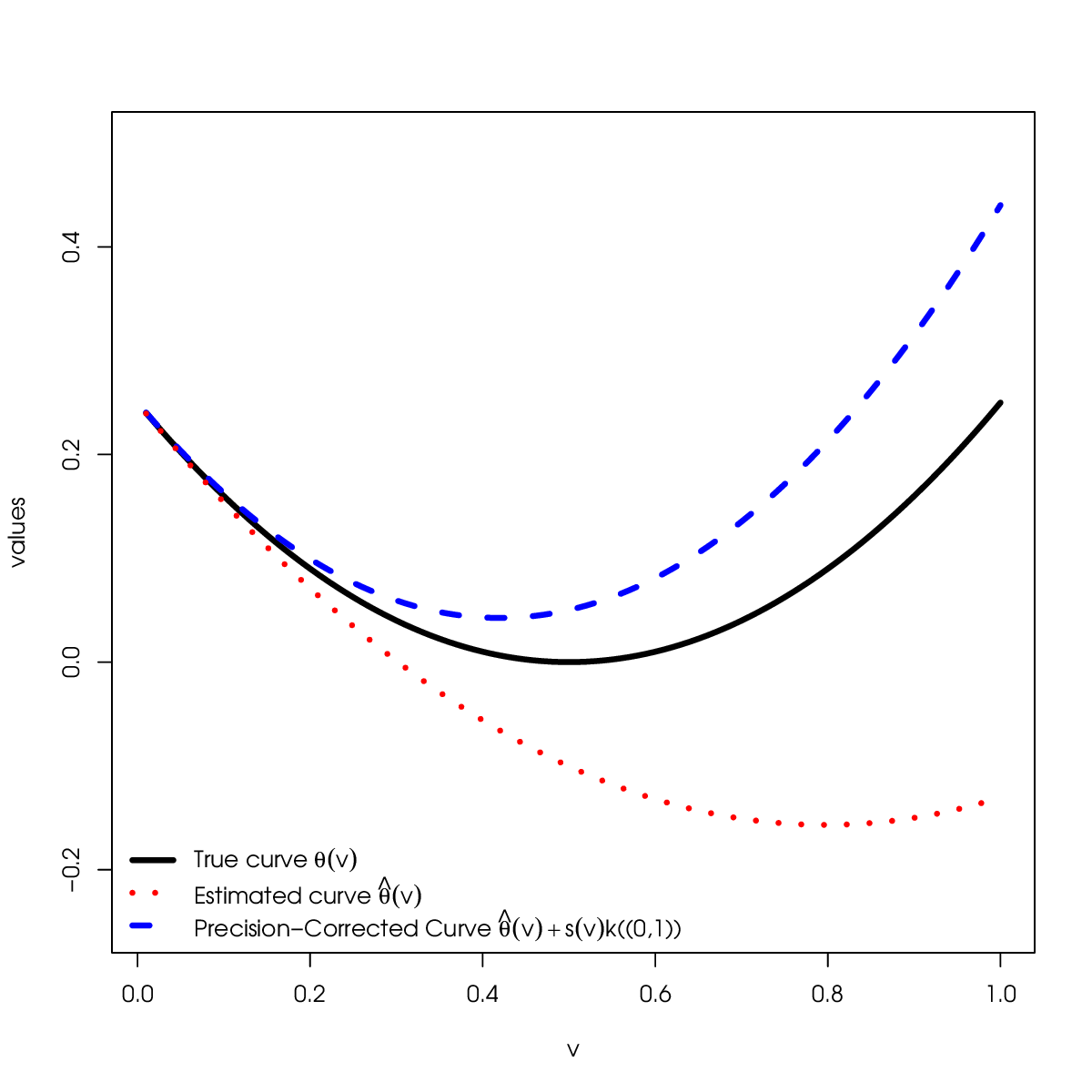}\\
  \caption{This figure depicts a precision-corrected curve (dashed curve) that adjusts the boundary estimate $\widehat{\theta}(v)$ (dotted curve) by an amount proportional to its point-wise standard error.
  The minimum of the precision-corrected curve is closer to the minimum of the true curve (solid) than the minimum of $\widehat{\theta}(v)$,
  removing the downward bias.}\label{fig2}
\end{figure}

\clearpage
\begin{figure}
  \includegraphics[scale = 0.3]{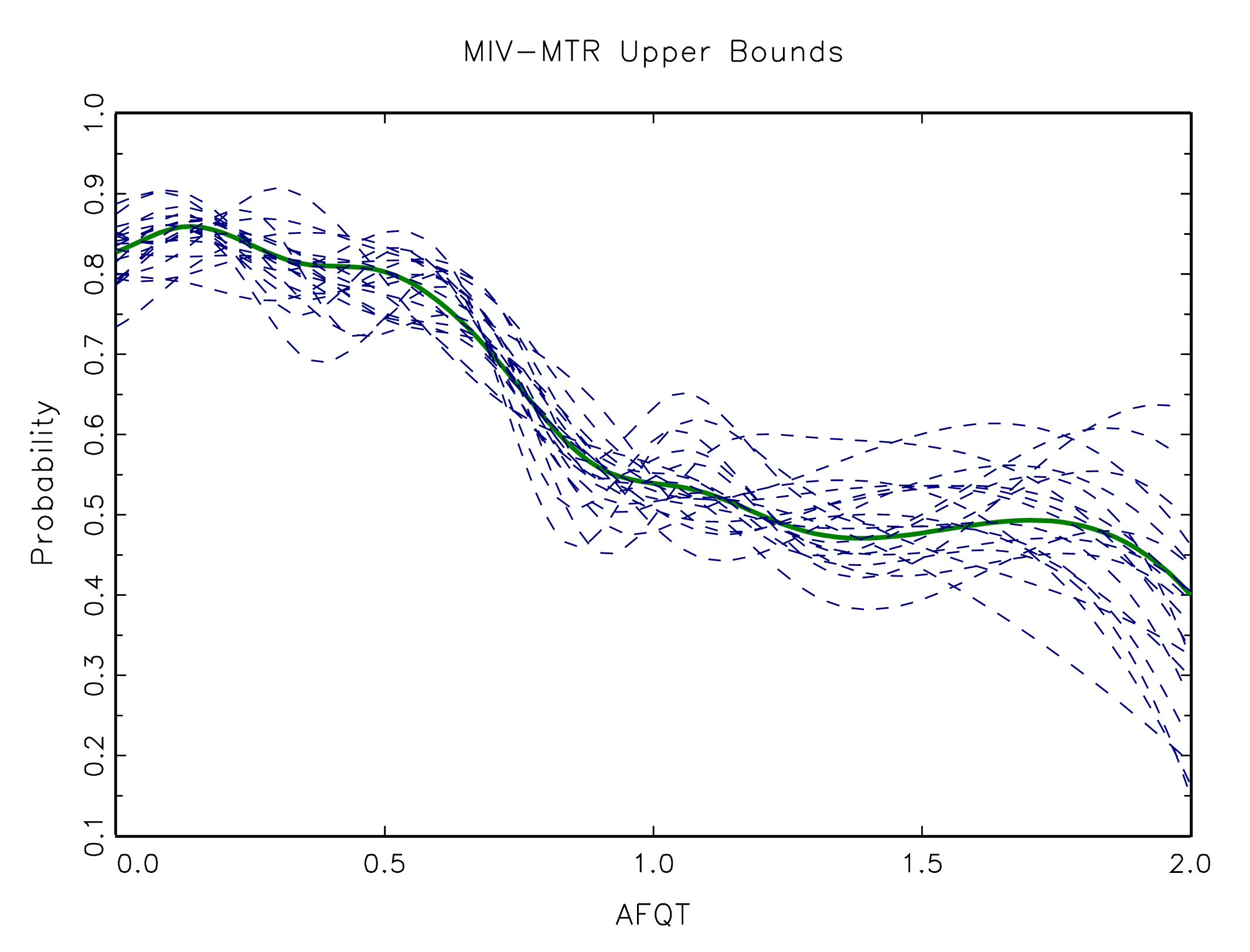}\\
  \caption{This figure, based on NLSY data and the application in Section 5 of Chernozhukov, Lee, and Rosen (2009), depicts an estimate of the bounding function (solid curve) and 20 bootstrapped estimates (dashed curves).}\label{fig2-real-b}

  \includegraphics[scale = 0.3]{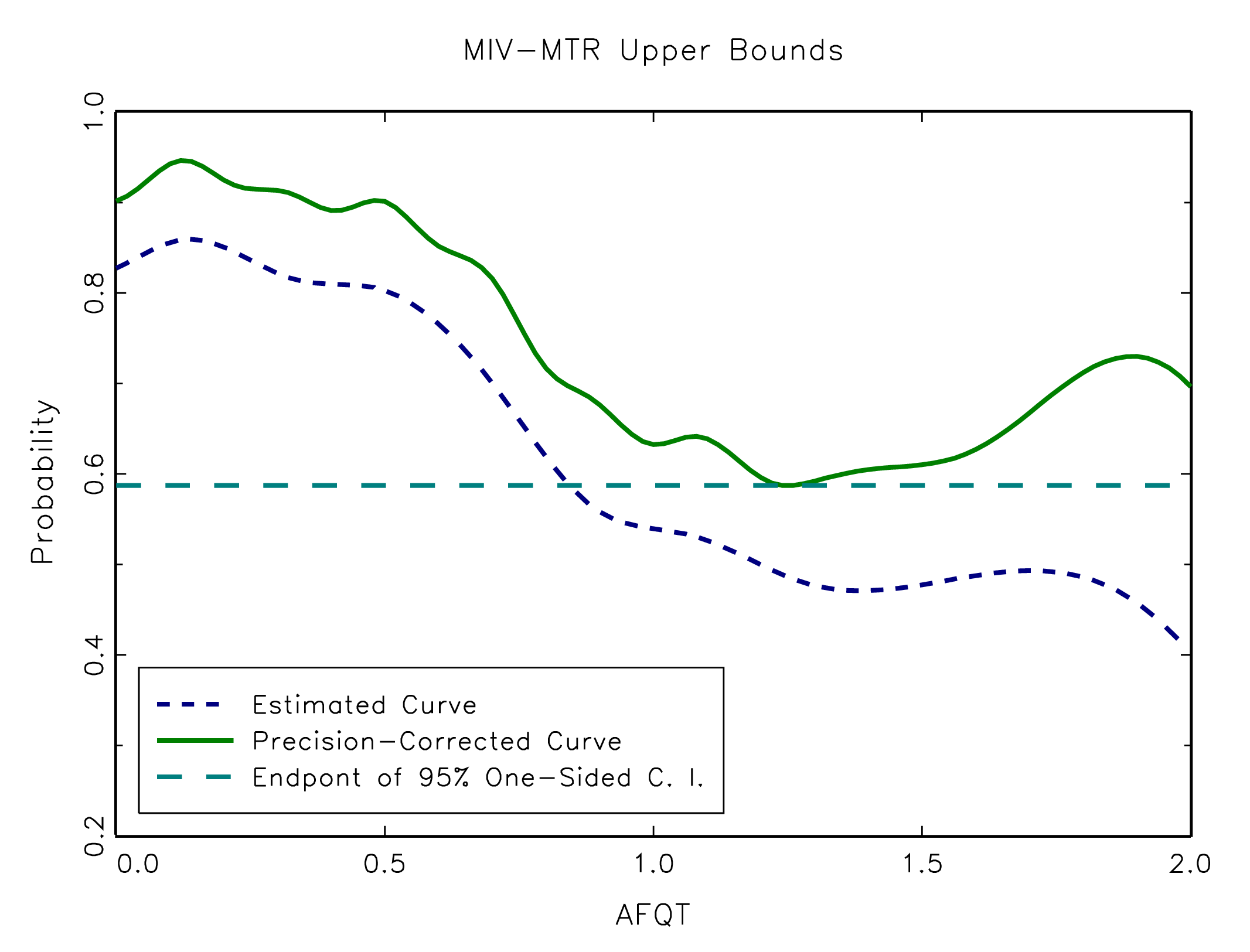}\\
  \caption{This figure, based on NLSY data and the application in Section 5 of Chernozhukov, Lee, and Rosen (2009), depicts a precision-corrected curve (solid curve) that adjusts the boundary estimate $\widehat{\theta}(v)$ (dashed curve) by an amount proportional to its point-wise standard error.
 The horizontal dashed line shows the end point of 95\% one-sided confidence interval.}\label{fig2-real}

\end{figure}

\clearpage
\newpage
\begin{table}
{\footnotesize \singlespacing
  \caption{Results for Monte Carlo Experiments (Series Estimation using B-splines) }\label{mc1}

  \begin{tabular}{rrcccccc}
    \hline\hline
    DGP & Sample  & Critical  & Ave. Smoothing & Cov.   & False Cov. & \multicolumn{2}{c}{Ave. Argmax Set} \\
        &  Size   & Value     & Parameter      & Prob.  &  Prob.     &  Min.  & Max.   \\
    \hline
  \multicolumn{8}{l}{CLR with Series Estimation using B-splines} \\
          &         & Estimating $V_n$? &    &     &    & &  \\
       1  &  500    &     No       &   9.610  &   0.944  &   0.149  &  -1.800  &   1.792  \\
       1  &  500    &     Yes      &   9.610  &   0.944  &   0.149  &  -1.800  &   1.792  \\
       1  & 1000    &     No       &  10.490  &   0.947  &   0.013  &  -1.801  &   1.797  \\  \vspace*{1ex}
       1  & 1000    &     Yes      &  10.490  &   0.947  &   0.013  &  -1.801  &   1.797  \\
       2  &  500    &     No       &   9.680  &   0.992  &   0.778  &  -1.800  &   1.792  \\
       2  &  500    &     Yes      &   9.680  &   0.982  &   0.661  &  -0.762  &   0.759  \\
       2  & 1000    &     No       &  10.578  &   0.997  &   0.619  &  -1.801  &   1.797  \\  \vspace*{1ex}
       2  & 1000    &     Yes      &  10.578  &   0.982  &   0.470  &  -0.669  &   0.670  \\
       3  &  500    &     No       &  11.584  &   0.995  &   0.903  &  -1.800  &   1.792  \\
       3  &  500    &     Yes      &  11.584  &   0.984  &   0.765  &  -0.342  &   0.344  \\
       3  & 1000    &     No       &  13.378  &   0.994  &   0.703  &  -1.801  &   1.797  \\  \vspace*{1ex}
       3  & 1000    &     Yes      &  13.378  &   0.971  &   0.483  &  -0.290  &   0.290  \\
       4  &  500    &     No       &  18.802  &   0.996  &   0.000  &  -1.800  &   1.792  \\
       4  &  500    &     Yes      &  18.802  &   0.974  &   0.000  &  -0.114  &   0.114  \\
       4  & 1000    &     No       &  20.572  &   1.000  &   0.000  &  -1.801  &   1.797  \\
       4  & 1000    &     Yes      &  20.572  &   0.977  &   0.000  &  -0.098  &   0.091  \\
  \hline
 \end{tabular}

%
}
\end{table}

\begin{table}
{\footnotesize \singlespacing
  \caption{Results for Monte Carlo Experiments (AS) }\label{mc2}

 \begin{tabular}{rrccc}
    \hline\hline
    DGP & Sample Size & Critical Value & Cov. Prob.  & False Cov. Prob. \\
    \hline
 \multicolumn{5}{l}{AS with CvM (Cram\'{e}r-von Mises-type statistic)} \\
     1  &  500    &     PA/Asy     & 0.959    &  0.007    \\
     1  &  500    &     GMS/Asy    & 0.955    &  0.007   \\
     1  & 1000    &     PA/Asy     & 0.958    &  0.000   \\  \vspace*{1ex}
     1  & 1000    &     GMS/Asy    & 0.954    &  0.000   \\
     2  &  500    &     PA/Asy     & 1.000    &  1.000  \\
     2  &  500    &     GMS/Asy    & 1.000    &  0.977   \\
     2  & 1000    &     PA/Asy     & 1.000    &  1.000   \\  \vspace*{1ex}
     2  & 1000    &     GMS/Asy    & 1.000    &  0.933  \\
     3  &  500    &     PA/Asy     & 1.000    &  1.000  \\
     3  &  500    &     GMS/Asy    & 1.000    &  1.000   \\
     3  & 1000    &     PA/Asy     & 1.000    &  1.000   \\   \vspace*{1ex}
     3  & 1000    &     GMS/Asy    & 1.000    &  1.000  \\
     4  &  500    &     PA/Asy     & 1.000    &  1.000  \\
     4  &  500    &     GMS/Asy    & 1.000    &  1.000   \\
     4  & 1000    &     PA/Asy     & 1.000    &  1.000   \\
     4  & 1000    &     GMS/Asy    & 1.000    &  1.000  \\
  \hline
  \end{tabular}
}
\end{table}

\clearpage
\newpage
\begin{table}
{\footnotesize \singlespacing
  \caption{Results for Monte Carlo Experiments (Other Estimation Methods)}\label{mc1-cont}

  \begin{tabular}{rrcccccc}
    \hline\hline
    DGP & Sample  & Critical  & Ave. Smoothing & Cov.   & False Cov. & \multicolumn{2}{c}{Ave. Argmax Set} \\
        &  Size   & Value     & Parameter      & Prob.  &  Prob.     &  Min.  & Max.   \\
    \hline
   \multicolumn{8}{l}{CLR with Series Estimation using Polynomials} \\
          &         & Estimating $V_n$? &    &     &    & &  \\
       1  &  500    &     No       &   5.524  &    0.954  &   0.086  &  -1.800  &   1.792 \\
       1  &  500    &     Yes      &   5.524  &    0.954  &   0.086  &  -1.800  &   1.792 \\
       1  & 1000    &     No       &   5.646  &    0.937  &   0.003  &  -1.801  &   1.797 \\  \vspace*{1ex}
       1  & 1000    &     Yes      &   5.646  &    0.937  &   0.003  &  -1.801  &   1.797 \\
       2  &  500    &     No       &   8.340  &    0.995  &   0.744  &  -1.800  &   1.792 \\
       2  &  500    &     Yes      &   8.340  &    0.989  &   0.602  &  -0.724  &   0.724 \\
       2  & 1000    &     No       &   9.161  &    0.996  &   0.527  &  -1.801  &   1.797 \\  \vspace*{1ex}
       2  & 1000    &     Yes      &   9.161  &    0.977  &   0.378  &  -0.619  &   0.620 \\
       3  &  500    &     No       &   8.350  &    0.998  &   0.809  &  -1.800  &   1.792 \\
       3  &  500    &     Yes      &   8.350  &    0.989  &   0.612  &  -0.300  &   0.301 \\
       3  & 1000    &     No       &   9.155  &    0.996  &   0.560  &  -1.801  &   1.797 \\  \vspace*{1ex}
       3  & 1000    &     Yes      &   9.155  &    0.959  &   0.299  &  -0.253  &   0.252 \\
       4  &  500    &     No       &   8.254  &    1.000  &   0.000  &  -1.800  &   1.792 \\
       4  &  500    &     Yes      &   8.254  &    0.999  &   0.000  &  -0.081  &   0.081 \\
       4  & 1000    &     No       &   9.167  &    0.998  &   0.000  &  -1.801  &   1.797 \\
       4  & 1000    &     Yes      &   9.167  &    0.981  &   0.000  &  -0.069  &   0.069 \\
\multicolumn{8}{l}{CLR with Local Linear Estimation} \\
        &         & Estimating $V_n$? &    &     &    & &  \\
     1  &  500    &     No       &    0.606  &    0.923  &   0.064  &  -1.799  &   1.792     \\
     1  &  500    &     Yes      &    0.606  &    0.923  &   0.064  &  -1.799  &   1.792     \\
     1  & 1000    &     No       &    0.576  &    0.936  &   0.003  &  -1.801  &   1.796     \\  \vspace*{1ex}
     1  & 1000    &     Yes      &    0.576  &    0.936  &   0.003  &  -1.801  &   1.796     \\
     2  &  500    &     No       &    0.264  &    0.995  &   0.871  &  -1.799  &   1.792     \\
     2  &  500    &     Yes      &    0.264  &    0.989  &   0.808  &  -0.890  &   0.892     \\
     2  & 1000    &     No       &    0.218  &    0.996  &   0.779  &  -1.801  &   1.796     \\  \vspace*{1ex}
     2  & 1000    &     Yes      &    0.218  &    0.990  &   0.675  &  -0.776  &   0.776     \\
     3  &  500    &     No       &    0.140  &    0.995  &   0.943  &  -1.799  &   1.792     \\
     3  &  500    &     Yes      &    0.140  &    0.986  &   0.876  &  -0.426  &   0.424     \\
     3  & 1000    &     No       &    0.116  &    0.992  &   0.907  &  -1.801  &   1.796     \\  \vspace*{1ex}
     3  & 1000    &     Yes      &    0.116  &    0.986  &   0.816  &  -0.380  &   0.377     \\
     4  &  500    &     No       &    0.078  &    0.991  &   0.000  &  -1.799  &   1.792     \\
     4  &  500    &     Yes      &    0.078  &    0.981  &   0.000  &  -0.142  &   0.142     \\
     4  & 1000    &     No       &    0.064  &    0.997  &   0.000  &  -1.801  &   1.796     \\
     4  & 1000    &     Yes      &    0.064  &    0.991  &   0.000  &  -0.127  &   0.127     \\
\hline
 \end{tabular}

%
}
\end{table}

\begin{table}
{\footnotesize \singlespacing
  \caption{Computation Times of Monte Carlo Experiments}\label{mc-time}

 \begin{tabular}{rrccc}
    \hline\hline
     & AS & Series (B-splines) & Series (Polynomials)  & Local Linear \\
    \hline
Total minutes for simulations & 8.67  &	 57.99 &	19.44 &	83.75  \\
Average seconds for each test &  0.03 	 &    0.22 &  	0.07  & 0.31    \\
         Ratio relative to AS & 1.00   & 	  6.69 &  	2.24  &	9.66   \\ 
\hline
\end{tabular}
}
\end{table}

\clearpage
\normalsize

\section{Proofs Omitted From the Main Text.}\label{sec:omitted-proof-appendix}
\subsection{Proof of Lemma \ref{lemma: estimation of V} (Estimation of $\Vn$)}
There is a single proof for both analytical and simulation methods, but it is convenient for clarity to split
the first step of the proof into separate cases. There are four steps in total.

Step 1a. (Bounds on $k_{n, \mathcal{V}}(\gamma_n)$ in Analytical Case)   We have that for some constant $\eta >0$
\begin{align*}
 & k_{n, \mathcal{V}}(\gamma_n) := \( \bar a_n + \frac{c(\gamma_n)}{\bar a_n}\), \\
 & \kappa_n := \kappa_n(\gamma'_n)  := Q_{\gamma'_n} \(  \sup_{v \in \V} Z_n^*(v)\), \ \  & \bar \kappa_n := 7  \(\bar a_n + \frac{\eta \ell\ell_n}{\bar a_n}\).
\end{align*}
The claim of this step is that given the sequence $\gamma_n$ we have for all large $n$:
\begin{eqnarray}
&& k_{n, \mathcal{V}}(\gamma_n) \geq \kappa_n(\gamma_n)\label{ineq: a}\\
&& 6 k_{n, \mathcal{V}}(\gamma_n) < \bar \kappa_n  \label{ineq: b}
\end{eqnarray}
Inequality (\ref{ineq: b}) follows from \eqref{ineq: a1} in step 2 of the proof of Lemma \ref{lemma: concentrate on balls}  (with $\gamma_n$ in place of $\gamma'_n$);
(\ref{ineq: a}) follows immediately from Condition C.3.

Step 1b. (Bounds on $k_{n, \mathcal{V}}(\gamma_n)$ in Simulation Case) We have
\begin{align*}
k_{n, \mathcal{V}}(\gamma_n) & := Q_{\gamma_n}\( \sup_{v \in \V} Z_n^\star(v)\mid  \D_n\) , \\
 \kappa_n = \kappa_n(\gamma_n') & := Q_{\gamma'_n} \(  \sup_{v \in \V} \bar Z_n^*(v) \), \ \  & \bar \kappa_n := 7\(\bar a_n  + \frac{\eta \ell \ell_n}{\bar a_n}\).
\end{align*}
The claim of this step is that given $\gamma_n$ there is $\gamma_n \geq \gamma_n' = \gamma_n - o(1)$ such that, wp $\to$ 1
\begin{eqnarray}
 && k_{n, \mathcal{V}}(\gamma_n)  \geq \kappa_n(\gamma_n') \label{ineq: aa}\\
 && 6k_{n, \mathcal{V}}(\gamma_n) < \bar \kappa_n \label{ineq: bb}
\end{eqnarray}

 To show inequality (\ref{ineq: aa}), note that by C.2 and Lemma \ref{lemma: quantiles are close} wp $\to 1$
\begin{equation}\label{eq: quantile comparison 4_again}
 \kappa_{n,\mathcal{V}}(\gamma_n + o(1/\ell_n)) + o(\delta_n) \geq k_{n,\mathcal{V}}(\gamma_n)  \geq \kappa_{n,\mathcal{V}}(\gamma_n - o(1/\ell_n)) - o(\delta_n) .
\end{equation}
Hence (\ref{ineq: aa}) follows from
 \begin{eqnarray*}
&  & \P_n \( \sup_{v \in \V} \bar Z_n^*(v)  \leq x \) \Big \vert_{x=k_{n, \mathcal{V}}(\gamma_n)}  \\
 &  & \geq_{(1)} \P_n\(  \sup_{v \in \V} \bar Z_n^*(v)  \leq \kappa_{n, \mathcal{V}}(\gamma_n - o(1/\ell_n)) - o(\delta_n)  \) - o(1) \text{ wp $\to 1$}\\
& & \geq_{(2)} \P_n \( \sup_{v \in \V} \bar Z_n^*(v) \leq \kappa_{n, \mathcal{V}}(\gamma_n - o(1/\ell_n)) \) - o(1) = \gamma_n - o(1/\ell_n) - o(1) =: \gamma_n',  \;\; (\gamma_n \geq \gamma_n')
  \end{eqnarray*}
where (1) holds by (\ref{eq: quantile comparison 4_again}) and (2) holds by anti-concentration Corollary \ref{cor: anti}.

 To show inequality (\ref{ineq: bb}) note that by C.3, we have
\begin{eqnarray*}
 \kappa_{n,\mathcal{V}}(\gamma_n + o(1/\ell_n)) + o(\delta_n) & \leq & \bar a_n + \frac{c(\gamma_n + o(1/\ell_n))}{\bar a_n} + o(\delta_n) \leq \bar a_n +  \frac{\eta \ell\ell_n + \eta \log 10}{\bar a_n}
 + o(\delta_n),
\end{eqnarray*}
where the last inequality relies on
$$
c(\gamma_n + o(1/\ell_n)) \leq -\eta \log ((1-\gamma_n - o(1/\ell_n)) \leq \eta o(\ell \ell_n) + \eta \log 10,
$$
holding for large $n$ by C.3. From this we deduce (\ref{ineq: bb}).

Step 2. (Lower Containment)   We have that for all $v \in \Vn$,
\ba
A_n(v) & := & \widehat \theta_n(v)  - \inf_{ v \in \V} \( \hat \theta_n(v) + k_{n, \mathcal{V}}(\gamma_n) s_n(v)\) \\
 & \leq & - Z_n(v) \sigma_n(v) + \kappa_n \sigma_n(v) + \sup_{v \in \V} \{ \theta_{n0} - \hat \theta_n(v) - k_{n,\V}(\gamma_n) s_n(v) \}:=  B_n(v)
\ea
since $\theta_n(v) \leq \theta_{n0}  + \kappa_n \sigma_n(v), \forall v \in V_n$ and $\widehat \theta_n(v) - \theta_n(v) = - Z_n(v) \sigma_n(v)$. Therefore,
\ba
\P_n\{ V_n \subseteq \widehat V_n\} &=&  \P_n \{ A_n(v) \leq 2 k_{n, \V}(\gamma_n) s_n(v), \forall v \in V_n \} \\
 &\geq &  \P_n \{ B_n(v) \leq 2  k_{n, \V}(\gamma_n) s_n(v), \forall v \in V_n \} \\
 & \geq & \P_n \{ - Z_n(v) \sigma_n(v)  \leq 2 k_{n,\V}(\gamma_n) s_n(v)  - \kappa_n \sigma_n(v), \forall v \in V_n\}\\
 & - &  \P_n \{ \sup_{ v \in \V} \frac{\theta_{n0}- \widehat \theta_n(v)}{s_n(v)} \geq k_{n, \V} (\gamma_n)\} \\
 &  := &  a - b = \gamma_n' - o(1) = 1 - o(1),
\ea
where $b=o(1)$ follows similarly to the proof of Theorems \ref{theorem: inference analytical} (analytical case) and Theorem \ref{theorem: inference1} (simulation case),
using  that $k_{n, \V}(\gamma_n) \geq k_{n, V_n}(\gamma_n)$ for sufficiently large $n$, and $a = 1-o(1)$ follows from the
following argument:
\ba
a  & \geq_{(1)} & \P_n \(  \sup_{v \in \mathcal{V}} - Z_n(v) \leq 2 k_{n, \V} (\gamma_n) [1 - o_{\P_n}(\delta_n/( \bar a_n + \ell \ell_n ))] - \kappa_n \) \\
& \geq_{(2)} & \P_n \(  \sup_{v \in \mathcal{V}} - Z^*_n(v) \leq 2 k_{n, \V} (\gamma_n) - \kappa_n - o_{\P_n}(\delta_n)\) - o(1)\\
&  \geq_{(3)} & \P_n \( \sup_{v \in \mathcal{V}} - Z^*_n(v) \leq  \kappa_n - o_{\P_n}(\delta_n)\) - o(1) \\
&  \geq_{(4)} & \gamma_n' - o(1) = 1 - o(1),
 \ea
where terms $o(\delta_n)$ are different in different places; where (1) follows by C.4, (2) is by C.2 and by Step 1, namely by $k_{n, \V}(\gamma_n') \leq \bar \kappa_n \lesssim \bar a_n + \ell\ell_n$ wp $\to 1$,
(3) follows by Step 1, and (4) follows by
the anti-concentration Corollary \ref{cor: anti} and the definition of $\kappa_n$.

\textit{Step} 3. -- Upper Containment.  We have that for all $v \not \in \barVn$,
\ba
A_n(v) & := & \widehat \theta_n(v) -\theta_{n0}- \inf_{ v \in \V} \( \hat \theta_n(v) - \theta_{n0} + k_{n, \mathcal{V}}(\gamma_n) s_n(v)\) \\
 & > & - Z_n(v) \sigma_n(v) + \bar \kappa_n \bar \sigma_n  \\
 & + &  \sup_{v \in \V} \{ \theta_{n0} - \hat \theta_n(v) - k_{n,\V}(\gamma_n) s_n(v) \} :=   C_n(v),
\ea
since $\theta_n(v) > \theta_{n0} + \bar \kappa_n \bar \sigma_n, \forall v \not \in \barVn$, and $\hat \theta_n(v) - \theta_n(v) = - Z_n(v) \sigma_n(v).$ Hence
\ba
 \P_n\(  \widehat V_n \not \subseteq \barVn\) & = &  \P_n \{  A_n(v) \leq 2 k_{n, \V} (\gamma_n) s_n(v), \exists v \not \in \barVn \}\\
 & \leq & \P_n\{  C_n(v) < 2 k_{n, \V} (\gamma_n) s_n(v), \exists v \not \in \barVn \} \\
 & \leq & \P_n\{ - \sup_{v \in \V} 2 |Z_n(v)| \bar \sigma_n < 3 k_{n, \V} (\gamma_n) \bar s_n - \bar \kappa_n \bar \sigma_n \} := a = o(1),
\ea
where we used elementary inequalities to arrive at the last conclusion.  Then $a=o(1)$, since
\ba
a & \leq_{(1)} & \P_n \( - 2|Z_n(v)| < 3 k_{n,\V}(\gamma_n) [1 + o_{\P_n}(\delta_n/(\bar a_n + \ell \ell_n))] - \bar \kappa_n , \exists v \in \V\)  \\
& \leq_{(2)} & \P_n \( - |Z^*_n(v)| < (3 k_{n,\V}(\gamma_n) - \bar \kappa_n)/2  + o(\delta_n), \exists v  \in \V\) + o(1)  \\
& \leq_{(3)} & \P_n \( -|Z^*_n(v)| < - k_{n,\V}(\gamma_n)  + o(\delta_n), \exists v  \in \V\) + o(1)  \\
& \leq_{(4)}  & 2 \P_n \( \sup_{v \in \V} Z^*_n(v) >  k_{n,\V}(\gamma_n)  - o(\delta_n) \) + o(1)  \\
& \leq_{(5)}  & 2 \P_n \( \sup_{v \in \V} Z^*_n(v) >  \kappa_{n}- o(\delta_n) \) + o(1)  \leq_{(6)}  2(1- \gamma_n') + o(1),
\ea
where inequality (1) follows by Condition C.4, inequality (2) follows by Condition C.2 and Step 1, namely by $k_{n, \V}(\gamma_n') \leq \bar \kappa_n \lesssim \bar a_n + \ell\ell_n$ wp $\to 1$, inequality (3) follows by Step 1 and the union bound, inequality (4) holds by the union bound and symmetry,   inequality (5) holds by Step 1,  and inequality (6) holds by the definition of $\kappa_n$ and the anti-concentration Corollary \ref{cor: anti}.

Step 4. (Rate).   We have that wp $\to 1$
\begin{align*}
& d_H(\widehat V_n, V_{0}) \leq_{(1)} d_H(\widehat V_n, \Vn) +  d_H(\Vn, V_{0}) \leq_{(2)} 2 d_H( \barVn, V_0) \leq_{(3)} 2( \bar \sigma_n \bar \kappa_n)^{1/\rho_n}/c_n
\end{align*}
where (1) holds by the triangle inequality,  (2) follows by the containment  $V_0 \subseteq \Vn \subseteq
\widehat V_n \subseteq \bar V_n$ holding wp $\to 1$, and (3) follows from $  \bar \kappa_n \bar \sigma_n \to 0$
holding by assumption, and from the following relation holding by Condition V:
\begin{eqnarray*}
d_H(\bar V_n, V_0) & = &  \sup_{v \in \bar V_n} d(v,  V_{0})  \leq   \sup \{ d(v, V_{0}): \theta_n(v) - \theta_{n0} \leq \bar \kappa_n \bar \sigma_n \} \\
& \leq  & \sup \{ d(v, V_{0}):   (c_n d(v, V_{0}))^{\rho_n}\wedge \delta \leq \bar \kappa_n \bar \sigma_n \} \\
& \leq & \sup \{ t:   (c_n t )^{\rho_n}\wedge \delta  \leq  \bar \kappa_n \bar \sigma_n \}
\leq  c_n^{-1}  (\bar \kappa_n \bar \sigma_n)^{1/\rho_n} \text{ for all }  0 \leq   \bar \kappa_n \bar \sigma_n \leq \delta.
\end{eqnarray*}
\qed

\subsection{Proof of Lemma \ref{lemma:parametric}}

Step 1. Verification of C.1.  This condition holds by inspection in view of
continuity of $v \mapsto p_{n}(v,\beta_n)$ and $v \mapsto p_{n}(v,\widehat \beta)$
implied by Condition P(ii) and by $\Omega_n$ and $\widehat \Omega_n$ being positive definite.

Step 2. Verification of C.2. Part (a). By Condition P, uniformly in $v \in \mathcal{V}$, for $\beta_n^\ast(v)$ denoting an intermediate value between $\widehat \beta_n$ and $\beta_n$,
\begin{eqnarray*}
 Z_n(v) &=& \frac{p_n(v, \beta^*_n(v))'}{ \| p_n(v, \beta_n)'\Omega_n^{1/2}\|} \sqrt{n}(\hat \beta_n - \beta_n)  \\
 & = &   \frac{p_n(v, \beta_n)'}{ \| p_n(v, \beta_n)'\Omega_n^{1/2}\|} \sqrt{n}(\hat \beta_n - \beta_n) +  \frac{L_n \sqrt{n} \| \hat \beta_n - \beta_n \|^2}{\min_{v \in \mathcal{V}}\| p_n(v,\beta_n)\|}  \frac{1}{\lambda_{\min}(\Omega_n^{1/2})} \\
 & = &   \frac{p_n(v, \beta_n)' \Omega_n^{1/2}}{ \| p_n(v, \beta_n)'\Omega_n^{1/2}\|} \mathcal{N}_k   + o_{\Pn}(\delta'_n)
 +  O_{\Pn} (n^{-1/2})\text{.} \\
 \end{eqnarray*}

 Part (b).  First note using the inequality
\begin{equation}\label{eq: eleimentary}
\left \| \frac{a}{\|a\|}- \frac{b}{\|b\|}\right\| \leq \left( 2\frac{\|a - b\|}{\|a\|}\right) \wedge \(2\frac{\|a - b\|}{\|b\|}\)\text{,}
\end{equation}
we have
\begin{eqnarray*}
M_n & = &  \left  \| \frac{p_n(v, \beta_n)' \Omega_n^{1/2}}{ \| p_n(v, \beta_n)'\Omega_n^{1/2}\|} - \frac{p_n(v, \hat \beta_n)' \hat \Omega_n^{1/2}}{ \| p_n(v, \hat \beta_n)'\hat \Omega_n^{1/2}\|} \right \|
   \leq  2\frac{\| p_n(v, \beta_n)' \Omega_n^{1/2} -  p_n(v, \hat \beta_n)' \hat \Omega_n^{1/2}\|}{\| p_n(v, \beta_n)'\Omega_n^{1/2}\|} \\
 & \leq &  2 \frac{\| p_n(v, \beta_n)' \Omega_n^{1/2} (I - \Omega_n^{-1/2} \hat \Omega_n^{1/2} ) \|}{\| p_n(v, \beta_n)'\Omega_n^{1/2}\|}
 + 2 \frac{L_n \|\widehat \beta_n - \beta_n \|}{\min_{v \in \mathcal{V}}\| p_n(v,\beta_n)\|}  \frac{\lambda_{\max}(\widehat \Omega_n^{1/2})}{\lambda_{\min}(\Omega_n^{1/2})} \\
 & \leq & 2\|\Omega_n^{-1/2}\| \| \widehat \Omega_n^{1/2}-\Omega_n^{1/2}\| + O_{\Pn}(n^{-1/2}) \leq O_{\Pn}(n^{-b}) + O_{\Pn}(n^{-1/2}) =  O_{\Pn}(n^{-b})\text{,}
 \end{eqnarray*}
for some $b>0$. We have that
$$
E_{\Pn} \( \sup_{v \in \V} |Z^*_n(v) - Z_n^{\star}(v)|  \mid \D_n \) \leq M_n E_{\Pn} \| \mathcal{N}_k\| \lesssim M_n \sqrt{k}.
$$
Hence for any $\delta_n'' \propto  n^{-b'}$ with a constant $0 < b' < b$, we have by Markov's Inequality that
$$
\Pn\(   \sup_{v \in \V} |Z^*_n(v) - Z_n^{\star}(v)| > \delta_n \ell_n  \mid \D_n \) \leq \frac{  O_{\Pn}(n^{-b})}{\delta''_n \ell_n} = o_{\Pn}(1/\ell_n).
$$

\noindent Now select $\delta_n = \delta'_n \vee \delta_n''$.

Step 3. Verification of C.3.  We shall employ Lemma \ref{lemma: key concentration}, which
has the required notation in place. We only need to compute an upper bound
on the covering numbers $N(\varepsilon,\textsf{V}, \rho)$ for the process $Z_n$. We have
that
\begin{eqnarray*}
 &\sigma(Z^*_n(v) - Z^*_n(\tilde v) )  \leq   \left\| \frac{p_n(v,\beta_n)'\Omega^{1/2}_n}{\|p_n(v,\beta_n)'\Omega^{1/2}_n\|}-   \frac{p_n(\tilde v,\beta_n)'\Omega^{1/2}_n}{\|p_n(\tilde v,\beta_n)'\Omega^{1/2}_n\|} \right\|  \\
&\leq   2 \left\| \frac{( p_n(v,\beta_n) -p_n(\tilde v,\beta_n) )'\Omega^{1/2}_n}{\|p_n(v,\beta_n)'\Omega^{1/2}_n\|}\right\|  \leq  2 \frac{L_n}{\min_{v \in \V} \|p_n(v,\beta_n)\|} \frac{\lambda_{\max}(\Omega_n^{1/2})}{\lambda_{\min}(\Omega_n^{1/2})}\| v - \tilde v\|  \leq  C L \| v - \tilde v\|,
 \end{eqnarray*}
where $C$ is some constant that does not depend on $n$, by the eigenvalues of $\Omega_n$ bounded
away from zero and from above.  Hence by the standard volumetric argument
$$
N(\varepsilon, \textsf{V},\rho) \leq \left(  \frac{1+  C L  \text{diam}(\textsf{V})}{\varepsilon}\right)^d, \ \  0 < \varepsilon< 1,
$$
where the diameter of $\textsf{V}$ is measured by the Euclidian metric.
Condition C.3 now follows by Lemma \ref{lemma: key concentration}, with
$
a_n(\textsf{V}) = (2\sqrt{\log L_n(\textsf{V})  }) \vee (1 +\sqrt{d}),  \ \ L_n(\textsf{V}) =  C'  \( 1 + C L \text{diam}(\textsf{V}) \)^d,
$
where $C'$ is some positive constant.

Step 4. Verification of C.4.  Under Condition P, we have that
$
1 \leq a_n(\textsf{V}) \leq \bar a_n:= a_n(\mathcal{V}) \lesssim 1,
$
so that C.4(a) follows since by Condition P
$$
\bar \sigma_n = \sqrt{ \max_{ v \in \V} \| p_n(v, \beta_n) \Omega_n^{1/2}\|/n } \leq   \sqrt{ \max_{ v \in \V} \| p_n(v, \beta_n)\| \|\Omega_n^{1/2}\|/n } \lesssim \sqrt{1/n}
$$
To verify C.4(b) note that uniformly in $v \in \V$,
\begin{eqnarray*}
& & \left | \frac{\|p_n(v,\beta_n)'\widehat \Omega_n^{1/2}\|}{\|p_n(v,\beta_n)'\Omega_n^{1/2}\|} -1 \right | \leq
  \left | \frac{\|p_n(v,\beta_n)'\widehat \Omega_n^{1/2}\|- \|p_n(v,\beta_n)'\Omega_n^{1/2}\|}{\|p_n(v,\beta_n)'\Omega_n^{1/2}\|}  \right |\\
& &  \leq  \frac{\|p_n(v,\beta_n)'(\widehat \Omega_n^{1/2} - \Omega_n^{1/2})\|}{\|p_n(v,\beta_n)'\Omega_n^{1/2}\|}   \leq
\frac{\|p_n(v,\beta_n)'\Omega^{1/2}(\Omega_n^{-1/2}\widehat \Omega_n^{1/2}-I)\|}{\|p_n(v,\beta_n)'\Omega_n^{1/2}\|}   \\
& & \leq  \| \Omega_n^{-1/2}\widehat \Omega_n^{1/2}-I\| \leq \|\Omega_n^{-1/2}\| \| \widehat \Omega_n^{1/2}-\Omega_n^{1/2}\| = o_{\P_n}(\delta_n),
\end{eqnarray*}
since $ \| \widehat \Omega_n^{1/2}- \Omega_n^{1/2}\| = O_{\P_n}(n^{-b})$ for some $b>0$,
and since  $\|\Omega_n^{-1/2}\|$ is uniformly bounded, both implied  by the assumptions.

Step 5. Verification of S. Then, since under Condition V with for large enough $n$,
$
r_n \lesssim c_n^{-1} (1/\sqrt{n})^{1/\rho_n}= o(1),
$
we have that $r_n \leq \varphi_n$ for large $n$ for some $\varphi_n = o(1)$. Condition S then follows by noting that for any positive $o(1)$ term,
$
\sup_{ \| v - \tilde v\| \leq o(1)} | Z_n(v) - Z_n(\tilde v)| \lesssim o(1) \| \mathcal{N}_k\| = o_{\Pn}(1).
$\qed

\section{Kernel-Type Estimation of Bounding Function from Conditional Moment Inequalities}\label{sec:est:local:CMIexample}

In this section we provide primitive conditions that justify application of kernel-type estimation methods covered in Section \ref{sec:est:local} for models characterized by conditional moment inequalities.

\begin{example}[\textbf{Bounding Function from Conditional Moment Inequalities}]
Suppose that  we have an i.i.d. sample
of $(X_i, Z_i), i=1,..., n$ defined on the probability space $(A, \mathcal{A}, \P)$,
where we take $\P$ fixed in this example.  Suppose that $\text{support}(Z_i)= \mathcal{Z} \subseteq [0,1]^d$,
and $$\theta_{n0} = \min_{v \in \V} \theta_{n}(v), $$
for $\theta_n(v) = E_{\P}[ m(X_i,\mu,j)|Z_i=z]$,  $v = (z,j)$, where
$\V \subseteq \mathcal{Z} \times \{1,...,J\}$ be the set of interest.  Suppose the first $J_0$  functions correspond
to equalities treated as inequalities, so that $m(X_i,\mu,j) = - m(X_i,\mu,j+1)$, for $j \in \mathcal{J}_0= \{1,3,...,J_0-1\}$.
Hence $\theta_n(z,j) = - \theta_n(z,j+1)$ for $j \in \mathcal{J}_0$, and
we only need to estimate functions $\theta_n(z,j)$ with the index $j \in \mathcal{J} := \mathcal{J}_0 \cup \{J_0+1, J_0 +2,...,J\}$.
Suppose we use the local polynomial approach to approximating and estimating $\theta_n(z,j)$.  For $u \equiv (u_1,\ldots,u_d)$, a $d$-dimensional vector of nonnegative integers, let $[u] = u_1 + \cdots + u_d$.
Let $A_p$ be the set of all $d$-dimensional vectors  $u$ such that $[u] \leq p$ for some integer $p \geq 0$ and let $|A_p|$ denote the number of elements in $A_p$.
For $z \in \mathbb{R}^d$ with $u \in A_p$, let $z^u = \prod_{i=1}^d z_i^{u_i}$.
Now define
\begin{align}\label{porder}
\mathbf{p}(b,z) = \sum_{u \in A_p} b_u z^u,
\end{align}
where $b = (b_u)_{u \in A_p}$ is a vector of dimension $|A_p|$.
For each $v = (z,j)$ and $Y_i(j):=m(X_i,\mu,j)$, define
\begin{align*}
S_n (b) := \sum_{i=1}^n \left[ Y_i(j) - \mathbf{p}\left(b, \frac{Z_i - z}{h_n} \right) \right]^2 K_{h_n} (Z_i - z),
\end{align*}
where $K_h (u) := K(u/h)$, $K(\cdot)$ is a $d$-dimensional kernel function and $h_n$ is a sequence of bandwidths.
The local polynomial estimator $\widehat \theta_{n}(v)$ of the regression function is the first element of
$\widehat b(z,j):= \arg\min_{b \in \Bbb{R}^{|A_p|}}S_n(b)$.

We impose the following conditions:
\begin{quote}
(i) for each $j \in \mathcal{J}$, $\theta(z,j)$ is $(p+1)$ times continuously differentiable with respect to $z \in \mathcal{Z}$,
where $\mathcal{Z}$ is convex. (ii) the probability density function $f$ of $Z_i$ is bounded above and bounded below from zero with continuous derivatives on $\mathcal{Z}$; (iii) for $Y_i(j):=m(X_i,\mu,j)$,  $Y_i :=( Y_i(j), j \in \mathcal{J})'$, and $U_i := Y_i - E_{\P}[Y_i|Z_i]$;
 and $U_i$ is a bounded random vector;
(iv) for each $j$, the conditional on $Z_i$ density of $U_i$  exists and is  uniformly bounded from above
and below,
or, more generally, condition R stated in Appendix \ref{sec:proofs-strong-local} holds;
(v) $K(\cdot)$ has support on $[-1,1]^d$, is twice continuously differentiable, $\int u K(u)du=0$, and $\int K(u)du=1$;
(vi) $h_n \rightarrow 0$, $n h_n^{d+|\mathcal{J}|+1} \rightarrow \infty$, $n h_n^{d +  2(p+1)} \rightarrow 0$,
 $\sqrt{ n^{-1} h^{-2d} } \rightarrow 0$ at polynomial rates in $n$.
\end{quote}

These conditions are imposed to verify Assumptions A1-A7 in \cite{Kong/Linton/Xia:10}. Details of verification are given in Supplementary Appendix \ref{sec:prac:lle}. Note that $p > |\mathcal{J}|/2 - 1$ is necessary to satisfy bandwidth conditions in (vi).
 The assumption that $U_i$ is bounded is technical and is made to simplify exposition and proofs.

Let $\delta_n = 1/\log n$.
 Then it follows
from Corollary 1 and Lemmas 8 and 10 of  \cite{Kong/Linton/Xia:10}
 that
\begin{align}\label{bahadur-lemma0}
\widehat \theta_n(z,j) - \theta(z,j) = \frac{1}{n h_n^d f(z)} \mathbf{e}_1'S_p^{-1} \sum_{i=1}^n (e_j'U_i) K_h( Z_i - z) \mathbf{u}_p \left( \frac{Z_i-z}{h_n} \right) + B_n(z,j) + R_n(z,j),
\end{align}
where $\mathbf{e}_1$ is an $|A_p| \times 1$ vector whose  first element is one and all others are zeros,
$S_p$ is an $|A_p| \times |A_p| $ matrix such that $S_p = \{ \int z^u (z^v)' du: u \in A_p, v \in A_p \}$,
$\mathbf{u}_p(z)$ is an  $|A_p| \times 1$ vector such that $\mathbf{u}_p(z) = \{  z^u : u \in A_p \}$,
$$
B_n(z,j) = O ( h_n^{p+1} ) \text{ and } R_n(z,j) = o_{\P} \left( \frac{ \delta_n }{ (nh_n^d)^{1/2} } \right),
$$
uniformly in  $(z,j) \in \mathcal{Z} \times \{1,...,J\}$. The exact form of $B_n(z,j)$ is given in equation (12) of \cite{Kong/Linton/Xia:10}. The result that $B_n(z,j) = O ( h_n^{p+1} )$ uniformly in $(z,j)$  follows
from the standard argument based on Taylor expansion given in \cite{Fan/Gijbels:96}, \cite{Kong/Linton/Xia:10}, or \cite{Masry:06}.
The condition that $n h_n^{d + 2(p+1)} \rightarrow 0$ at a polynomial rate in $n$ corresponds to the undersmoothing condition.

Now set $\mathbf{K}(z/h) \equiv \mathbf{e}_1'S_p^{-1} K_h( z ) \mathbf{u}_p( z/h )$, which is a kernel of order $(p+1)$ (See Section 3.2.2 of \cite{Fan/Gijbels:96}).
Let $$
g_v(U, Z):=
\frac{e_j' U }{ (h_n^d )^{1/2} f(z) } \mathbf{K} \left( \frac{Z - z}{h_n} \right).
$$
Then it follows from Lemma 15 in Appendix J that uniformly in $v \in \mathcal{V}$
\begin{align*}
(nh_n^d)^{1/2}(\widehat \theta_n(z,j) - \theta_n(z,j)) =  \mathbb{G}_n (g_v)  + o_{\mathrm{P}} (\delta_n).
\end{align*}
Application of Theorems \ref{theorem:strong-approx-kernel} and \ref{theorem:simulate-kernel} in Appendix \ref{sec:proofs-strong-local}, based on the Rio-Massart coupling, verifies condition NK.1 (a) and NK.1 (b). Finally,
Condition NK.2 holds if we take $\hat{f}_n(z)$ to be the standard kernel density estimator with kernel $K$ and let $e_j'\hat{U}_i = Y_i(j) - \widehat \theta_n(z,j)$. \qed \end{example}

\section{Strong Approximation for Kernel-Type Methods}\label{sec:proofs-strong-local}

To establish our strong approximation for kernel-type estimators we use Theorem 1.1 in \cite{Rio:94}, stated below, which builds on the earlier results of \cite{Massart:89}. After the statement of the Rio-Massart coupling we provide our strong approximation result, which generalizes the previous results to kernel-type estimators for regression models with multivariate outcomes. We then provide a novel multiplier method to approximate the distribution of such estimators. The proofs for these results are provided in Appendix \ref{sec:est:local:proofs}.

\subsection{Rio-Massart Coupling}\label{sec:proofs-RM-coupling}
Consider a sufficiently rich probability space $(A, \mathcal{A}, \P)$.  Indeed, we can always enrich an original
 space by taking the product with  $[0,1]$ equipped with the uniform measure
over Borel sets of $[0,1]$.  Consider a suitably measurable, namely image admissible Suslin, function class $\mathcal{F}$ containing functions
$f: I^d \to I$ for $I = (-1,1)$. A function class $\mathcal{F}$ is of uniformly bounded variation of
at most $K(\mathcal{F})$ if
$$
TV(\mathcal{F}) := \sup_{f \in \mathcal{F}} \sup_{ g \in \mathcal{D}_c(I^d) } \(  \int_{\Bbb{R}^d}  f(x) \text{div} g(x)/\|g\|_{\infty} dx \) \leq K(\mathcal{F}),
$$
where $\mathcal{D}_c(I^d)$ is the space of $C^{\infty}$ functions taking values in $\mathbb{R}^d$ with compact support included in $I^d$, and where div$g(x)$ is the divergence of $g(x)$.
Suppose the function class $\mathcal{F}$  obeys the following uniform $L_1$ covering condition
$$
\sup_{Q} N(\epsilon, \mathcal{F}, L_1(Q)) \leq C(\mathcal{F}) \epsilon^{d(\mathcal{F})},
$$
where $\sup$ is taken over probability measures with finite support, and  $N(\epsilon, \mathcal{F}, L_1(Q))$ is the covering number under  the $L_1(Q)$ norm on $\mathcal{F}$.  Let $X_1,...,X_n$ be an i.i.d. sample
on the probability space $(A, \mathcal{A},P)$
from density $f_X$ with support on $I^d$, bounded from above and away from zero. Let $P_X$ be
the measure induced by $f_X$. Then there exists a $P_X$-Brownian Bridge $\mathbb{B}_n$
with a.s. continuous paths with respect to the $L^1(P_X)$ metric such that for any positive $t \geq C\log{n}$,
$$
\P\(  \sqrt{n} \sup_{f \in \mathcal{F}} | \mathbb{G}_n(f) - \mathbb{B}_n(f)| \geq C \sqrt{ t n^{\frac{d-1}{d}} K(\mathcal{F})} + C t \sqrt{\log n}  \) \leq e^{-t},
$$
where constant $C$ depends only on $d$, $C(\mathcal{F})$, and $d(\mathcal{F})$.

\subsection{Strong Approximation for Kernel-Type Estimators}\label{sec:proofs-strong-kernel}

We shall use the following technical condition in what follows.

\noindent\textbf{Condition R.}  \textit{The random $(J + d)$-vector $(U_i, Z_i)$ obeys $U_i = (U_{i,1},...,U_{i,J})=\varphi_n(X_{i,1})$, and $Z_i = \tilde \varphi_n(X_{2i})$,  where $X_i = (X_{1i}', X_{2i}')' $ is a $(d_1+d)$-vector with $1 \leq d_1 \leq J$, which has density bounded away from zero by $\underline{f}$ and above by $\bar f$ on the support $I^{d_1+d}$, where $\varphi_n: I^{d_1} \mapsto I^{J}$ and $\sum_{l=1}^{d_1}\int_{I^{d_1}} | D_{x_{1l}} \varphi_n(x_{1}) | dx_{1} \leq B,$  where
$D_{x_{1l}} \varphi_n(x_1)$ denotes the weak derivative with respect to the $l$-th component of $x_1$,  and
$\tilde \varphi_n: I^{d} \mapsto I^d$ is continuously differentiable such that $\max_{k \leq d}\sup_{x_2} |\partial \tilde \varphi_n(x_{2})/ \partial x_{2k} | \leq B$ and $|\det \partial \tilde\varphi_n (x_2)/\partial x_2|\geq c>0$, where $\partial \tilde \varphi_n(x_{2})/ \partial x_{2k}$ denotes the partial derivative with respect to the $k$-th component of $x_2$.
The constants $J,B, \underline{f}$, $\bar f$, $c$ and vector dimensions do not depend on $n$. ($|\cdot|$ denotes $\ell_1$ norm.)}

A simple example of $(U_i,Z_i)$ satisfying this condition is given in Corollary \ref{corollary:strong:kernel} below.

\begin{theorem}[Strong Approximation for Kernel-Type Estimators] \label{theorem:strong-approx-kernel}Consider a suitably enriched probability space
$(A, \mathcal{A}, \mathrm{P}_n)$ for each $n$. Let $n \rightarrow \infty$.
Assume the following conditions hold for each $n$: (a) There are $n$ i.i.d. $(J+d)$-dimensional random vectors of the form
$(U_i, Z_i)$ that obey Condition R, and  the density $f_n$ of $Z$ is bounded from above and away from zero on the set $\mathcal{Z}$, uniformly in $n$.  (b) Let $v = (z,j)$ and $\mathcal{V} = \mathcal{Z} \times \{1,...,J\}$, where $\mathcal{Z} \subseteq I^d$. The kernel estimator $v \mapsto \widehat \theta_{n}(v)$ of some target function $v \mapsto
\theta_n(v)$ has an asymptotic linear expansion uniformly in $ v \in \mathcal{V}$
\begin{align*}
(nh_n^d)^{1/2}(\widehat \theta_n(v) - \theta_n(v)) =  \mathbb{G}_n (g_v)  + o_{\mathrm{P}_n} (\delta_n),  \ \ g_v(U_i, Z_i):=
\frac{1}{ (h_n^d )^{1/2} f_n(z) } e_j' U_{i} \mathbf{K} \left( \frac{z - Z_i}{h_n} \right),
\end{align*}
where $e_j' U_{i} \equiv U_{ij}$,  $\mathbf{K}$ is twice continuously differentiable product kernel function with support
on $I^d$, $\int \mathbf{K}(u)du=1$, and $h_{n}$ is a
sequence of bandwidths that converges to zero, (c) for a given $\delta_n \searrow 0$, the bandwidth sequence obeys:
$
\left(n^{-1/(d+d_1)} h_{n}^{-1} \log n\right) ^{1/2} + (nh_n^d)^{-1/2}\log^{3/2} n = o(\delta_n).
$
Then there exists a sequence of centered $\mathrm{P}_n$-Gaussian Bridges $\mathbb{B}_n$  such that
\begin{align*}
 \sup_{v\in \mathcal{V}} |(nh_n^d)^{1/2}(\widehat \theta_n(v) - \theta_n(v))  - \mathbb{B}_n(g_v)| =o_{\P_n}
  (\delta_n).
\end{align*}
Moreover, the paths of $v \mapsto \mathbb{B}_n(g_v)$ can be chosen to be continuous a.s.
\end{theorem}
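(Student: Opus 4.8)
The plan is to reduce the assertion, via the asymptotic linear expansion in hypothesis (b), to the construction of a single $\mathrm{P}_n$-Gaussian bridge $\mathbb{B}_n$ indexed by $\{g_v : v\in\mathcal{V}\}$ for which $\sup_{v\in\mathcal{V}}|\mathbb{G}_n(g_v)-\mathbb{B}_n(g_v)|=o_{\mathrm{P}_n}(\delta_n)$; combining this with (b) and the triangle inequality yields the stated bound, and path-continuity of $v\mapsto\mathbb{B}_n(g_v)$ will come for free from the construction. To build $\mathbb{B}_n$ I would invoke the Rio--Massart coupling quoted above, applied not on the space of $(U,Z)$ but on the space of the underlying vector $X=(X_1',X_2')'$ furnished by Condition R, whose law has a density on $I^{d_1+d}$ bounded above and away from zero -- precisely the ambient setting Rio--Massart requires, with ambient dimension $D:=d_1+d$ (matching the exponent $n^{-1/(d+d_1)}$ in hypothesis (c)).

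Concretely, I would push $g_v$ forward to $\tilde g_v(x):=g_v(\varphi_n(x_1),\tilde\varphi_n(x_2))$, so that $\mathbb{G}_n(g_v)$ equals the $X$-empirical process at $\tilde g_v$, and rescale by $M_n\asymp h_n^{-d/2}$ chosen so that $\mathcal{F}_n:=\{\tilde g_v/M_n : v\in\mathcal{V}\}$ consists of functions $I^{d_1+d}\to(-1,1)$ -- here $M_n\asymp h_n^{-d/2}$ since $|e_j'U|$ is bounded by Condition R, $f_n\geq\underline{f}>0$, and $\mathbf{K}$ is bounded. Rio--Massart then produces, on the (enriched) space carrying the data, a $P_X$-Brownian bridge $\mathbb{B}_n^X$ over $\mathcal{F}_n$, and I would set $\mathbb{B}_n(g_v):=M_n\,\mathbb{B}_n^X(\tilde g_v/M_n)$. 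A one-line covariance computation gives $\mathrm{Cov}(\mathbb{B}_n(g_v),\mathbb{B}_n(g_w))=\mathrm{Cov}_{\mathrm{P}_n}(g_v(U,Z),g_w(U,Z))$, so $\mathbb{B}_n$ is the desired $\mathrm{P}_n$-Gaussian bridge; and since $v\mapsto\tilde g_v/M_n$ is continuous into $L^1(P_X)$ (dominated convergence, using continuity of $\mathbf{K}$ and discreteness of the $j$-coordinate), the $L^1$-continuity of the paths of $\mathbb{B}_n^X$ transfers to continuity of $v\mapsto\mathbb{B}_n(g_v)$.

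To apply Rio--Massart I must verify its hypotheses for $\mathcal{F}_n$: image-admissible Suslin measurability (routine, as $v$ ranges over a Polish space and $(v,x)\mapsto\tilde g_v(x)$ is jointly measurable); a uniform $L^1$-covering bound $\sup_Q N(\varepsilon,\mathcal{F}_n,L^1(Q))\lesssim\varepsilon^{-\mathsf{d}}$ with $\mathsf d$ and the constant uniform in $n$ (from smoothness of the fixed kernel $\mathbf{K}$, finiteness of $\{1,\dots,J\}$, and the fact that the remaining location index $z$ is $d$-dimensional, placing $\mathcal{F}_n$ inside a controlled class of shifted smooth bumps); and -- the crux -- a uniform total-variation bound $TV(\mathcal{F}_n)\leq K(\mathcal{F}_n)$. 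For the latter I would integrate $\sum_k|\partial_{x_{2k}}(\tilde g_v/M_n)|$ and $\sum_l|\partial_{x_{1l}}(\tilde g_v/M_n)|$ over $I^D$: the $x_2$-derivatives bring down $h_n^{-1}$ from $\mathbf{K}((z-\tilde\varphi_n(x_2))/h_n)$ and a bounded factor from $\partial\tilde\varphi_n$, and the change of variables $w=\tilde\varphi_n(x_2)$ (legitimate since $|\det\partial\tilde\varphi_n|\geq c$) reduces the integral to $\int h_n^{-1}|\nabla\mathbf{K}((z-w)/h_n)|\,dw\asymp h_n^{d-1}$, while the $x_1$-derivatives are controlled by $\sum_l\int_{I^{d_1}}|D_{x_{1l}}\varphi_n|\,dx_1\leq B$; with $M_n^{-1}\asymp h_n^{d/2}$ this gives $K(\mathcal{F}_n)\lesssim h_n^{d-1}$. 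Feeding $K(\mathcal{F}_n)\lesssim h_n^{d-1}$ and $t=C\log n$ ($C$ large, so $e^{-t}=n^{-C}\to0$) into Rio--Massart and multiplying the $\sqrt n$-scaled bound by $M_n/\sqrt n$ produces, on an event of probability $\geq1-n^{-C}$, $\sup_v|\mathbb{G}_n(g_v)-\mathbb{B}_n(g_v)|\lesssim(n^{-1/D}h_n^{-1}\log n)^{1/2}+(nh_n^d)^{-1/2}\log^{3/2}n$, which is $o(\delta_n)$ by hypothesis (c), hence $o_{\mathrm{P}_n}(\delta_n)$.

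I expect the sharp total-variation bound to be the main obstacle: it is exactly where Condition R is used in full -- boundedness of $e_j'U$, the $L^1$-bounded weak derivative of $\varphi_n$, and the lower bound on the Jacobian determinant of $\tilde\varphi_n$ -- and getting the $h_n$-power right, namely $K(\mathcal{F}_n)\asymp h_n^{d-1}$ rather than merely $O(1)$, is what makes the coupling error match hypothesis (c) term by term. A secondary technical point is checking that the $L^1$-entropy constants for $\mathcal{F}_n$ do not deteriorate with $n$ (polynomial-in-$n$ growth would only cost extra $\log n$ factors, which (c) has room to absorb).
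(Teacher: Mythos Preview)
Your proposal is correct and follows essentially the same route as the paper: push the class to the $X=(X_1,X_2)$-space furnished by Condition R, normalize so that the functions land in $(-1,1)$, verify the uniform $L^1$-entropy and the total-variation bound $K(\mathcal{F}_n)\lesssim h_n^{d-1}$ (this is indeed the crux, and your derivative-by-derivative accounting matches the paper's), apply Rio--Massart in ambient dimension $d+d_1$ with $t=C\log n$, and rescale back. The only cosmetic difference is that the paper normalizes by $h_n^{d/2}f_n(z)$ (which strips the $1/f_n(z)$ factor entirely, yielding the simpler class $\{e_j'U_i\,\mathbf{K}((z-Z_i)/h_n)\}$) rather than by a single constant $M_n\asymp h_n^{-d/2}$; since $f_n$ is bounded above and away from zero this changes nothing. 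For the covering-number step the paper cites Lemma~4.1 of Rio (1994) for the kernel-location class and Lemma~A.1 of Ghosal, Sen, and van der Vaart (2000) for the product with the finite $j$-index, giving constants uniform in $n$---worth noting if you want to tighten your sketch there.
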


\begin{remark}
Conditions (a) and (b) cover standard conditions in the literature, imposing
a uniform Bahadur expansion for kernel-type estimators,
which have been shown in
\cite{Masry:06} and \cite{Kong/Linton/Xia:10}
for kernel mean regression estimators and also local polynomial
estimators under fairly general conditions. Implicit in the expansion above is that
the asymptotic bias is negligible, which can be achieved by
undersmoothing, i.e. choosing the bandwidth to be smaller than
the rate-optimal bandwidth. \end{remark}

\begin{corollary}[\textbf{A Simple Leading Case for Moment Inequalities
Application}]\label{corollary:strong:kernel}
Suppose that $(U_i, Z_i)$ has bounded support, which we then take to be a subset of  $I^{J+d}$ without loss of generality. Suppose that
$U_i = (U_{ij}, j=1,...,J)$ where for the first $J_0/2$ pairs of terms, we have
$U_{ij} = - U_{i{j+1}}, j =1, 3, ...,J_0-1$.  Let $\mathcal{J} =
\{ 1, 3, ...,J_0-1, J_0+1, J_0+ 2,... \}$.
Suppose that $( U_{ij},Z_i, j \in \mathcal{J})$
have joint density bounded from above and below by some constants $\bar f$ and $\underline f$.
Suppose these constants and $d$, $J$, and $d_1 = | \mathcal{J}|$ do not depend on $n$. Then
Condition R holds, and the conclusions of Theorem \ref{theorem:strong-approx-kernel} then hold under
the additional conditions imposed in the theorem.
\end{corollary}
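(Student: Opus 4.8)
The plan is to verify Condition~R explicitly for the stated configuration; this is the only genuinely new content, because once Condition~R is established the remaining hypotheses of Theorem~\ref{theorem:strong-approx-kernel} (the uniform Bahadur expansion~(b) and the bandwidth restriction~(c)) are precisely the ``additional conditions imposed in the theorem,'' and the asserted coupling then follows verbatim from that theorem.

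First I would choose the decomposition $(U_i,Z_i)=(\varphi_n(X_{1i}),\tilde\varphi_n(X_{2i}))$ with $X_{1i}:=(U_{ij})_{j\in\mathcal{J}}$, a $d_1$-vector where $d_1=|\mathcal{J}|$, and $X_{2i}:=Z_i$, a $d$-vector, so that $X_i=(X_{1i}',X_{2i}')'$ is $(d_1+d)$-dimensional. For $\tilde\varphi_n$ I would take the identity on $I^{d}$, so $Z_i=X_{2i}$; then $\partial\tilde\varphi_n(x_2)/\partial x_{2k}=e_k$ gives $\max_{k}\sup_{x_2}|\partial\tilde\varphi_n(x_2)/\partial x_{2k}|=1$ and $|\det\partial\tilde\varphi_n(x_2)/\partial x_2|=1$, so these requirements hold with $B\geq 1$ and $c=1$. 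For $\varphi_n$ I would take the fixed linear ``duplication-with-sign'' map that reconstructs the full residual vector from its $\mathcal{J}$-subvector: the $j$-th output coordinate equals the input coordinate labelled $j$ when $j\in\mathcal{J}$, and equals minus the input coordinate labelled $j-1$ when $j$ is even with $j\leq J_0$ (using $U_{ij}=-U_{i,j-1}$ in that case). Since $|\mathcal{J}|=J_0/2+(J-J_0)=J-J_0/2$ while the even indices $\leq J_0$ number $J_0/2$, the output has exactly $J$ coordinates and $1\leq d_1\leq J$; each output coordinate is $\pm$ one input coordinate, so $\varphi_n$ maps $I^{d_1}$ into $I^{J}$, and since $\varphi_n$ does not depend on $n$, neither do the constants it generates.

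Next I would check the bounded-variation bound on $\varphi_n$. Because $\varphi_n$ is linear, $D_{x_{1l}}\varphi_n(x_1)$ is the constant vector of coefficients of the $l$-th input variable; an input variable $U_{ij}$ with $j$ odd and $j\leq J_0-1$ occurs (with coefficients $+1$ and $-1$) in exactly two output coordinates, and any other input variable in exactly one, so $|D_{x_{1l}}\varphi_n(x_1)|\leq 2$ in the $\ell_1$ norm for every $l$. Hence $\sum_{l=1}^{d_1}\int_{I^{d_1}}|D_{x_{1l}}\varphi_n(x_1)|\,dx_1\leq 2d_1\,2^{d_1}$, and I would set $B:=\max\{1,\,2d_1 2^{d_1}\}$, independent of $n$.

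Finally I would handle the density requirement. Since $(U_i,Z_i)$ has bounded support, taken without loss of generality inside $I^{J+d}$, the support $S$ of $X_i$ is a bounded subset of $I^{d_1+d}$, and by hypothesis the joint density of $(U_{ij},Z_i)_{j\in\mathcal{J}}$ is bounded above by $\bar f$ and below on $S$ by $\underline f>0$. After a fixed affine rescaling of $X_i$ — which does not depend on $n$ because $S$ is fixed, and which changes $B,c,\bar f,\underline f$ only by $n$-free factors — we may take the support to be $I^{d_1+d}$ with the density still squeezed between two positive constants, so Condition~R holds with all constants and dimensions independent of $n$; integrating out $X_{1i}$ then shows the marginal density of $Z=X_{2i}$ is bounded above and away from zero uniformly in $n$, supplying the remaining part of hypothesis~(a) of Theorem~\ref{theorem:strong-approx-kernel}. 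With Condition~R in hand, Theorem~\ref{theorem:strong-approx-kernel} applies directly and yields the stated coupling $\sup_{v\in\mathcal V}|(nh_n^d)^{1/2}(\widehat\theta_n(v)-\theta_n(v))-\mathbb{B}_n(g_v)|=o_{\Pn}(\delta_n)$ with a.s.\ continuous paths. I do not anticipate a real obstacle: the only points needing care are the dimension bookkeeping $J=d_1+J_0/2$, the observation that $\varphi_n$ may be taken linear and $n$-free so that its total variation is trivially bounded, and the harmless normalization of the support to $I^{d_1+d}$.
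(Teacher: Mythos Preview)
Your proposal is correct and is essentially the verification the paper has in mind; indeed the paper states this corollary without proof, treating it as an immediate check of Condition~R, and your choice of $X_{1i}=(U_{ij})_{j\in\mathcal J}$, $X_{2i}=Z_i$, $\tilde\varphi_n=\mathrm{id}$, and $\varphi_n$ the linear sign-duplication map is exactly the intended construction.

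One small caveat: your affine-rescaling step to force the support of $X_i$ to be exactly $I^{d_1+d}$ only works if the original support is itself a box (or more generally a parallelepiped). The natural reading of the hypothesis ``joint density bounded from above and below by $\bar f$ and $\underline f$'' in the corollary is that the lower bound $\underline f>0$ holds on all of $I^{d_1+d}$, so the support already \emph{is} the full cube and no rescaling is needed; under that reading your argument goes through cleanly and the rescaling step is simply unnecessary. The rest of your bookkeeping (the dimension count $J=d_1+J_0/2$, the total-variation bound $\sum_l\int|D_{x_{1l}}\varphi_n|\leq 2d_1\,2^{d_1}$, and deducing the marginal bounds on $f_n$ by integrating out $X_{1i}$) is correct.
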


Note that Condition R allows for much more general error terms and regressors. For example,
it allows error terms $U_i$ not to have a density at all, and $Z_i$ only to have density
bounded from above.

The next theorem shows that the Brownian bridge $\mathbb{B}_n(g_v)$ can be approximately simulated
via the Gaussian multiplier method.  That is, consider the following symmetrized process
 \begin{equation}\label{useful}
\mathbb{G}^o_n(g_v)=  \frac{1}{\sqrt{n}} \sum_{i=1}^n \xi_i g_v(U_i,Z_i)  = \mathbb{G}_n( \xi g_v),
 \end{equation}
where $\xi_1,..., \xi_n$  are i.i.d $N(0,1)$, independent of the data $\mathcal{D}_n$ and of $\{(U_i,Z_i)\}_{i=1}^n $, which
are i.i.d. copies of $(U,Z).$  Conditional on the data this is a Gaussian process
with a covariance function which is a consistent estimate of the covariance function of $v \mapsto
\mathbb{B}_n(g_v)$. The theorem below shows that the uniform distance between a copy of $\mathbb{B}_n(g_v)$ and $\mathbb{G}^o_n(g_v)$
is small with an explicit probability bound.  Note that if the function class $\{g_v, v \in \V\}$ were
Donsker, then such a result would
follow from the multiplier functional central limit theorem.  In our case,
this function class is not Donsker, so we require a different argument.

For the following theorem, consider now a sufficiently rich probability space $(A, \mathcal{A}, \Pn)$.   Note that we can always enrich the original space if needed by taking the product with $[0,1]$ equipped with the uniform measure over Borel sets of $[0,1]$.

\begin{theorem}[\textbf{Multiplier Method for Kernels}]\label{theorem:simulate-kernel}
Let $v = (z,j)$ and $\mathcal{V} \subseteq \mathcal{Z} \times \{1,...,J\}$, where $\mathcal{Z}$ is a compact
convex set that does not depend on $n$. The estimator $v \mapsto \widehat \theta_n(v)$ and the function $v \mapsto \theta_n(v)$ are continuous in $v$.  In what follows, let $e_j$ denote the $J$- vector with $j$th element one and all other elements zero. Suppose
 that $(U, Z)$ is a $(J+d)$-dimensional random vector, where
 $U$ is a generalized residual
 such that $E[U|Z]= 0$ a.s. and $Z$ is a covariate;
 the density $f_n$ of $Z$ is continuous and bounded away from zero and from above on $\mathcal{Z}$, uniformly in $n$;
  and the support of $U$ is bounded uniformly in $n$. $\mathbf{K}$ is a twice continuously differentiable, possibly higher-order, product kernel function with support on $[-1,1]^{d}$, $\int \mathbf{K}(u)du=1$;  and $h_{n}$ is a sequence of bandwidths such that $h_n \rightarrow 0$ and $n h^d \to \infty$ such that $\sqrt{ n^{-1} h^{-2d} } = o( (\delta_n/[\ell_n \sqrt{ \log n}])^{d+1} )$.
  Let $\{(U_i,Z_i)\}_{i=1}^n $ be i.i.d. copies of $(U,Z),$ where $\{Z_i\}_{i=1}^n$ are a part of the data $\D_n$ and $\{ U_i \}$ are a measurable transformation of data. Let $\mathbb{B}_n$ denote the $\P_n$-Brownian bridge, and
$$
g_v(U, Z):=
\frac{e_j' U }{ (h_n^d )^{1/2} f_n(z) } \mathbf{K} \left( \frac{z - Z}{h_n} \right).
$$

Then there exists an independent from data $\D_n$, identically distributed copy $v \mapsto \bar{\mathbb{B}}_n (g_v)$ of the process $v \mapsto \mathbb{B}_n (g_v)$, such that for some $o(\delta_n)$ and $o (1/ \ell_n)$ sequences,
\begin{equation}\label{eq: kernel multiplier approximation}
\Pn\(  \sup_{v \in \V} \left| \Gn^o( g_v) - \barBn(g_v) \right| > o( \delta_n) \Big| \D_n\)=o_{\Pn}(1/\ell_n).
\end{equation}
\end{theorem}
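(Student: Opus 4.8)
The plan is to exploit the fact that, conditionally on the data $\D_n$, the multiplier process $v\mapsto\mathbb{G}_n^o(g_v)=n^{-1/2}\sum_{i=1}^n\xi_i g_v(U_i,Z_i)$ is a centered Gaussian process with covariance kernel $\widehat C_n(v,v'):=\mathbb{E}_n[g_v g_{v'}]$, whereas the target $v\mapsto\mathbb{B}_n(g_v)$ is a centered Gaussian process with covariance kernel $C_n(v,v'):=E_{\Pn}[g_v g_{v'}]$; the bridge's projection term drops out because $E[U\mid Z]=0$ forces $E_{\Pn}[g_v]=0$. Thus I would reduce \eqref{eq: kernel multiplier approximation} to two ingredients: (i) a uniform bound $\sup_{v,v'\in\V}|\widehat C_n(v,v')-C_n(v,v')|\le\varepsilon_n$ holding on a $\Pn$-event of probability $1-o(1/\ell_n)$, and (ii) a Gaussian coupling lemma asserting that two centered Gaussian processes indexed by $\V$, whose covariance kernels are uniformly within $\varepsilon_n$ and which share the intrinsic entropy bound of the kernel class, can be realized on a common space with uniform distance controlled by a fractional power of $\varepsilon_n$ plus a path-oscillation remainder that is made negligible by a suitable discretization scale.

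For (i), the class $\mathcal{G}_n:=\{g_v g_{v'}:v,v'\in\V\}$ is a VC-type class: a finite union over coordinate pairs $(j,j')$ of products of translated, rescaled copies of the Lipschitz compactly supported kernel $\mathbf{K}$ with bounded coordinates of $U$ and the bounded-below weight $1/f_n$; it has envelope of order $h_n^{-d}$ and worst-case variance of order $h_n^{-d}$. An exponential maximal inequality for VC-type classes (Talagrand's concentration inequality, plus a van der Vaart--Wellner entropy bound for the expectation) then yields $\sup_{v,v'}|\widehat C_n(v,v')-C_n(v,v')|\lesssim_{\Pn}\varepsilon_n$ with $\varepsilon_n$ of order $\sqrt{n^{-1}h_n^{-2d}}$ up to logarithmic factors, on an event of probability $1-o(1/\ell_n)$, the exponential tail furnishing the $1/\ell_n$. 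The same computation records the intrinsic modulus: since $\mathbf{K}$ is Lipschitz with support in $[-1,1]^d$, $E_{\Pn}[(g_v-g_{v'})^2]^{1/2}\lesssim\|z-z'\|/h_n$ for $j=j'$, so the intrinsic $L^2$-semimetric on $\V$ is comparable to $h_n^{-1}$ times Euclidean distance, and $\V$ has $L^2$-covering numbers $\lesssim(1+h_n^{-1}\text{diam}(\V)/\epsilon)^d$; on the event above the same bounds hold for $\widehat C_n$.

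For (ii), I would fix a Euclidean $\mu_n$-net of $\V$ of cardinality $M_n\lesssim\mu_n^{-d}$, couple the two Gaussian vectors on the net --- whose covariance matrices differ by at most $\varepsilon_n$ entrywise, and which inherit a near-finitely-dependent structure from the $h_n$-decorrelation of the kernel --- through a shared white noise, bounding $\|\widehat\Sigma_n^{1/2}-\Sigma_n^{1/2}\|$ by a power of $\varepsilon_n$ and hence the net sup-distance by $\varepsilon_n$-powers times $\mathrm{polylog}(n)$; then bound the oscillation of each process over Euclidean $\mu_n$-balls, i.e. intrinsic $(\mu_n/h_n)$-balls, by Dudley's entropy integral and the Borell--TIS inequality by $\lesssim(\mu_n/h_n)\sqrt{\log n}$ with the required probability (treating the very small intrinsic scales inside a single $h_n$-ball separately if needed). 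Balancing the net-coupling and oscillation terms over $\mu_n$ produces an overall bound of order a fixed fractional power of $\varepsilon_n$ times slowly varying factors; the hypothesis $\sqrt{n^{-1}h_n^{-2d}}=o\big((\delta_n/[\ell_n\sqrt{\log n}])^{d+1}\big)$ is exactly what renders this $o(\delta_n)$ with conditional probability $1-o(1/\ell_n)$, which is \eqref{eq: kernel multiplier approximation}. Independence of $\bar{\mathbb{B}}_n$ from $\D_n$ is arranged by carrying out the coupling on the product of the data space with an auxiliary $[0,1]$-space supporting the white noise (permitted by the assumed enrichment), and a.s. continuity of $v\mapsto\bar{\mathbb{B}}_n(g_v)$ follows from finiteness of the intrinsic entropy integral.

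The main obstacle is that $\{g_v\}$ is genuinely non-Donsker: the $h_n^{-d}$ blow-up means that neither the covariance concentration in (i) nor the Gaussian path oscillation in (ii) is $o(1)$ by itself, so the discretization scale $\mu_n$ must be tuned to trade them off in a way that survives the $(d+1)$-fold power loss, all while scrupulously tracking logarithmic factors and the bookkeeping between conditional-on-$\D_n$ and unconditional statements (using, e.g., the ``closeness in conditional probability implies closeness of conditional quantiles'' principle already employed in the paper). Getting the coupling rate on the net sharp enough --- which hinges on the near-band structure of the kernel covariance matrices rather than on any uniform lower eigenvalue bound --- is the delicate quantitative step.
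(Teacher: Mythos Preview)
Your architecture is the paper's: reduce to a conditional Gaussian comparison, discretize $\V$ to a mesh, couple the two Gaussian vectors on the mesh through a shared standard normal, and control the off-mesh oscillation by Dudley's integral using the Lipschitz/entropy structure $\sigma(Z^*_n(v)-Z^*_n(v'))\lesssim h_n^{-1}\|z-z'\|$. The paper carries this out in three steps that mirror your (i)--(ii) and the extension step exactly.

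Where you diverge is in the one step you yourself flag as delicate --- passing from closeness of the empirical and population covariance operators to closeness of their square roots on the mesh --- and here your proposal is underspecified in a way that matters for the rate. You propose to control the \emph{entrywise} sup $\sup_{v,v'}|\widehat C_n-C_n|$ via Talagrand and then invoke the ``near-band'' structure to reach the operator norm. The paper instead bounds $\|\widehat\Omega_n-\Omega_n\|$ directly by Rudelson's law of large numbers for matrices, using that on the chosen mesh each vector $p_i=(g_v(U_i,Z_i))_{v\in\V_0}$ is \emph{sparse}: only $\lesssim(\lambda_n)^d$ coordinates are nonzero (compact kernel support), so $\|p_i\|\lesssim(\lambda_n/h_n)^{d/2}$ rather than the naive $K_n^{1/2}h_n^{-d/2}$. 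This yields $E\|\widehat\Omega_n-\Omega_n\|\lesssim\sqrt{\log n/(n(h_n/\lambda_n)^d)}$, which is sharper by a factor $\lambda_n^{d/2}$ than what one gets from the band/row-sum route you sketch. The square-root step is then handled by the inequality $\|\widehat\Omega_n^{1/2}-\Omega_n^{1/2}\|^2\le K_n\|\widehat\Omega_n-\Omega_n\|^2$ of \cite{Chetverikov:11}, which needs no lower eigenvalue bound --- exactly the obstruction you anticipated. Finally, the paper extends the mesh process to all of $\V$ by the embedding lemma of \cite{Belloni/Chernozhukov/Fernandez-Val:10}, which is the precise device that manufactures an \emph{independent-of-data} copy $\bar{\mathbb{B}}_n$ with a.s.\ continuous paths. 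So your plan is correct in outline and coincides with the paper's, but the two named tools (Rudelson's operator LLN plus the Chetverikov square-root inequality) are what close the gap you left open.
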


\section{Proofs for Nonparametric Estimation of $\theta(v)$ via Kernel-Type methods}\label{sec:est:local:proofs}

\subsection{Proof of Lemma \ref{lemma:kernels}} There are six steps, with the first four
verifying conditions C.1-C.4, and the last two providing auxiliary calculations.
Let $U_{ij} \equiv e_j'U_i$.

Step 1. Verification of C.1.  Condition C.1 holds by inspection, in view of continuity of
$v \mapsto \hat \theta_n(v)$, $v \mapsto \theta_n(v)$, $v \mapsto \sigma_n(v)$,
 and $v \mapsto s_n(v)$.

Step 2. Verification of C.3.  Note that
$$
\frac{g_v(U_i,Z_i)}{\sigma_n(v) \sqrt{n h_n^d}} = \frac{ \KU}{ \normKU }
$$
We shall employ Lemma \ref{lemma: key concentration}, which
has the required notation in place. We only need to compute an upper bound
on the covering numbers $N(\varepsilon,\textsf{V}, \rho)$ of $\textsf{V}$ under the metric
$\rho(v, \bar v) = \sigma(Z^*_n(v) -Z^*_n(\bar v))$. We have
that for $v = (z,j)$ and $\bar v  = (\bar z, j)$
\begin{eqnarray*}
& & \sigma(Z^*_n(v) - Z^*_n(\bar v) )   \leq  \Upsilon_n \| z - \bar z\|, \\
&& \Upsilon_n:= \sup_{v \in \mathcal{V}, 1 \leq k \leq d} \left \| \nabla_{z_k} \frac{ \KU}{ \normKU }  \right \|_{\Pn,2}.
 \end{eqnarray*}
We have that
\begin{eqnarray*}
\Upsilon_n \leq \sup_{v \in \mathcal{V}, 1 \leq k \leq d}   \frac{ \left \| \nabla_{z_k} \KU \right \|_{\Pn,2} }{  \normKU } +  \frac{ \left |\nabla_{z_k} \normKU \right|}{ \normKU}\text{,}
\end{eqnarray*}
which is bounded by $C (1 + h_n^{-1})$ for large $n$ by Step 6.  Since $J$ is finite, it follows that  for all large $n > n_0$ for all non-empty subsets of $\textsf{V} \subseteq \V$,
$$
N(\varepsilon, \textsf{V},\rho) \leq \left( \frac{J^{1/d} (1+  C (1 + h_n^{-1})  \text{diam}(\textsf{V}))}{\varepsilon}\right)^d, \ \ \ 0 < \varepsilon< 1. $$
Condition C.3 now follows for all $n> n_0$ by Lemma \ref{lemma: key concentration}, with
$$
a_n(\textsf{V}) = (2\sqrt{\log L_n(\textsf{V})  }) \vee (1 +\sqrt{d}),  \ \ L_n(\textsf{V}) =  C'  \( 1 + C (1+ h_n^{-1})\text{diam}(\textsf{V}) \)^d,
$$
where $C'$ is some positive constant.

Step 3. Verification of C.4.  Under Condition NK, we have that
$$
a_n(\textsf{V}) \leq \bar a_n:= a_n(\mathcal{V}) \lesssim \sqrt{ \log \ell_n  + \log n} \lesssim \sqrt{ \log n},
$$
so that C.4(a) follows if $\sqrt{ \log n/(n h_n^d) } \to 0$.

To verify C.4(b) note that
$$
\left |\frac{s_n(v)}{ \sigma_n(v)} -1 \right| =  \Bigg |  \underbrace{\(\frac{f_n(z)}{\hat f_n(z)}\)}_{a} \Bigg ( \underbrace{\frac{\enorm{\KK \hat U_{ij}}}{\norm{\KU}}}_{b/c} \Bigg )  -1 \Bigg |.
$$
Since  $ | a (b/c) -1| \leq  2 |a -1| + |(b -c)/c|$ when $|(b -c)/c| \leq 1$, the result follows from
$|a-1| = O_{\Pn}(n^{-b}) = o_{\Pn}(\delta_n/(\bar a_n + \ell_n))$  holding by NK.2 for some $b>0$ and from
\begin{eqnarray*}
|(b -c)/c| & \leq &  \max_{1 \leq i \leq n} \|\hat U_i - U_i\| \frac{\enorm{\KK}}{\normKU} +  \bigg| \frac{\enormKU}{\normKU}-1 \bigg|,\\
& \leq &  O_{\Pn}(n^{-b}) O_{\Pn}(1) + O_{\Pn}\(\sqrt{\frac{\log n}{ nh^d}}\) = O_{\Pn}(n^{-b}) = o_{\Pn}(\delta_n/(\bar a_n + \ell_n))
\end{eqnarray*}
for some $b>0$ where we used NK.2, the results of Step 6, and the condition that $nh_n^d \to \infty$ at a polynomial rate.

Step 4.  Verification of C.2.  By NK.1
and $1 \lesssim E_{\Pn}[g^2_v] \lesssim 1$ uniformly in $v \in \V$  holding by Step 6 give
$$
\sup_{v \in \V} \left | \frac{\Gn(g_v)}{ \sqrt{E_{\Pn}[g^2_v]}} - \frac{\Bn(g_v)}{ \sqrt{E_{\Pn}[g^2_v]}}\right| = O_{\Pn}(\delta_n),
$$
where $v \mapsto \Bn(g_v)$ is zero-mean $\Pn$-Brownian bridge, with a.s. continuous sample paths. This
and the condition on the remainder term in NK.1 in turn imply C.2(a).

To show C.2(b) we need to show that for any $C>0$
$$
\Pn\( \sup_{v \in \V} \left | \frac{\Gn^o(\hat g_v)}{ \sqrt{\En[\hat g^2_v]}} - \frac{\barBn(g_v)}{ \sqrt{E_{\Pn}[g^2_v]}} \right| > C \delta_n \mid \D_n  \) = o_{\Pn}(1/\ell_n),
$$
where $\barBn$ is a copy of $\Bn$, which is independent of the data. First,
Condition NK.1 with the fact that $1 \lesssim E_{\Pn}[g^2_v] \lesssim 1$ uniformly in $v \in \V$ implies that
$$
\Pn\(  \sup_{v \in \V} \left | \frac{\Gn^o( g_v)}{ \sqrt{E_{\Pn}[g^2_v]}} - \frac{\barBn(g_v)}{ \sqrt{E_{\Pn}[g^2_v]}} \right| > C \delta_n \mid \D_n\)=o_{\Pn}(1/\ell_n).
$$
Therefore, in view of the triangle inequality and the union bound, it remains to show that
\begin{eqnarray}\label{eq: difference small}
\Pn\(  \sup_{v \in \V} \left | \frac{\Gn^o(\hat g_v)}{ \sqrt{\En[\hat g^2_v]}} - \frac{\Gn^o(g_v)}{ \sqrt{E_{\Pn}[g^2_v]}}  \right| > C \delta_n \mid \D_n\)=o_{\Pn}(1/\ell_n).
\end{eqnarray}
We have that
$$
 \sup_{v \in \V} \left| \frac{\Gn^o(\hat g_v)}{ \sqrt{\En[\hat g^2_v]}} - \frac{\Gn^o(g_v)}{ \sqrt{E_{\Pn}[g^2_v]}} \right| \leq
 \sup_{v \in \V} \left| \frac{\Gn^o(\hat g_v - g_v)}{ \sqrt{E_{\Pn}[g^2_v]}}\right| +  \sup_{v \in \V} \left| \frac{\Gn^o(g_v)}{ \sqrt{E_{\Pn}[g^2_v]}}\right| \sup_{v \in \V} \left| \frac{\sigma_n(v)}{s_n(v)} -1 \right|.
$$
We observe that
\begin{eqnarray*}
&& E_{\Pn} \(\sup_{v \in \V} \left| \frac{\Gn^o(g_v)}{ \sqrt{E_{\Pn}[g^2_v]}}\right| \sup_{v \in \V} \left| \frac{\sigma_n(v)}{s_n(v)} -1 \right| \mid \D_n \)  \\
 &&= E_{\Pn} \(\sup_{v \in \V} \left| \frac{\Gn^o(g_v)}{ \sqrt{E_{\Pn}[g^2_v]}}\right|\mid \D_n \) \sup_{v \in \V} \left| \frac{\sigma_n(v)}{s_n(v)} -1 \right|= O_{\Pn}\(\sqrt{\log n} n^{-b}\) = O_{\Pn}(\delta_n/\ell_n),
\end{eqnarray*}
where the last equality follows from Steps 5 and 3.  Also we note that
\begin{eqnarray*}
&& E_{\Pn} \(\sup_{v \in \V} \left| \frac{\Gn^o(\hat g_v - g_v)}{ \sqrt{E_{\Pn}[g^2_v]}}\right|\mid \D_n \)  \\
&&  \leq_{(1)}   O_{\Pn}( \sqrt{\log n})   \sup_{v \in \V} \frac{ \enorm{ \(\frac{U_{ij}}{f_n(z)} - \frac{\hat U_{ij}}{\hat f_n(z)}\)\KK}}{
 \frac{1}{f_n(z)} \normKU}  \\
 && \lesssim_{(2)}   O_{\Pn}( \sqrt{\log n}) \sup_{v \in \V} \frac{\enorm{\|\KK (1 + | U_{ij}|)}}{\norm{\KU}} \( \left | \frac{f_n(z)}{\hat f_n(z)} -1 \right| \vee \max_{1 \leq i \leq n} \| \hat U_{i} - U_{i} \| \) \\
 && \leq_{(3)}   O_{\Pn}( \sqrt{\log n}) O_{\Pn}(1) O_{\Pn}(n^{-b}) = o_{\Pn}(\delta_n/\ell_n),
\end{eqnarray*}
where (1) follows from Step 5, (2) by elementary inequalities, and (3) by Step 6 and NK.2.
It follows that (\ref{eq: difference small}) holds by Markov's Inequality.

Step 5. This step shows that
 \begin{eqnarray}
 E_{\Pn} \(\sup_{v \in \V} \left| \frac{\Gn^o(g_v)}{ \sqrt{E_{\Pn}[g^2_v]}}\right|\mid \D_n \) & = &  O_{\Pn}\(\sqrt{\log n}\) \label{eq: MI1}  \\
E_{\Pn} \(\sup_{v \in \V} \left| \frac{\Gn^o(\hat g_v - g_v)}{ \sqrt{E_{\Pn}[g^2_v]}}\right|\mid \D_n \) &\leq &   O_{\Pn}( \sqrt{\log n}) \times  \nonumber\\
& \times &   \sup_{v \in \V} \frac{ \enorm{\(\frac{U_{ij}}{f_n(z)} - \frac{\hat U_{ij}}{\hat f_n(z)}\)\KK}}{
 \frac{1}{f_n(z)} \normKU}.   \label{eq: MI2}
 \end{eqnarray}

To show (\ref{eq: MI1})  we use Lemma \ref{lemma: maximal ineq gaussian} applied to
$X_v = \frac{\Gn^o(g_v)}{ \sqrt{E_{\Pn}[g^2_v]}}$ conditional on $\D_n$.  First, we compute
$$
\sigma(X) = \sup_{v \in \V} \left( E_{\P_n} (X_v^2 |\D_n) \right)^{1/2} = \sup_{v \in \V} \frac{ \enorm{\KK U_{ij}}}{
  \normKU} =1 + o_{\Pn}(1),
$$
where the last equality holds by Step 6.
Second, we observe that for $v =(z,j)$ and $\bar v = (\bar z, j)$
\begin{eqnarray*}
& & \sigma(X_v - X_{\bar v} )   \leq  \Upsilon_n \| z - \bar z\|, \ \ \   \Upsilon_n:= \sup_{v \in \mathcal{V}, 1 \leq k \leq d} \left \| \nabla_{z_k} \frac{ \KU}{ \normKU }  \right \|_{\EPn,2}.
 \end{eqnarray*}
We have that
\begin{eqnarray*}
\Upsilon_n && \leq \sup_{v \in \mathcal{V}, 1 \leq k \leq d}   \frac{ \left \| \nabla_{z_k} \KK  U_{ij} \right \|_{\EPn,2} }{  \normKU }
\\ && + \frac{  \enorm{\KK  U_{ij}} }{ \normKU} \cdot  \frac{ \left |\nabla_{z_k} \normKU  \right|}{ \normKU}.
\end{eqnarray*}
which is bounded with probability converging to one by $C (h_n^{-1} +1)$ for large $n$ by Step 6 and NK.2. Since $J$ is finite, it follows that  for all large $n > n_0$, the covering number
for $\V$ under $\rho(v, \bar v) = \sigma(X_v - X_{\bar v})$ obeys with probability converging to 1,
$$
N(\varepsilon, \V,\rho) \leq \left(\frac{J^{1/d}(1+  C (1 + h_n^{-1})  \text{diam}(\V))}{\varepsilon}\right)^d, \ \  0 < \varepsilon< \sigma(X),
$$
Hence
$
\log N(\varepsilon, \V,\rho)  \lesssim \log n + \log(1/\varepsilon).
$
Hence  by Lemma \ref{lemma: maximal ineq gaussian}, we have that
$$
E_{\Pn}\( \sup_{v \in \V} |X_v| \mid \D_n\)  \leq \sigma(X) +  \int_0^{2 \sigma(X)} \sqrt{ \log (n/\varepsilon)} d \varepsilon  = O_{\P_n}(\sqrt{ \log n}).
$$

To show (\ref{eq: MI2}) we use Lemma \ref{lemma: maximal ineq gaussian} applied to
$X_v = \frac{\Gn^o(\hat g_v - g_v)}{ \sqrt{E_{\Pn}[g^2_v]}}$ conditional on $\D_n$.  First, we compute
$$
\sigma(X) = \sup_{v \in \V} \left( E_{\P_n} (X_v^2 |\D_n) \right)^{1/2} = \sup_{v \in \V} \frac{ \enorm{\(\frac{U_{ij}}{f_n(z)} - \frac{\hat U_{ij}}{\hat f_n(z)}\)\KK}}{
 \frac{1}{f_n(z)} \normKU}.
$$
Second, we observe that for $v =(z,j)$ and $\bar v = (\bar z, j)$
$$
 \sigma(X_v - X_{\bar v} )   \leq  (\Upsilon_n + \hat \Upsilon_n) \| z - \bar z\|,
$$
where
$$
 \hat \Upsilon_n:= \sup_{v \in \mathcal{V}, 1 \leq k \leq d} \left \| \nabla_{z_k} \frac{ \KK \hat U_{ij}}{ \normKU }  \right \|_{\EPn,2},
$$
and $\Upsilon_n$ is the same as defined above.

We have that
\begin{eqnarray*}
\hat \Upsilon_n && \leq \sup_{v \in \mathcal{V}, 1  \leq k \leq d}   \frac{ \left \| \nabla_{z_k} \KK \hat U_{ij} \right \|_{\EPn,2} }{  \normKU } \\
&& + \frac{  \enorm{\KK \hat U_{ij}} }{ \normKU} \cdot  \frac{ \left |\nabla_{z_k} \normKU  \right|}{ \normKU}
\end{eqnarray*}
The first term is bounded by
$$
\sup_{v \in \mathcal{V}, 1 \leq k \leq d}   \frac{ \left \| \nabla_{z_k} \KK U_{ij} \right \|_{\EPn,2} }{  \normKU } + \max_{1 \leq i \leq n} \|\hat U_{i} - U_{i}\|  \sup_{v \in \mathcal{V}, 1 \leq k \leq d}   \frac{ \left \| \nabla_{z_k} \KK  \right \|_{\EPn,2} }{  \normKU }
$$
which is bounded by $C (1 + h_n^{-1}) + O_{\Pn}(n^{-b}) O_{\Pn}(1)$ for large $n$ by Step 6 and NK.2.
In the second term, the left term of the product is bounded by
$$
\sup_{v \in \mathcal{V}, 1 \leq k \leq d}   \frac{ \left \| \KK U_{ij} \right \|_{\EPn,2} }{  \normKU } + \max_{1 \leq i \leq n} \|\hat U_{i} - U_{i}\|  \sup_{v \in \mathcal{V}, 1 \leq k \leq d}   \frac{ \left \|  \KK  \right \|_{\EPn,2} }{  \normKU }
$$
which is bounded by $C (1 + o_{\Pn}(1)) + O_{\Pn}(n^{-b}) O_{\Pn}(1)$ for large $n$ by Step 6 and NK.2; the right
term of the product is bounded by $C(1 + h_n^{-1} + o_{\Pn}(1))$ by Step 6.  Conclude that
$\hat \Upsilon_n \leq C (1 + h_n^{-1})$ for some constant $C>0$ with probability converging to one.

Since $J$ is finite, it follows that  for all large $n > n_0$, the covering number
for $\V$ under $\rho(v, \bar v) = \sigma(X_v - X_{\bar v})$ obeys with probability converging to 1,
$$
N(\varepsilon, \V,\rho) \leq \left( \frac{J^{1/d} (1+  C (1 + h_n^{-1})  \text{diam}(\V))}{\varepsilon}\right)^d, 0 < \varepsilon< \sigma(X),
$$
Hence
$$
\log N(\varepsilon, \V,\rho)  \lesssim \log n + \log(1/\varepsilon).
$$
Hence  by Lemma \ref{lemma: maximal ineq gaussian}, we have that
$$
E_{\Pn}\( \sup_{v \in \V} |X_v| \mid \D_n\)  \lesssim \sigma(X) +  \int_0^{2 \sigma(X)} \sqrt{ \log (n/\varepsilon)} d \varepsilon  = O_{\P_n}(\sqrt{ \log n}) \sigma(X).
$$

Step 6.  The claim of this step are the following relations:
uniformly in $ v \in \mathcal{V}, 1 \leq k \leq d$
\begin{eqnarray}
& & 1 \ \ \  \lesssim {\norm{\KK U_{ij}}} \lesssim 1 \label{rel: K1} \\
&& 1 \ \ \ \lesssim {\norm{\KK }} \lesssim 1 \\
&& h_n^{-1} \lesssim {\norm{\nabla_{z_k} \KK U_{ij}}}  \lesssim h_n^{-1} \\
&& h_n^{-1} \lesssim {\norm{ \nabla_{z_k} \KK }}  \lesssim h_n^{-1} \\
&& h_n^{-1} \lesssim \left |\nabla_{z_k} \normKU  \right | \lesssim h_n^{-1}  \label{rel: K5}
\end{eqnarray}
and
\begin{eqnarray}
 \frac{\enorm{\KK U_{ij}}}{\norm{\KK U_{ij}}} & = & 1+ O_{\Pn}\(\sqrt{\frac{\log n}{nh_n^d}} \)  \label{rel: K6} \\
 \frac{\enorm{\KK }}{\norm{\KK }} & = &  1+ O_{\Pn}\(\sqrt{\frac{\log n}{nh_n^d}} \) \\
  \frac{\enorm{\nabla_{z_k} \KK U_{ij}}}{\norm{\nabla_{z_k} \KK U_{ij}}} & = & 1+ O_{\Pn}\(\sqrt{\frac{\log n}{nh_n^{d}}} \)  \\
  \frac{\enorm{ \nabla_{z_k} \KK }}{\norm{ \nabla_{z_k} \KK }} & = &  1+ O_{\Pn}\(\sqrt{\frac{\log n}{nh_n^{d}}} \)   \label{rel: K9}
\end{eqnarray}

The proofs of (\ref{rel: K1})-(\ref{rel: K5}) are all similar to one another, as are those of (\ref{rel: K6})-(\ref{rel: K9}), and are standard in the kernel estimator literature.
We therefore prove only (\ref{rel: K1}) and (\ref{rel: K6}) to demonstrate the argument.
To establish (\ref{rel: K1}) we have
\begin{eqnarray*}
{\norm{\KK U_{ij}}}^2 & = &  h_n^{-d}\int \mathbf{K}^2( (z- \bar z)/h_n) E[U_{ij}^2|\bar z] f_n(\bar z) d \bar z \\
 & \leq_{(1)} &  h_n^{-d} \int\mathbf{ K}^2( (z- \bar z)/h_n) C d \bar z  \leq_{(2)}  \int \mathbf{K}^2(u) C du,
\end{eqnarray*}
for some constant $ 0< C< \infty$, where in (1) we use the assumption that $E[U_{ij}^2|z]$ and $f_n(z)$ are bounded uniformly
from above and in (2), change of variables. On the other hand,
\begin{eqnarray*}
{\norm{\KK U_{ij}}}^2 & = & h_n^{-d} \int \mathbf{K}^2( (z- \bar z)/h_n) E[U_{ij}^2|\bar z]  f_n(\bar z) d \bar z \\
 & \geq_{(1)} &  h_n^{-d} \int\mathbf{ K}^2( (z- \bar z)/h_n) C d \bar z
  \geq_{(2)} \int \mathbf{K}^2(u) C du,
\end{eqnarray*}
for some constant $ 0< C< \infty$, where in (1) we use the assumption that $E[U_{ij}^2|z]$ and $f_n(z)$ are bounded away from
zero uniformly in $n$, and in (2), change of variables.

Moving to (\ref{rel: K6}), it suffices to show that uniformly in $v \in \mathcal{V}$,
$$
\En\(\(\KK\)^2 U^2_{ij}\)- E_{\Pn}\(\(\KK\)^2 U^2_{ij}\) = O_{\Pn}\(\sqrt{\frac{\log n}{nh_n^d}}\),
$$
or equivalently
\begin{align}\label{hk-lemma}
\En\( \mathbf{K}^2 \( \frac{z - Z_i}{h_n}\) U^2_{ij}\)- E_{\Pn}\( \mathbf{K}^2 \( \frac{z - Z_i}{h_n}\) U^2_{ij}\) = O_{\Pn}\(\sqrt{\frac{h_n^d \log n}{n}}\).
\end{align}
Given the boundedness of $U_{ij}$ imposed by Condition R, this is in fact a standard result on local
 empirical processes, using Pollard's empirical process methods. Specifically, (\ref{hk-lemma}) follows by the application of Theorem 37 in chapter II of \cite{Pollard:84}.
 \qed

\subsection{Proof of Lemma \ref{lemma: kernels-v-est}}
To show claim (1), we need to establish that for
$$
\varphi_n = o(1) \cdot \(\frac{h_n}{ \sqrt{\log n}}\),
$$
for any $o(1)$ term, we have that
$$
\sup_{\|v - v' \| \leq \varphi_n} | Z^*_n(v) - Z_n^*(v') | = o_{\Pn}(1)\text{.}
$$

Consider the stochastic process $X =\{Z_n(v), v \in \V\}$. We shall use the standard maximal inequality
stated in Lemma \ref{lemma: maximal ineq gaussian}. From the proof of Lemma \ref{lemma:kernels} we have
that for $v = (z,j)$ and $v' = (z',j)$, $
\sigma(Z^*_n(v) - Z^*_n(v') ) \leq    C (1 + h_n^{-1}) \| z - z'\|,$
where $C$ is some constant that does not depend on $n$, and
$
\log N(\varepsilon, \textsf{V},\rho) \lesssim \log n + \log (1/\varepsilon).
$
Since
$$
\|v - v' \| \leq \varphi_n \implies \sigma(Z^*_n(v) - Z^*_n(v') ) \leq  C   \frac{o(1)}{\sqrt{\log n}}\text{,}
$$
we have
$$
E \sup_{\|v - v' \| \leq \varphi_n  } | X_v - X_{v'}| \lesssim \int_0^{C   \frac{o(1)}{\sqrt{\log n}} }\sqrt{ \log (n/\varepsilon) } d \varepsilon
\lesssim  \frac{o(1)}{\sqrt{\log n}} \sqrt{ \log n} = o(1).
$$
Hence the conclusion follows from Markov's Inequality.

Under Condition V by lemma \ref{lemma: estimation of V}
$$
r_n \lesssim \( \sqrt{\frac{\log n}{ nh_n^d}\log n} \)^{1/\rho_n} c_n^{-1},
$$
so $r_n = o(\varphi_n)$ if
\begin{equation*}
\( \sqrt{\frac{\log n}{ nh_n^d}\log n} \)^{1/\rho_n}c_n^{-1} = o\( \frac{h_n}{ \sqrt{\log n} } \).
\end{equation*}
Thus, Condition S is satisfied.
\qed

\subsection{Proof of Theorem \ref{theorem:strong-approx-kernel}.}
To prove this theorem, we use the Rio-Massart coupling. First we note that
$$
\mathcal{M}=\{ h^{d/2}_n f_n(z) g_v(U_i,Z_i) = e_j'U_i\mathbf{K}((z-Z_i)/h_n),  z \in \mathcal{Z}, j \in\{1,...,J\} \}
$$
is the product of $\{ e_j'U_i, j \in {1,...,J}\}$ with covering number trivially bounded above by $J$ and
$\mathcal{K}:=\{\mathbf{K}((z-Z_i)/h_n), z \in \mathcal{Z}\}$ obeys $\sup_{Q} N(\epsilon, \mathcal{K}, L_1(Q)) \lesssim \epsilon^{-\nu}$
for some finite constant $\nu$; see Lemma 4.1 of \cite{Rio:94}. Therefore, by Lemma A.1 in \cite{Ghosal/Sen/vanderVaart:00}, we have that
\begin{equation}\label{eq: entropy bound}
\sup_{Q} N(\epsilon, \mathcal{M}, L_1(Q)) \lesssim  J (\epsilon/2)^{-\nu} \lesssim  \epsilon^{-\nu}.
\end{equation}
Next we bound, for $  \mathbf{K}_{l}(u) = \partial \mathbf{K}(u)/\partial u_l$
\begin{eqnarray*}
  TV(\mathcal{M})   & &   \leq \sup_{f \in \mathcal{M}} \int | D_{(x'_1,x'_2)'} f(x_1,x_2) |  dx_1 dx_2 \\
&& \leq \sup_{v \in \mathcal{V}} \int_{I^{d}} \int_{I^{d_1}}   \Bigg (
\sum_{l=1}^{d_1}|e_j'D_{x_{1l}}\varphi_n(x_{1})  \mathbf{K}((z- \tilde \varphi_n(x_2))/h_n)|   \\
&  & +
\sum_{l=1}^{d}|  e_j'\varphi_n(x_1) \mathbf{K}_l ((z-\tilde \varphi_n(x_2))/h_n) h_n^{-1} \partial \tilde \varphi(x_2)/\partial x_{2k} | \Bigg) dx_1 dx_2  \\
 & &  \leq C \max_{1 \leq l \leq n} \sup_{v \in \mathcal{V}}  \int_{I^{d}} \Bigg ( |\mathbf{K}((z- \tilde \varphi_n(x_2))/h_n)| +
 h_n^{-1} |   \mathbf{K}_{l} ((z- \tilde \varphi_n(x_2))/h_n) | B  \Bigg) d x_2  \\
 && \leq C h_n^d + C  h_n^{-1} h_n^{d} \leq C h_n^{d-1} =: K(\mathcal{M})
\end{eqnarray*}
where $C$ is a generic constant, possibly different in different places, and where we rely on
$$
\int_{I^{d_1}} |D_{x_{1l}}\varphi_n(x_{1})| d x_1 \leq B, \ \ \sup_{x_1} |e_j'\varphi_n(x_1)| \leq B, \ \ \sup_{x_2} |\partial \tilde \varphi(x_2)/\partial x_{2k}|  \leq B
$$
as well as on
$$
\int_{I^{d}}  |\mathbf{K}((z- \tilde \varphi_n(x_2))/h_n)| d x_2 \leq C h^d, \ \  \int_{I^{d}} |   \mathbf{K}_{l} ((z- \tilde \varphi_n(x_2))/h_n) | d x_2   \leq C h^d.
$$
To see how the latter relationships holds, note that $Y=\tilde \varphi_n(v)$ when $ v\sim U(I^d)$ has a density bounded uniformly from above:
$
f_Y(y) \lesssim 1/|  \det \partial \tilde \varphi_n(v)/\partial v |   \lesssim 1/c.$
Moreover, the functions $|\mathbf{K}((z- y)/h_n)|$ and  $|\mathbf{K}_{l} ((z- y)/h_n)|$
are bounded above by some constant $\bar K$ and are non-zero only over a $y$ belonging to cube centered at $z$ of volume $(2h)^d$.  Hence
$$
\int_{I^{d}}  |\mathbf{K}((z- \tilde \varphi_n(x_2))/h_n)| d x_2 \leq \int_{I^{d}} |\mathbf{K}((z- y)/h_n)| f_Y(y) dy \leq
\bar K (2h)^d (1/c) \leq C h^d,
$$
and similarly for the second term.

By the Rio-Massart coupling we have that for some constant $C$ and  $t \geq C\log n$:
$$
\mathrm{P}_n\(  \sqrt{n} \sup_{f \in \mathcal{M}} | \mathbb{G}_n(f) -  \mathbb{B}_n(f)| \geq C \sqrt{ t n^{\frac{d+d_1-1}{d+d_1}} K(\mathcal{M})} + C t \sqrt{\log n}  \) \leq e^{-t},
$$
which implies that
$$
\mathrm{P}_n\( \sup_{v \in \mathcal{V}} | \mathbb{G}_n(g_v) -  \mathbb{B}_n(g_v)| \geq n^{-1/2}C \sqrt{ t n^{\frac{d+d_1-1}{d+d_1}} h_n^{d-1}} h_n^{-d/2} +  n^{-1/2} h_n^{-d/2} C t \sqrt{\log n}  \) \leq e^{-t},
$$
which  upon inserting $t= C\log n$ gives
\begin{align*}
 \P_n\( \sup_{v\in \mathcal{V}} |\mathbb{G}_n(g_v) - \mathbb{B}_n(g_v)| \geq C
  \left[ n^{-1/2(d+d_1)}\left(h_{n}^{-1} \log n\right) ^{1/2} + (nh_n^d)^{-1/2}\log^{3/2} n\right] \) \lesssim 1/n.
\end{align*}
This implies the required conclusion. Note that  $g_v \mapsto \mathbb{B}_n(g_v)$ is continuous under the $L_1(f_X)$ metric by the Rio-Massart coupling, which implies continuity of $v \mapsto  \mathbb{B}_n(g_v)$, since $v - v' \to 0$ implies $g_v - g_{v'} \to 0$ in the $L_1(f_X)$ metric. \qed

\subsection{Proof of Theorem \ref{theorem:simulate-kernel}}
Step 1. First we note that  (\ref{eq: kernel multiplier approximation}),
is implied by
\begin{equation}\label{eq: main multiplier}
E_{\Pn}\(  \sup_{v \in \V} \left| \Gn^o( g_v) - \barBn(g_v) \right| \Big| \D_n\)=o_{\Pn}(\delta_n/\ell_n)
\end{equation}
in view of the Markov inequality.  Using calculations similar to those in Step 5 in the proof of Lemma \ref{lemma:kernels}, we can conclude that for $X_v:= \Gn^o( g_v)$, $v=(z,j)$, and $\bar{v}=(\bar{z},j)$,
$$
\sigma(X_v - X_{\bar v}) \leq \|v - \bar v\| O_{\Pn}(1 + h_n^{-1}) \text{,}
$$
where $\sigma^2(X_v - X_{\bar v}) := E_{P_n}((X_v - X_{\bar v})^2| \D_n)$. Application of the Gaussian maximal inequality quoted in Lemma \ref{lemma: maximal ineq gaussian}, similarly Step 6 in the proof of Lemma \ref{lemma:kernels}, then gives:
\begin{equation}\label{eq: discr1}
E_{\Pn} \( \sup_{ \|v - \bar v\| \leq \varepsilon } |X_v - X_{\bar v}|\mid \D_n \) = \varepsilon O_{\Pn} \( (1 + h_n^{-1}) \sqrt{\log n}\),
\end{equation}
where
$$
\varepsilon \propto o \( \frac{\delta_n/\ell_n}{ h_n^{-1} \sqrt{\log n}}  \),
$$
whence
\begin{equation}\label{eq: discr2}
E_{\Pn} \( \sup_{ \|v - \bar v\| \leq \varepsilon } |X_v - X_{\bar v}|\mid \D_n \) = o_{\Pn} ( \delta_n/\ell_n).
\end{equation}
Next we setup a regular  mesh  $\V_0 \subset \mathcal{V}$ with mesh width
$ \varepsilon$.   The cardinality
of the mesh is given by
$$
K_n \propto (1/\varepsilon)^d \propto h_n^{-d} \lambda^d_n, \ \ \lambda_n= \frac{\sqrt{\log n}} {o(\delta_n/\ell_n)}.
$$
With such mesh selection, we have that
\begin{equation}\label{eq: discretize multiplier}
E_{\Pn} \( \sup_{ v \in \V}  |X_v - X_{\pi(v)}| \mid \D_n \) \leq   E_{\Pn} \( \sup_{ \|v - \bar v\| \leq \varepsilon } |X_v - X_{\bar v}|\mid \D_n \)  \leq  o_{\Pn} ( \delta_n/\ell_n),
\end{equation}
where $\pi(v)$ denotes a point in $\V_0$ that is closest to $v$.

The steps given below will show that there is a Gaussian process $\{Z_v, v \in \V\}$,
which is independent of $\D_n$, having the same law as $\{\mathbb{B}_n(g_v), v \in \V\}$,
and having the following two key properties:
\begin{eqnarray}\label{eq: approximate discreitized multiplier}
& & E_{\Pn} \( \sup_{ v \in \V_0}  |X_v - Z_{v}| \mid \D_n \) = o_{\Pn} ( \delta_n/\ell_n),\\
& & E_{\Pn} \( \sup_{ v \in \V}  |Z_v - Z_{\pi(v)}| \) = o ( \delta_n/\ell_n).
\label{eq: discretize gaussian}
\end{eqnarray}

The claim of the lemma then follows by setting
$\{\bar{\mathbb{B}}_n(g_v), v \in \V\} = \{Z_v, v \in \V\}$, and then
noting that
$$
E_{\Pn} \( \sup_{ v \in \V}  |X_v - Z_{v}| \mid \D_n \)= o_{\Pn} ( \delta_n/\ell_n)
$$
holds by the triangle inequality for the sup norm
and (\ref{eq: discretize multiplier})-(\ref{eq: discretize gaussian}). Note that
the last display is equivalent to  (\ref{eq: main multiplier}). We now prove these assertions in the followings steps.

Step 2.  In this step we construct the process $Z_v$ on points $v \in \V_0$, and
show that (\ref{eq: approximate discreitized multiplier}) holds.  In what follows,
we use the notation $(X_v)_{v \in \V_0}$ to denote a $K_n$ vector collecting $X_v$ with indices
$v \in \V_0$. We have that conditional on the data $\D_n$,
$$
(X_v)_{v \in \V_0} = \hat \Omega^{1/2}_n \mathcal{N},  \ \ \mathcal{N} \sim N(0,I),
$$
where $\mathcal{N}$ is independent of $\D_n$, and
$$
\hat \Omega = \En[ p_i p_i'] \ \textrm{ and }  \  p_i = ( g_v(U_i, Z_i) )_{v \in \mathcal{V}_0}.
$$
We then set $(Z_v)_{v \in \V_0} = \Omega^{1/2}_n \mathcal{N}$ for $\Omega = E_{\Pn}[ p_i p_i']$ and
the same $\mathcal{N}$ as defined above.

Before proceeding further, we note that by construction the process $\{Z_v, v \in \V_0\}$
is independent of the data $\mathcal{D}_n$.  This is facilitated by suitably enlarging the
the probability space as needed.\footnote{Given the space $(A', \mathcal{A}', \Pn')$ that carries $\mathcal{D}_n$ and given
a different space $(A'', \mathcal{A}'', \Pn'')$ that carries $\{Z_v, v \in \V_0\}$ as
well as its complete version $\{Z_v, v \in \V\}$, we can take $(A, \mathcal{A}, \Pn)$ as the
product of the two spaces, thereby maintaining independence between the data and the constructed process.
Since $\{Z_v, v \in \V\}$ constructed below takes values in a separable metric space, it suffices
to take $(A'', \mathcal{A}'', \Pn'')$ as the canonical probability space, as noted in Appendix A.}

Since the support of $\mathbf{K}$ is compact and points of the grid $\V_0$ are equally spaced, we have that
$$N_i := |\{ v \in \mathcal{V}_0: g_v(U_i,Z_i) \neq 0\}| \lesssim (h_n/\varepsilon)^d \lesssim \lambda^d_n.$$
Using the boundedness assumptions of the lemma, we have that
$$\| p_i\|  \leq  (\bar U/\underline{f}) \sqrt{N_i}/h^{d/2}_n \lesssim (\lambda_n/h_n)^{d/2},$$
where $\bar U$ is the upper bound on $U$ and $\underline{f}$ is the lower bound on the density $f_n$,
both of which do not depend on $n$.

The application of Rudelson's Law of Large Numbers for operators, \cite{Rudelson99}, yields
$$
E_{\Pn} \|\hat \Omega_n - \Omega_n\| \lesssim \sqrt{\log n/(n (h_n/\lambda_n)^d)}.
$$
The application of the Gaussian maximal inequality quoted in Lemma \ref{lemma: maximal ineq gaussian} gives:
$$
E_{\Pn} \( \sup_{ v \in \V_0}  |X_v - Z_{v}| \mid \D_n \) \lesssim \sqrt{ \log K_n} \max_{v \in \V_0} \sigma(X_v -Z_v).
$$
Since $(X_v)_{v \in \V_0} - (Z_v)_{v \in \V_0} =  (\hat \Omega^{1/2}_n - \Omega_n^{1/2})' \mathcal{N}$, we have that:
$$
\max_{v \in \V_0} \sigma(X_v -Z_v)^2 \leq \| (\hat \Omega^{1/2}_n - \Omega_n^{1/2})^2 \| \leq \|  \hat \Omega^{1/2}_n - \Omega_n^{1/2}\|^2
\leq K_n \| \hat \Omega_n- \Omega_n\|^2,
$$
where the last inequality follows by a useful matrix inequality derived in \cite{Chetverikov:11}.
Putting bounds together and using $\log K_n \lesssim \log n$
\begin{eqnarray}
E_{\Pn} \( \sup_{ v \in \V_0}  |X_v - Z_{v}| \mid \D_n \) & = &  O_{\Pn} \( \sqrt{ \log n} \sqrt{K_n} \sqrt{\log n/(n (h_n/\lambda_n)^d)}\)
\\
& = & O_{\Pn}( \log n/\sqrt{n (h_n/\lambda_n)^{2d}}) = o_{\Pn}(\delta_n/\ell_n),
\end{eqnarray}
where the last condition holds by the conditions on the bandwidth.

Step 3. In this step we complete the construction of the process $\{Z_v, v \in \V\}$.
We have defined the process $Z_v$ for all $v \in \V_0$.    We want to embed these random
variables into a path of a Gaussian process $\{Z_v, v \in \V\}$, whose covariance function
is given by  $(v, \bar v) \mapsto E_{Pn}[ g_v g_{\bar v}]$. We want to maintain
the independence of the process from $\D_n$.  The construction follows by Lemma 11 in
\cite{Belloni/Chernozhukov/Fernandez-Val:10}.  This lemma requires
that a version of $\{Z_v, v \in \V\}$ has a.s. uniformly continuous sample paths, which follows from the Gaussian maximal inequalities and entropy calculations similar
to those given in Step 3 in the proof of Lemma \ref{lemma:kernels}. Indeed,
we can conclude that
$$
\sigma(Z_v - Z_{\bar v}) \leq \|v - \bar v\| C(1 + h_n^{-1}),
$$
which establishes total boundedness of $\mathcal{V}$ under the standard deviation pseudometric.
Moreover, application of the Gaussian maximal inequality Lemma \ref{lemma: maximal ineq gaussian}
 to $Z_v$ gives:
$$
E_{\Pn} \( \sup_{ \|v - \bar v\| \leq \varepsilon } |Z_v - Z_{\bar v}| \) \leq C \varepsilon \( (1 + h_n^{-1}) \sqrt{\log n}\).
$$
By a standard argument,  e.g. \cite{VanDerVaart/Wellner:96}, these facts imply that the paths of $Z_v$ are a.s. uniformly continuous.\footnote{Note however that the process depends on $n$, and the statement here is a non-asymptotic statement, holding for any $n$.  This property should not be confused with asymptotic equicontinuity, which does not hold here.}

The last claim (\ref{eq: discretize gaussian}) follows from the preceding display, the choice of meshwidth $\varepsilon$, and the inequality:
$$
E_{\Pn} \( \sup_{ v \in \mathcal{V}} |Z_v - Z_{\pi(v)}| \) \leq E_{\Pn} \( \sup_{ \|v - \bar v\| \leq \varepsilon } |Z_v - Z_{\bar v}| \) = o(\delta_n/\ell_n).
$$

\qed

\section{Asymptotic Linear Representation for Series Estimator of a Conditional Mean}\label{sec:prac:roymodel}

In this section we use the primitive conditions set out in Example 5 of the main text to verify the required asymptotically linear representation for $\sqrt{n}(\widehat \beta_n - \beta_n)$ using \cite{Newey:97}. This representation is also Condition (b) of Theorem \ref{theorem:strong series}. We now reproduce the imposed conditions from the example for clarity. We note that it is also possible to develop similar conditions for nonlinear estimators, see for example Theorem 1(d) of \cite{Horowitz/Mammen:04}.

We have that $\theta_n(v) = E_{\Pn}[Y_i | V_i =v]$,
assumed to be a continuous function.  There is an i.i.d. sample $(Y_i, V_i), i=1,...,n$, with $\V  \subseteq \text{support}(V_i) \subseteq [0,1]^d$
for each $n$. Here $d$ does not depend on $n$, but all
other parameters, unless stated otherwise, can depend on $n$.  Then
we  have  $\theta_n(v) = p_{n}(v)'\beta_{n} + A_{n}(v)$, for $p_{n}: \text{support}(V_i) \mapsto \Bbb{R}^{K_n}$ representing
the series functions;  $\beta_{n}$
is the coefficient of the best least squares approximation to $\theta_n(v)$ in the population,
and $A_{n}(v)$ is the approximation error.  The number of series terms $K_n$ depends on $n$.

Recall that we have imposed the following technical conditions in the main text:
\begin{quote}
Uniformly in $n$, (i)  $p_{n}$ are either B-splines of a fixed order or trigonometric series terms or any
other terms $p_n = (p_{n1},\ldots,p_{nK_n})'$ such that $\|p_n(v)\| \lesssim \zeta_n = \sqrt{K_n}$ for all $v \in \text{support}(V_i)$,
$\|p_n(v)\| \gtrsim   \zeta_n' \geq 1$ for all $v \in \mathcal{V}$, and $\log \textrm{lip}(p_{n}) \lesssim \log K_n$, (ii) the mapping $v \mapsto \theta_n(v)$
  is sufficiently smooth, namely $\sup_{v \in \mathcal{V}}|A_{n}(v)| \lesssim K_n^{-s}$, for some $s>0$,  (iii) $\lim_{n \to \infty }(\log n)^c\sqrt{n}K_n^{-s} = 0$ for each $c>0$,\footnote{This condition,
which is based on \cite{Newey:97} can be relaxed to $(\log n)^c K_n^{-s + 1} \to 0$ and $(\log n)^c \sqrt{n} K^{-s}_n/\zeta_n' \to 0$, using the
recent results of \cite{Belloni/Chen/Chernozhukov:11} for least squares series estimators.} (iv) for  $\epsilon_i = Y_i - E_{\Pn}[Y_i|V_i]$,
  $E_{\Pn}[\epsilon_i^2|V_i=v]$ is bounded away from zero  uniformly in $v \in \text{support}(V_i)$, and (v)
  eigenvalues of $Q_{n} = E_{\Pn}[p_{n}(V_i) p_{n}(V_i)']$ are bounded away from zero and from above, and (vi) $E_{\Pn}[|\epsilon_i|^4 |V_i=v]$ is bounded from above uniformly in $v \in \text{support}(V_i)$,  (vii) $\lim_{n \to \infty}(\log n)^c K_n^5/n =0$ for each $c>0$.  \end{quote}

We impose Condition (i) directly through the choice of basis functions.  Condition (ii) is a standard condition on the error of the series approximation, and is the same as Assumption A3 of \cite{Newey:97}, also used by \cite{Chen:07}.
Condition (v) is Assumption 2(i) of \cite{Newey:97}.
The constant $s$ will depend on the choice of basis functions. For example, if splines or power series are used, then $s = \alpha/d$, where $\alpha$ is the number of continuous derivatives of
$\theta_n\left( v \right) $ and $d$ is the dimension of $v$.
Restrictions on $K_n$ in conditions (iii) and (vii) require that  $\alpha > 5d/2$.
Conditions (i), (vi), and (vii) and Theorem 7, namely Corollary 1, ensure that the constraint on the growth rate for the number of series terms is satisfied.

Define $S_n \equiv E[ \epsilon_i^2 p_{n}(V_i) p_{n}(V_i)']$
and $\Omega_n \equiv  Q_n^{-1} S_n Q_n^{-1}$. Arguments based on  \cite{Newey:97} give the following lemma,
which verifies the linear expansion required in condition (b) of Theorem \ref{theorem:strong series} with $\delta_n = 1/\log n$.

\begin{lemma}[\textbf{Asymptotically Linear Representation of Series Estimator}]\label{lemma: asym_lin_seris} Suppose
conditions (i)-(vii) hold. Then we have the following asymptotically linear representation:
\begin{align*}
\Omega_n^{-1/2} \sqrt{n}(\widehat \beta_n  - \beta_n ) =  \Omega_n^{-1/2} Q_{n}^{-1} \frac{1}{\sqrt{n}} \sum_{i=1}^n  p_{n}(Z_i) \epsilon_i + o_{\Pn}(1/\log n).
\end{align*}
\end{lemma}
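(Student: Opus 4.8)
The plan is to follow the standard least-squares series argument of \cite{Newey:97}, decomposing $\widehat\beta_n-\beta_n$ into a leading linear term, a random-design (matrix estimation) term, and an approximation-bias term, and then to check that conditions (i)--(vii) are precisely strong enough to push the last two terms below the sharper order $o_{\Pn}(1/\log n)$ required here, rather than merely $o_{\Pn}(1)$. First I would write $\widehat\beta_n = \widehat Q_n^{-1}\En[p_n(V_i)Y_i]$ with $\widehat Q_n := \En[p_n(V_i)p_n(V_i)']$, substitute $Y_i = p_n(V_i)'\beta_n + A_n(V_i) + \epsilon_i$ with $\beta_n = Q_n^{-1}E_{\Pn}[p_n(V_i)\theta_n(V_i)]$ the population $L^2$-projection coefficient, so that $E_{\Pn}[p_n(V_i)A_n(V_i)]=0$ and $E_{\Pn}[p_n(V_i)\epsilon_i]=0$. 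This gives
\[
\sqrt{n}(\widehat\beta_n - \beta_n) = \widehat Q_n^{-1}\Big(n^{-1/2}\textstyle\sum_{i=1}^n p_n(V_i)\epsilon_i\Big) + \widehat Q_n^{-1}\Big(n^{-1/2}\textstyle\sum_{i=1}^n p_n(V_i)A_n(V_i)\Big),
\]
and after premultiplying by $\Omega_n^{-1/2}$ and adding and subtracting $\Omega_n^{-1/2}Q_n^{-1}n^{-1/2}\sum_i p_n(V_i)\epsilon_i$, the remainder splits into $T_1 := \Omega_n^{-1/2}(\widehat Q_n^{-1}-Q_n^{-1})n^{-1/2}\sum_i p_n(V_i)\epsilon_i$ and $T_2 := \Omega_n^{-1/2}\widehat Q_n^{-1}n^{-1/2}\sum_i p_n(V_i)A_n(V_i)$. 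Here $\Omega_n = Q_n^{-1}S_n Q_n^{-1}$ with $S_n = E_{\Pn}[\epsilon_i^2 p_n(V_i)p_n(V_i)']$ has eigenvalues bounded away from zero and infinity by (iv), (v), (vi), hence $\|\Omega_n^{-1/2}\|=O(1)$ and $\|\Omega_n^{-1/2}Q_n^{-1}\|=O(1)$.

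The core estimate is the bound on the random design matrix. Using Rudelson's law of large numbers for operators (\cite{Rudelson99}) or a matrix Bernstein inequality together with $\|p_n(v)\|\lesssim\zeta_n=\sqrt{K_n}$, $E_{\Pn}\|p_n(V_i)\|^4\lesssim K_n^2$, and $\lambda_{\max}(Q_n)=O(1)$, one obtains $\|\widehat Q_n - Q_n\| = O_{\Pn}(\sqrt{K_n\log n/n})$; by (vii) this is $o_{\Pn}(1)$, so with probability approaching one the eigenvalues of $\widehat Q_n$ are bounded away from zero and infinity and $\|\widehat Q_n^{-1}-Q_n^{-1}\| = \|\widehat Q_n^{-1}(\widehat Q_n - Q_n)Q_n^{-1}\| = O_{\Pn}(\sqrt{K_n\log n/n})$. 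Since $E_{\Pn}\|n^{-1/2}\sum_i p_n(V_i)\epsilon_i\|^2 = E_{\Pn}[\epsilon_i^2\|p_n(V_i)\|^2]\lesssim \mathrm{tr}(Q_n)\le K_n\lambda_{\max}(Q_n)\lesssim K_n$ by (iv), (vi), it follows that $\|T_1\| = O_{\Pn}(K_n\sqrt{\log n/n})$, which is $o_{\Pn}(1/\log n)$ because (vii) gives $K_n^2(\log n)^3/n\to 0$. For $T_2$, the orthogonality $E_{\Pn}[p_n A_n]=0$ with independence yields $E_{\Pn}\|n^{-1/2}\sum_i p_n(V_i)A_n(V_i)\|^2 = E_{\Pn}[A_n(V_i)^2\|p_n(V_i)\|^2]\le(\sup_v|A_n(v)|)^2\,\mathrm{tr}(Q_n)\lesssim K_n^{1-2s}$ by (ii), so $\|T_2\| = O_{\Pn}(K_n^{1/2-s})$, which is $o_{\Pn}(1/\log n)$ since (iii) forces $K_n^{-s}=o(n^{-1/2}(\log n)^{-c})$ for every $c>0$ while $K_n=o(n^{1/5})$ by (vii). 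Combining the three displays gives the stated representation; alternatively, if one prefers, the decomposition and the matrix-concentration step can be quoted directly from the relevant lemmas in \cite{Newey:97}, with the growth-rate strengthening of (iii) and (vii) supplying the sharper $1/\log n$ bound.

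The main obstacle is the sharp matrix concentration for $\widehat Q_n - Q_n$ at the rate $\sqrt{K_n\log n/n}$ and the bookkeeping that upgrades $o_{\Pn}(1)$ to $o_{\Pn}(1/\log n)$: one must verify that (iii) and (vii) are exactly calibrated so that the product of this matrix-error rate with the $O_{\Pn}(\sqrt{K_n})$ norm of the empirical score, as well as the bias contribution of order $K_n^{1/2-s}$, each remain $o(1/\log n)$. The moment conditions (iv)--(vi) enter only through boundedness of the covariance-type matrices $Q_n$ and $S_n$ and the second-moment bounds on $p_n\epsilon$ and $p_n A_n$ used above, and the conditional fourth-moment bound (vi) is what licenses the Rudelson/matrix-Bernstein step at the claimed rate.
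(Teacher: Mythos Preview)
Your argument is correct and follows the same Newey-style decomposition as the paper: split $\sqrt{n}(\widehat\beta_n-\beta_n)$ into the leading score term, a design-matrix error $T_1$, and an approximation-bias term $T_2$, then check that (iii) and (vii) make both remainders $o_{\Pn}(1/\log n)$. The paper simply quotes Newey's rates directly, obtaining $\|T_1\|=O_{\Pn}(K_n^{3/2}/\sqrt n)$ via a Frobenius-norm bound $\|\widehat Q_n-Q_n\|=O_{\Pn}(K_n/\sqrt n)$, and $\|T_2\|=O_{\Pn}(\sqrt n K_n^{-s})$ via the projection inequality $\|\widehat Q_n^{-1}\En[p_nA_n]\|^2\lesssim \En[A_n^2]$. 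Your route is a mild sharpening on both counts: Rudelson gives the operator-norm rate $\sqrt{K_n\log n/n}$ rather than $K_n/\sqrt n$, and exploiting the population orthogonality $E_{\Pn}[p_nA_n]=0$ (valid because $\beta_n$ is the $L^2$ projection) gives $\|T_2\|=O_{\Pn}(K_n^{1/2-s})$ rather than $O_{\Pn}(\sqrt n K_n^{-s})$. Either set of bounds is comfortably $o(1/\log n)$ under (iii) and (vii), so nothing is lost or gained for the lemma as stated; your bounds would, however, allow slightly weaker growth conditions if one wished.

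One small misattribution at the end: condition (vi), the conditional fourth moment of $\epsilon_i$, plays no role in the Rudelson/matrix-Bernstein step for $\widehat Q_n-Q_n$, which involves only $p_n(V_i)$ and is driven by the a.s.\ bound $\|p_n(v)\|\le\zeta_n$ from (i) together with $\lambda_{\max}(Q_n)=O(1)$ from (v). Condition (vi) enters only through the boundedness of $S_n$ and hence of $\Omega_n^{\pm 1/2}$.
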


\begin{proof}[Proof of Lemma \ref{lemma: asym_lin_seris}]
As in \cite{Newey:97}, we have the following representation: with probability approaching one,
\begin{align}\label{series-bahadur1}
\widehat \beta_n - \beta_n &=  n^{-1}  \widehat Q_n^{-1} \sum_{i=1}^n p_{n}(V_i) \epsilon_i + \nu_n,
\end{align}
where $ \widehat Q_n \equiv \En [p_{n}(V_i) p_{n}(V_i)']$, $\epsilon_i \equiv Y_i - E_{\Pn}[Y|V=V_i]$,
 $\nu_n \equiv n^{-1} \widehat Q_n^{-1} \sum_{i=1}^n p_{n}(V_i)A_n(V_i)$, where $A_n(v) : =\theta_n(v) - p_{n}(v)'\beta_n\text{.}$
As shown in the proof of Theorem 1 of  \cite{Newey:97}, we have $ \| \nu_n \| = O_{\Pn}( K_n^{-s})$.
In addition, write
\begin{align*}
\bar{R}_{n} := \left[ \widehat Q_n^{-1} - Q_n^{-1} \right] n^{-1} \sum_{i=1}^n p_{n}(V_i) \epsilon_i
&=    Q_n^{-1} \left[  Q_n - \widehat Q_n \right] n^{-1} \widehat Q_n^{-1} \sum_{i=1}^n p_{n}(V_i) \epsilon_i.
\end{align*}
Then it follows from the proof of Theorem 1 of  \cite{Newey:97} that
\begin{align*}
\|\bar{ R}_n \| &= O_{\Pn} \left( \zeta_n K_n/n \right),
\end{align*}
where $\zeta_n  = \sqrt{K_n}$ by condition (i).
Combining the results above gives
\begin{align}\label{series-bahadur2}
\widehat \beta_n - \beta_n &=  n^{-1}   Q_n^{-1} \sum_{i=1}^n p_{n}(V_i) \epsilon_i + R_n,
\end{align}
where the remainder term $R_n$ satisfies
$$
\| R_n \|
= O_{\Pn} \left( \frac{K_n^{3/2}}{n} + K_n^{-s} \right).
$$
 Note that by condition (iv), eigenvalues of $S_n^{-1}$
are bounded above. In other words, using the notation used in Corollary \ref{corollary:strong} in the main text, we have that
$\tau_n \lesssim 1$.
Then
\begin{align}\label{series-bahadur3}
\Omega_n^{-1/2} \sqrt{n} (\widehat \beta_n - \beta_n) &=  n^{-1/2} \sum_{i=1}^n u_{i,n} + r_n,
\end{align}
where
\begin{align}
u_{i,n} := \Omega_n^{-1/2}  Q_n^{-1} p_{n}(V_i) \epsilon_i\text{,}
\end{align}
and the new remainder term $r_n$ satisfies
$$
\| r_n \|
= O_{\Pn} \left[  n^{1/2} \left( K_n^{3/2}/n + K_n^{-s} \right) \right].
$$
Therefore, $r_n = o_{\Pn}(1/\log n)$ if
\begin{align}\label{lin-cond}
(\log n)  n^{1/2} \left( K_n^{3/2}/n + K_n^{-s} \right) \rightarrow 0,
\end{align}
which is  satisfied under conditions (iii) and (vii), and we have proved the lemma.
\end{proof}

\section{Asymptotic Linear Representation for Local Polynomial Estimator of a Conditional Mean}\label{sec:prac:lle}

In this section we provide details of Example 7 that are omitted in Appendix F. Results obtained in \cite{Kong/Linton/Xia:10} give the following lemma,
which verifies the linear expansion required in condition (b) of Theorem \ref{theorem:strong-approx-kernel} with $\delta_n = 1/\log n$.

\begin{lemma}[\textbf{Asymptotically Linear Representation of Local Polynomial Estimator}]\label{lemma: asym_lin_kernel} Suppose
conditions (i)-(vi) hold. Then we have the following asymptotically linear representation:
uniformly in $v = (z,j) \in \mathcal{V} \subseteq \mathcal{Z} \times \mathcal{J}$,
\begin{align*}
(nh_n^d)^{1/2}(\widehat \theta_n(v) - \theta_n(v)) =  \mathbb{G}_n (g_v)  + o_{\mathrm{P}} (1/\log n).
\end{align*}
\end{lemma}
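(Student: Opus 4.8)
The plan is to read the claim off the uniform Bahadur expansion (\ref{bahadur-lemma0}), which under conditions (i)--(vi) follows from Corollary 1 and Lemmas 8 and 10 of \cite{Kong/Linton/Xia:10} once their Assumptions A1--A7 are verified from (i)--(vi); that verification is the content of this appendix and I would carry it out first, since it is where essentially all the work lies. Recall from (\ref{bahadur-lemma0}) that, uniformly in $v=(z,j)\in\mathcal{Z}\times\{1,\dots,J\}$,
\[
\widehat\theta_n(v)-\theta_n(v)=\frac{1}{nh_n^d f(z)}\,\mathbf{e}_1'S_p^{-1}\sum_{i=1}^n (e_j'U_i)\,K_{h_n}(Z_i-z)\,\mathbf{u}_p\!\left(\frac{Z_i-z}{h_n}\right)+B_n(v)+R_n(v),
\]
with $B_n(v)=O(h_n^{p+1})$ and $R_n(v)=o_{\mathrm P}\big(\delta_n/(nh_n^d)^{1/2}\big)$ uniformly in $v$, where $\delta_n=1/\log n$. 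Everything below is bookkeeping that converts this into the stated expansion for $(nh_n^d)^{1/2}(\widehat\theta_n(v)-\theta_n(v))$.

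First I would dispose of the two remainder terms after scaling by $(nh_n^d)^{1/2}$. For the stochastic remainder, $(nh_n^d)^{1/2}\sup_{v}|R_n(v)|=o_{\mathrm P}(\delta_n)=o_{\mathrm P}(1/\log n)$ directly. For the bias, $(nh_n^d)^{1/2}\sup_v|B_n(v)|\,(\log n)=O\big((nh_n^{d+2(p+1)})^{1/2}\log n\big)=o(1)$, because condition (vi) forces $nh_n^{d+2(p+1)}\to 0$ at a polynomial rate in $n$, which swamps the $\log n$ factor; hence $(nh_n^d)^{1/2}B_n(v)=o(1/\log n)$ uniformly. Next I would identify the leading term with $\mathbb{G}_n(g_v)$. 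Writing $\mathbf{K}(u):=\mathbf{e}_1'S_p^{-1}K_{h_n}(u)\mathbf{u}_p(u/h_n)$ for the equivalent (order $p+1$) kernel as in the statement preceding (\ref{bahadur-lemma0}), the leading term of (\ref{bahadur-lemma0}) equals $\frac{1}{nh_n^d f(z)}\sum_i (e_j'U_i)\mathbf{K}((Z_i-z)/h_n)$, so after scaling by $(nh_n^d)^{1/2}$ it becomes
\[
\frac{1}{\sqrt n}\sum_{i=1}^n \frac{e_j'U_i}{(h_n^d)^{1/2}f(z)}\,\mathbf{K}\!\left(\frac{Z_i-z}{h_n}\right)=\frac{1}{\sqrt n}\sum_{i=1}^n g_v(U_i,Z_i),
\]
using the symmetry of the product kernel (equivalently, that $g_v$ may be written with argument $(z-Z_i)/h_n$ or $(Z_i-z)/h_n$, the equivalent kernel $\mathbf{K}$ being even). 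Since $E[U\mid Z]=0$ a.s., $E[g_v(U,Z)]=E\big[(e_j'E[U\mid Z])\,\mathbf{K}((z-Z)/h_n)/((h_n^d)^{1/2}f(z))\big]=0$, so $n^{-1/2}\sum_i g_v(U_i,Z_i)=\mathbb{G}_n(g_v)$ exactly. Collecting the three pieces yields $(nh_n^d)^{1/2}(\widehat\theta_n(v)-\theta_n(v))=\mathbb{G}_n(g_v)+o_{\mathrm P}(1/\log n)$ uniformly in $v\in\mathcal{V}$.

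The main obstacle is not the algebra above but the preliminary step: checking that conditions (i)--(vi) imply Assumptions A1--A7 of \cite{Kong/Linton/Xia:10}, in particular that their bandwidth requirements are met (note $p>|\mathcal{J}|/2-1$ is required, cf.\ (vi)), that the design density and the conditional law of $U$ given $Z$ satisfy the needed regularity (Condition R, together with boundedness of $U$), that the uniform order of $B_n(v)$ stated in equation (12) of \cite{Kong/Linton/Xia:10} holds from the Taylor-expansion argument of \cite{Fan/Gijbels:96}, and---the delicate point---that the remainder in their Bahadur expansion is $o_{\mathrm P}\big(\delta_n/(nh_n^d)^{1/2}\big)$ \emph{uniformly} in $(z,j)$ with exactly the rate $\delta_n=1/\log n$, which is precisely where the polynomial-rate hypotheses on $h_n$ in (vi) enter. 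Once (\ref{bahadur-lemma0}) is in hand with these rates, the rest is immediate.
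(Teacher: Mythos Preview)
Your proposal is correct and follows essentially the same approach as the paper: verify Assumptions A1--A7 of \cite{Kong/Linton/Xia:10} from conditions (i)--(vi), invoke their Corollary 1 and Lemmas 8 and 10 to obtain the uniform Bahadur expansion (\ref{bahadur-lemma0}), and then read off the claim. The paper devotes its proof almost entirely to the A1--A7 verification and dismisses the passage from (\ref{bahadur-lemma0}) to the lemma as immediate, whereas you spell out that algebra (handling $B_n$, $R_n$, the identification of the leading term with $\mathbb{G}_n(g_v)$, and the centering via $E[U\mid Z]=0$) and defer the verification; both orderings are fine, and your identification of the polynomial-rate undersmoothing in (vi) as the key input for killing the bias after the $\log n$ inflation is exactly right.
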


\begin{proof}[Proof of Lemma \ref{lemma: asym_lin_kernel}]
We first verify Assumptions A1-A7 in \cite{Kong/Linton/Xia:10} (KLX hereafter).
In our example, $\rho(y;\theta) = \frac{1}{2}(y-\theta)^2$ using the notation in KLX. Then
$\varphi(y;\theta)$ in Assumptions A1 and A2 in KLX is $\varphi(y;\theta) = \varphi(y - \theta) = -(y - \theta)$.
Then Assumption A1 is satisfied since the pdf of $U_i$ is bounded and   $U_i$ is a bounded random vector.
Assumption A2 is trivially satisfied since $\varphi(u) = -u$.
Assumption A3 follows since $K(\cdot)$ has compact support and is twice continuous differentiable.
Assumption A4 holds by condition (ii) since $X_i$ and $X_j$ are independent in our example $(i \neq j)$.
Assumption A5 is implied directly by Condition (i).
Since we have i.i.d. data, mixing coefficients ($\gamma[k]$ using the notation of KLX) are identically zeros for any $k \geq 1$. The regression error $U_i$ is assumed to be bounded, so that $\nu_1$ in KLX can be arbitrary large. Hence, to verify Assumption A6 of KLX, it suffices to check that
for some $\nu_2 > 2$, $h_n \rightarrow 0$, $nh_n^d/\log n \rightarrow \infty$, $h_n^{d+2(p+1)}/\log n < \infty$, and $n^{-1} (n h_n^d/\log n)^{\nu_2/8} d_n \log n /  M_n^{(2)} \rightarrow \infty$, where $d_n = (n h_n^d/\log n)^{-1/2}$
and $M_n^{(2)} = M^{1/4} (n h_n^d/\log n)^{-1/2}$ for some $M > 2$,
by choosing $\lambda_2 = 1/2$ and $\lambda_1 = 3/4$ on page 1540 in KLX.
By choosing a sufficiently large $\nu_2$ (at least greater than  8),  the following holds: $n^{-1} (n h_n^d)^{\nu_2/8}\rightarrow \infty$.
Then condition (vi) implies Assumption A6. Finally, condition (iv) implies Assumption A7 since we have i.i.d. data.
Thus, we have verified all the conditions in KLX.

Let $\delta_n = 1/\log n$.
 Then it follows
from Corollary 1 and Lemmas 8 and 10 of KLX that
\begin{align}\label{bahadur-lemma}
\widehat \theta_n(z,j) - \theta_n(z,j) = \frac{1}{n h_n^d f(z)} \mathbf{e}_1'S_p^{-1} \sum_{i=1}^n (e_j'U_i) K_h( Z_i - z) \mathbf{u}_p \left( \frac{Z_i-z}{h_n} \right) + B_n(z,j) + R_n(z,j),
\end{align}
where $\mathbf{e}_1$ is a $|A_p| \times 1$ vector whose  first element is one and all others are zeros,
$S_p$ is a $|A_p| \times |A_p| $ matrix such that $S_p = \{ \int z^u (z^v)' du: u \in A_p, v \in A_p \}$,
$\mathbf{u}_p(z)$ is a  $|A_p| \times 1$ vector such that $\mathbf{u}_p(z) = \{  z^u : u \in A_p \}$,
$$
B_n(z,j) = O ( h_n^{p+1} ) \text{ and } R_n(z,j) = o_{P} \left( \frac{ \delta_n }{ (nh_n^d)^{1/2} } \right),
$$
uniformly in  $(z,j) \in \mathcal{V}$. The exact form of $B_n(z,j)$ is given in equation (12) of KLX. The result that $B_n(z,j) = O ( h_n^{p+1} )$ uniformly in $(z,j)$  follows
from the standard argument based on Taylor expansion given in \cite{Fan/Gijbels:96}, KLX, or \cite{Masry:06}.
The condition that $n h_n^{d + 2(p+1)} \rightarrow 0$ at a polynomial rate in $n$ corresponds to the undersmoothing condition.
Now the lemma follows from \eqref{bahadur-lemma} immediately since $\mathbf{K}(z/h) \equiv \mathbf{e}_1'S_p^{-1} K_h( z ) \mathbf{u}_p( z/h )$ is a kernel of order $(p+1)$ (See Section 3.2.2 of \cite{Fan/Gijbels:96}).
\end{proof}

\section{Local Asymptotic Power Comparisons}\label{sec:local-asy-power}

We have shown in the main text that the test of $\mbox{H}_0: \theta_{na} \leq \theta_{n0}$ of the form
$$
\text{Reject } \mbox{H}_0  \text{ if }  \theta_{na} > \widehat \theta_{n0}(p),
$$
can reject all local alternatives $\theta_{na}$ that are more distant than $\bar\sigma_n \bar a_n$.
We now provide a couple of examples of local alternatives against which our test has non-trivial power, but for which the CvM statistic of \cite {Andrews/Shi:08}, henceforth AS,
does not. See also \cite{Armstrong:11} for a comprehensive analysis of power properties
of KS statistic of \cite{Andrews/Shi:08}.  It is evident from the results of AS on local asymptotic power that there are also models for which their CvM statistic will have power against some $n^{-1/2}$ alternatives, where our approach will not.\footnote{For the formal results, see AS Section 7, Theorem 4. In the examples that follow their Assumption LA3' is violated, as is also the case in the example covered in their Section 13.5.} We conclude that neither approach dominates.

We consider two examples in which
\begin{equation*}
Y_{i}=\theta_n \left( V_{i}\right) +U_{i}\text{,}
\end{equation*}
where $U_{i}$ are iid with $E\left[ U_{i}|V_{i}\right] =0$ and $V_{i}$ are iid random variables uniformly distributed on $\left[ -1,1\right] $. Suppose that for all $v\in \left[ -1,1\right] $ we have
\begin{equation*}
\theta^\ast \leq E\left[ Y_{i}|V_{i}=v\right] \text{,}
\end{equation*}
equivalently
\begin{equation*}
\theta^\ast \leq \theta _{0}=\min_{v\in \left[ -1,1\right] }\theta_n(v)\text{.}
\end{equation*}%
In the examples below we consider two specifications of the bounding function $\theta_n(v)$, each with
$$
\min_{v\in \left[ -1,1\right] }\theta_n(v)=0\text{,}
$$
and we analyze asymptotic power against a local alternative $\theta _{na}>\theta _{0}$.

Following AS, consider the CvM test statistic
\begin{equation}
T_{n}\left( \theta \right) :=\int \left[ n^{1/2}\frac{\overline{m}_{n}\left(
g;\theta \right)}{\widehat \sigma_n(g;\theta)\vee \varepsilon}  \right]_{-}^2 dQ\left(
g\right) \text{,}  \label{CvM statistic}
\end{equation}%
for some $\varepsilon>0$, where $[u]_{-} := -u 1(u < 0)$ and $\theta$ is the parameter value being tested. \
In the present context we have
\begin{equation*}
\overline{m}_{n}\left( g;\theta \right) :=\frac{1}{n}\sum\limits_{i=1}^{n}%
\left( Y_{i}-\theta \right) g\left( V_{i}\right) \text{,}
\end{equation*}
where $g\in \mathcal{G}$ are instrument functions used to transform the
conditional moment inequality $E\left[ Y-\theta |V=v\right] $ a.e. $v\in
\mathcal{V}$ to unconditional inequalities, and $Q\left( \cdot \right) $ is
a measure on the space $\mathcal{G}$ of instrument functions as described in
AS Section 3.4. $\widehat\sigma_n(g;\theta)$ is a uniformly consistent estimator for $\sigma_n(g;\theta)$, the standard deviation of $n^{1/2} \overline{m}_{n}\left( g;\theta \right)$.

We can show that $T_{n}(\theta)=\tilde{T}_{n}(\theta) +o_{p}\left( 1\right),
$ where
\begin{align*}
\tilde{T}_{n}(\theta)  &:=\int \left[ \beta_{n}\left( \theta, g\right)/\(\sigma_n(g;\theta) \vee \varepsilon \) +w\left( \theta, g\right) \right]_{-}^2 dQ\left(g\right),
\end{align*}
where $w\left( \theta,g\right) $ is a mean zero Gaussian process, and $
\beta _{n}\left( \theta,g\right) $ is a deterministic function of the form
\begin{equation*}
\beta _{n}\left( \theta,g\right) \equiv \sqrt{n}E\left\{ \left[ \theta_n \left(
V_{i}\right) -\theta  \right] g(V_i) \right\} \text{.}
\end{equation*}
For any $\theta$, the testing procedure based on the CvM statistic rejects $H_0:\theta \leq \theta_{n0}$ if
$$T_n(\theta) > c(\theta,1-\alpha)\text{,}$$
where $c(\theta,1-\alpha)$ is a generalized moment selection (GMS) critical value that satisfies
$$c(\theta,1-\alpha) = (1-\alpha) \text{-quantile of}  \(\int \left[ \varphi_{n}\left( \theta, g\right)/\(\sigma_n(g;\theta) \vee \varepsilon \) +w\left( \theta, g\right) \right]_{-}^2 dQ\left(g\right)\) + o_p(1)\text{.}$$
$\varphi_{n}\left( \theta, g\right)$ is a GMS function that satisfies $0 \leq \varphi_{n}\left( \theta, g\right) \leq \beta_{n}\left( \theta, g\right)$ with probability approaching 1 whenever $\beta_{n}\left( \theta, g\right) \geq 0$, see AS Section 4.4 for further details. Relative to $\tilde T_n(\theta)$, in the integrand of the expression above $\varphi_{n}\left( \theta, g\right)$ is replaced with $\beta_{n}\left( \theta, g\right)$. Hence if
$$
\sup_{g \in \mathcal{G}} \left[ \beta _{n}\left( \theta_{na},g\right) \right] _{-} \to 0,
$$
for the sequence of local alternatives $\theta_{na}$, then
$$
\liminf_{n\rightarrow\infty} \Pr\(T_{n}(\theta_{na}) > c(\theta_{na},1-\alpha)\) \leq \alpha \text{,}
$$
since asymptotically $c\(\theta_{na},1-\alpha)\)$ exceeds the $1-\alpha$ quantile of $\tilde T_n(\theta)$.
It follows that the CvM test has only trivial power against such a sequence of alternatives. \ The same conclusion holds using plug-in asymptotic critical values, since these are no smaller than GMS critical values.

In the following two examples we now verify that $\sup_{g \in \mathcal{G}} \left[ \beta _{n}\left( \theta_{na},g\right) \right] _{-} \to 0$. We assume that instrument functions are $g$ are either indicators of
boxes or cubes, defined in AS Section 3.3, and hence bounded between zero and one.

\subsection{Example K.1 (Unique, well-defined optimum)}

Let the function $\theta \left( \cdot \right) $ be specified as
\begin{equation*}
\theta_n( v ) =\left\vert v\right\vert ^{a}\text{,}
\end{equation*}%
for some $a\geq 1$.

Let us now proceed to bound, using that $0 \leq g \leq 1$,
\begin{eqnarray*}
\left[ \beta _{n}\left( \theta_{na},g\right) \right] _{-}
&=&\sqrt{n}\left[ E\left\{ \left[ \theta_n( V_{i}) -\theta_{na}\right] g(V_i) \right\} \right] _{-} \\
&\leq &\sqrt{n}E\left\{ \left[ \theta_n( V_{i}) -\theta_{na}\right]
_{-}\right\}  \\
&=&\sqrt{n}\int\limits_{-1}^{1}\left( \theta_{na}-|v|^{a}\right) 1\left\{
|v|^{a}\leq \theta_{na}\right\} dv \\
&=&2\sqrt{n}\int\limits_{0}^{1}\left( \theta_{na}-v^{a}\right) 1\left\{
v\leq \theta_{na}^{1/a}\right\} dv \\
&=&\frac{2a}{a+1}\sqrt{n}\theta_{na}^{\left( a+1\right) /a} \\
&\equiv &\overline{\beta }_{n}\text{.}
\end{eqnarray*}
Note that
$$
\theta_{na}=o\left( n^{-a/[2(a+1)]} \right) \Rightarrow \overline{\beta }%
_{n}\rightarrow 0.
$$
Thus, in this case the asymptotic rejection probability of the CvM test for the local alternative $\theta_{na}$ is bounded above by $\alpha$. On the other hand, by Theorems \ref{theorem: inference analytical} and \ref{theorem: inference1} of the main text, our test rejects all local alternatives $\theta_{na}$ that are more distant than $\bar\sigma_n \bar a_n$ with probably at least $\alpha$ asymptotically. It suffices to find a sequence of local alternatives  $\theta_{na}$ such that
$\theta_{na}=o\left( n^{-a/[2(a+1)]} \right)$ but $\theta_{na} \gg \bar\sigma_n \bar a_n$.

For instance, consider the case where $a=2$.\ Then
\begin{equation*}
\sqrt{n}\theta _{na}^{3/2}\rightarrow 0\Rightarrow \overline{\beta }
_{n}\rightarrow 0\text{,}
\end{equation*}
i.e.  $\theta _{na}=o\left( n^{-1/3}\right) $ $\Rightarrow \overline{\beta }
_{n}\rightarrow 0$, so the CvM test has trivial asymptotic power against $\theta_{na}.$ In contrast, since this is a very smooth case, our approach can achieve
$\bar\sigma_n \bar a_n = O(n^{-\delta})
$ for  some $\delta$ that can be close to $1/2$, for instance by using a series
estimator with a slowly growing number of terms, or a higher-order kernel
or local polynomial estimator. Our test would then be able to reject any $\theta_{na}$ that converges to zero
faster than $n^{-1/3}$ but more slowly than $n^{-\delta}$.

\subsection{Example K.2 (Deviation with Small Support)}

Now suppose that the form of the conditional mean function, $\theta_n(v) \equiv E\left[ Y_{i}|V_{i}=v\right]$, is given by
\begin{equation*}
\theta _{n}\left( v\right) := \bar{\theta} \left(
v\right) -\tau _{n}^{a}\left( \phi \left( v/\tau _{n}\right) -\phi \left(
0\right) \right) \text{,}
\end{equation*}%
where $\tau _{n}$ is a sequence of positive constants converging to zero and $\phi(\cdot)$ is the standard normal density function. \
Let $\bar{\theta} \left( v\right) $ be minimized at zero so that
\begin{equation*}
\theta _{0}=\min_{v\in \left[ -1,1\right] }\theta _{n}\left( v\right)
=\min_{v\in \left[ -1,1\right] }\bar{\theta} \left( v\right) =0\text{.}
\end{equation*}%
Let the alternative by $\tilde{\theta}_{na}\equiv \tau _{n}^{a}\phi \left(
0\right) $. \ Again, the behavior of the AS statistic is driven by $\left[\beta _{n}\left( \tilde{\theta}_{na},g\right) \right] _{-}$, which we bound from above as
\begin{eqnarray*}
\left[ \beta _{n}\left( \tilde{\theta}_{na},g\right) \right] _{-} &=&\sqrt{n}\left[ E\left\{ %
\left[ \theta _{n}\left( V_{i}\right) -\tilde{\theta}_{na}\right]
g \left( V_{i}  \right) \right\} \right] _{-} \\
&\leq &\sqrt{n}E\left\{ \tau _{n}^{a}\phi \left( V_i/\tau _{n}\right) \right\}
\\
&=&\frac{\sqrt{n}}{2} \int\limits_{-1}^{1}\tau _{n}^{a}\phi \left( v/\tau _{n}\right)
dv \\
&\leq&\frac{\sqrt{n}}{2} \tau _{n}^{a+1}\equiv \overline{\beta }_{n}\text{.}
\end{eqnarray*}

Consider the case $ a=2 $.  \ If $\tau _{n}=o\left(
n^{-1/6}\right) $ then $\overline{\beta }_{n}\rightarrow 0$, so that again the CvM test has only trivial asymptotic power. \ If $\tau
_{n}=n^{-1/6-c/2}$ for some small positive constant $c$,  then $\tilde{\theta}_{na}\equiv n^{-1/3-c}\phi \left( 0\right)$.
Note that
\begin{equation*}
f\left( v\right): =\tau _{n}^{2}\phi \left( v/\tau _{n}\right) \Rightarrow
f^{\prime \prime }\left( v\right) =\phi ^{\prime \prime }\left( v/\tau
_{n}\right) \leq \overline{\phi ^{\prime \prime }}<\infty\text{,}
\end{equation*}
for some constant $\overline{\phi ^{\prime \prime }}$. Hence, if $\bar{\theta}(v)$ is twice continuously differentiable,
we can use a series or kernel estimator to estimate $\theta_n(v)$ uniformly at the rate of $(\log n)^d n^{-2/5}$ for some $d>0$, leading to non-trivial power against alternatives $\tilde{\theta}_{na}$ for sufficiently small $c$.

\section{Results of Additional Monte Carlo Experiments}\label{sec:mc-supplement}

In this section we present the results of some additional Monte Carlo
experiments to further illustrate the finite-sample performance of
our method. We consider two types of additional data-generating processes (DGPs). The first set of DGPs, DGPs 5-8, are motivated by \cite{Manski/Pepper:08}, discussed briefly in Example B of the main text. The second set, DGPs 9-12, are from Section 10.3 of AS.

\subsection{Monte Carlo Designs}

In DGPs 5-8 we consider the lower bound on $\theta^* = E[Y_i(t)|V_i=v]$ under the monotone
instrumental variable (MIV) assumption, where $t$ is a
treatment, $Y_i(t)$ is the corresponding potential outcome, and
$V_i$ is a monotone instrumental variable. The lower bound  on
$E[Y_i(t)|V_i=v]$ can be written as
\begin{align}\label{mp-bound-sim-supp}
\max_{u \leq v} E \left[ Y_i \cdot 1\{Z_i = t\} + y_0 \cdot 1\{Z_i \neq t\} | V_i = u \right],
\end{align}
where $Y_i$ is the observed outcome, $Z_i$ is a realized treatment,
and $y_0$ is the left end-point of the support of $Y_i$, see
\cite{Manski/Pepper:08}. The parameter of interest is $\theta^\ast =
E[Y_i(1)|V_i=1.5]$.

In DGP5, $V_0 = \mathcal{V}$ and the MIV assumption has no
identifying power.  In other words, the bound-generating function is flat on $\mathcal{V}$,
in which case the bias of the analog estimator is most acute, see \cite{Manski/Pepper:08}.  In DGP6, the MIV assumption has identifying power, and $V_0$ is a strict subset of $\mathcal{V}$.
In DGPs 7-8, we set $V_0$ to be a singleton set.

Specifically, for DGPs 5-8 we generated 1000 independent samples as follows:
$$
V_i
\sim \text{Unif}[-2,2], Z_i = 1\{ \varphi_0(V_i) + \varepsilon_i > 0 \}, \ \text{and}
\ \ Y_i = \min \{\max\{-0.5, \sigma_0(V_i) U_i\}, 0.5 \} ,
$$
where $\varepsilon_i \sim N(0,1)$,
$U_i \sim N(0,1)$, $\sigma_0(V_i) = 0.1 \times |V_i|$,
 and $(V_i,U_i)$ are
statistically independent $(i=1,\ldots,n)$.
The
bounding function has the form
\begin{align*}
\theta(v) &:= E \left[ Y_i \cdot 1\{Z_i
= 1\} + y_0 \cdot 1\{Z_i \neq 1\} | V_i = v \right] \\
&= -0.5 \Phi[ -\varphi_0(v) ],
\end{align*}
where $\Phi(\cdot)$ is the standard normal cumulative
distribution function.
For DGP5, we set $\varphi_0(v) \equiv 0$. In this case, the
bounding function
is completely
flat ($\theta_l(v) = -0.25$ for each $v \in \mathcal{V} =
[-2,1.5]$). For DGP6, an alternative specification is
considered:
$$
\varphi_0(v) = v 1( v \leq 1) + 1( v > 1).
$$
In this case, $v \mapsto \theta(v)$ is strictly increasing on
$[-2,1]$ and is flat on $[1,2]$, and  $V_0 = [1,1.5]$ is a
strict subset of $\mathcal{V} = [-2,1.5]$.
For DGP7, we consider
$$
\varphi_0(v) = -2 v^2.
$$
In this case, $v \mapsto \theta_l(v)$ has a unique maximum at $v=0$, and thus,
  $V_0 = \{ 0 \}$ is singleton.
For DGP8, we consider
$$
\varphi_0(v) = -10 v^2.
$$
In this case, $v \mapsto \theta(v)$  has a  unique maximum at $v=0$ and is more peaked than that of DGP7. Figures \ref{figure_dgp56} and \ref{figure_dgp78} show data realizations and bounding functions for these DGPs.

DGPs 9-12 use the bounding functions in Section 10.3 of AS. DGP9 and DGP10 feature a roughly plateau-shaped bounding function given by
\begin{align}
\label{mp-bound-AS1} \theta(v) := L \phi(v^{10})\text{,}
\end{align}
instead of $\theta(v) := L \phi(v)$ as in Section \ref{sec:monte-carlo} of the main text. DGP11 and DGP12 use the roughly double-plateau-shaped bounding function
\begin{align}
\label{mp-bound-AS2} \theta(v) := L \cdot \max{\{\phi((v-1.5)^{10}),\phi((v+1.5)^{10})\}} \text{.}
\end{align}
Specifically, we generated 1000 independent samples from the model:
$$
V_i
\sim \text{Unif}[-2,2],
U_i = \min \{\max\{-3, \sigma \tilde{U}_i\}, 3 \} ,
\ \text{and}
\ \ Y_i = \theta(V_i) + U_i,
$$
where
$\tilde{U}_i \sim N(0,1)$, with $L$ and $\sigma$ as follows:
\begin{eqnarray*}
& & \textrm{DGP9 and DGP11:  } L = 1 \text{ and } \sigma = 0.1 \text{; } \ \ \ \textrm{DGP10 and DGP12:  } L = 5 \text{ and } \sigma = 0.1 \text{.}
\end{eqnarray*}
Figure \ref{figure_dgp12_AS} illustrates the bounding function and data realizations for DGPs 9 and 10; figure \ref{figure_dgp34_AS} provides the same for DGPs 11 and 12. Interest again lies in inference on $\theta_0 = \sup_{v \in \mathcal{V}} \theta(v)$, which in these DGPs is $\theta_0 = L \phi(0)$.

\subsection{Simulation Results}

To evaluate the relative performance of our inference method in DGPs 5-8, we used our method with cubic B-splines with knots equally spaced over the sample quantiles of $V_i$, and we also implemented one of the inference methods proposed by AS, specifically their Cram\'{e}r-von Mises-type (CvM) statistic with PA/Asy and GMS/Asy critical values. Implementation details for our method with B-splines are the same as in Section \ref{sec:monte-carlo-series} of the main text. Tuning parameters for CvM were chosen exactly as in AS (see Section 9).\footnote{In DGPs 5-8 our Monte Carlo design differs from that of AS, and alternative choices of tuning parameters could perform more or less favorably in our design. We did not examine sensitivity to the choice of tuning parameters for their method.} We considered
sample sizes $n=250$, $n=500$, and  $n=1000$.

The coverage probability is evaluated at the true lower bound $\theta_0$ (with the nominal level of
95\%), and the false coverage probability (FCP) is evaluated at a $\theta$ value outside the identified set. For DGP5,
we set $\theta = \theta_0 - 0.03$; for DGP6-DGP7,  $\theta = \theta_0 - 0.05$; and for DGP8,   $\theta = \theta_0 - 0.07$.
These points are chosen differently across different DGPs to ensure that the FCPs have similar values.
This type of FCP was reported in AS, along with a so-called ``CP-correction'' (similar to size correction in
testing).  We did not do CP-correction in our reported results.
There were 1,000 replications for each experiment.
Table \ref{mc1-sup} summarizes the results of Monte
Carlo experiments. CLR and AS refer to our inference method and that of AS, respectively.

First, we consider Monte Carlo results for  DGP5.
The discrepancies between nominal and actual coverage probabilities
are not large  across all methods, implying that all of them perform well in finite samples. For DGP5,
since the true argmax
set $V_0$ is equal to $\mathcal{V}$, an estimated $V_0$
should be the entire set $\mathcal{V}$. Thus the
simulation results are the same whether or not estimating $V_0$ since for most of simulation
draws, $\widehat V_n = \mathcal{V}$. Similar conclusions
hold for AS with  CvM between PA/Asy and GMS/Asy critical values.
In terms of false coverage probability, CvM with either critical value performs better than our method.

We now move to DGPs 6-8. In DGP6, the true argmax set $V_0$ is
$[1,1.5]$ and in DGP7 and DGP8, $V_0$ is a singleton set. In these cases the true argmax set $V_0$ is a strict subset of
$\mathcal{V}$. Hence, we expect that it is important to estimate
$V_0$. On average, for DGP6, the estimated sets were
$[-0.793, 1.5]$ when $n=250$,
$[-0.359,1.5]$ when $n=500$, and $[-0.074,1.5]$ when $n=1,000$;
for DGP7, the estimated sets were
$[-0.951,   0.943]$ when $n=250$,
$[-0.797,   0.798]$ when $n=500$, and $[-0.684,   0.680]$ when $n=1,000$;
for DGP8, the estimated sets were
$[-1.197,   0.871]$ when $n=250$,
$[-0.662,   0.645]$ when $n=500$, and $[-0.403,   0.402]$ when $n=1,000$.

Hence, an average estimated set is larger than $V_0$; however, it is still a strict subset of $\mathcal{V}$ and gets smaller as $n$ gets large.
For all the methods, the Monte Carlo results are consistent with asymptotic theory. Unlike in DGP5, the CLR method performs better than the AS method in terms of false coverage probability.\footnote{As in Section \ref{sec:monte-carlo}, this conclusion will remain valid even with CP-correction as in AS, since our method performs better in DGPs 6-8 where we have over-coverage.} As can be seen from the
table, the CLR method performs  better when $V_0$ is estimated in
terms of making the coverage probability  less
conservative and also of making the false coverage probability smaller.
Similar gains are obtained for the CvM with GMS/Asy critical values, relative to that with PA/Asy critical values.

We now turn to DGPs 9-12. AS, Section 10.3, report results for their approach using their CvM and KS statistics, and we refer the reader to their paper for results using their method. They also include results for our approach using B-splines and local-linear estimation of the bounding function. Here we provide further investigation of  the performance of our method in additional Monte Carlo simulations.

From Figures \ref{figure_dgp12_AS} and \ref{figure_dgp34_AS} we see that the bounding function is very nearly flat in some regions, including areas close to $V_0$, and also has very large derivatives a bit further away from $V_0$. The functions are smooth, but the steep derivatives mimic discontinuity points, and are challenging for nonparametric estimation methods. The AS approach does not rely on smoothness of the bounding function, and performs better in most - though not all - of the comparisons of AS. The performance of our approach improves with the sample size, as expected.

Our Monte Carlo experiments for DGPs 9-12 further examine the performance of our method in such a setup. In all experiments we report coverage probabilities (CPs) at $\theta_0 = L \phi(0)$ and FCPs at $\theta_0 - 0.02$ as in AS. We provide results for sample sizes $n=500$ and $n=1000$, both with and without estimation of the argmin set. In Table \ref{mc2-supp} we report the results of series estimation via B-splines. We used cubic B-splines and our implementation was identical to that described in Section \ref{sec:monte-carlo} of the main text. Compared to the results in Table \ref{mc1} of the main text for DGPs 1-4, we see that the average number of series terms is much higher. This is due to a higher number of terms selected during cross-validation, presumably because of the regions with very large derivatives. Our coverage probabilities are below the nominal level, but they improve with the sample size, as in AS, ranging from .885 to .937 across DGPs at $n=1000$. Moreover, we see that our method using $\textsf{V}=\mathcal{V}$ rather than $\textsf{V}=\widehat V_n$ actually performs better in this setup.

To further investigate the challenge of nonparametrically estimating a bounding function with steep derivatives, we implemented our method with a locally-constant Nadaraya-Watson kernel estimator. The functions are in fact nearly locally constant at most points, with the exception of the relatively narrow regions with steep derivatives. The top half of Table \ref{mc3-supp} presents the results with a bandwidth selected the same way as for the local-linear estimator in Section \ref{monte-carlo-kernel}, equation \eqref{h-rule}. When $V_n$ is estimated coverage probabilities in these DGPs range from $.903$ to $.923$ when $n=500$ and $.926$ to $.945$ when $n=1000$, becoming closer to the nominal level. The procedure exhibits good power in all cases, with FCPs decreasing with the sample size. These results are qualitatively similar to those reported in AS for the local linear estimator. We also include results when $ \mathcal{V}$ is used instead of estimating the argmin set. This results in higher coverage probabilities for $\theta_0$, in most cases closer to the nominal level, but also somewhat higher FCPs. Overall performance remains reasonably good.

The bottom half of Table \ref{mc3-supp} gives results for locally-constant kernel estimation using an identical rule of thumb for bandwidth selection, $h=\widehat{h}_{ROT} \times \widehat{s}_v$, but without applying an undersmoothing factor. The proofs of our asymptotic results use undersmoothing, but with a locally-constant kernel estimator this does not appear essential. Our exploratory Monte Carlo results are very good, offering support to that view. Compared to the results with undersmoothing, all coverage probabilities increase, and all FCPs decrease. This suggests that future research on the possibility to abandon undersmoothing may be warranted.

The overall results of this section support the conclusions reached in Section \ref{sec:monte-carlo} of the main text regarding comparisons to AS. In completely flat cases, the AS method outperforms our method, whereas in non-flat cases, our method outperforms the AS method. In this section we also considered some intermediate cases. In DGP7, where the bounding function is partly-flat, our method performed favorably. More generally there is a wide range of intermediate cases that could be considered, and we would expect the approach of AS to perform favorably in some cases too. Indeed, in DGPs 9-12 from AS, the bounding function exhibits areas with extremely steep derivatives. Their results indicate that in these DGPs their approach performs better at smaller sample sizes ($n=100,250$) than does our approach, which is based on nonparametric estimation methods. However, at larger sample sizes ($n=500,1000$) even with the very steep derivatives of DGPs 9-12 our approach performs well, and in a handful of cases, e.g. DGP10 with kernel estimation, can even perform favorably. The main conclusions we draw from the full range of Monte Carlo experiments are that our inference method generally performs well both in coverage probabilities and false coverage probabilities and that in terms of a comparison between our approach and that of AS, each has their relative advantages and neither approach dominates.

\clearpage
\newpage
\begin{table}
{\footnotesize \singlespacing
  \caption{Results for Monte Carlo Experiments }\label{mc1-sup}

  \begin{tabular}{rrccc}
    \hline\hline
    DGP & Sample Size & Critical Value & Cov. Prob.  & False Cov. Prob. \\
    \hline
  \multicolumn{5}{l}{CLR with Series Estimation using B-splines} \\
          &         & Estimating $V_n$? &    &           \\
       5  &  250    &     No     &   0.914  &   0.709   \\
       5  &  250    &     Yes    &   0.914  &   0.709   \\
       5  &  500    &     No     &   0.935  &   0.622   \\
       5  &  500    &     Yes    &   0.935  &   0.622   \\
       5  & 1000    &     No     &   0.947  &   0.418   \\  \vspace*{1ex}
       5  & 1000    &     Yes    &   0.947  &   0.418   \\
       6  &  250    &     No     &   0.953  &   0.681   \\
       6  &  250    &     Yes    &   0.942  &   0.633   \\
       6  &  500    &     No     &   0.967  &   0.548   \\
       6  &  500    &     Yes    &   0.941  &   0.470   \\
       6  & 1000    &     No     &   0.973  &   0.298   \\  \vspace*{1ex}
       6  & 1000    &     Yes    &   0.957  &   0.210   \\
       7  &  250    &     No     &   0.991  &   0.899   \\
       7  &  250    &     Yes    &   0.980  &   0.841   \\
       7  &  500    &     No     &   0.996  &   0.821   \\
       7  &  500    &     Yes    &   0.994  &   0.697   \\
       7  & 1000    &     No     &   0.987  &   0.490   \\  \vspace*{1ex}
       7  & 1000    &     Yes    &   0.965  &   0.369   \\
       8  &  250    &     No     &   0.999  &   0.981   \\
       8  &  250    &     Yes    &   0.996  &   0.966   \\
       8  &  500    &     No     &   1.000  &   0.984   \\
       8  &  500    &     Yes    &   0.999  &   0.951   \\
       8  & 1000    &     No     &   0.998  &   0.909   \\
       8  & 1000    &     Yes    &   0.995  &   0.787   \\
  \hline
\multicolumn{5}{l}{AS with CvM (Cram\'{e}r-von Mises-type statistic)} \\
     5  &  250    &     PA/Asy     &  0.951   & 0.544   \\
     5  &  250    &     GMS/Asy    &  0.945   & 0.537   \\
     5  &  500    &     PA/Asy     &  0.949   & 0.306    \\
     5  &  500    &     GMS/Asy    &  0.945   & 0.305   \\
     5  & 1000    &     PA/Asy     &  0.962   & 0.068   \\  \vspace*{1ex}
     5  & 1000    &     GMS/Asy    &  0.956   & 0.068   \\
     6  &  250    &     PA/Asy     &  1.000   & 0.941  \\
     6  &  250    &     GMS/Asy    &  0.990   & 0.802   \\
     6  &  500    &     PA/Asy     &  1.000   & 0.908  \\
     6  &  500    &     GMS/Asy    &  0.980   & 0.674   \\
     6  & 1000    &     PA/Asy     &  1.000   & 0.744   \\  \vspace*{1ex}
     6  & 1000    &     GMS/Asy    &  0.980   & 0.341  \\
     7  &  250    &     PA/Asy     &  1.000   & 1.000  \\
     7  &  250    &     GMS/Asy    &  0.997   & 0.948   \\
     7  &  500    &     PA/Asy     &  1.000   & 0.997  \\
     7  &  500    &     GMS/Asy    &  0.997   & 0.916   \\
     7  & 1000    &     PA/Asy     &  1.000   & 0.993   \\   \vspace*{1ex}
     7  & 1000    &     GMS/Asy    &  0.997   & 0.823  \\
     8  &  250    &     PA/Asy     &  1.000   & 1.000  \\
     8  &  250    &     GMS/Asy    &  1.000   & 0.988   \\
     8  &  500    &     PA/Asy     &  1.000   & 1.000  \\
     8  &  500    &     GMS/Asy    &  0.999   & 0.972   \\
     8  & 1000    &     PA/Asy     &  1.000   & 1.000   \\
     8  & 1000    &     GMS/Asy    &  1.000   & 0.942  \\
  \hline
  \end{tabular}

\parbox{5in}{
Notes: CLR and AS refer to our inference methods and those of \cite{Andrews/Shi:08}, respectively. There were 1000 replications per experiment.}

}
\end{table}

\begin{table}[hbtc]
{\footnotesize \singlespacing
  \caption{Results for Monte Carlo Experiments (Series Estimation Using B-splines) }\label{mc2-supp}

  \begin{tabular}{rrcccccc}
    \hline\hline
    DGP & Sample  & Critical  & Ave. Smoothing & Cov.   & False Cov. & \multicolumn{2}{c}{Ave. Argmax Set} \\
        &  Size   & Value     & Parameter      & Prob.  &  Prob.     &  Min.  & Max.   \\
    \hline
  \multicolumn{8}{l}{CLR with Series Estimation Using B-splines} \\
          &         & Estimating $V_n$? &    &     &    & &  \\
       9  &  500    &     No       &  35.680  &   0.920  &   0.562  &  -1.799  &   1.792  \\
       9  &  500    &     Yes      &  35.680  &   0.870  &   0.475  &  -1.001  &   1.001  \\
       9  & 1000    &     No       &  39.662  &   0.937  &   0.487  &  -1.801  &   1.797  \\  \vspace*{1ex}
       9  & 1000    &     Yes      &  39.662  &   0.913  &   0.380  &  -0.977  &   0.977  \\
       10  &  500    &     No       &  39.090  &   0.887  &   0.534  &  -1.799  &   1.792  \\
       10  &  500    &     Yes      &  39.090  &   0.825  &   0.428  &  -0.912  &   0.912  \\
       10  & 1000    &     No       &  41.228  &   0.920  &   0.477  &  -1.801  &   1.797  \\  \vspace*{1ex}
       10 & 1000    &     Yes      &  41.228  &   0.891  &   0.351  &  -0.902  &   0.903  \\
       11  &  500    &     No       &  35.810  &   0.880  &   0.462  &  -1.799  &   1.792  \\
       11  &  500    &     Yes      &  35.810  &   0.853  &   0.399  &  -1.799  &   1.792  \\
       11  & 1000    &     No       &  40.793  &   0.937  &   0.374  &  -1.801  &   1.797  \\  \vspace*{1ex}
       11  & 1000    &     Yes      &  40.793  &   0.912  &   0.299  &  -1.801  &   1.797  \\
       12  &  500    &     No       &  39.474  &   0.836  &   0.459  &  -1.799  &   1.792  \\
       12  &  500    &     Yes      &  39.474  &   0.811  &   0.386  &  -1.799  &   1.792  \\
       12  & 1000    &     No       &  42.224  &   0.917  &   0.367  &  -1.801  &   1.797  \\
       12  & 1000    &     Yes      &  42.224  &   0.885  &   0.294  &  -1.801  &   1.797  \\
  \hline
 \end{tabular}

\parbox{5in}{
Notes: DGPs 9-12 correspond to DGPs 1-4 in \cite{Andrews/Shi:08} Section 10.3. The last two columns report the average values of the minimum and maximum of the argmax set. The estimated set is allowed to be disconnected and so the interval between the minimum and maximum of the argmax set is just an outer set for the estimated argmax set.}
}
\end{table}

\begin{table}[hbtc]
{\footnotesize \singlespacing
  \caption{Results for Monte Carlo Experiments (Nadaraya-Watson Kernel Estimation) }\label{mc3-supp}

  \begin{tabular}{rrcccccc}
    \hline\hline
    DGP & Sample  & Critical  & Ave. Smoothing & Cov.   & False Cov. & \multicolumn{2}{c}{Ave. Argmax Set} \\
        &  Size   & Value     & Parameter      & Prob.  &  Prob.     &  Min.  & Max.   \\
    \hline
  \multicolumn{8}{l}{Using Bandwidth $h=\widehat{h}_{ROT} \times \widehat{s}_v \times n^{1/5} \times n^{-2/7}$} \\
          &         & Estimating $V_n$? &    &     &    & &  \\
       9  &  500    &     No      &  0.206  &   0.944   &   0.496   &  -1.799   &   1.792  \\
       9  &  500    &     Yes      &  0.206  &   0.916   &   0.391   &  -0.955   &   0.954  \\
       9  & 1000    &     No      &  0.169  &   0.966   &   0.326   &  -1.801   &   1.796  \\ \vspace*{1ex}
       9  & 1000    &     Yes      &  0.169  &   0.945   &   0.219   &  -0.941   &   0.940  \\
       10  &  500    &     No      &  0.166  &   0.945   &   0.523   &  -1.799   &   1.792  \\
       10  &  500    &     Yes      &  0.166  &   0.903   &   0.411   &  -0.879   &   0.880  \\
       10  &  1000    &     No      &  0.136  &   0.963   &   0.387   &  -1.801   &   1.796  \\ \vspace*{1ex}
       10  &  1000    &     Yes      &  0.136  &   0.926   &   0.266   &  -0.868   &   0.868  \\
       11  &  500    &     No      &  0.195  &   0.938   &   0.403   &  -1.799   &   1.792  \\
       11  &  500    &     Yes      &  0.195  &   0.923   &   0.345   &  -1.799   &   1.792  \\
       11  & 1000    &     No      &  0.160  &   0.951   &   0.201   &  -1.801   &   1.796  \\ \vspace*{1ex}
       11  & 1000    &     Yes      &  0.160  &   0.926   &   0.152   &  -1.801   &   1.796  \\
       12  &  500    &     No      &  0.169  &   0.937   &   0.439   &  -1.799   &   1.792  \\
       12  &  500    &     Yes      &  0.169  &   0.917   &   0.365   &  -1.799   &   1.792  \\
       12  & 1000    &     No      &  0.138  &   0.947   &   0.235   &  -1.801   &   1.796  \\
       12  & 1000    &     Yes      &  0.138  &   0.933   &   0.176   &  -1.801   &   1.796  \\
  \hline
 \multicolumn{8}{l}{Using Bandwidth $h=\widehat{h}_{ROT} \times \widehat{s}_v$ (no undersmoothing factor)} \\
         &         & Estimating $V_n$? &    &     &    & &  \\
       9  &  500    &     No      &  0.351  &   0.968   &   0.470   &  -1.799   &   1.792  \\
       9  &  500    &     Yes      &  0.351  &   0.944   &   0.360   &  -0.908   &   0.907  \\
       9  & 1000    &     No      &  0.306  &   0.977   &   0.211   &  -1.801   &   1.796  \\ \vspace*{1ex}
       9  & 1000    &     Yes      &  0.306  &   0.956   &   0.138   &  -0.885   &   0.884  \\
       10  &  500    &     No      &  0.283  &   0.959   &   0.520   &  -1.799   &   1.792  \\
       10  &  500    &     Yes      &  0.283  &   0.931   &   0.409   &  -0.839   &   0.839  \\
       10  &  1000    &     No      &  0.247  &   0.977   &   0.285   &  -1.801   &   1.796  \\ \vspace*{1ex}
       10  &  1000    &     Yes      &  0.247  &   0.959   &   0.188   &  -0.828   &   0.828  \\
       11  &  500    &     No      &  0.333  &   0.955   &   0.316   &  -1.799   &   1.792  \\
       11  &  500    &     Yes      &  0.333  &   0.939   &   0.261   &  -1.799   &   1.792  \\
       11  & 1000    &     No      &  0.290  &   0.960   &   0.118   &  -1.801   &   1.796  \\ \vspace*{1ex}
       11  & 1000    &     Yes      &  0.290  &   0.943   &   0.079   &  -1.801   &   1.796  \\
       12  &  500    &     No      &  0.287  &   0.956   &   0.376   &  -1.799   &   1.792  \\
       12  &  500    &     Yes      &  0.287  &   0.944   &   0.295   &  -1.799   &   1.792  \\
       12  & 1000    &     No      &  0.250  &   0.960   &   0.154   &  -1.801   &   1.796  \\
       12  & 1000    &     Yes      &  0.250  &   0.948   &   0.111   &  -1.801   &   1.796  \\
 \hline
\end{tabular}

\parbox{5in}{
Notes: DGPs 9-12 correspond to DGPs 1-4 in \cite{Andrews/Shi:08} Section 10.3. The last two columns report the average values of the minimum and maximum of the argmax set. The estimated set is allowed to be disconnected and so the interval between the minimum and maximum of the argmax set is just an outer set for the estimated argmax set.
}

}
\end{table}

\clearpage
\begin{figure}[htbp]
\caption{Simulated Data and Bounding Functions: DGP1 and DGP2}
\label{figure_dgp12}
\begin{center}
\makebox{
\includegraphics[origin=bl,scale=.5,angle=90]{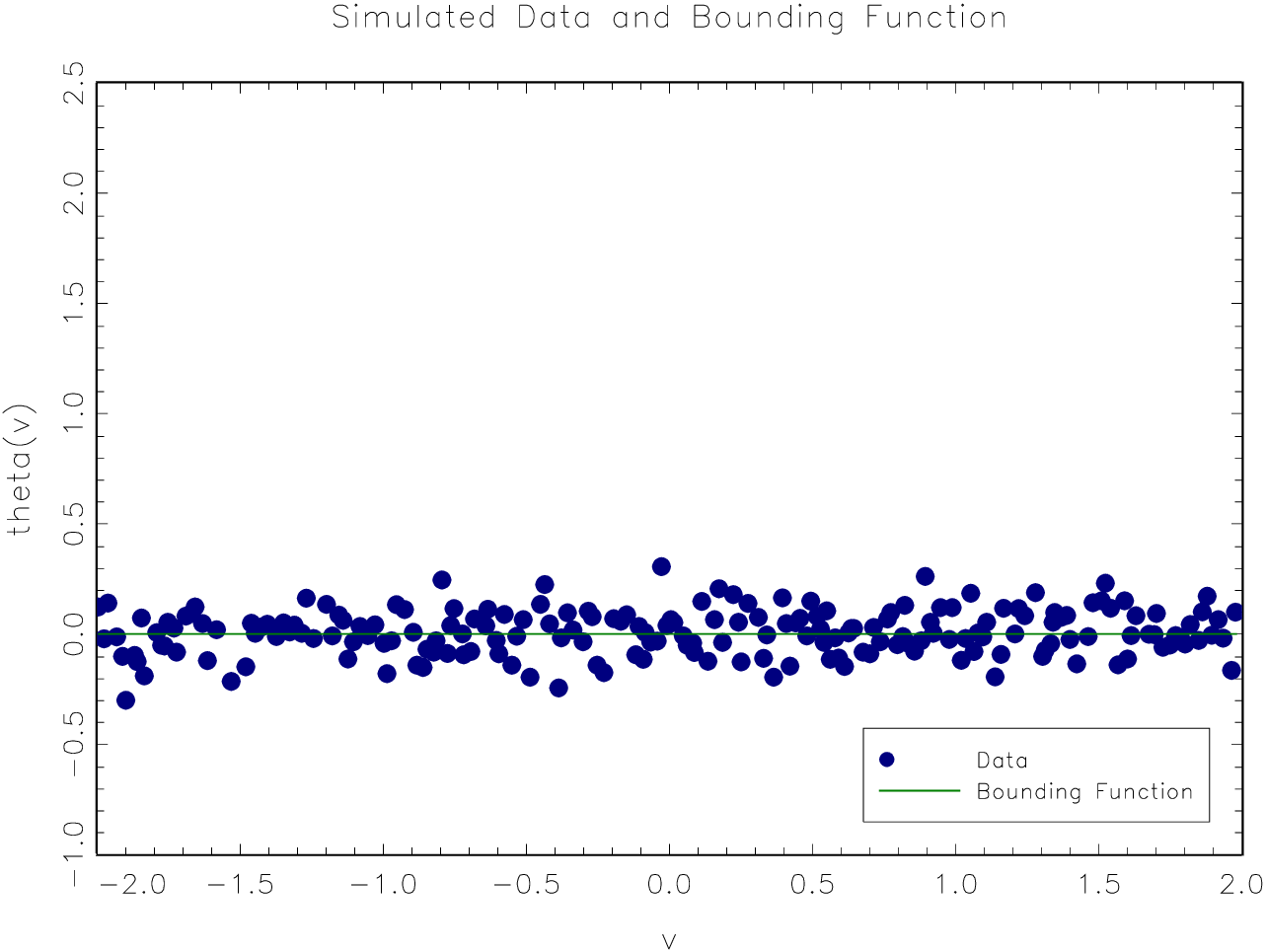}
}
\makebox{
\includegraphics[origin=bl,scale=.5,angle=90]{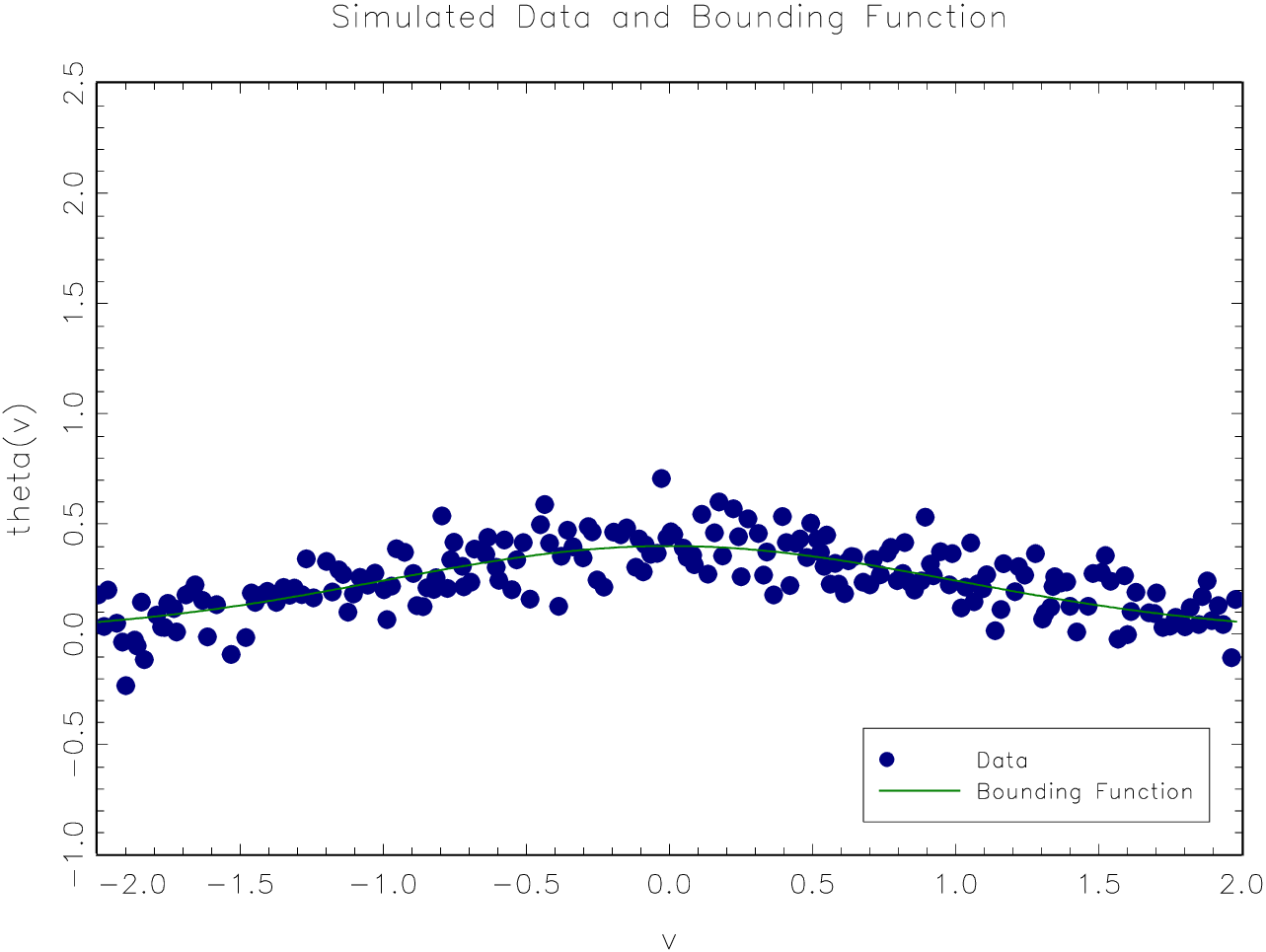}
}
\end{center}
\end{figure}

\clearpage
\begin{figure}[htbp]
\caption{Simulated Data and Bounding Functions: DGP3 and DGP4}
\label{figure_dgp34}
\begin{center}
\makebox{
\includegraphics[origin=bl,scale=.5,angle=90]{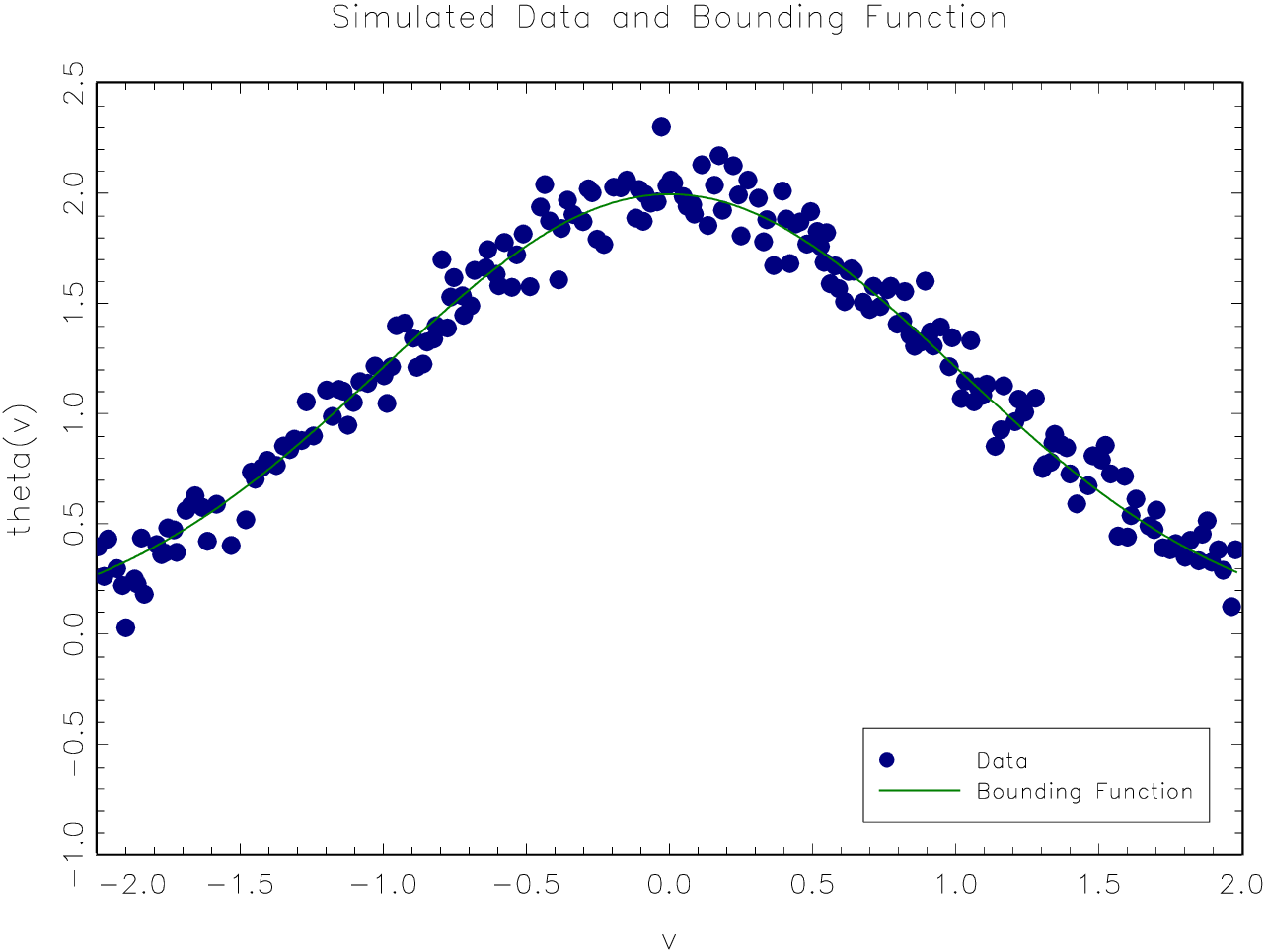}
}
\makebox{
\includegraphics[origin=bl,scale=.5,angle=90]{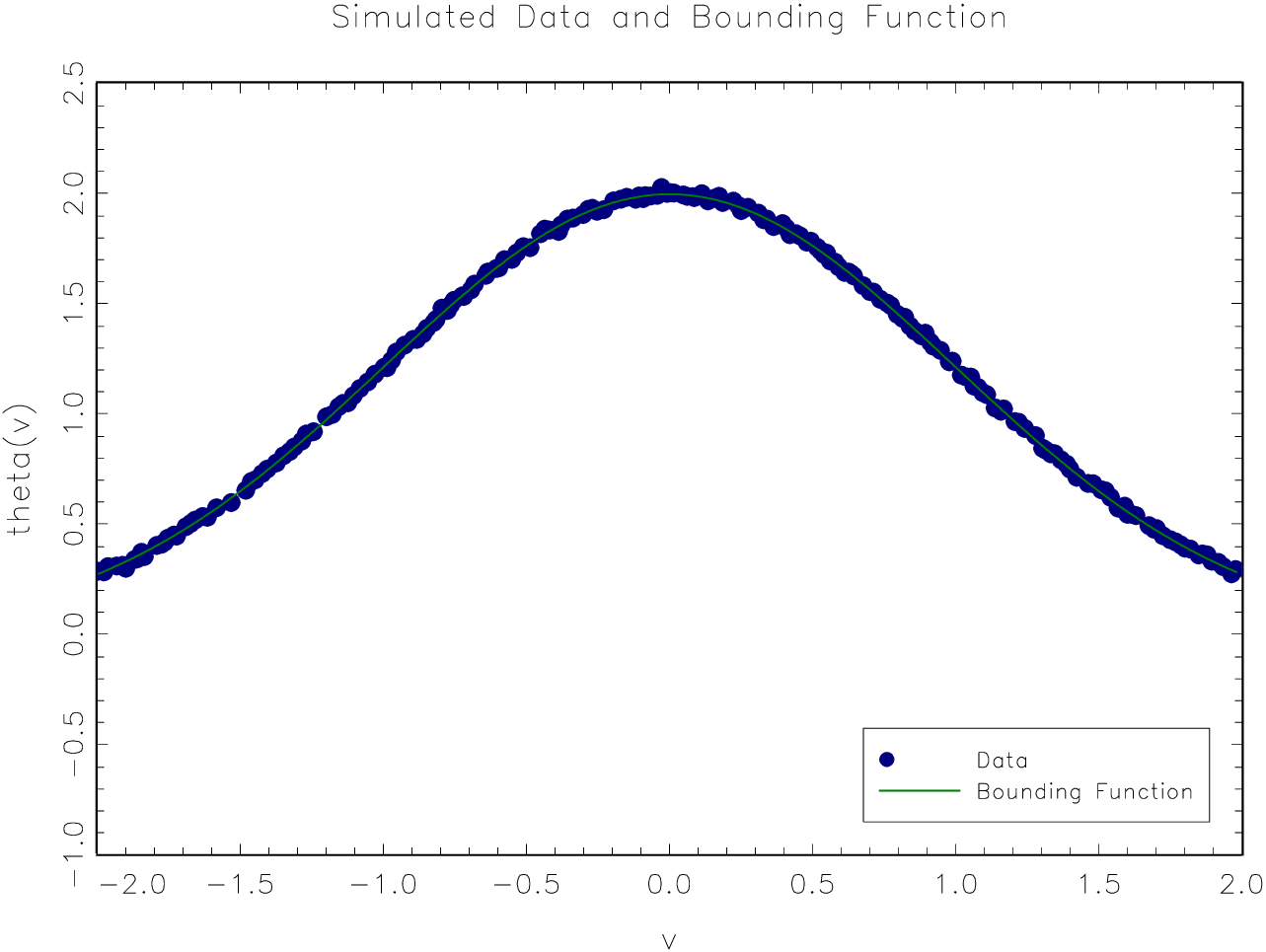}
}
\end{center}
\end{figure}

\clearpage
\begin{figure}[htbp]
\caption{Simulated Data and Bounding Functions: DGP5 and DGP6}
\label{figure_dgp56}
\begin{center}
\makebox{
\includegraphics[origin=bl,scale=.5,angle=90]{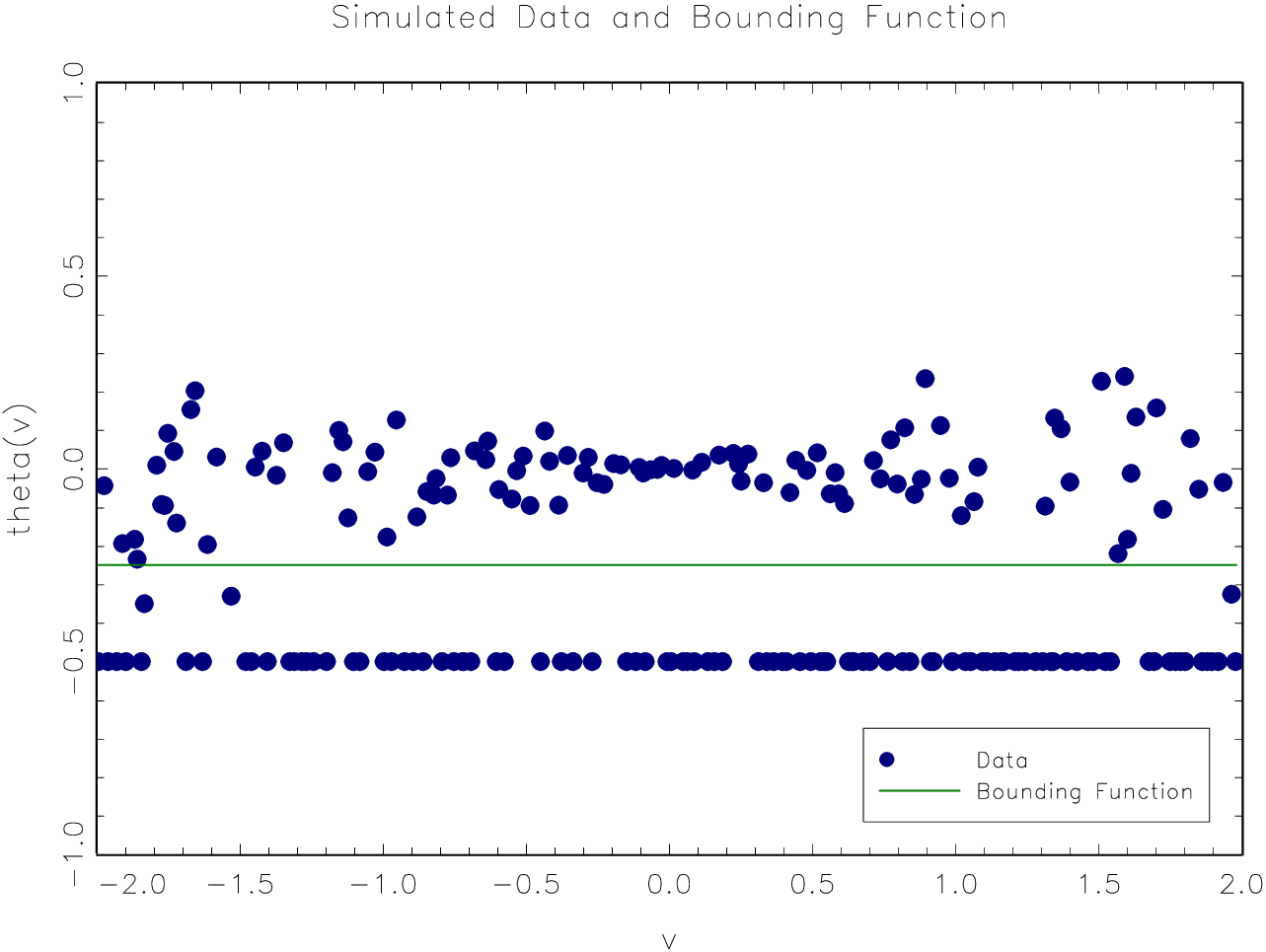}
}
\makebox{
\includegraphics[origin=bl,scale=.5,angle=90]{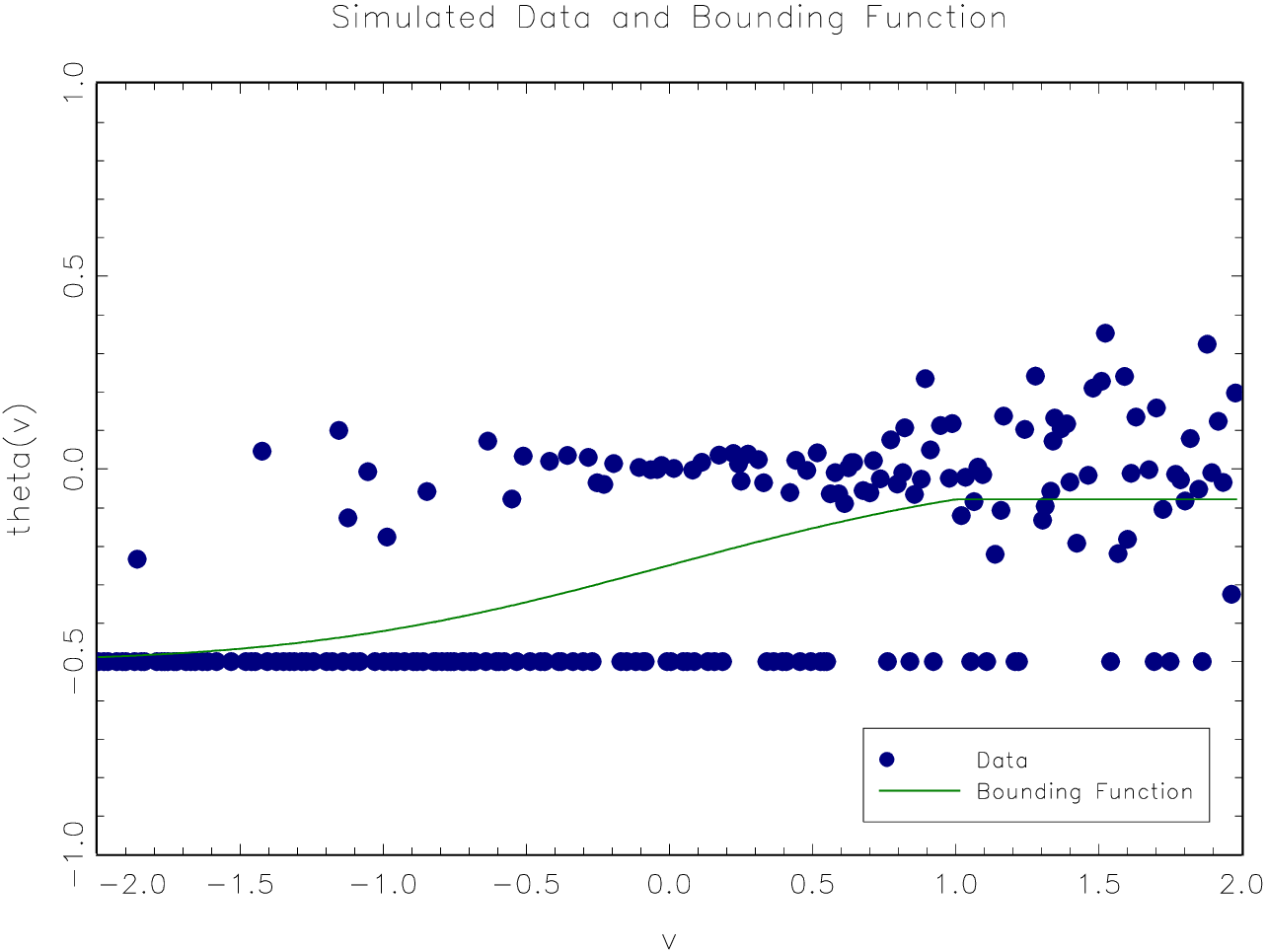}
}
\end{center}
\end{figure}

\clearpage
\begin{figure}[htbp]
\caption{Simulated Data and Bounding Functions: DGP7 and DGP8}
\label{figure_dgp78}
\begin{center}
\makebox{
\includegraphics[origin=bl,scale=.5,angle=90]{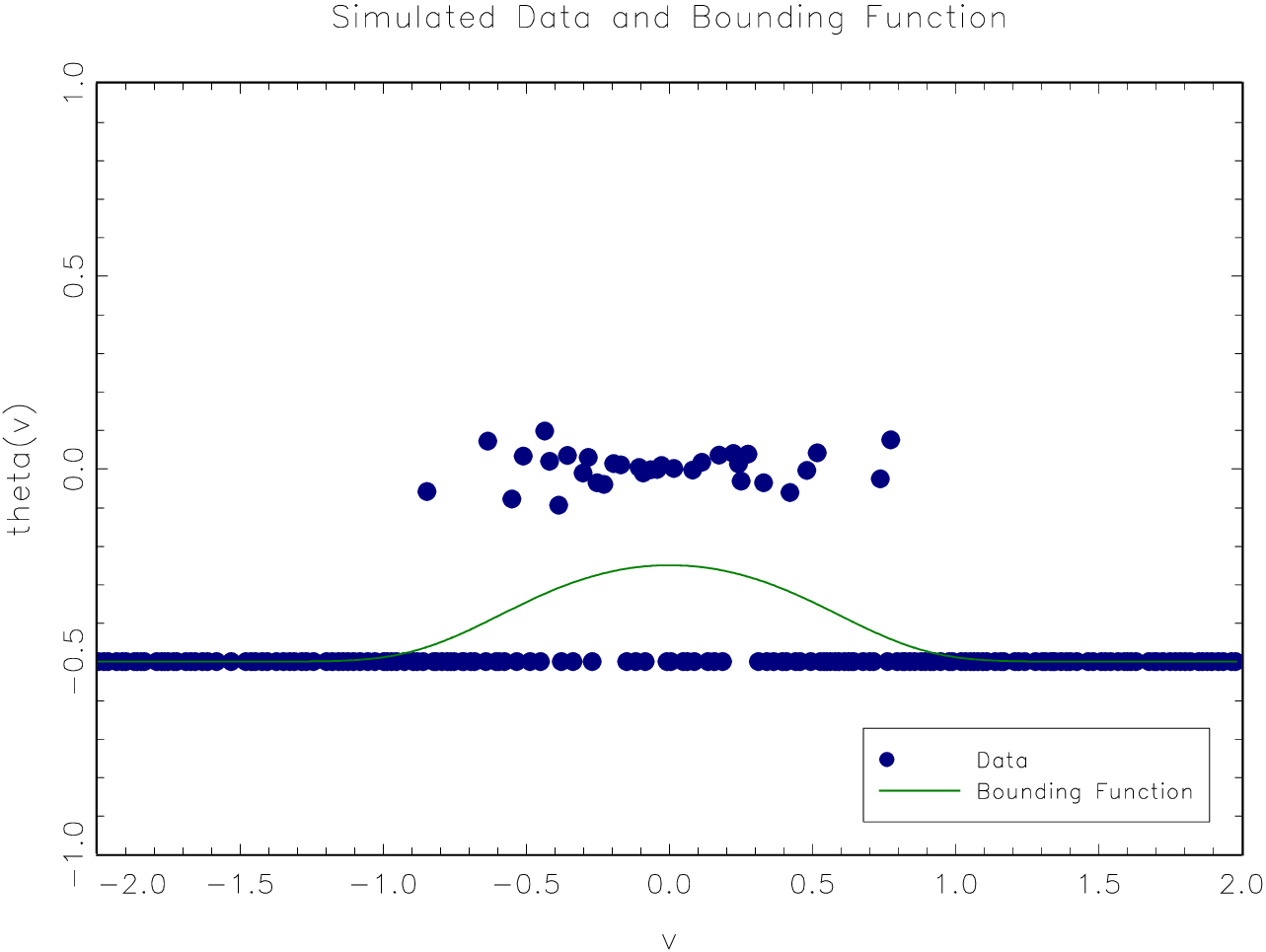}
}
\makebox{
\includegraphics[origin=bl,scale=.5,angle=90]{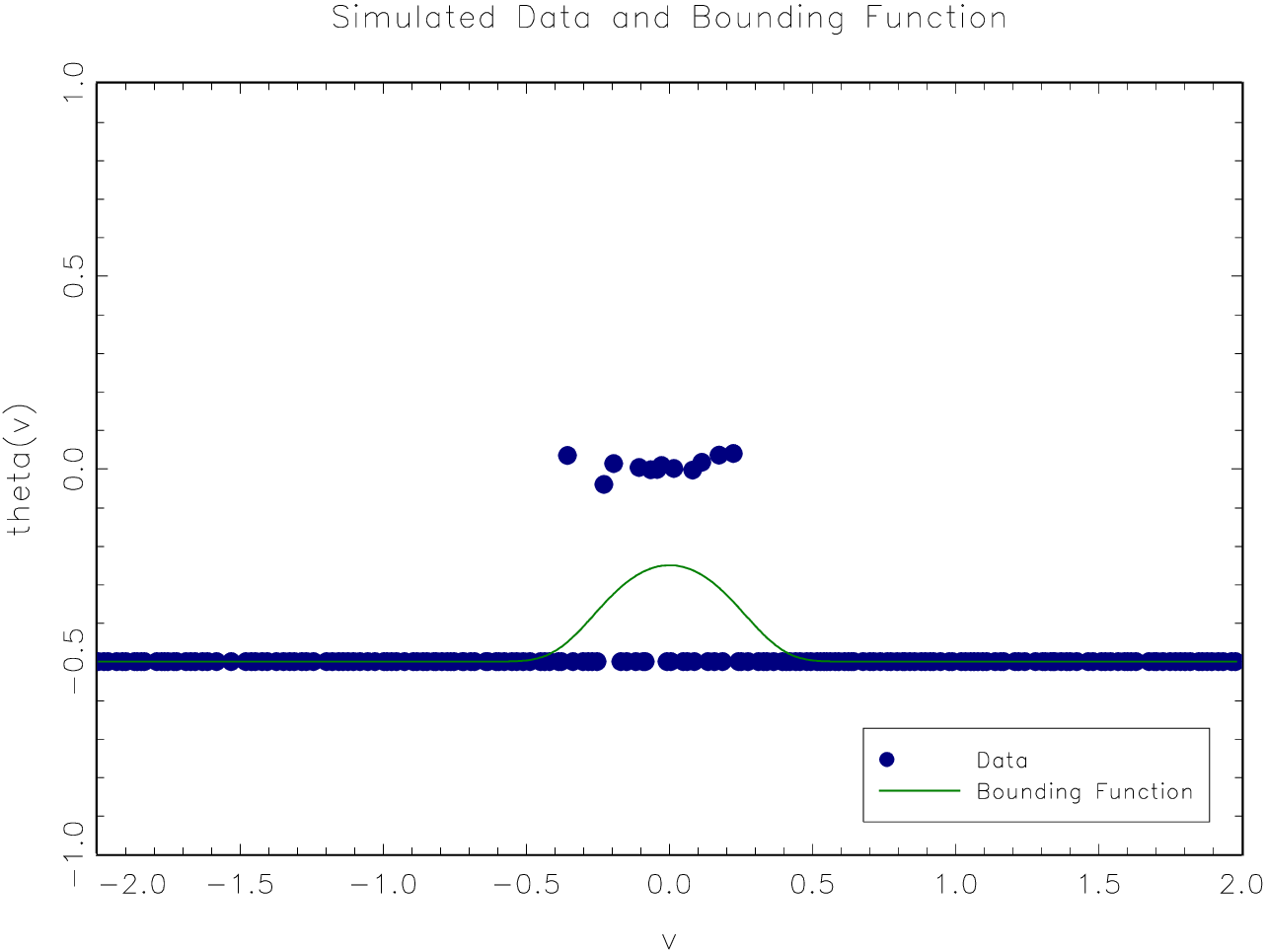}
}
\end{center}
\end{figure}

\clearpage
\begin{figure}[htbp]
\caption{Simulated Data and Bounding Functions: DGP9 and DGP10}
\label{figure_dgp12_AS}
\begin{center}
\makebox{
\includegraphics[origin=bl,scale=.5,angle=90]{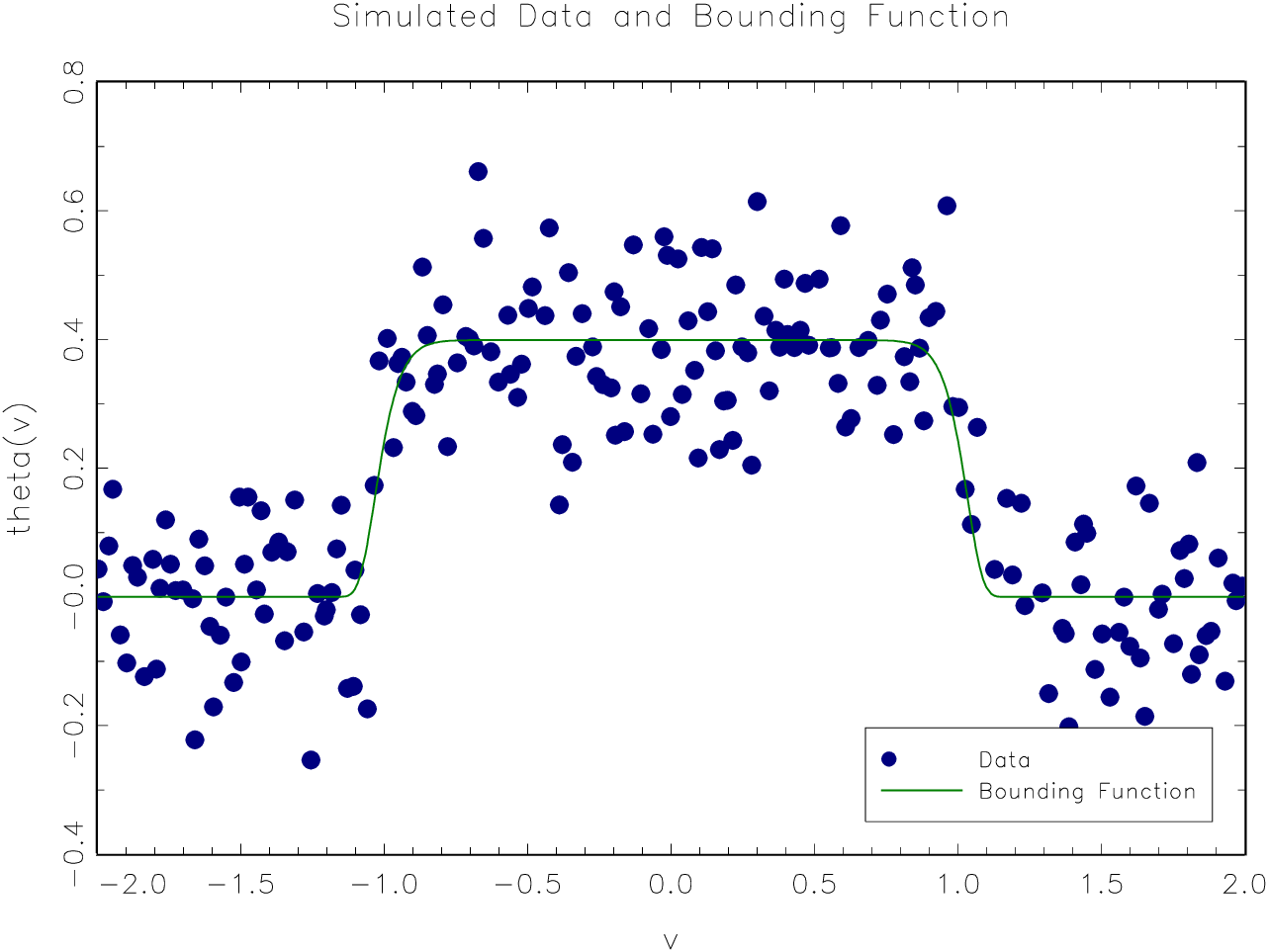}
}
\makebox{
\includegraphics[origin=bl,scale=.5,angle=90]{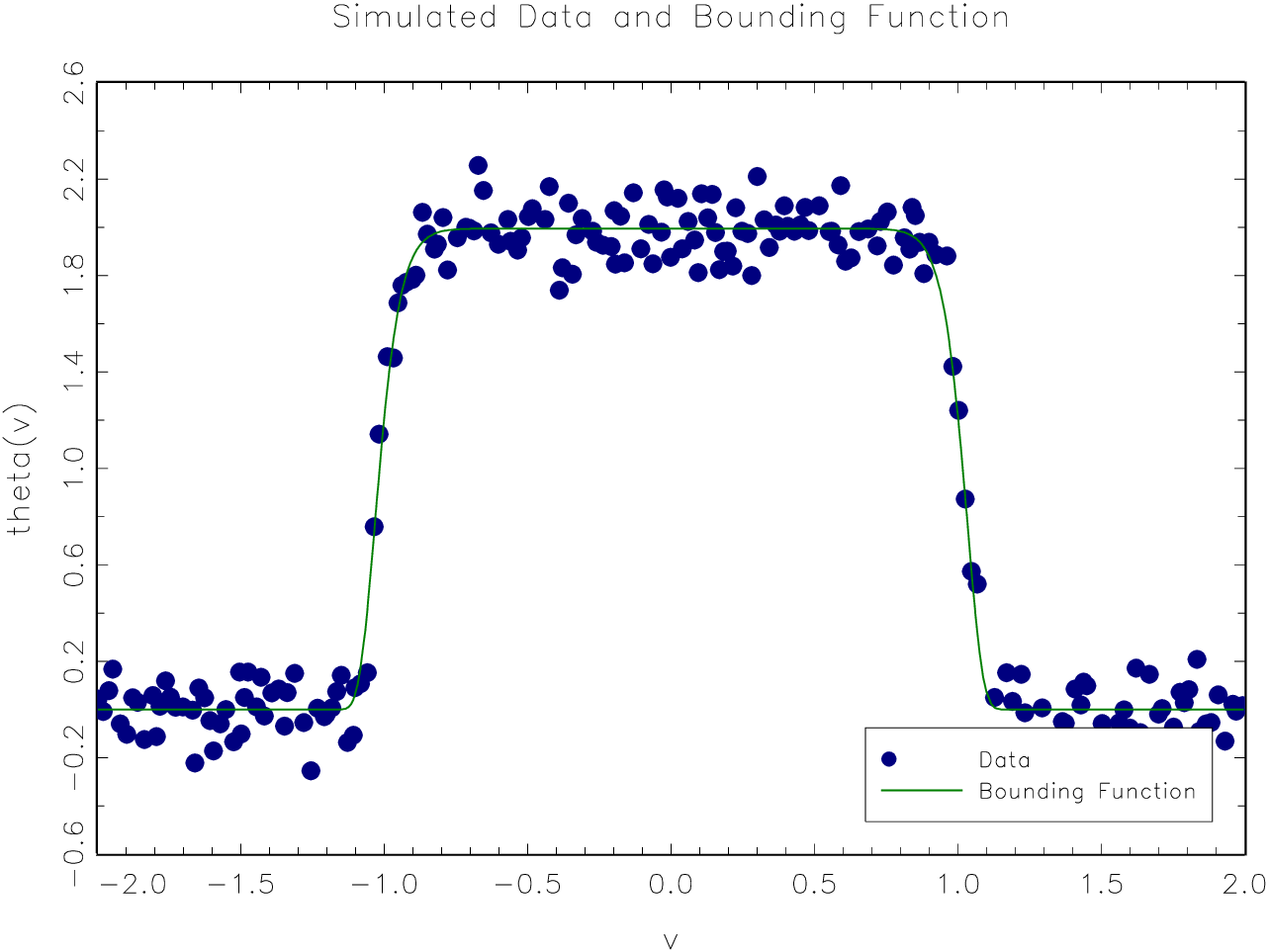}
}
\end{center}
\end{figure}

\clearpage
\begin{figure}[htbp]
\caption{Simulated Data and Bounding Functions: DGP11 and DGP12}
\label{figure_dgp34_AS}
\begin{center}
\makebox{
\includegraphics[origin=bl,scale=.5,angle=90]{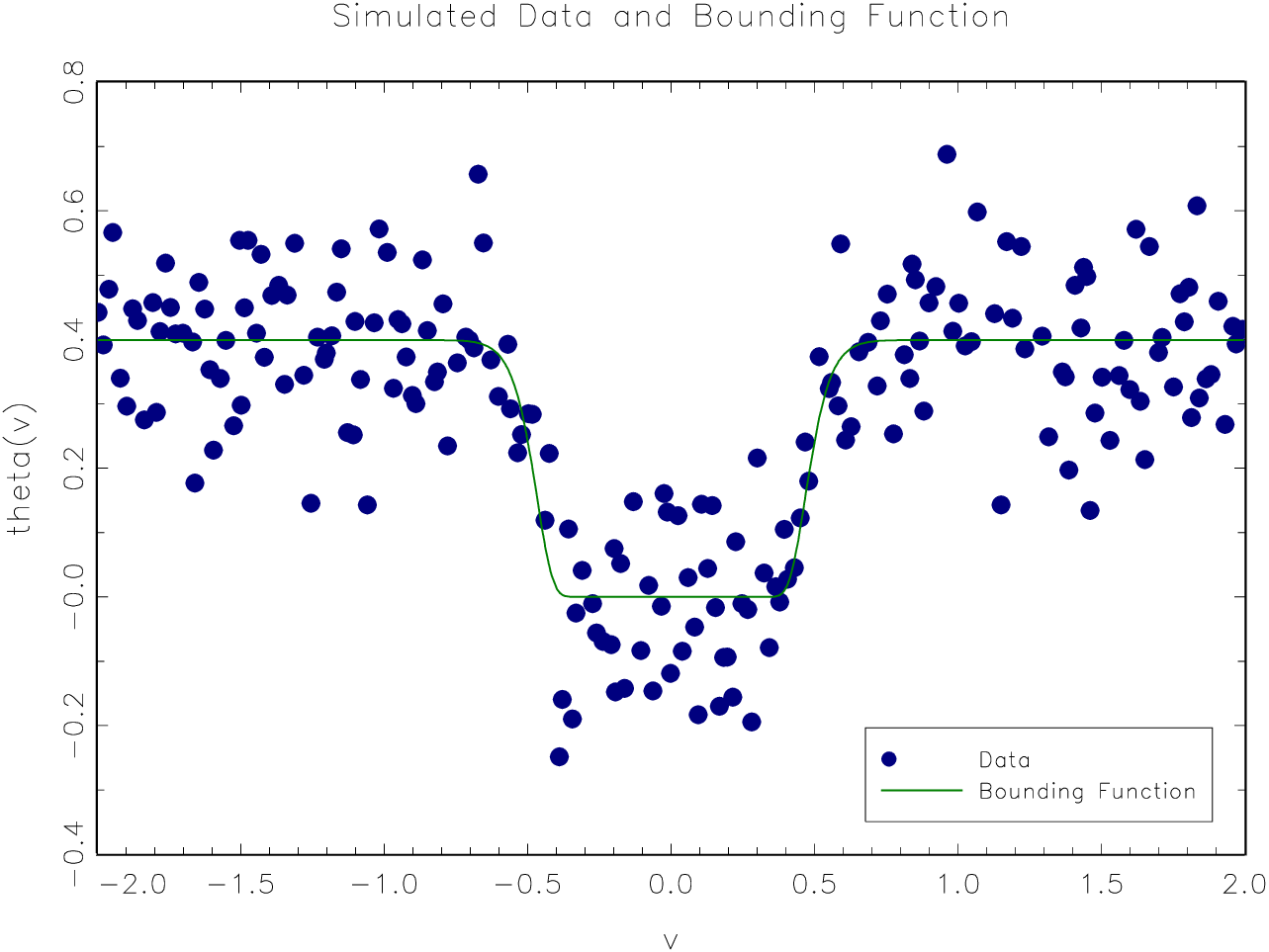}
}
\makebox{
\includegraphics[origin=bl,scale=.5,angle=90]{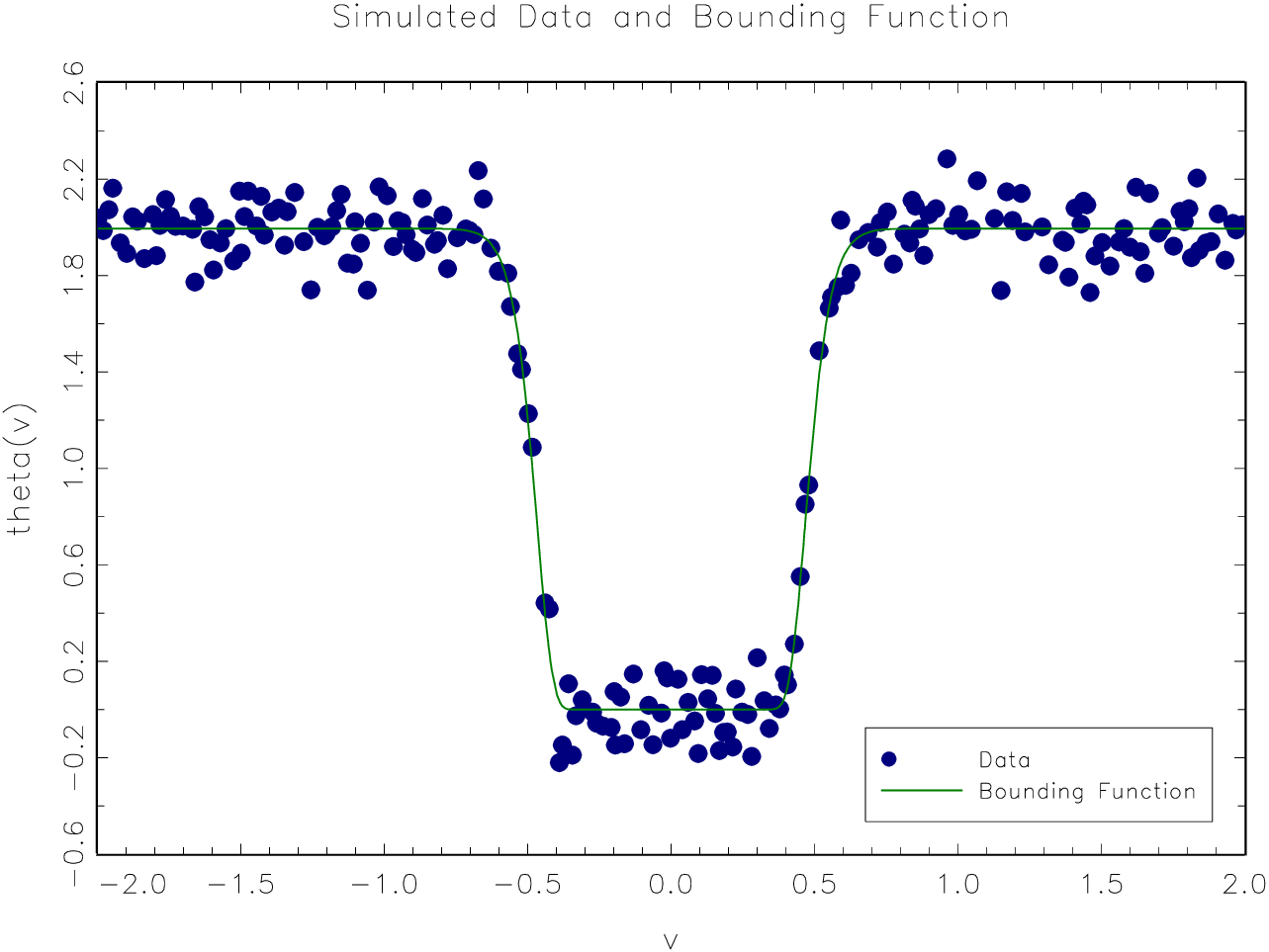}
}
\end{center}
\end{figure}

\end{document}